\documentclass[12pt]{amsart}
\usepackage{a4wide}
\usepackage{times}
\usepackage{bbm}
\usepackage{mathtools, amssymb}
\usepackage{graphicx,xspace}
\usepackage{epsfig}
\usepackage{dsfont}
\usepackage[usenames,dvipsnames]{xcolor}
\usepackage{tikz}
\usepackage[T1]{fontenc}
\usepackage[utf8]{inputenc}
\usepackage{array,multirow} %
\usepackage{bm}
\usepackage{cancel}
\usepackage{tipa}
\usepackage{stmaryrd}
\usepackage{csquotes}

\setcounter{tocdepth}{2}
\usepackage{hyperref,pifont}
\usepackage{todonotes}
\usepackage{dsfont}

\newcommand{\ignorer}[1]{}

\usepackage[capitalize]{cleveref}

\theoremstyle{plain}
\def \inc{\text{inc}}
\def \next{\text{next}}
\def\setHH{\mathcal H^{g-1}_n}
\newtheorem{lemma}{Lemma}[section]
\newtheorem{theorem}[lemma]{Theorem}
\newtheorem{corollary}[lemma]{Corollary}
\newtheorem{proposition}[lemma]{Proposition}
\newtheorem{definition}[lemma]{Definition}

\theoremstyle{remark}
\newtheorem{remark}[lemma]{Remark}

\def\eps{\varepsilon}
\renewcommand\epsilon{\varepsilon}

\def\vvv{{v}}
\def\www{{w}}
\def\aaa{{a}}
\def\bbb{{b}}
\def\ccc{{c}}
\def\si{\sigma}
\def\ka{\kappa}
\def\core{\text{core}}
\def\anc{\text{anc}}
\def\root{\text{root}}
\def\tree{\text{tree}}   
\def\desc{\text{desc}}  

\def\R{\mathbb{R}}

\def\D{\mathbb{D}}
\def\P{\mathbb{P}}

\def\cEG{\mathcal{EG}}
\def\cP{\mathcal{P}}
\def\transpo{t}

\def\E{\mathbb{E}}
\def\O{\mathcal O}

\def\Z{\mathbb{Z}}
\def\bS{\mathbb{S}}
\def\bL{\mathbb{L}}
\def\kL{\mathfrak{L}}
\def\kT{\mathbb{T}}

\newcommand{\arc}[1]{{ %
  \setbox9=\hbox{#1}%
    \ooalign{\resizebox{\wd9}{\height}{\texttoptiebar{\phantom{A}}}\cr#1}}}


\DeclareMathOperator{\spec}{spec}
\DeclareMathOperator{\Tree}{Tree}
\DeclareMathOperator{\dt}{dt}
\DeclareMathOperator{\Poisson}{Poisson}

\DeclareMathOperator{\Arc}{Arc}

%
%


%

\DeclareMathOperator\Red{Red}
\def\N{N}
\def\SetSieve{\mathcal X}
\def\setF{\mathcal F^g_n}
\def\setFF{\mathcal F^{g-1}_n}
\def\setFZ{\mathcal F^{0}_n}
\def\setH{\mathcal H^g_n}

\def\Sieve{\mathbb X}

\def\Map{\mathsf{Map}}
\def\H{H}
\def\rH{{\bm H}^g_n}
\def\rF{{\bm F}^g_n}
\def\rFF{{\bm F}^{g-1}_n}
\def\rHH{{\bm H}^{g-1}_n}
\def\rFZ{{\bm F}^0_n}
\def\qrF{\tilde{\bm F}^g_n}

\def\cT{\mathcal{T}}
\def\cN{\mathcal{N}}
\def\kS{\mathfrak{S}}
\def\be{\mathbbm{e}}

\newcommand\restr[2]{{%
		\left.\kern-\nulldelimiterspace %
		#1 %
		\right|_{#2} %
	}}

\title[Random fixed genus factorizations]{Random generation and scaling limits\\
of fixed genus
factorizations into transpositions}

 \author[V. Féray]{Valentin Féray}
       \address[VF]{Université de Lorraine, CNRS, IECL, F-54000 Nancy, France}
       \email{valentin.feray@univ-lorraine.fr}

 \author[B. Louf]{Baptiste Louf}
 \address[BL]{Ångström Laboratory, Lägerhyddsvägen 1, Uppsala}
       \email{baptiste.louf@math.uu.se}

 \author[P. Thévenin]{Paul Thévenin}
 \address[PT]{Ångström Laboratory, Lägerhyddsvägen 1, Uppsala}
       \email{paul.thevenin@math.uu.se}
 
     \keywords{permutation factorizations, Hurwitz numbers, scaling limits, combinatorial maps, random trees}

\subjclass[2010]{60C05,05A05}

\begin{document}

\begin{abstract}
We study the asymptotic behaviour of random factorizations of the $n$-cycle into transpositions of fixed genus $g>0$. They have a geometric interpretation as branched covers of the sphere and their enumeration as Hurwitz numbers was extensively studied in algebraic combinatorics and enumerative geometry. 
On the probabilistic side, several models and properties of permutation factorizations
 were studied in previous works, in particular minimal factorizations of cycles
into transpositions (which corresponds to the case $g=0$ of this work).

Using the representation of factorizations via unicellular maps,
we first exhibit an algorithm which samples an asymptotically uniform factorization of genus $g$ in linear time. 
In a second time, we code a factorization as a process of chords appearing one by one in the unit disk, and we prove the convergence (as $n\to\infty$) of the process associated with a uniform genus $g$ factorization of the $n$-cycle.
The limit process can be explicitly constructed from a Brownian excursion.
Finally, we establish the convergence of a natural genus process, coding the appearance of the successive genera in the factorization.
\end{abstract}

\maketitle

\section{Introduction}

\subsection{Background and overview of the result}
In this paper, we are interested in properties of large random factorizations of the long cycle $(1\, 2\, \cdots\, n)$ into a given number of transpositions. Namely, if $\mathbb T_n$ is the set of transpositions in the symmetric group $S_n$,
we let, for $n \ge 1$ and $g \ge 0$,
\[ \setF = \big\{ (\transpo_1, \cdots, \transpo_{n-1+2g}) \in \mathbb T_n^{n-1+2g}: 
\transpo_1 \cdots \transpo_{n+1-2g}=(1\, 2\, \cdots\, n) \big\},\]
Our aim is to understand the asymptotic behaviour of an element of
 $\setF$ taken uniformly at random when $g>0$ is fixed and $n$ tends to infinity. 
\medskip

The cardinalities of the $\setF$ are examples of \emph{Hurwitz numbers}, which are important quantities in algebraic combinatorics.
 They were first introduced to enumerate ramified covering of the sphere by surfaces (see \cite{LZ04} and references therein).
 Later connections  with other objects and concepts have been discovered,
such as the moduli space of curves via the ELSV formula \cite{ELSV}, 
integrable hierarchies \cite{Oko00} or the topological recursion (see \cite{ACEH16} and references therein). In the particular case of factorizations of the long cycle, the enumeration in the genus $0$ case (also called \emph{minimal factorizations}) goes back to Dénes \cite{Den59}, who gave the exact formula
$|\setFZ|=n^{n-2}$.
Later on, the enumeration in greater genus was treated \cite{Jac88, SSV97}, and then broadly generalized in \cite{poulalhon2002factorizations}, using representation theory.
All these works inspired similar investigations in the more general setting of Coxeter groups \cite{Bes15,CS14}.

We insist on the following fact. 
Although the formula $|\setFZ|=n^{n-2}$ in genus $0$ has been explained
through several bijections \cite{Mos89,GY02,Biane2004}, 
in higher genus, there is no known combinatorial way to enumerate $\setF$.
In particular, it is not clear how to generate efficiently a uniform random element in $\setF$.
Our first contribution is to give a simple algorithm,
which generates an asymptotically uniform random element in $\setF$
(see \cref{ssec:generation} below).
\medskip

Recently, permutation factorizations have also been studied from a probabilistic point of view. 
We first mention the literature of random sorting networks
(see \cite{AHRV07,dauvergne2018archimedean} among other articles on the subject),
which are factorizations into adjacent transpositions -- i.e.~transpositions switching two consecutive integers.
Closer to the present paper, Kortchemski and the first author studied 
random uniform elements of $\setFZ$
(i.e. in the case $g=0$), both from the global \cite{MinFacto} and local \cite{MinFactoTrajectories} 
 points of view. 
It is rather easy to see that the local behaviour will be the same for any {\em fixed} genus $g$,
so that we focus here on the global limit, i.e. on scaling limits.

To make sense of the notion of scaling limits of factorizations,
the following approach was proposed in  \cite{MinFacto}.
 We see a transposition $(a\, b) \in \mathbb T_n$
as a chord $[\exp(-2\pi i \, a/n),\exp(-2\pi i\, b/n)]$ in the disk.
A factorization in $\setF$ is then encoded as a process of chords 
appearing one after the other;
the time in the process corresponds to the index of the transposition in the factorization.
In \cite{MinFacto}, the authors show that, for $g=0$, a phase transition
 occurs in the process of chords at time  $\Theta(\sqrt{n})$.
 More precisely, take a uniform element of $\setFZ$
 and its associated chord process.
 It is shown that, for any $c \geq 0$,  this chord process taken at time $\lfloor c \sqrt{n} \rfloor$ converges towards a random compact subset of the disk, which can be defined from a certain Lévy process. Later, the third author extended this work by showing that this convergence holds jointly for all $c \geq 0$ \cite{thevenin2019geometric}, and that the limiting process can be constructed directly from a Brownian excursion.

In the present work, we extend these results to the case of genus greater than $0$. 
Using our new generation algorithm,
we show the convergence of the chord process associated with a uniform random element in $\setF$. The phase transition still occurs at a time window $\Theta(\sqrt{n})$,
and we give a simple explicit construction of the limit process (\cref{ssec:intro_scaling}). 
\medskip

Our third main result (\cref{ssec:intro_genus}) describes the evolution of the genus 
in a uniform random factorization.
Again, we see a phase transition at a time window $\Theta(\sqrt{n})$.
Before this time window, the prefix of a uniform random factorization of genus $g$ - that is, the first $k$ transpositions for $k=o(\sqrt{n})$ -
is with high probability also the prefix of a minimal factorization.
With high probability this is not the case after the time window $\Theta(\sqrt{n})$.
The study of the genus process in the critical time window $\Theta(\sqrt{n})$
 turns out to be related with looking at reduced trees in conditioned Galton-Watson trees,
as done in the seminal work of Aldous \cite{aldous1993}.

\bigskip

{\em Convention}: we compose permutations from left to right, so that $\sigma \tau$ corresponds to the composition $\tau \circ \sigma$ ($\sigma$ is applied first and then $\tau$). We will use the notation $[a,b]$ to denote either the real interval or the set of integers between $a$ and $b$. This will always be made clear by the context.

Finally, given two sequences of random variables $(U_n)_{n \geq 1}, (V_n)_{n \geq 1}$ 
on the same probability space, we say that $U_n$ is dominated by $V_n$ in probability and 
write $U_n=\O_P(V_n)$ if, for every $\epsilon>0$, there exists $C>0$ such that $\limsup_{n \rightarrow \infty} \P(U_n \leq C V_n) \geq 1-\epsilon$.

\medskip

\subsection{Random generation of uniform genus $g$ factorizations}
\label{ssec:generation}
As said above, our first main contribution is a linear time algorithm that generates 
an random element $\qrF$ in $\setF$ which is {\em asymptotically uniform},
in the following sense: if $\rF$ is a uniform random element of $\setF$
and if $d_{TV}$ denotes the total variation distance, then one has
\[ \lim_{n\to \infty} d_{TV}(\qrF,\rF) =0.\]

Our algorithm works recursively on the genus $g$.
We start with a (random) factorization $F=(t_1,\dots,t_{n-1+2(g-1)})$ of genus $g-1$
and define a random factorization $\Lambda(F)$ as follows.
First, pick $\vvv < \www$ uniformly in $[1,n+1-2g]$, and $\aaa<\bbb<\ccc$ uniformly in $[1,n]$, independently of $F$. 

\begin{enumerate}
\item Then we define
\begin{align*}
\overline F_1 =(t_1,t_2,\dots,t_{\vvv-1},(\aaa \,\ccc),t_\vvv,\dots,t_{\www-2},(\aaa \, \bbb),t_{\www-1},\dots,t_{n-1+2(g-1)});\\
\overline F_2 =(t_1,t_2,\dots,t_{\vvv-1},(\aaa \,\bbb),t_\vvv,\dots,t_{\www-2},(\aaa \,\ccc),t_{\www-1},\dots,t_{n-1+2(g-1)}).
\end{align*}
If the product of all transpositions in $\overline F_1$ is a single cycle of length $n$, 
 (possibly distinct from $(1 \, \cdots \, n)$), then we set $\overline F=\overline F_1$ and denote this long cycle by $\zeta$.
Otherwise we consider $\overline F_2$: if the product of all transpositions in $\overline F_2$
 is a single cycle of length $n$, then we set $\overline F=\overline F_2$ and  denote this long cycle by $\zeta$.
If none of these products is a long cycle, we set $\overline F=\dag$, which is a formal symbol
to represent that the algorithm "fails".\footnote{It turns out that,
if $F$ is taken uniformly at random in $\setFF$, with high probability,
exactly one of the lists $\overline F_1$ and $\overline F_2$ is a factorization of a long cycle,
the other being a factorization of a product of three disjoint cycles; see \cref{lem_bad_phi}.}
\item If $\overline F \neq \dag$, let $\si$ be the unique permutation such that $\sigma(1)=1$
and $\si^{-1} \zeta \si =(1\, \cdots\, n)$. Then we set $\Lambda(F)=\sigma(\overline F)$,
where $\sigma$ acts by conjugation on each transposition in $\overline F$;
i.e.~we replace each
transposition $t=(i \,j)$ by $\si^{-1} t \si =(\si(i) \, \si(j))$.

One checks immediately that $\Lambda(F)$ is a factorization of genus $g$ of $(1\, \cdots\, n)$.
If  $\overline F=\dag$, we simply set $\Lambda(F)=\dag$.
\end{enumerate}
Note that $\Lambda$ is a random function. Formally, for any $g\ge 1$, it maps random variables
on $\setFF \uplus \{\dag\}$ to random variables on $\setF \uplus \{\dag\}$,
with the convention $\Lambda(\dag)=\dag$.

\begin{theorem}
\label{thm:RandomGenerationFacto}
Let $\rFF$ and $\rF$ be uniform random factorizations respectively in $\setFF$ and $\setF$, respectively.
Then \[ \lim_{n\to \infty} d_{TV}\big[\Lambda(\rFF),\rF\big] =0.\]
\end{theorem}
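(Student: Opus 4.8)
The plan is to compute the push‑forward of the uniform measure under $\Lambda$ and compare it to the uniform measure on $\setF$. First I would dispose of the failure symbol: by \cref{lem_bad_phi} one has $\P[\Lambda(\rFF)=\dag]\to 0$, so it suffices to prove $\sum_{F'\in\setF}\big|\P[\Lambda(\rFF)=F']-|\setF|^{-1}\big|\to 0$. Conditioning on $\rFF$ and on the algorithm's independent uniform choices, for every $F'\in\setF$,
\[
\P\big[\Lambda(\rFF)=F'\big]=\frac{N(F')}{|\setFF|\binom{n+1-2g}{2}\binom n3},
\]
where $N(F')$ is the number of tuples $(F,\vvv,\www,\aaa,\bbb,\ccc)$ that $\Lambda$ maps to $F'$; summing over $F'$ and using $\P[\dag]\to 0$ gives $\sum_{F'}N(F')=(1-o(1))\,|\setFF|\binom{n+1-2g}{2}\binom n3$.

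Next I would identify $N(F')$ combinatorially. Given $F'$ and the positions $\vvv<\www$ of the two transpositions that were inserted, one recovers the whole input: deleting $t'_\vvv,t'_\www$ from $F'$ yields a permutation which, for the step to have been legal, must be an $n$-cycle, and there is then a unique $\sigma$ fixing $1$ conjugating it to $(1\cdots n)$, which produces $F\in\setFF$ and then $\aaa<\bbb<\ccc$. Conversely, $\Lambda$ sends the reconstructed input back to $F'$ exactly when (i) the product after deletion is a single $n$-cycle, (ii) $t'_\vvv$ and $t'_\www$ share a single point whose $\sigma^{-1}$-preimage is the smallest of the three points involved, and (iii) the ``$\overline F_1$ before $\overline F_2$'' convention selects the right arrangement — (iii) being automatic outside a negligible set, by the mechanism behind \cref{lem_bad_phi}. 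Hence $N(F')$ counts these \emph{reducible position-pairs}. I would also record an a priori bound: (i) forces the transposition-graph of $F'$ to remain connected after removing the edges $t'_\vvv,t'_\www$, so both are non-bridges and their common vertex has degree $\ge 3$ in the $2$-core of the graph; that $2$-core has cyclomatic number $2g$ and hence at most $4g-2$ vertices of degree $\ge 3$, so $N(F')\le c(g)$ for all $F'\in\setF$, with $c(g)$ depending only on $g$.

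Putting this together, with $m_n:=\E[N(\rF)]=(1-o(1))\,|\setFF|\binom{n+1-2g}{2}\binom n3/|\setF|$, the two displays give
\[
d_{TV}\big(\Lambda(\rFF),\rF\big)=o(1)+\tfrac12\,\E\Big[\,\big|N(\rF)/m_n-1\big|\,\Big].
\]
Since $0\le N(\rF)\le c(g)$ and $\E[N(\rF)/m_n]=1$, the variables $N(\rF)/m_n$ are uniformly integrable, so the theorem is equivalent to the concentration statement: $N(\rF)$ converges in probability to a deterministic constant $\ell_g$ (equivalently $m_n\to\ell_g$) — i.e.\ a uniform genus-$g$ factorization carries asymptotically a deterministic number of reducible position-pairs.

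This concentration is the crux and the main obstacle. I would prove it by showing that, for all but an $o(|\setF|)$-fraction of $F'\in\setF$, the number of reducible pairs equals a constant $\ell_g$ depending only on $g$: a reducible pair corresponds to a canonical ``handle removal'' in the unicellular-map encoding of $F'$, and the number of such canonical configurations in a one-face map of genus $g$ should be an invariant of $g$ alone, in the spirit of Chapuy's identity that a genus-$g$ unicellular map has exactly $2g$ trisections. The delicate points are (a) combining the purely graph-theoretic condition (i) with the order-sensitive conditions (ii)–(iii) so that the resulting count no longer depends on the random combinatorial type of the $2$-core of $F'$, and (b) showing the exceptional ``bad'' fraction is negligible — both of which require a hands-on description of the local structure of a uniform positive-genus factorization and reduce to estimates of the same flavour as \cref{lem_bad_phi}. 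Once $N(\rF)\to\ell_g$ in probability is established, the last display together with uniform integrability yields $d_{TV}(\Lambda(\rFF),\rF)\to 0$.
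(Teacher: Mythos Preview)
Your reduction is correct and is precisely the mechanism behind the paper's proof: writing $d_{TV}$ in terms of the preimage count $N(F')$ and showing that $N(\rF)$ concentrates at a constant. Your a priori bound $N(F')\le c(g)$ is sound (the common vertex must have $2$-core degree $\ge 3$, and the excess-degree sum in a $2$-core of cyclomatic number $2g$ is $4g-2$, forcing all degrees $\le 4g$). One minor misattribution: $\P[\Lambda(\rFF)=\dag]\to 0$ does not follow from \cref{lem_bad_phi} alone but from \cref{lem:set_bij} (the map $\Phi$ always produces a unicellular map on $\mathcal A_{g-1,n}$) together with the first item of \cref{prop:almost_all_bij}.

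The genuine gap is exactly the step you flag as ``the crux'': you do not prove that $N(\rF)$ concentrates. You correctly intuit that the constant should come from a Chapuy-type trisection count, and indeed the paper establishes $\ell_g=2g$: under the map encoding of \cref{sec:facto_maps}, your ``reducible position-pairs'' in $F'$ are exactly the \emph{good trisections} of the Hurwitz map $\Map(F')$ (this is the content of the inverse operation $\Psi$ and \cref{prop_asympto_bij}). Since every genus-$g$ unicellular map has exactly $2g$ trisections (\cref{lem_trisec}), one has $N(F')\le 2g$; and by the second item of \cref{prop:almost_all_bij} a uniform marked trisection is good with probability $1-o(1)$, which together with the pointwise bound forces $N(\rF)=2g$ w.h.p. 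Proving this last point is where the real work lies: it requires the scheme decomposition of Hurwitz maps (\cref{sec:hurwmaps}) and the random-tree estimates (\cref{lem_small_tree,lem_small_desc_trees}) that control the local structure near branching vertices. Your outline anticipates this but does not supply it; without those ingredients the concentration statement --- and hence the theorem --- remains open.

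In short: your counting framework is equivalent to the paper's bijective argument, but the paper's \cref{prop_asympto_bij} and \cref{prop:almost_all_bij} are not optional technicalities --- they \emph{are} the proof of the concentration you need.
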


\begin{corollary}
\label{corol:RandomGeneration}
Let $\rFZ$ be a uniform random {\em minimal} factorization of $(1\, \cdots\, n)$, 
i.e. a uniform random element in $\setFZ$. Then
\[ \lim_{n\to \infty} d_{TV}\big[ \Lambda^g(\rFZ), \rF \big] =0.\]
\end{corollary}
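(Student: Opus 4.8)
The plan is to deduce the corollary from \cref{thm:RandomGenerationFacto} by a straightforward induction on $g$, using the contraction property of the total variation distance under (random) maps. First I would record the elementary fact that if $\Lambda$ is a random function from random variables on $\setFF\uplus\{\dag\}$ to random variables on $\setF\uplus\{\dag\}$ (built from extra randomness independent of its input), then for any two input laws $\mu,\nu$ one has $d_{TV}(\Lambda(\mu),\Lambda(\nu))\le d_{TV}(\mu,\nu)$. This is just the standard statement that applying the same (possibly randomized) kernel to two measures cannot increase their total variation distance; it follows from the coupling characterization of $d_{TV}$, by coupling the two inputs optimally and feeding both copies the same auxiliary randomness used to define $\Lambda$.

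Next I would set up the induction. The base case $g=0$ is trivial, since $\Lambda^0$ is the identity and $\rF=\rFZ$ has exactly the uniform law on $\setFZ$, so $d_{TV}(\Lambda^0(\rFZ),\rFZ)=0$. For the inductive step, suppose $d_{TV}\big(\Lambda^{g-1}(\rFZ),\rFF\big)\to 0$ as $n\to\infty$. Writing $\Lambda^g=\Lambda\circ\Lambda^{g-1}$, the triangle inequality gives
\[
d_{TV}\big(\Lambda^g(\rFZ),\rF\big)\le d_{TV}\big(\Lambda(\Lambda^{g-1}(\rFZ)),\Lambda(\rFF)\big)+d_{TV}\big(\Lambda(\rFF),\rF\big).
\]
The second term on the right tends to $0$ by \cref{thm:RandomGenerationFacto}. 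For the first term, I would apply the contraction lemma from the previous paragraph with $\mu$ the law of $\Lambda^{g-1}(\rFZ)$ and $\nu$ the law of $\rFF$: it is bounded by $d_{TV}\big(\Lambda^{g-1}(\rFZ),\rFF\big)$, which tends to $0$ by the induction hypothesis. Hence $d_{TV}\big(\Lambda^g(\rFZ),\rF\big)\to 0$, completing the induction. One small point to be careful about is that all these laws live on the augmented spaces $\setF\uplus\{\dag\}$, etc., and that $\Lambda(\dag)=\dag$; the contraction inequality is insensitive to this since it holds for arbitrary measurable spaces, and the convergence in \cref{thm:RandomGenerationFacto} already takes place in this augmented setting.

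There is essentially no hard step here: the entire content is in \cref{thm:RandomGenerationFacto}, and the only thing to verify carefully is that $\Lambda$, as a \emph{random} function, uses auxiliary randomness ($\vvv,\www,\aaa,\bbb,\ccc$) that is independent of its argument, so that the same realization of this auxiliary randomness can be used on both coupled inputs; this is exactly how $\Lambda$ is defined in \cref{ssec:generation}. I would also remark that the convergence is not merely qualitative: if one had a quantitative rate $d_{TV}(\Lambda(\rFF),\rF)\le\varepsilon_n(g)$ in \cref{thm:RandomGenerationFacto}, the same argument would give $d_{TV}(\Lambda^g(\rFZ),\rF)\le\sum_{h=1}^{g}\varepsilon_n(h)$, which for fixed $g$ still tends to $0$.
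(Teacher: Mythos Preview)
Your proof is correct and is precisely the natural way to deduce the corollary from \cref{thm:RandomGenerationFacto}; the paper does not spell out a separate proof of the corollary, treating it as an immediate consequence, and your induction via the contraction of total variation under a Markov kernel is exactly the argument one would write to make this rigorous. The one point worth making explicit (which you do mention) is that the successive applications of $\Lambda$ use fresh, independent auxiliary randomness at each step, so that $\Lambda^g$ is indeed a composition of $g$ Markov kernels and the contraction inequality applies at every stage.
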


This result gives a linear time algorithm to generate asymptotically uniform elements of $\setF$. Namely, one samples first a uniform element of $\mathcal{F}^0_n$ in linear time in $n$ using the encoding of minimal factorizations by Cayley trees \cite{Den59,Mos89} and 
Prüfer code for Cayley trees \cite{Prufer}. 
Then we apply $\Lambda^g$. Each of the steps of $\Lambda$ has a linear time-complexity\footnote{We consider that basic operations, such as taking an integer uniformly at random between $1$ and $n$,
adding an element or swapping elements in a list, are done in constant time.},  which makes our algorithm linear. Corollary~\ref{corol:RandomGeneration} ensures that the probability that the algorithm fails is only $o(1)$ and that the output is closed
in total variation to a uniform random element in $\setF$.
\medskip

While the algorithm is easily described uniquely in terms of factorizations,
the proof of Theorem~\ref{thm:RandomGenerationFacto}
heavily relies on an encoding of factorizations as unicellular maps.
Indeed, the elements of $\setF$
are known to be in bijection with a family of \emph{unicellular maps} 
with labeled edges and a consistency condition,
 which we call {\em Hurwitz condition} (see Section~\ref{sec:facto_maps} for a definition). 
 The main step in the proof of Theorem~\ref{thm:RandomGenerationFacto}
 is the construction of an "asymptotic bijection" on these sets of maps,
  which translates to the operation $\Lambda$ in terms of factorizations. 
Our bijection is inspired, but different, from Chapuy's bijection on unicellular maps \cite{Cha10}
(the latter is not compatible with the Hurwitz condition). 
Proving that it is indeed a bijection between sets of asymptotically total cardinalities
requires a careful analysis of random maps with the Hurwitz condition,
mixing tools of analytic combinatorics and results of random tree theory.
\medskip

Finally we mention that the permutation $\si$ appearing in step (ii) of the above algorithm
has a particular form, see \cref{prop:labelingalgorithm} below. 
Informally, it is close to the permutation which swaps the integer intervals $[a\!+\!1,b]$ and $[b\!+\!1,c]$
(keeping elements of the same interval in the same order).
This is a key point to address scaling limit problems, as done in the next section.
\medskip

\begin{remark}
To be complete on the comparison with existing methods in the literature,
let us mention that an asymptotically uniform sampling algorithm
for $\setFF$ can also be derived from \cite{chapuy2011new,chapuy2013simple}. 
This algorithm is also linear in time, but contrary to ours,
 its probability to succeed is bounded away from $1$ (and exponentially decreasing with the genus).
 This higher rejection probability makes the algorithm less convenient to study asymptotic properties of uniform random elements in $\setFF$.
 
For specialists, here is a brief description of this sampling algorithm: take a uniform Cayley tree, pick $g$ triples of vertices uniformly, and merge them to get a unicellular map (as prescribed in \cite{chapuy2011new}); if the resulting map satisfies the Hurwitz condition, we return the associated factorization, otherwise we repeat the operation.
We can also modify it into an exact uniform sampling algorithm by choosing $2h\!+\!1$-tuples of vertices (instead of triples) with appropriate probabilities.
\end{remark}

\subsection{Sieves and their scaling limits}
\label{ssec:intro_scaling}
~
\bigskip

As said above, we identify a transposition $\transpo=(a\, b)$ in $\mathbb T_n$
with a {\em chord} in the unit disk, given by the line segment $[\exp(-2\pi i \, a/n),\exp(-2\pi i\, b/n)]$.

In the literature, {\em noncrossing} sets of chords are called {\em laminations}.
Here, we shall consider sets of chords which are potentially crossing and call such objects {\em sieves}.

More precisely, a sieve of the unit disk $\D$ is a compact subset of $\D$ made of the union of the unit circle $\bS^1$ and a set of chords. The set of all sieves of $\D$ is denoted by $\SetSieve(\D)$.
If, furthermore, the chords of a sieve do not cross except possibly at their endpoints, then it is called a lamination. The set of all laminations of $\D$ is denoted by $\bL(\D)$.

With a factorization $F=(\transpo_1,\dots,\transpo_m)$ in $\setF$, 
we associate a {\em sieve} denoted $\Sieve(F)$:
by definition, $\Sieve(F)$ is simply the union of the $m$ chords 
corresponding to the transpositions in $F$.
We note that, unlike in the case $g=0$, these chords might be crossing,
and hence $\Sieve(F)$ is not a lamination in general. 

The problem we address here is the following.
Fix some genus $g>0$. 
For each $n$, we let $\bm F^g_n$ be a uniform random element in $\setF$.
We are looking for the limit of $\Sieve(\bm F^g_n)$ 
for the Hausdorff topology on compact subspaces of 
the closed unit disk $\D$.
A simulation for $g=1$ and $n=1000$ is given on \cref{fig:genre1}.
This simulation has been done using the algorithm described
in the previous section.

\begin{figure}[t]
\center
\includegraphics[scale=.3]{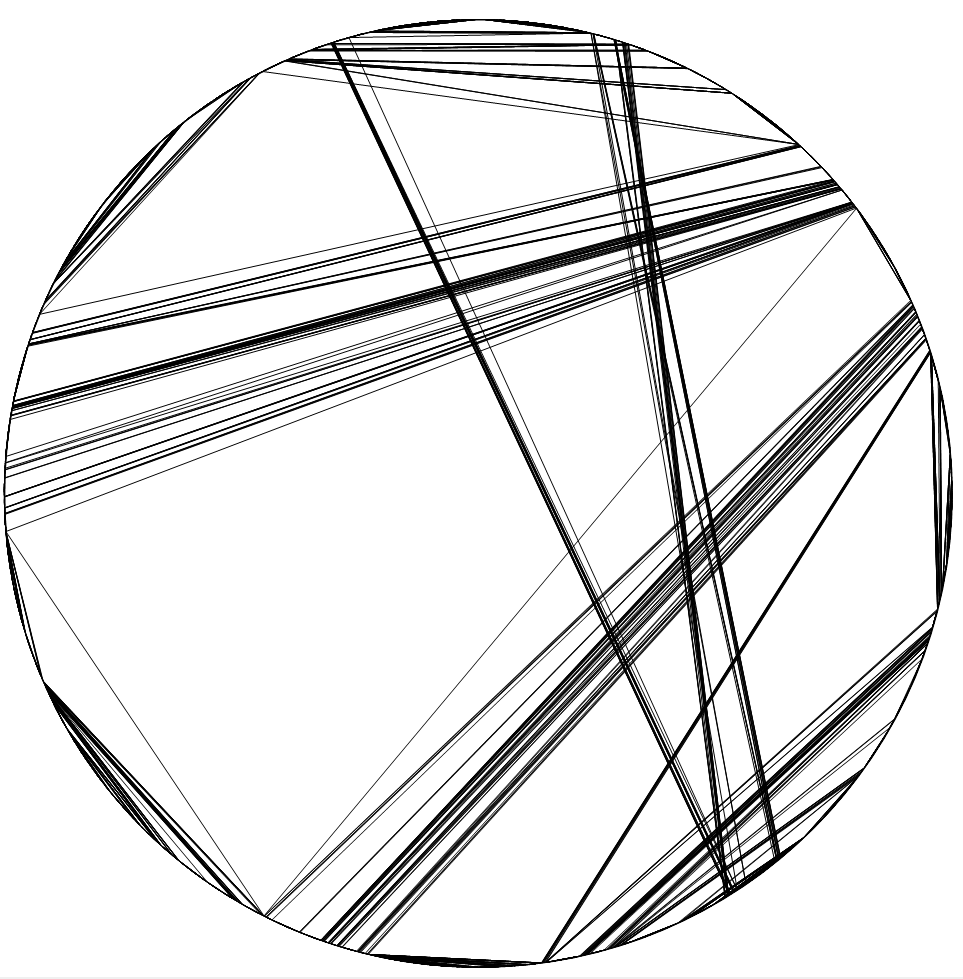}
\caption{A realization of $\Sieve(\bm F^1_{1000})$}
\label{fig:genre1}
\end{figure}

For $g=0$,
the problem was solved in \cite{MinFacto,thevenin2019geometric}:
the limit $\bm\Sieve^0_\infty$ of the sieve $\Sieve(\bm F^0_n)$ is 
the so-called Aldous' Brownian triangulation, also denoted $\mathbb L_\infty^{(2)}$
(see Section \ref{sec:background} for a definition).
The latter has been introduced by Aldous in \cite{Ald94b}
It has been proved to be the limit of various models of random laminations
\cite{Ald94b,Kor14,CK14,KM17}.
To define the limit  $\bm\Sieve^g_\infty$ for a general genus $g$,
we need to introduce first some notation.

If $A$ and $B$ are points on the circle,
we write $\arc{AB}$ for the arc going from $A$ to $B$ in counterclockwise order.
By convention it contains $A$ but not $B$.
Let $\Sieve$ be a sieve
and $\{A,B,C\}$ three points on the unit circle,
named such that  they appear in the order ($A$, $B$, $C$)
when turning counterclockwise around the circle starting at a reference point, say $(1,0)$.
Let $D$ be the point of the arc $\arc{AC}$ such that the arcs $\arc{AD}$ and $\arc{BC}$
have the same length.
We consider the piecewise rotation $\Psi$ of the circle sending $\arc{AB}$ to $\arc{DC}$,
$\arc{BC}$ to $\arc{AD}$ and fixing $\arc{CA}$.
Finally, we define $R_{\{A,B,C\}}(\Sieve)$ as the sieve obtained by replacing each chord $[X,Y]$ of $\Sieve$ by $[\Psi(X),\Psi(Y)]$. See an illustration of this operation on Fig. \ref{fig:rotation}. Now, for any $g \in \Z_+$, define an operation $R^g$ on the set $\SetSieve(\D)$ of sieves the following way: let $({A_i, B_i, C_i})_{i \geq 1}$ be i.i.d. uniform triples of points on $\bS^1$. Then, define $R^0$ as the identity on $\SetSieve(\D)$ and, for any $g \geq 1$, any $\Sieve \in \SetSieve(\D)$, $R^g(\Sieve) = R_{{A_g,B_g,C_g}}(R^{g-1}(\Sieve))$. 

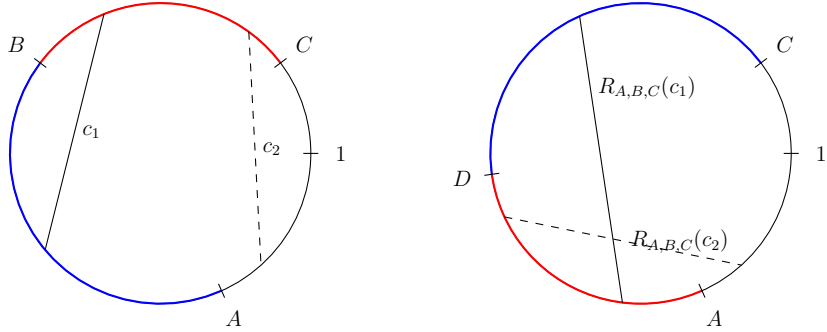
\begin{figure}
\begin{tabular}{c c c}
\begin{tikzpicture}[scale=2,every node/.style={scale=0.7}]
\draw (0,0) circle (1);
\draw[red,thick,domain=37:143] plot ({cos(\x)}, {sin(\x)});
\draw[blue,thick,domain=143:294] plot ({cos(\x)}, {sin(\x)});
\draw ({1.2*cos(37)}, {1.2*sin(37)}) node{$C$};
\draw ({1.2*cos(294)}, {1.2*sin(294)}) node{$A$};
\draw ({1.2*cos(143)}, {1.2*sin(143)}) node{$B$};
\draw (1.2,0) node{$1$};
\draw ({1.05*cos(37)}, {1.05*sin(37)}) -- ({.95*cos(37)}, {.95*sin(37)}) ({1.05*cos(143)}, {1.05*sin(143)})-- ({.95*cos(143)}, {.95*sin(143)}) ({1.05*cos(294)}, {1.05*sin(294)}) -- ({.95*cos(294)}, {.95*sin(294)}) (.95,0)--(1.05,0);
\draw[dashed] ({cos(54)}, {sin(54)}) -- ({cos(-48)}, {sin(-48)}) node[pos=0.5,right]{$c_2$};
\draw ({cos(112)}, {sin(112)}) -- ({cos(220)}, {sin(220)}) node[pos=0.5,right]{$c_1$};
\end{tikzpicture}
&
\begin{tikzpicture}
\draw[white] (0,0) -- (.5,0);
\end{tikzpicture}
&
\begin{tikzpicture}[scale=2,every node/.style={scale=0.7}]
\draw (0,0) circle (1);
\draw[blue,thick,domain=37:188] plot ({cos(\x)}, {sin(\x)});
\draw[red,thick,domain=188:294] plot ({cos(\x)}, {sin(\x)});
\draw ({1.2*cos(37)}, {1.2*sin(37)}) node{$C$};
\draw ({1.2*cos(294)}, {1.2*sin(294)}) node{$A$};
\draw ({1.2*cos(188)}, {1.2*sin(188)}) node{$D$};
\draw (1.2,0) node{$1$};
\draw ({1.05*cos(37)}, {1.05*sin(37)}) -- ({.95*cos(37)}, {.95*sin(37)}) ({1.05*cos(188)}, {1.05*sin(188)})-- ({.95*cos(188)}, {.95*sin(188)}) ({1.05*cos(294)}, {1.05*sin(294)}) -- ({.95*cos(294)}, {.95*sin(294)}) (.95,0)--(1.05,0);
\draw[dashed] ({cos(54+151)}, {sin(54+151)}) -- ({cos(-48)}, {sin(-48)}) node[pos=0.5,right]{$R_{A,B,C}(c_2)$};
\draw ({cos(112+151)}, {sin(112+151)}) -- ({cos(220-106)}, {sin(220-106)}) node[pos=0.75, right]{$R_{A,B,C}(c_1)$};
\end{tikzpicture}
\end{tabular}

\caption{The piecewise rotation $\Psi$. 
Note that the chords $R_{A,B,C}(c_1)$ and $R_{A,B,C}(c_2)$ cross each other, while $c_1$ and $c_2$ do not.}
\label{fig:rotation}
\end{figure}

Using this notation, we define 
$\Sieve^g_\infty = \overline{R^g(\Sieve^0_\infty)}$.
We then have the following result
-- we recall that we consider the Hausdorff topology on compact subsets of the unit disk $\D$.
\begin{theorem}
  \label{thm:main}
The sieve $\Sieve(\bm F^g_n)$ converges to $\Sieve^g_\infty$ in distribution.
\end{theorem}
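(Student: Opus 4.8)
The plan is to transport the convergence result known for genus $0$ (namely $\Sieve(\bm F^0_n) \to \Sieve^0_\infty$ from \cite{MinFacto,thevenin2019geometric}) through $g$ applications of the operation $\Lambda$, matching on the limit side the $g$ applications of the piecewise rotation $R$. By \cref{corol:RandomGeneration}, we have $d_{TV}(\Lambda^g(\rFZ), \bm F^g_n) \to 0$, so it suffices to prove that $\Sieve(\Lambda^g(\rFZ))$ converges in distribution to $\Sieve^g_\infty = \overline{R^g(\Sieve^0_\infty)}$. Since total variation convergence is preserved by any measurable map, $\Sieve(\Lambda^g(\rFZ))$ and $\Sieve(\bm F^g_n)$ have asymptotically the same law, and the theorem reduces to analyzing the behaviour of the sieve under a single step $\Lambda$ and iterating.

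First I would set up an induction on $g$. The inductive hypothesis is that $\Sieve(\Lambda^{g-1}(\rFZ))$ converges in distribution, jointly with the i.i.d.\ uniform data used at the next step (the points corresponding to $\aaa<\bbb<\ccc$ on the circle, and the pair $\vvv<\www$), to the pair $(R^{g-1}(\Sieve^0_\infty), (A_g,B_g,C_g))$ with $(A_g,B_g,C_g)$ uniform and independent. For the inductive step, I would analyze what $\Lambda$ does to the sieve. The key input is the explicit description of the conjugating permutation $\si$ promised in \cref{prop:labelingalgorithm}: informally $\si$ is close to the permutation swapping the integer intervals $[\aaa{+}1,\bbb]$ and $[\bbb{+}1,\ccc]$ while preserving the cyclic order within each block. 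On the level of chords in the disk, conjugation by this near-block-transposition is, up to an error of at most a few chords and a circle-rotation error of size $O(1/n)$, exactly the piecewise rotation $\Psi = \Psi_{A,B,C}$ where $A,B,C$ are the points $e^{-2\pi i \aaa/n}, e^{-2\pi i\bbb/n}, e^{-2\pi i\ccc/n}$: the arc $\arc{AB}$ is rotated onto $\arc{DC}$, the arc $\arc{BC}$ onto $\arc{AD}$, and $\arc{CA}$ is fixed, which is precisely the definition of $R_{\{A,B,C\}}$. The two inserted chords $(\aaa\,\ccc)$ and $(\aaa\,\bbb)$ contribute only two extra chords, which are negligible in the Hausdorff limit. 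One also has to control the discrepancy between the true $\si$ and the ideal block-transposition: by \cref{prop:labelingalgorithm} this discrepancy affects only $O(1)$ positions (near the block boundaries), hence moves only $O(1)$ chords and changes the sieve by a Hausdorff-negligible amount. Finally one must handle the failure event $\overline F = \dag$; by the footnote after the algorithm and \cref{lem_bad_phi}, conditionally on $\rFF$ uniform this has probability $o(1)$, and after conjugation $\Lambda(\rFF)$ is itself (asymptotically) uniform in $\setF$ by \cref{thm:RandomGenerationFacto}, so on the (asymptotically full) event of success we may combine the induction hypothesis with the continuity argument above. Since the new random points $(A,B,C)$ at each step are asymptotically uniform and independent of the past, we obtain in the limit $R^g(\Sieve^0_\infty)$, and taking closures (a continuous operation for the Hausdorff topology on a compact space) gives $\Sieve^g_\infty$.

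Concretely the steps, in order, would be: (1) reduce via \cref{corol:RandomGeneration} to studying $\Sieve(\Lambda^g(\rFZ))$; (2) establish that $R_{\{A,B,C\}}$ is continuous (almost surely, for generic $\{A,B,C\}$) on the relevant class of sieves, so that the continuous mapping theorem applies — here one needs that $\Sieve^0_\infty$ almost surely has no chord with an endpoint at $A$, $B$, $C$, $D$, which holds since $\Sieve^0_\infty$ is the Brownian triangulation and the triple is independent; (3) use \cref{prop:labelingalgorithm} to show that the map "factorization $\mapsto$ sieve" intertwines $\Lambda$ with $R_{\{A,B,C\}}$ up to a Hausdorff-negligible error, uniformly with high probability; (4) run the induction, carrying along the joint convergence with the freshly sampled uniform triple at each stage; (5) conclude by continuity of the closure operation.

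The main obstacle I expect is step (3): making precise and quantitative the statement that conjugation by the permutation $\si$ of \cref{prop:labelingalgorithm} is uniformly close (in Hausdorff distance on the resulting sieve) to the clean piecewise rotation $\Psi$. The permutation $\si$ is determined by the unicellular-map structure of the factorization, not by a simple closed formula, so one must extract from \cref{prop:labelingalgorithm} enough control to guarantee that only $O(1)$ chords are displaced in a non-negligible way, and that the remaining chords are each moved by the rotation $\Psi$ up to an $O(1/n)$ perturbation of their endpoints. A secondary technical point is ensuring the joint convergence (sieve together with the independent uniform triples) rather than merely marginal convergence, so that the iteration of $R$ in the limit genuinely matches the iteration of $\Lambda$; this is routine but must be stated carefully since the triples at step $g$ must be shown asymptotically independent of the genus-$(g{-}1)$ sieve, which follows from their construction in the algorithm being independent of $F$.
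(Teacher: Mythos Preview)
Your overall architecture (reduce to $\Lambda^g(\rFZ)$, then match one step of $\Lambda$ with one rotation $R_{\{A,B,C\}}$) is exactly the strategy the paper follows. However, step~(2) of your plan contains a real gap: the map $R_{\{A,B,C\}}$ is \emph{not} continuous at the Brownian triangulation (nor at $\Sieve^{g-1}_\infty$), so the continuous mapping theorem does not apply. The condition you propose, that $\Sieve^0_\infty$ has no chord with an endpoint exactly at $A$, $B$, $C$, $D$, is not the right one and is not sufficient. The failure of continuity is caused by \emph{accumulations} of small chords straddling the points $A$, $B$, $C$: a chord of length~$\varepsilon$ around $A$ is sent by $R_{\{A,B,C\}}$ to a chord $\varepsilon$-close to the macroscopic segment $[A,D]$. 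Almost surely the Brownian triangulation has such short straddling chords around every point, and in particular around $A$, $B$, $C$; this is exactly \cref{rem:continuity} in the paper. Consequently one can find sequences $L_n\to\bL^{(2)}_\infty$ for which $R_{\{A,B,C\}}(L_n)$ fails to contain anything close to $[A,D]$ and hence does not converge to $\overline{R_{\{A,B,C\}}(\bL^{(2)}_\infty)}$.

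The paper deals with this discontinuity by abandoning the continuous mapping theorem and working instead through the tree encoding of $\rFZ$ (the Goulden--Yong bijection, \cref{ssec:gouldenyong}). The crucial estimates are \cref{lem:uniformancestrallines,lem:uniformancestrallinescontinuous}: with high probability the ancestral lines of the vertices $V(A''_i),V(B''_i),V(C''_i)$ in the tree carry no small-subtree vertex with a small label, which rules out exactly those small chords around the rotation points that would trigger the discontinuity at any finite time level~$M\sqrt n$. For the terminal sieve one cannot argue this way (all labels are present), and the paper instead exploits that $\bL^{(2)}_\infty$ is a.s.\ a \emph{maximal} lamination: any subsequential limit $X$ of $R^g(\bL(T_n))$ satisfies $R^g(\bL^{(2)}_\infty)\subseteq X$ and $(R^g)^{-1}(X)$ is a lamination containing $\bL^{(2)}_\infty$, hence equal to it (see the end of the proof of \cref{thm:convergence_rotated_GW}). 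Your step~(3) also understates the difficulty: the $O(1)$ ``badly placed'' chords (those moved differently by $\sigma$ and by the ideal block-swap) are precisely chords near $a,b,c$ and could become large after rotation; in \cref{ssec:genusgtogenus0} the paper shows they land close to the triangle chords $[A'_i,B'_i],[B'_i,C'_i],[C'_i,A'_i]$, which belong to $\Sieve^g_\infty$ by \cref{lem:Triangles_In_XgInfty}. So the missing idea is how to bypass the discontinuity of $R$, and your outline does not supply it.
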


One can also prove a functional version of the convergence of Theorem \ref{thm:main}. 
In this version, we associate with a factorization $F=(\transpo_1,\dots,\transpo_m)$ 
an increasing list of sieves $(\Sieve_k(F))_{0 \le k \le n}$,
where $\Sieve_k(F)$ is the union of the unit circle 
and of the chords corresponding to the transpositions $\transpo_1, \dots, \transpo_k$.
Our goal is to describe the limit of the process 
$(\Sieve_k(\bm F^g_n))_{0 \le k \le n}$ with a suitable renormalization of the time parameter $k$.

In genus $0$,  the following was proved in \cite{MinFacto, thevenin2019geometric}:
the time-rescaled process $\Sieve_{(c/\sqrt{2}) \sqrt{n}}$ converges to a process $\left(\bL_c^{(2)}\right)_{c \geq 0}$.
This limiting process interpolates between the circle (for $c=0$) and the Brownian triangulation  $\bL_\infty^{(2)}=\Sieve_\infty^0$ (for $c \rightarrow \infty$).
It can be constructed starting from a Brownian excursion 
and an associated Poisson point process 
(see Section \ref{sec:background} and \cite{thevenin2019geometric} for details).
We will show that, for any fixed $g \geq 0$, the limit of the process in genus $g$ is simply the process $\left(\bL_c^{(2)}\right)_{c \geq 0}$, to which we apply the operation $R^g$ (taking the same triples of points for all $c\geq 0$).
\medskip

To state this process convergence formally, we recall that,
given a subset $E \subseteq \R_+$ and a metric space $F$, 
$\mathcal D(E,F)$ denotes the set of càdlàg functions from $E$ to $F$ 
(that is, right-continuous with left limits on $E$). 
This set $\mathcal D(E,F)$ can be naturally endowed with the Skorokhod $J1$ topology
 (we refer to Annex $A2$ in \cite{Kal02} for  definitions).

\begin{theorem}
\label{thm:mainprocess}
Let $g \geq 0$. Then we have the following convergence in distribution
in $\mathcal D(\R_+, \SetSieve(\D))$, jointly with the convergence of Theorem \ref{thm:main}:
\[\left(\Sieve_{(c/\sqrt{2}) \sqrt{n}}(\bm F^g_n)\right)_{c \geq 0} \ \longrightarrow \left( R^g\left(\bL_c^{(2)}\right) \right)_{c \geq 0}.\]
\end{theorem}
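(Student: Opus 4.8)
The plan is to deduce Theorem~\ref{thm:mainprocess} by combining three ingredients: (1) the genus-$0$ functional convergence from \cite{MinFacto,thevenin2019geometric}, namely $\left(\Sieve_{(c/\sqrt 2)\sqrt n}(\bm F^0_n)\right)_{c\ge 0}\to\left(\bL_c^{(2)}\right)_{c\ge 0}$ in $\mathcal D(\R_+,\SetSieve(\D))$; (2) the random generation result, Corollary~\ref{corol:RandomGeneration}, which tells us that $\Lambda^g(\rFZ)$ is asymptotically uniform in $\setF$, so it suffices to prove the claimed convergence with $\bm F^g_n$ replaced by $\Lambda^g(\rFZ)$; and (3) the precise description of the conjugating permutation $\si$ in step (ii) of the algorithm, promised in \cref{prop:labelingalgorithm}, which says $\si$ is close to the interval-swap of $[a+1,b]$ and $[b+1,c]$. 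The strategy is thus to show that, after time-rescaling, the map $F\mapsto\Sieve_\bullet(\Lambda(F))$ on sieve-valued processes converges to the map $L_\bullet\mapsto R_{\{A,B,C\}}(L_\bullet)$ on the level of processes, and then iterate $g$ times.

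\smallskip

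First I would set up the single-step statement. Fix a realization of the randomness $\vvv<\www$ in $[1,n+1-2g]$ and $\aaa<\bbb<\ccc$ in $[1,n]$ used by $\Lambda$; after rescaling, $\aaa/n,\bbb/n,\ccc/n$ converge to a uniform triple of points $A,B,C$ on $\bS^1$, while $\vvv,\www$ are $\O_P(n)$ and contribute two extra chords whose effect on the sieve is asymptotically negligible (a single chord changes a compact set by $o(1)$ in Hausdorff distance only if... — in fact two chords do change the set, but they are added at a fixed time, and since $\Sieve_k$ for $k=\Theta(\sqrt n)$ only ever sees $O(\sqrt n)$ of the $n-1+2(g-1)$ transpositions, the probability that indices $\vvv$ or $\www$ fall among the first $\Theta(\sqrt n)$ is $O(1/\sqrt n)\to 0$; so with high probability the two inserted transpositions do not appear in the rescaled-time window at all, and outside the window they are two chords among $\Theta(n)$, hence invisible in the Hausdorff limit which is already the full Brownian triangulation image). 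The heart of the step is then: the conjugation by $\si$ replaces each chord $[\exp(-2\pi i\,x/n),\exp(-2\pi i\,y/n)]$ by $[\exp(-2\pi i\,\si(x)/n),\exp(-2\pi i\,\si(y)/n)]$, and by \cref{prop:labelingalgorithm} the rescaled action of $\si$ on the circle converges uniformly to the piecewise rotation $\Psi$ defining $R_{\{A,B,C\}}$. Therefore, conditionally on $(A,B,C)$, the process $\Sieve_{(c/\sqrt2)\sqrt n}(\Lambda(F))$ is, up to the negligible corrections above, the image under (a discrete approximation of) $R_{\{A,B,C\}}$ of $\Sieve_{(c/\sqrt2)\sqrt n}(F)$. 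Since $R_{\{A,B,C\}}$ acts chordwise by a fixed homeomorphism of the circle once $(A,B,C)$ is fixed and these are distinct a.s., it is continuous on $\SetSieve(\D)$ at every deterministic limiting sieve (here the Brownian-triangulation-valued process, which a.s.\ has no chord endpoint at $A$, $B$, $C$, or $D$), so the continuous mapping theorem applies in $\mathcal D(\R_+,\SetSieve(\D))$ with the $J1$ topology; note $R_{\{A,B,C\}}$ acts on each time-slice independently of $c$, hence preserves càdlàg-ness and commutes with the Skorokhod time-changes, which is what makes the functional statement go through and not just the one-time-marginal statement of Theorem~\ref{thm:main}.

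\smallskip

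The iteration is then formal: starting from $\left(\Sieve_{(c/\sqrt2)\sqrt n}(\Lambda^{g-1}(\rFZ))\right)_c\to\left(R^{g-1}(\bL_c^{(2)})\right)_c$ (induction hypothesis, the base case $g=0$ being \cite{thevenin2019geometric}), apply the single-step statement with independent fresh randomness $(A_g,B_g,C_g)$ to get $\left(\Sieve_{(c/\sqrt2)\sqrt n}(\Lambda^g(\rFZ))\right)_c\to\left(R_{\{A_g,B_g,C_g\}}(R^{g-1}(\bL_c^{(2)}))\right)_c=\left(R^g(\bL_c^{(2)})\right)_c$; one must only check that the randomness used at each of the $g$ steps is independent of everything before, which is exactly how $\Lambda$ is defined. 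Finally, Corollary~\ref{corol:RandomGeneration} gives $d_{TV}(\Lambda^g(\rFZ),\bm F^g_n)\to 0$, so the convergence transfers to $\bm F^g_n$; and since the $g=0$ functional convergence holds jointly with the convergence of Theorem~\ref{thm:main} at $c=\infty$ (take the closure $\overline{\cdot}$ in the definition of $\Sieve^0_\infty$ and of $\Sieve^g_\infty=\overline{R^g(\Sieve^0_\infty)}$), the whole package is joint with Theorem~\ref{thm:main}.

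\smallskip

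The main obstacle I anticipate is \textbf{the two edge cases in $\Lambda$ and the near-boundary behaviour of $R_{\{A,B,C\}}$}. Concretely: (a) one must rule out, uniformly over the rescaled time window, that the algorithm outputs $\dag$ or that the ``wrong'' list $\overline F_1/\overline F_2$ is selected in a way that distorts the prefix — this is controlled by \cref{lem_bad_phi} but needs to be fed into a process-level statement, not just a one-shot one; and (b) the piecewise rotation $\Psi$ is discontinuous at the three (or four, counting $D$) break points, so proving that the discrete conjugation by $\si$ converges to $R_{\{A,B,C\}}$ \emph{in the Hausdorff topology on the whole process} requires showing that no positive fraction of the chords ever has an endpoint converging to a break point, and that chords straddling a break point (whose two endpoints get sent far apart by $\Psi$) are asymptotically rare or short — this is where one really uses that the source process is the (continuous, non-atomic) Brownian-triangulation process and the fine form of $\si$ in \cref{prop:labelingalgorithm}, rather than any soft argument. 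Handling the interaction of these discontinuities with the Skorokhod $J1$ topology (as opposed to uniform convergence) is the delicate point; I expect it is resolved by working slice-by-slice in $c$ and invoking that a.s.\ the limiting process has, for each fixed $c$, no chord with an endpoint at a break point, together with a dominated-convergence / tightness argument to upgrade from finite-dimensional to functional convergence.
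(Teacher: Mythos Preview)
Your high-level strategy matches the paper's: reduce to genus $0$ via Corollary~\ref{corol:RandomGeneration}, argue that the inserted transpositions $(a\,b),(a\,c)$ are invisible in the $\Theta(\sqrt n)$ window, and identify the conjugation $\sigma$ with the piecewise rotation via Proposition~\ref{prop:labelingalgorithm}. The gap is the continuous mapping step.

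Your claim that $R_{\{A,B,C\}}$ ``is continuous on $\SetSieve(\D)$ at every deterministic limiting sieve (here the Brownian-triangulation-valued process, which a.s.\ has no chord endpoint at $A$, $B$, $C$, or $D$)'' is false, and the paper says so explicitly in Remark~\ref{rem:continuity}. The discontinuity of $R_{\{A,B,C\}}$ at a sieve $S$ is not governed by whether $S$ itself has a chord endpoint at a break point; it is governed by whether approximating sieves $S_n\to S$ can carry a \emph{small chord around} a break point, which $R$ blows up to a macroscopic chord near $[A,D]$. Any sieve containing the circle has such approximants, so $R_{\{A,B,C\}}$ is discontinuous at \emph{every} sieve in the support of $\bL_c^{(2)}$ for all $c\in[0,\infty]$. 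Your proposed fix (``for each fixed $c$, no chord with an endpoint at a break point'') therefore addresses the wrong thing, and no soft tightness argument upgrades this to functional convergence.

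What the paper does instead is to rule out, \emph{in the discrete process}, small chords around the (pulled-back) rotation points at times $\le M\sqrt n$. This is a statement about the tree $T(\rFZ)$: a small chord around a break point $P$ corresponds to an ancestor of $V(P)$ with small subtree, and one must show such an ancestor is unlikely to carry a label $\le M\sqrt n$. That is Lemma~\ref{lem:uniformancestrallines} (and its continuous analogue Lemma~\ref{lem:uniformancestrallinescontinuous}), proved via the modulus of continuity of the contour function; it has no counterpart in your sketch. With that estimate in hand, the paper does not invoke continuous mapping but establishes a chord-to-chord correspondence (Proposition~\ref{prop:chordtochord}) and applies Lemma~\ref{lem:chorddistances} to control Hausdorff distances after rotation only for chords whose endpoints stay in the same component of $\bS^1\setminus E$; the small-chord case is handled by the tree lemma just mentioned. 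The full proof chains Theorem~\ref{thm:convergence_rotated_GW} (rotated GW-tree process), Proposition~\ref{prop:shuffling} (shuffling to uniform labels), Proposition~\ref{prop:chordstotree} (sieve vs.\ dual-tree lamination), and Proposition~\ref{prop:fromuniftounif} ($\Lambda^g$ vs.\ $R^g$); each of these has to re-handle the discontinuity of $R^g$ separately, which is why the argument is not a one-line continuous mapping.
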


Let us discuss briefly the proof of this theorem.
We use the algorithm defined in Section \ref{ssec:generation}
and identify $\bm F^g_n$ with $\Lambda^g(\bm F^0_n)$ 
on a set of probability tending to $1$.
Since $(\Sieve_{(c/\sqrt{2}) \sqrt{n}}(\bm F^0_n))_{c \geq 0}$ is known to tend to $(\bL_c^{(2)})_{c \geq 0}$ (this is the case $g=0$ of the theorem,
already proved in \cite{thevenin2019geometric}),
we need to understand how applying $\Lambda$ to a factorization modifies 
its associated sieve process.
The transpositions $(a\, b)$ and $(a\, c)$ added at step (i) of the construction of $\Lambda$ are added with high probability at times larger than $\Theta(\sqrt n)$ and thus are not visible
on the sieve process.
However, the conjugation by $\sigma$ at step (ii) is visible, and 
it turns out to act asymptotically as a rotation operation $R_{\{A,B,C\}}$
with respect to three uniform random points.

A technical difficulty comes from the fact that the rotation operations $R_{\{A,B,C\}}$ are not continuous, which prevents us from directly using the result in genus $0$. Indeed, small chords may be sent to large chords by the rotation operation, and conversely. Therefore, we need to prove that these noncontinuous phenomena asymptotically do not affect the sieve-valued process that we consider.
\bigskip

We end this section with a side result on the limiting objects $\bm \Sieve_\infty^g$.
Intuitively, successive rotations applied to the Brownian triangulation tend to add more chords inside the disk; therefore we expect $\bm \Sieve_\infty^g$
to be more and  more "dense" as $g$ grows.
 This is made rigorous in the following statement:

\begin{proposition}
\label{thm:cvginfty}
Almost surely, for the Hausdorff distance:
\begin{align*}
\bm \Sieve_\infty^g \underset{g \rightarrow \infty}{\rightarrow} \D,
\end{align*}
where $\D$ denotes the whole unit disk.
\end{proposition}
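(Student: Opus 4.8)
The goal is to show that $\bm\Sieve_\infty^g \to \D$ almost surely as $g\to\infty$, for the Hausdorff distance. Since $\D$ is the maximal compact subset of the disk, this amounts to showing that the chords of $\bm\Sieve_\infty^g$ become dense in $\D$: for every $\eta>0$, eventually every point of $\D$ lies within $\eta$ of some chord of $\bm\Sieve_\infty^g$. It is convenient to work with the natural filtration $(\mathcal G_g)_{g\ge 0}$ generated by $\bm\Sieve_\infty^0$ and the i.i.d. triples $(A_i,B_i,C_i)_{i\ge 1}$; then $\bm\Sieve_\infty^g$ is $\mathcal G_g$-measurable, and $\bm\Sieve_\infty^{g+1}=\overline{R_{\{A_{g+1},B_{g+1},C_{g+1}\}}(\bm\Sieve_\infty^g)}$. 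The key monotonicity-type observation is that the piecewise rotation $\Psi=\Psi_{\{A,B,C\}}$ does not shrink the diameter too much: while a single chord can be cut by the discontinuities of $\Psi$ into at most a bounded number of pieces, each piece is again a chord, so $R_{\{A,B,C\}}$ maps a sieve containing a chord that $\eta$-covers a given region to a sieve still containing chords, and — crucially — randomly rotating relocates chords all over the disk. So the plan is to show that, no matter how sparse $\bm\Sieve_\infty^g$ is on some small region, after finitely many further random rotations some chord gets moved to pass near any prescribed point.

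Here is the step-by-step approach I would follow. First, fix a finite $\eta$-net $\{z_1,\dots,z_N\}$ of $\D$; it suffices to show that a.s., for $g$ large enough, each $z_j$ is within $2\eta$ of a chord of $\bm\Sieve_\infty^g$. Second, I would establish a one-step "spreading" estimate: there is a constant $p=p(\eta)>0$ such that, for any deterministic sieve $\Sieve$ containing at least one chord of length $\ge \eta$ (which $\bm\Sieve_\infty^0$ does, almost surely — indeed the Brownian triangulation contains chords of every length up to the diameter), and for a single uniform triple $\{A,B,C\}$,
\[
\P\big(\exists \text{ a chord of } R_{\{A,B,C\}}(\Sieve) \text{ within } \eta \text{ of } z_j\big)\ \ge\ p,
\]
uniformly in $j$. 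This is because, conditionally on a chord $c\subseteq\Sieve$ that avoids the (measure-zero) discontinuity set of $\Psi$, the image chord $\Psi(c)$ is an honest chord whose endpoints are obtained by a measurable relocation depending on $(A,B,C)$; for a positive-probability set of triples, this image chord passes within $\eta$ of $z_j$. The cleanest way to see the latter: with probability bounded below, $\{A,B,C\}$ is configured so that $\Psi$ rotates the sub-arc containing one endpoint of $c$ into any desired window of the circle, so the image chord has a prescribed endpoint and sweeps a positive-length family of chords near $z_j$. Third, and this is where persistence is needed, I would observe that $\bm\Sieve_\infty^{g+1}\supseteq \overline{R_{\{A_{g+1},B_{g+1},C_{g+1}\}}(\text{any chord of }\bm\Sieve_\infty^g)}$, but the rotation at the next step might move the good chord away from $z_j$; to handle this, I would instead argue via Borel–Cantelli along a subsequence, using that once $z_j$ is $\eta$-covered we only need to recover the covering infinitely often is NOT enough — so I would strengthen step two to a statement about a chord of length $\ge\eta$ being created that remains long, and iterate: each $R$ preserves the existence of chords of length $\ge\eta/C$ for a fixed constant $C$ (cutting reduces length by at most a bounded factor depending only on how many arcs a chord can meet, which is at most $3$), hence by induction $\bm\Sieve_\infty^g$ always contains a chord of length $\ge \eta/C^{?}$ — actually length is preserved up to the cutting, and cutting a chord of length $\ell$ at the $\le 2$ discontinuity points on it yields a piece of length $\ge \ell/3$ at worst in terms of the Euclidean chord, but since a rotated long chord that is cut still has both a long circular-arc piece, one can keep a definite lower bound. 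With a uniform lower bound $\ell_0>0$ on the longest-chord length in $\bm\Sieve_\infty^g$ for all $g$, step two applies at every step with the same $p=p(\eta,\ell_0)$, and conditionally on $\mathcal G_g$ the events "$z_j$ gets $\eta$-covered at step $g+1$" have probability $\ge p$. By conditional Borel–Cantelli (Lévy's extension), a.s. infinitely many of them occur; but I still need that once covered it stays covered.

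The honest fix — and I suspect this is the main obstacle — is that coverage is not monotone under a single $R_{\{A,B,C\}}$, so one cannot simply accumulate coverings. The resolution I would pursue: prove that after $k$ rotations the sieve contains not just one but a number of chords that grows, so that the *collection* of regions covered stabilizes. Concretely, note $R_{\{A,B,C\}}$ maps each chord to at least one chord but generically splits some into two or more, so $|\text{chords of }\bm\Sieve_\infty^g|$ is nondecreasing and a.s. $\to\infty$; moreover each new chord is, with positive conditional probability, near any prescribed $z_j$. Then I would run the argument as follows: fix $z_j$ and $\eta$; at each step, with conditional probability $\ge p$ a fresh chord lands within $\eta$ of $z_j$ *and* has length $\ge \ell_0$; call such steps "good for $j$"; by conditional Borel–Cantelli there are infinitely many good steps a.s. The point is that the newly created good chord, together with all chords created after it, forms a set that a rotation can only permute-and-split, never delete — so "the number of chords of $\bm\Sieve_\infty^g$ within $\eta$ of $z_j$" is, after a good step, at least $1$, and while a later rotation may move it, it moves it to some other location where it again $\eta$-covers a net point; summing over the finitely many net points and using that the total chord count tends to infinity while each chord always $\eta$-covers *some* net point, a counting/pigeonhole argument forces every net point to be $\eta$-covered for $g$ large. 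Making this last pigeonhole step rigorous — controlling where chords go rather than just that they are numerous — is the delicate part; a clean alternative is to condition on $\mathcal G_g$ and show that the *conditional* probability that $z_j$ is eventually permanently $\eta$-covered is $1$, by a zero–one argument: the event is a tail event with respect to the rotations, its conditional probability is either $0$ or $1$ by Kolmogorov, and it is $>0$ by step two, hence $1$; intersecting over the finite net finishes the proof. I would use this tail/zero-one route as the primary argument, with the chord-counting discussion as intuition.
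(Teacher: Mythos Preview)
Your proposal has a genuine gap. You correctly identify the central difficulty --- coverage of a point $z_j$ is not monotone under further rotations --- but your proposed fix via Kolmogorov's zero--one law does not work. The event ``$z_j$ is eventually permanently $\eta$-covered'' is \emph{not} a tail event of the i.i.d.\ sequence $(A_i,B_i,C_i)_{i\ge 1}$: changing a single triple $(A_k,B_k,C_k)$ alters $\bm\Sieve_\infty^g$ for every $g\ge k$, so there is no reason the event should be measurable with respect to $\bigcap_n \sigma((A_i,B_i,C_i):i\ge n)$. What you would actually need is a tail-triviality statement for the Markov chain $g\mapsto\bm\Sieve_\infty^g$ on sieves, which you have not established and which is not obvious. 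The chord-counting/pigeonhole sketch preceding it is also not a proof: having arbitrarily many chords, each of which is near \emph{some} net point, does not force every net point to be covered.

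The paper's argument avoids the persistence problem by a completely different mechanism. Rather than tracking where an arbitrary chord goes under successive rotations, it first proves (\cref{lem:Triangles_In_XgInfty}) that for every $i\le g$ the three chords $[A'_i,B'_i]$, $[B'_i,C'_i]$, $[C'_i,A'_i]$ belong to $\bm\Sieve_\infty^g$, where $A'_i=R_g\circ\cdots\circ R_i(A_i)$ and similarly for $B'_i,C'_i$. The key observation is that, for each fixed $g$, the $g$ unordered triples $\{A'_i,B'_i,C'_i\}_{1\le i\le g}$ are themselves i.i.d.\ uniform on $\bS^1$ (each $R_j$ is measure-preserving on the circle, and a short Jacobian check handles the self-rotation $R_i$). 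Thus $\bm\Sieve_\infty^g$ always contains $g$ i.i.d.\ uniform inscribed triangles. A direct first Borel--Cantelli bound on the event ``some triple of arcs in an $s$-discretisation is missed by all $g$ triangles'' (probability at most $s^3(1-1/s^3)^g$, summable in $g$) then gives $d_H(\bm\Sieve_\infty^g,\D)\le 1/s$ for all large $g$, almost surely, and one lets $s\to\infty$. The point is that no persistence argument is needed: at each level $g$ one exhibits $g$ fresh i.i.d.\ triangles inside $\bm\Sieve_\infty^g$, without caring whether they were there at level $g-1$.
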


This proposition follows from the construction of $\bm \Sieve_\infty^g$
and basic properties of the Brownian excursion; 
we prove it in \cref{ssec:ProofGToInfty}.

\subsection{The genus process}
\label{ssec:intro_genus}
A natural question on factorizations of a given genus $g$ is the following: 
when does the genus appear? More precisely,
as the transpositions of a factorization are read and the corresponding chords drawn in the disk,
when do we know that we are not considering a factorization of genus $h$ for $h < g$?

To study this question, we introduce the notion of genus of a list of transpositions.
For a factorization $F=(t_1, \ldots, t_{n-1+2g})$ of $(1\, \cdots\, n)$, we denote
by $f_k$ its prefix of length $k$, i.e. $f_k=(t_1, \ldots, t_k)$.
We say that $F$ {\em extends} $f_k$.
\begin{definition}
\label{defi:genus}
Fix $n,k \geq 1$, and let $f := (\transpo_1, \ldots, \transpo_k) \in \kT_n^k$, where $\kT_n$ is the set of transpositions on $[ 1,n ]$. The genus $G(f)$ of $f$ is defined as the minimum genus $g$ of a factorization $F$ of $(1\, \dots\, n)$ extending $f$.
\end{definition}

\begin{theorem}
\label{thm:asymptoticgenus}
Let $g \in \Z_+$ be fixed. Then the genus process 
$\big( G \big((\bm F_n^g)_{c\sqrt n} \big)\big)_{c\ge 0}$
converges to a random process $\big(G^g_c\big)_{c\ge 0}$
in the Skorokhod space $\mathcal D(\R,\{0,\dots,g\})$.
Moreover, almost surely, the limiting process $G^g_c$ converges to $0$ when $c$ tends to $0$ and to $g$ when $c$ tends to $+\infty$, and has jumps of size exactly $1$.
\end{theorem}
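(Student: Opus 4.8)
The plan is to leverage the generation algorithm and the scaling-limit machinery already established (Theorems~\ref{thm:RandomGenerationFacto} and~\ref{thm:mainprocess}), together with Aldous' theory of reduced trees in conditioned Galton--Watson trees. First I would fix the coupling: by Theorem~\ref{thm:RandomGenerationFacto} (iterated via Corollary~\ref{corol:RandomGeneration}) we may work on an event of probability tending to $1$ on which $\bm F_n^g = \Lambda^g(\bm F_n^0)$. The key observation is that $G(f_k) \le h$ if and only if the prefix $f_k$ extends to a genus-$h$ factorization, and that applying one step of $\Lambda$ to a genus-$(h-1)$ factorization $F$ inserts the two new transpositions $(a\,c)$ and $(a\,b)$ at positions $\vvv<\www$ which are \emph{uniform} in $[1,n+1-2g]$, hence with high probability both larger than any fixed multiple of $\sqrt n$; more precisely, for the time-rescaled process at time $c\sqrt n$, the number of the $g$ inserted pairs whose first insertion position is $\le c\sqrt n$ is what governs $G((\bm F_n^g)_{c\sqrt n})$. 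So the plan is to show that $G((\bm F_n^g)_{c\sqrt n})$ equals (up to a $o(1)$-probability event) the number of pairs of insertion times among the $g$ recursive steps of $\Lambda^g$ that fall before $c\sqrt n$, once these insertion times are correctly identified after the relabelling by $\sigma$ at step (ii).

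Next I would analyze the joint law of these $g$ rescaled insertion times. Each step of $\Lambda$ picks $\vvv<\www$ uniform in an interval of size $\sim n$; after rescaling by $\sqrt n$ and passing to the limit, $\vvv/\sqrt n$ converges to $0$ in probability (a uniform point on $[0,n]$, divided by $\sqrt n$, diverges — so I need the \emph{opposite}: we want to record at which rescaled time $c$ the genus increments, and a uniform insertion position $\vvv \sim \mathrm{Unif}[1,n]$ gives $\vvv/\sqrt n \to \infty$, meaning the genus increments happen at later and later rescaled times, consistent with ``$G^g_c \to g$ as $c \to \infty$'' and ``$\to 0$ as $c\to 0$''). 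The correct statement is: the relevant times are governed by where, in the minimal factorization $\bm F_n^0$ (whose Cayley tree converges to the Brownian CRT), the newly merged vertices sit, and — following Aldous~\cite{aldous1993} — the reduced tree spanned by $O(1)$ uniform random points in a conditioned Galton--Watson tree converges to the reduced tree of the CRT, whose edge lengths have an explicit law built from the Brownian excursion. I would therefore identify the limit process $(G^g_c)$ as the counting process $G^g_c = \#\{\,i \in [1,g] : \tau_i \le c\,\}$ for an explicit collection of limiting ``genus-appearance times'' $\tau_1,\dots,\tau_g$ extracted from the Brownian-excursion/CRT data, and establish convergence in $\mathcal D(\R,\{0,\dots,g\})$ by combining the finite-dimensional convergence of these times (from the reduced-tree convergence, jointly with Theorem~\ref{thm:mainprocess}) with the fact that the prelimit process is monotone non-decreasing and bounded by $g$, which makes Skorokhod convergence follow from convergence of the jump locations.

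For the three extra properties: monotonicity of $c \mapsto G((\bm F_n^g)_{c\sqrt n})$ is immediate since a longer prefix can only need higher genus to extend, so the limit is non-decreasing; $G^g_c \to 0$ as $c\to 0$ because, by the ``prefix of a minimal factorization'' phenomenon announced in \cref{ssec:intro_genus} (prefixes of length $o(\sqrt n)$ of a uniform genus-$g$ factorization are w.h.p.\ prefixes of minimal factorizations), we have $G((\bm F_n^g)_{c\sqrt n}) = 0$ w.h.p.\ as $c\to 0$, which passes to the limit; $G^g_c \to g$ as $c \to +\infty$ because a genus-$g$ factorization has no extension of genus $<g$, so $G(f_{n-1+2g}) = g$, and one checks the full genus is reached within time window $\Theta(\sqrt n)$; jumps of size exactly $1$ follow from showing the $g$ limiting appearance times $\tau_1,\dots,\tau_g$ are almost surely distinct — which reduces to a statement about the reduced tree of the CRT, namely that the $g$ independent uniform triples land at a.s.\ distinct ``depths'' in the relevant sense, an absolute-continuity argument.

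The hard part will be making rigorous the translation between the combinatorial operation $\Lambda$ (insertion of two transpositions plus conjugation by $\sigma$) and the limiting genus-appearance time: one must control precisely \emph{where} the genus of the prefix jumps after the relabelling $\sigma$ of step (ii), and show this location is asymptotically governed by a uniform random point on the CRT (equivalently, by the reduced tree of Aldous). This requires showing that $G((\bm F_n^g)_k)$ — a priori a subtle combinatorial quantity — admits a tractable description in terms of the underlying unicellular map and its Hurwitz condition, and that this description is stable under the asymptotic bijection on maps used to prove Theorem~\ref{thm:RandomGenerationFacto}. I would expect to need, as an intermediate lemma, that for a prefix $f_k$ with $k = \Theta(\sqrt n)$, $G(f_k)$ equals the number of ``already-completed'' genus-increasing insertions among the $g$ recursive steps, which in turn is controlled by the positions of the special vertices $a<b<c$ relative to the $\Theta(\sqrt n)$-prefix of the minimal factorization; the convergence of these relative positions to an explicit Brownian functional, jointly with everything else, is where the bulk of the technical work lies.
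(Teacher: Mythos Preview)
Your proposal has a genuine conceptual gap in the mechanism by which the genus appears, and this gap is not repaired by the later appeal to reduced trees.

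You correctly note that the insertion times $\vvv_i,\www_i$ are uniform in $[1,n]$ and hence with high probability exceed $c\sqrt n$; so the inserted transpositions $(a_i\,b_i),(a_i\,c_i)$ are \emph{absent} from the prefix $(\bm F_n^g)_{c\sqrt n}$. But you then try to salvage the idea that ``step $i$ of $\Lambda$ contributes an appearance time $\tau_i$'' and write $G^g_c=\#\{i\le g:\tau_i\le c\}$. That is not how the genus arises. After step (ii), the prefix $(\bm F_n^g)_{c\sqrt n}$ is (w.h.p.) exactly the conjugated prefix $(\rFZ)_{c\sqrt n}^{\sigma^g}$, and its genus comes from transpositions of the \emph{minimal} factorization which, after the piecewise rotation $\sigma^g\approx\tilde\sigma^g$, \emph{cross each other}. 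A transposition $t$ of $\rFZ$ can become crossing after rotation only if its dual edge in the Goulden--Yong tree $T(\rFZ)$ lies on a path between two of the $3g$ rotation points; these paths are summarised by the reduced tree of $T(\rFZ)$ with respect to the $3g$ marked vertices, which has $6g-3$ branches. Two transpositions projecting to the same branch are parallel after rotation, so after reduction the diagram has one chord per branch whose minimum edge-label is $\le c\sqrt n$. The genus of the prefix is the genus of that reduced chord diagram, and it is \emph{this} function of the $6g-3$ branch min-labels (not $g$ step-wise times) whose limit one studies via \cref{prop:reducedtree_minlabels}.

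Already for $g=1$ your description fails: the reduced tree is a star with three branches, and the genus jumps to $1$ when the \emph{second} of the three branch min-labels becomes $\le c\sqrt n$ (Proposition~\ref{prop:Genus1}); there is no single ``$\tau_1$ from the one step of $\Lambda$''. For $g\ge 2$ the $6g-3$ branches (and their min-labels) are jointly determined by all $3g$ marked points together, not step by step, so the genus process is not a sum of $g$ independent indicators. Your proposed intermediate lemma --- that $G(f_k)$ equals the number of ``already-completed genus-increasing insertions'' --- is therefore false as stated. What is missing from your plan is (a) the identification $G(f_k)$ with the genus of a combinatorial map $M(f_k)$ and its reduction, (b) the crossing criterion in terms of paths in $T(\rFZ)$, and (c) the passage from branch lengths to branch \emph{minimum labels} in the reduced tree (Proposition~\ref{prop:reducedtree_minlabels}), which is the right limit object. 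Once those are in place, the boundary behaviour and the size-$1$ jumps follow from the a.s.\ distinctness of the $6g-3$ limiting min-labels, not of $g$ step-times.
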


In other terms, the genus of a uniform random partial factorization of genus $g$ goes from $0$ to $g$ in the time window $\Theta(\sqrt{n})$. A formula for the limiting process for $g=1$ is given in \cref{prop:Genus1}.

The main idea in the proof of Theorem \ref{thm:asymptoticgenus} is to consider the representation of a list of transpositions as a sieve, and to focus on chords that cross each other. Indeed, only such chords may increase the genus of a partial factorization;
see Section~\ref{ssec:Red_facto}. 

The algorithm defined in Section \ref{ssec:generation} allows us to obtain $\bm F_n^g$ from a uniform random factorization $\rFZ$ of genus $0$ and the $3g$ rotation points $(A_i,B_i,C_i)_{i\le g}$. 
Furthermore, as already said, the minimal factorization $\rFZ$ can be coded by a tree. 
Summing up, the factorization $\bm F_n^g$ is obtained from a set $E$ of $3g$ uniform points on a random tree $T$. 
It turns the crossing chords of $\bm F_n^g$ (which are the ones explaining the genus process) correspond in some sense to edges of the \enquote{reduced tree of $T$ with respect to the vertices in $E$}.
The latter notion has been introduced by Aldous \cite{aldous1993} 
and is central in his theory of tree limits.
Here, we use one of his result to prove
that the genus of a prefix of a factorization of genus $g$
typically increases from $0$ to $g$ in the time window $\Theta(\sqrt{n})$.

\subsection{Outline of the paper}
In section \ref{sec:facto_maps}, we present a previously known encoding of factorizations
by edge-labelled unicellular maps.
Based on this, we prove \cref{thm:RandomGenerationFacto} in \cref{sec:asympbij}.

\cref{sec:background,sec:rotations} are a preparation for the proof
of our scaling limit results; we give respectively some background on trees and laminations
and some first results on sieve rotations.
We then prove \cref{thm:main,thm:mainprocess} in \cref{sec:proofs_Scaling}.

Finally, \cref{sec:genus_process} is devoted to the convergence of the genus process
(i.e.~to the proof of \cref{thm:asymptoticgenus}).\medskip

\section{Factorizations and unicellular maps}
\label{sec:facto_maps}
In this section, we explain how to encode factorizations of the long cycle as unicellular maps,
see \cite{Irv04}.
This encoding is a variant of the encoding of factorizations as constellations \cite{LZ04}
or of that of minimal factorizations as trees \cite{GY02}.
To make the article self-contained, we provide full proofs of our claims.
\medskip

We recall that a map is a {\em cellular embedding} of a graph into a 2-dimensional compact surface
without border; here {\em embedding} means that vertices are distinct, and edges do not cross,
except at vertices, while {\em cellular} means that the complement of the graph in the surface is homeomorphic 
to a disjoint union of open disks.
Maps are considered up to orientation-preserving homeomorphism,
and the genus of a map is the genus of the underlying surface.
It is well-known that maps can be alternatively described by combinatorial data,
namely as a connected graph with the additional data,
for each vertex, of a cyclic order on the edges adjacent to that vertex. 
We often represent maps in the plane by adding artificial crossings
between edges so that the order of edges around vertices is respected.

The faces of a map are the connected components of the surface when removing the graph.
The genus can be recovered combinatorially by Euler formulas $m+2=n+f+2g$,
where $n$, $m$ and $f$ are the number of vertices, edges and faces of the map, respectively.
A map is {\em unicellular} if it has only one face.
A {\em rooted} map is a map with a distinguished corner, called root corner.
We will consider maps with an edge labeling (each label from $1$ to $m$, where $m$ is the number of edges,
should be used exactly once). 
The labeling is said to be {\em consistent} if, when turning counterclockwise around each vertex starting with the edge of minimal label,
edges appear in increasing order of their labels.
With such a labeling, each vertex has a unique {\em special corner}, 
the one between edges
with the minimal and maximal labels adjacent to it. We denote by $spec(V)$ the special corner of a vertex $V$.

A rooted edge-labeled map is called a {\em Hurwitz map}
if its edge labeling is consistent and the root corner is special.
In the left-hand side of \cref{fig:Hurwitz}, we show a rooted edge-labeled map;
the arrow indicates the root corner.
To help the reader, we put blue crosses in the special corners (one around each vertex).
The root corner is indeed special and one can check that the edge-labeling is consistent:
hence, this is an example of a Hurwitz map.
This map is unicellular and has genus $2$.

\begin{figure}
  \begin{center}
    \includegraphics[height=3cm]{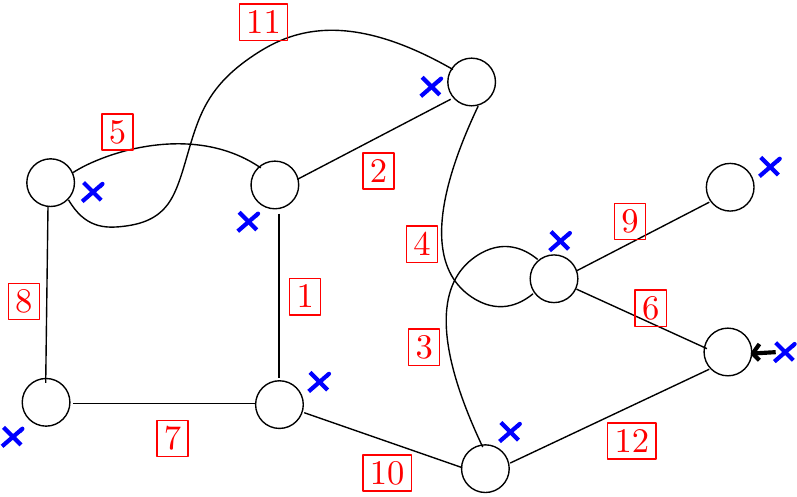} \qquad
    \includegraphics[height=3cm]{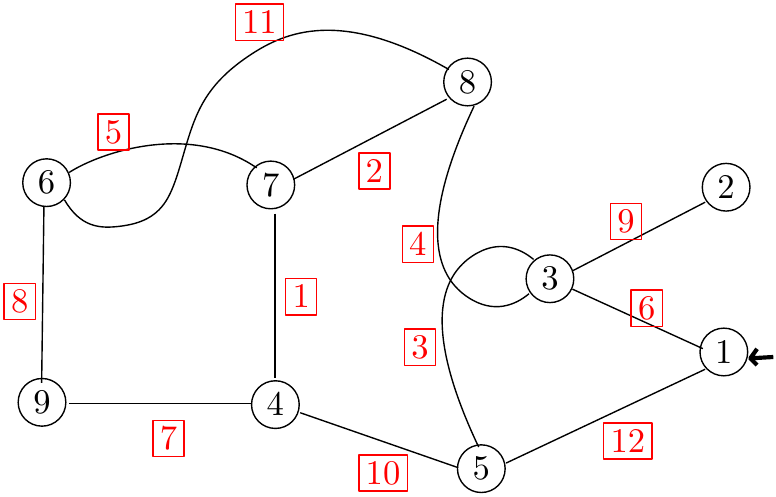}
  \end{center}
  \caption{(Left) an example of a Hurwitz map with special corners marked in blue.
  (Right) a vertex- and edge-labeled rooted map associated to the factorization \eqref{eq:ExFacto}.
  Forgetting the vertex labels gives back the map on the left.}
  \label{fig:Hurwitz}
\end{figure}
\medskip

Let $F=(\transpo_1,\dots,\transpo_m)$ be a factorization in $\setF$.
We associate to $F$ a Hurwitz map as follows:
\begin{itemize}
  \item its vertex set is $\{1,\ldots,n\}$;
  \item for $i \le m$, writing $\transpo_i=(j_i \,k_i)$,
    there is an edge labelled $i$ between $j_i$ and $k_i$;
  \item edges adjacent to each vertex are ordered in increasing order of their labels.
\end{itemize}
This is a map with labeled edges (with labels from $1$ to $m$)
and labeled vertices (with labels from $1$ to $n$). 
We root the map at vertex $1$ in its special corner.
As an example, we show on the right-hand side of \cref{fig:Hurwitz} the edge-
and vertex-labeled map associated to the factorization
\begin{equation}
  (4 \,7)\,(7 \,8)\, (3 \,5)\, (3 \,8)\, (6 \,7)\, (1 \,3)\, (4 \,9)\, (6 \,9)\, (2 \,3)\, (4 \,5)\, (6 \,8)\, (1 \,5).
  \label{eq:ExFacto}
\end{equation}
Finally we erase the labels of the vertices.
The resulting (edge-labeled rooted) map is denoted $\Map(F)$;
this is trivially a Hurwitz map.
For the above example of factorization, we obtain the map on the left-hand side of \cref{fig:Hurwitz}.

\begin{lemma}
  \label{lem:Unicellular+Relabeling}
  The map $\Map(F)$ is always unicellular. 
  Moreover, we can recover the vertex labels by turning around the face starting at the root corner
  and labeling the vertices with $1,2,\dots,n$ when we pass through their special corner
  (in particular the root vertex gets label $1$).
\end{lemma}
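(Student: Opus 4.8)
The statement has two parts: (a) $\Map(F)$ is unicellular, and (b) the vertex labelling can be reconstructed by walking around the unique face from the root corner. I would prove (a) first, since (b) presupposes that there is a single face to walk around.

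For part (a), the plan is to argue directly from the factorization. Recall that a map (as a combinatorial object: a graph with a cyclic edge-order at each vertex) has its faces encoded by the orbits of a permutation on the set of oriented edges (the product of the rotation system with the edge-flipping involution). Here the rotation system at vertex $j$ lists the edges incident to $j$ in increasing order of their labels, and the edge labelled $i$ joins $j_i$ and $k_i$ where $\transpo_i=(j_i\,k_i)$. The key point is that, because we compose permutations from left to right, the condition $\transpo_1\cdots\transpo_{n+1-2g}\cdots\transpo_m=(1\,2\,\cdots\,n)$ — wait, the product is the $n$-cycle — translates, via the standard correspondence between factorizations and maps (as in \cite{LZ04,Irv04}), into the statement that the face-permutation of $\Map(F)$ is a single cycle. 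Concretely: the number of faces of the map associated with $(\transpo_1,\dots,\transpo_m)$ equals the number of cycles of $\transpo_1\cdots\transpo_m$ — this is exactly the content of the correspondence with constellations/unicellular maps, and it can be checked by an explicit bijection between the corners of the map traversed in face-order and the orbit structure of the product of transpositions. Since the product here is the long cycle $(1\,2\,\cdots\,n)$, which has exactly one cycle, the map has exactly one face. Alternatively, and perhaps cleaner for a self-contained argument, I would set up the "label-chasing" in the face directly: define a walk on corners where from the special corner of vertex $v$ one follows the edge of smallest label out of $v$; show this walk visits every corner exactly once and returns to start iff the transposition product is an $n$-cycle. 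I expect to phrase this so that it simultaneously gives (b).

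For part (b), I would use the vertex- and edge-labelled map $\Map(F)$ \emph{before} erasing the vertex labels, which by construction has vertex set $\{1,\dots,n\}$ and is rooted at vertex $1$ in its special corner. The claim to establish is: as one traverses the unique face starting from the root corner, the corners at which one "first enters a vertex via its special corner" occur in the order $1,2,3,\dots,n$. The natural way to see this is to track how the long cycle $(1\,2\,\cdots\,n)=\transpo_1\cdots\transpo_m$ acts: the face-traversal of the map corresponds to reading off the cycle structure of the product, and the vertex labelled $j$ was, in the original construction, the integer $j\in\{1,\dots,n\}$ on which the transpositions act. One shows that the sequence of special corners encountered along the face is precisely the sequence $1,\transpo_1\cdots\transpo_m(1)=2, \dots$ obtained by iterating the long cycle — i.e. $1,2,\dots,n$ — because the special corner of vertex $v$ is, by consistency of the labelling, exactly the "entry point" of the face-walk into $v$ that is immediately followed by exiting along the minimal-label edge, and the combinatorics of the rotation system makes the successor of $v$ in this list equal to the image of $v$ under the product permutation. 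So the vertices receive labels $1,2,\dots,n$ in face-order of their special corners, which is what we want; and since the root corner is the special corner of vertex $1$, the root vertex gets label $1$.

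\textbf{Main obstacle.} The delicate part is the bookkeeping in the last paragraph: making precise the identification "successor of the special corner of $v$ along the face $=$ special corner of $\transpo_1\cdots\transpo_m(v)$". This requires carefully unwinding the definition of the face-permutation in terms of the rotation system (oriented edges, the involution swapping the two orientations, and composition order), and matching it against the left-to-right composition convention for permutations used in the paper; sign/direction conventions (counterclockwise vs.\ clockwise, which orientation of an edge one follows) are exactly where such arguments tend to go wrong. I would handle this by fixing notation for oriented edges once and for all, writing the face-permutation explicitly as $\varphi = \alpha\circ\rho$ (with $\rho$ the rotation and $\alpha$ the edge involution, in whichever order matches the conventions), and then verifying the claimed identity by a short local computation at a single vertex, using consistency of the labelling to control which edge is "minimal" there. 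Everything else — that the walk is well-defined, that it exhausts all corners, that erasing labels is harmless — is routine once this core identity is in place.
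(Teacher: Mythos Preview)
Your proposal is correct and follows essentially the same approach as the paper. The paper also proves the key identity that, along the face-walk, the special corner of $v$ is followed by the special corner of $\transpo_1\cdots\transpo_m(v)$; it does this via the concrete ``trajectory-chasing'' you allude to (start at the special corner of $i$, follow the minimal-label edge to some $j$, then continue around the face until the next special corner is reached), and then specializes from the general statement ``faces of the map correspond to cycles of the product'' to the unicellular case.
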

The left picture of \cref{fig:RelabelingHurwitz} illustrates this relabeling procedure.
\begin{figure}
  \begin{center}
    \includegraphics[height=3cm]{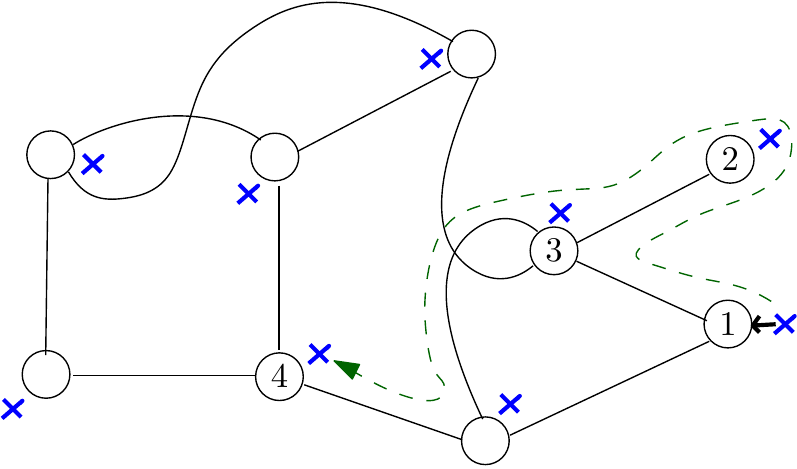}
    \qquad
    \includegraphics[height=3cm]{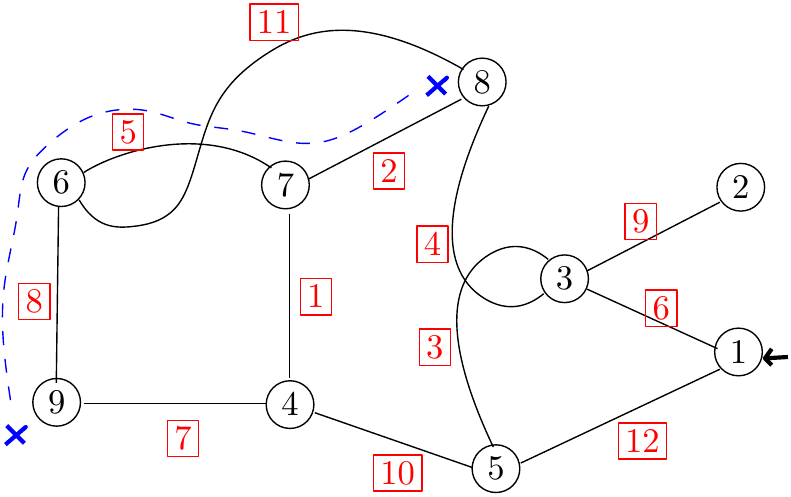}
  \end{center}
  \caption{Illustration of the relabeling procedure. 
      (Left) We turn around the unique face of the map and label the vertices
      in order $1,2,\dots,n$
      when we cross their special corner: for readability, we only show 
      the beginning of the tour of the face until label 4 is assigned.
      Note in particular that the bottommost vertex has not been labeled,
      since we only crossed its topleft corner, which is not special.\qquad
      (Right) This is the same edge- and vertex-labeled map as in \cref{fig:Hurwitz} (right),
      corresponding to the factorization in \eqref{eq:ExFacto}. We explain
      how to find the image of $8$ through the product: $\transpo_1\cdots\transpo_m$.
      The integer $8$ is mapped to $7$ by $\transpo_2$, which is itself mapped to $6$ by $\transpo_5$,
      itself mapped to $9$ by $\transpo_8$. Other transpositions do not affect the {trajectory}
      of $8$, so that the image of $8$ is $9$. Note that finding the transpositions affecting 
      the trajectory of $8$ and its successive images is simply done in the map by turning counterclockwise,
      starting at the special corner of $8$ and stopping when we cross another special corner
      (dashed blue line on the picture).}
  \label{fig:RelabelingHurwitz}
\end{figure}
\begin{proof}
We prove a more general fact.
With a sequence of transpositions $(\transpo_1,\dots,\transpo_m)$
(but not generally a factorization of $(1\,\cdots\,n)$),
we associate an edge- and vertex-labeled map $M$ as above.
Then the product $\transpo_1\cdots\transpo_m$ can be read on the map $M$ as follows:
for each face of $M$, turning around that face
and reading the labels of vertices when passing through their special corners
gives a cycle of the product.
\medskip

To understand this claim, we simply observe how the image of $i$
through the product $\transpo_1\cdots\transpo_m$ can be read of the map
(recall that we apply transposition from left to right).
The following discussion is illustrated on the right-hand side of \cref{fig:RelabelingHurwitz}.
We should first look for the smallest index $k$ such that $\transpo_k$ contains $i$,
i.e. the edge $e_1$ adjacent to vertex $i$ with smallest label.
We have $\transpo_k=(i \,j)$, where $j$ is the label of the other extremity of $e_1$.
To find it in the map, one starts at the special corner of $i$ and begins turning around its face.
We then have to find, if any, the smallest index $k'>k$ such that $\transpo_{k'}$ contains $j$.
\begin{enumerate}
  \item if there is none, it means that we have arrived at the special corner of $j$;
    in this case, $\transpo_1\cdots\transpo_m(i)=j$.
  \item if there is such a $k'$, we are not at the special corner of $j$.
    We have $\transpo_{k'} =(j \,j')$ for some $j'$. To find $j'$ on the map,
    we simply continue turning counterclockwise in the same face.
    Then we should look for the smallest index $k''>k'$ such that $\transpo_{k''}$ contains $j'$
    (if it exists) and the same case distinction applies.
\end{enumerate}
Iterating this, we turn counterclockwise in the same face until we reached a special corner.
The image of $i$ through $\transpo_1\cdots\transpo_m$ is the label of the corresponding corner, call it $h$.
Similarly, to find the image of $h$, we continue turning counterclockwise until we reach the next special corner,
and so on\ldots\ 
Each face of the map gives rise to a cycle of $\transpo_1\cdots\transpo_m$,
determined by reading labels of vertices when we cross their special corners.

We now explain why this implies our lemma.
If $(\transpo_1,\dots,\transpo_m)$ is a factorization of $(1\,\cdots\,n)$,
then the product has one cycle and hence the map is unicellular.
Moreover reading the labels of the vertices when passing through their special corners
give $(1\,\cdots\,n)$ (when starting at $1$). This proves the lemma.
\end{proof}

Since $\Map(F)$ is unicellular, has $n$ vertices and $m=n-1+2g$ edges,
it has genus $g$.
We denote by $\setH$ the set of genus $g$ unicellular Hurwitz maps with $n$ vertices.

\begin{proposition}
  $F \mapsto \Map(F)$ defines a bijection between $\setF$
  and $\setH$.
\end{proposition}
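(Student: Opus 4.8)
The plan is to show that $F\mapsto\Map(F)$ is a bijection by exhibiting an explicit inverse and checking the two compositions are identities. Given a Hurwitz map $M\in\setH$, we first reconstruct vertex labels: by \cref{lem:Unicellular+Relabeling} we turn around the unique face starting at the root corner and assign labels $1,2,\dots,n$ to vertices the first time we cross their special corner. Since $M$ is unicellular with $n$ vertices, each special corner is visited exactly once during the tour, so every vertex receives exactly one label and the root vertex gets $1$. This produces a vertex- and edge-labeled map. Reading off, for each edge with label $i$ between the vertices now labeled $j$ and $k$, the transposition $\transpo_i=(j\,k)$, we obtain a list $\Phi(M)=(\transpo_1,\dots,\transpo_m)$ with $m=n-1+2g$ transpositions in $\mathbb{T}_n$.

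Next I would verify that $\Phi(M)$ actually lies in $\setF$, i.e. that $\transpo_1\cdots\transpo_m=(1\,2\,\cdots\,n)$. This is exactly the general fact established inside the proof of \cref{lem:Unicellular+Relabeling}: for the vertex- and edge-labeled map just constructed, reading vertex labels at special corners while touring the unique face yields the cycle structure of the product $\transpo_1\cdots\transpo_m$; since the map is unicellular, the product is a single $n$-cycle, and since the labels were assigned precisely in the order $1,2,\dots,n$ along the face starting at the root, that cycle is $(1\,2\,\cdots\,n)$. Hence $\Phi(M)\in\setF$.

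It then remains to check $\Phi\circ\Map=\mathrm{id}_{\setF}$ and $\Map\circ\Phi=\mathrm{id}_{\setH}$. For the first: given $F\in\setF$, the map $\Map(F)$ is built from the vertex- and edge-labeled map on $\{1,\dots,n\}$ with an edge labeled $i$ between the two points of $\transpo_i$, after erasing vertex labels; by \cref{lem:Unicellular+Relabeling} the relabeling procedure recovers exactly those erased vertex labels, so $\Phi(\Map(F))$ reads back $\transpo_i=(j_i\,k_i)$ for each $i$, giving $F$. For the second: given $M\in\setH$, applying $\Map$ to $\Phi(M)$ rebuilds a map whose vertices are the points of $[1,n]$ with an edge labeled $i$ joining the endpoints of $\transpo_i$, with edges ordered around each vertex by increasing label, rooted at vertex $1$ in its special corner; one checks this is $M$ with its reconstructed vertex labels, then erased — the underlying edge-labeled rooted map being $M$ itself since the consistency of the labeling of $M$ is precisely what makes "increasing order of labels around each vertex" reproduce the original cyclic orders, and the root corner of $M$ is special by definition of a Hurwitz map.

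The main obstacle is bookkeeping the root and special-corner data carefully in the composition $\Map\circ\Phi$: one must make sure that the cyclic order of edges around each vertex of $M$ coincides with "increasing order of edge labels starting from the minimal label", which is exactly the consistency hypothesis built into the definition of a Hurwitz map, and that the chosen root corner (the special corner of the vertex labeled $1$) matches the distinguished special root corner of $M$ under the relabeling of \cref{lem:Unicellular+Relabeling}. Once this identification of combinatorial data is spelled out, both compositions are the identity and the proposition follows; I would also note in passing that the genus count $m+2=n+1+2g$ from Euler's formula guarantees $\Map(F)$ and the reconstructed map land in $\setH$ rather than some other genus, which was already observed just before the statement.
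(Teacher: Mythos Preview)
Your proposal is correct and takes essentially the same approach as the paper: construct the inverse by relabeling vertices via the face-tour procedure of \cref{lem:Unicellular+Relabeling}, read off the transpositions from the edges, and invoke the general fact from that lemma's proof to see the product is $(1\,2\,\cdots\,n)$. The paper's own proof is much terser (it simply says the relabeling ``gives the inverse mapping'' and leaves the two compositions implicit), so your version is a more detailed execution of the same idea rather than a different argument.
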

\begin{proof}
  \cref{lem:Unicellular+Relabeling} gives us the inverse mapping. Indeed, starting from a rooted
  Hurwitz map of genus $g$, we relabel the vertices as explained in \cref{lem:Unicellular+Relabeling}.
  We can then read the corresponding sequence of transpositions:
  if edge $i$ has extremities labeled $j$ and $k$, then $\transpo_i=(j \,k)$.
  By construction this sequence of transpositions is a factorization of genus $g$,
  giving the inverse mapping and proving that $F \mapsto \Map(F)$ is a bijection.
\end{proof}

\begin{remark}
If $\bm F^g_n$ is a uniform random element in $\setF$,
the above proposition ensures that $\rH= \Map(\bm F^g_n)$ is a uniform random element in $\setH$.
However, if we forget the edge labeling in $\rH$,
we get a random genus $g$ unicellular rooted map with $n$ vertices,
which is in general {\em not uniform}.
\end{remark}

\section{Asymptotic bijection}
\label{sec:asympbij}
In this section, we present an asymptotic bijection, i.e. a combinatorial operation that takes an asymptotically uniform Hurwitz map of genus $g-1$ and returns an asymptotically uniform Hurwitz map of genus $g$.

\subsection{Scheme decomposition of (almost all) Hurwitz maps}
\label{sec:hurwmaps}
In this section,
we show how we can reconstruct Hurwitz maps 
by gluing non-plane trees on a cubic map, called the {\em scheme}.
Not all Hurwitz maps can be constructed this way (because we restrict ourselves
to cubic schemes), but it turns out that the proportion of constructed Hurwitz maps 
is asymptotically $1$.
This is largely inspired from analogue considerations
on standard unicellular rooted maps (i.e without requiring that the edge-labeling is consistent),
see \cite{CMS09,Cha10}.

This section uses the theory of labelled combinatorial classes
and exponential generating functions (EGF)
as presented in the book \cite{FS09} of Flajolet and Sedgewick.
In all generating series below, 
we count objects by their number of {\em edges}.
In particular, let us denote ${h}_{g,n}=| {\setH}|$ and let 
\[{H}_g(z)=\sum_{n \ge 1} z^{n-1+2g}
\frac{{h}_{g,n}}{(n-1+2g)!}\]
be the exponential generating function of genus $g$ Hurwitz maps.

For $g=0$, the elements of $\mathcal{H}^0_n$
are trees with a consistent edge-labeling and a root corner required to be the special corner
of some vertex; we will call them \emph{Hurwitz trees}.
We note that
\begin{itemize}
\item since the root corner has to be special,
it is equivalent to distinguish a root vertex instead of a root corner;
\item  the plane embedding of a Hurwitz tree is completely determined
by its edge-labeling and that any plane embedding of a tree makes it unicellular
(this seems like a tautology since trees are obviously unicellular,
but we insist on that since this is specific to the case $g=0$).
\end{itemize}
Therefore, one can forget the plane embedding and see
Hurwitz trees as non-plane trees rooted at a distinguished {\em vertex}.
It is easily seen, see, e.g. \cite[p 127]{FS09}\footnote{We warn the reader that, in \cite{FS09},
as in most references, the size of a tree is its number of leaves, inducing a shift in the generating series.}, that
the associated generating series $T(z)$ satisfies $T(z)=\exp(zT(z))$ and is explicitly given by 
\[T(z) :=  H_0(z) = \sum_{n \ge 1} z^{n-1} \frac{n^{n-2}}{(n-1)!} .\]

We also need to consider \emph{doubly rooted Hurwitz trees},
i.e. non-plane rooted trees (or Hurwitz trees) with an additional distinguished vertex
(potentially identical to the root). 
Let $D$ be the exponential generating function of doubly rooted trees.
Since there are $n$ choices for the additional distinguished vertex in an object of size $n-1$
(i.e. with $n-1$ edges), we get $D(z)=zT'(z)+T(z)$.

We now introduce a subset $\tilde{\setH}$ of $\setH$
through a combinatorial construction.
In this construction, we will glue edge-labelled maps together, say $M$ and $M'$
with label sets $[m]$ and $[m']$ recursively.
In such operations, we deal with the labelings as 
in the theory of labelled combinatorial classes \cite[Chapter II]{FS09};
namely we consider all the ways to relabel $M$ and $M'$ in an increasing way
such that their label sets become disjoint and the union of their label sets is $[1, m+m']$.
Furthermore, when merging vertices, 
the circular order of edges incident to the new vertex needs to be defined;
in such cases we choose the unique order making 
the edge-labeling of the new map consistent.

\begin{enumerate}
\item We start from a rooted cubic\footnote{We recall that a map is cubic
  if all its vertices have degree 3.} map $M_0$ of genus $g$. 
From Euler's formula, we know that it has $4g-2$ vertices, and $6g-3$ edges. 
We split each edge into two, adding a vertex of degree two in the middle,
and choose a consistent edge-labeling of the resulting map.
Finally we forget the root corner, obtaining a (non-rooted) map $M_1$.
\item For each vertex $V$ of degree $3$ of $M_1$, 
choose a Hurwitz tree $T_V$.
We then glue $T_V$ to $M_1$, merging the root of $T_V$ with $V$.
\item For each vertex $V$ of degree $2$ of $M_1$,
we choose a doubly rooted Hurwitz tree $D_V$.
Call $e_1$ and $e_2$ the edges incident to $V$ in $M_1$
(choose arbitrarily which one is $e_1$). 
We glue $D_V$ to $M_1$ as follows: we erase the vertex $V$
and attach the now free extremities of $e_1$ and $e_2$
to the root and additional distinguished vertex of $D_V$ respectively;

(Steps ii) and iii) are illustrated on \cref{fig_garnir_carte}.)
 \item Finally, we root the map in the special corner of some vertex.
\end{enumerate}
The resulting map is a Hurwitz map of genus $g$
and we denote the set of maps obtained through this construction by 
$\tilde{\setH}$. We have $\tilde{\setH}\subset \setH$, indeed, by construction the labeling of the edges is always consistent, and we obtain unicellular maps, since we start from a unicellular map and add some trees to it. 

\begin{figure}
\center
\includegraphics[scale=0.5]{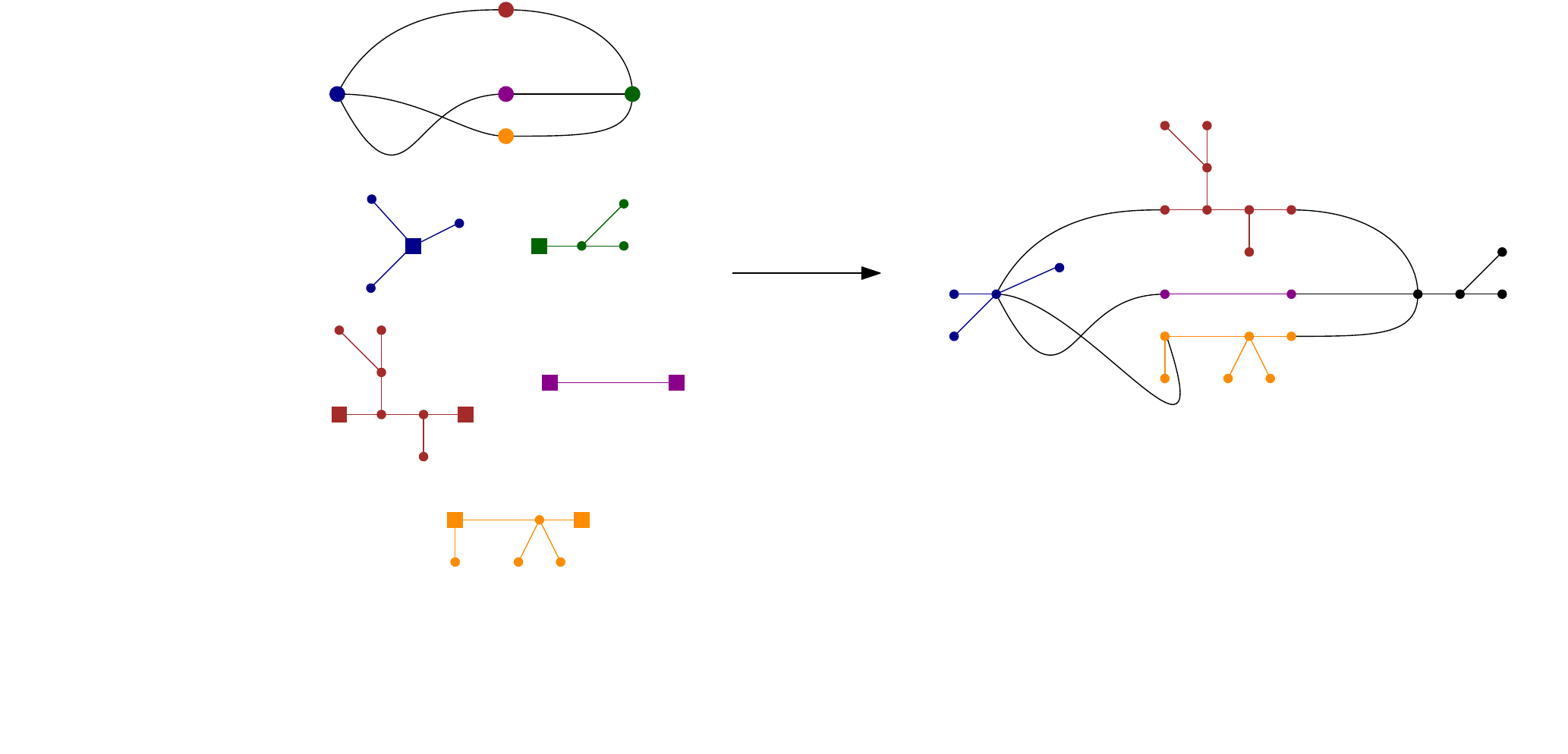}
\caption{Adding trees and doubly rooted trees to the map $M_1$. Here, $g=1$, the roots of the trees are represented by square vertices. We did not include the labeling of the edges.
This labeling however dictates the cyclic order of the edges around the vertex created by merging.}
\label{fig_garnir_carte}
\end{figure}

Let us compute the EFG
$\tilde{H}_g$ of $\bigcup_{n \ge 1} \tilde{\setH}$ following the steps of the construction.
\begin{description}
\item[step i)]
There are $\frac{2(6g-3)!}{12^g g!(3g-2)!}$ choices for $M_0$, see \cite[Corollary 2]{Cha10};
 each of those, after splitting edges into two, admits $(12g-6)!$ labelings.
The probability that a uniform random labeling is consistent is $2^{-(4g-2)}$: indeed, for each vertex of degree $3$, there is a probability $1/2$ that the order of the labels
around that vertex match the cyclic order in the map, and these events are independent
since they involve labels of disjoint sets of edges.
We conclude that there are $\frac{(12g-6)!(6g-3)!}{2^{4g-3} 12^g g!(3g-2)!}$ 
rooted versions of $M_1$.
Since each unrooted $M_1$ corresponds to $3(4g-2)$ rooted versions (the root is in a corner of a vertex of degree 3),
we finally obtain that the number of choices for $M_1$ is
\begin{equation}\label{eq:AlphaG}
 \alpha_g := \frac{(12g-6)! (6g-4)!}{2^{4g-2} 12^g g!(3g-2)!}.
\end{equation}
\item[steps ii) and iii)]
The non-rooted maps obtained after step iii) correspond to the independent choices
of a scheme $M_1$ (with $12g-6$ edges), $4g-2$ Hurwitz trees and $6g-3$ doubly rooted Hurwitz trees.
Their EGF is therefore given by
\[  \left(\alpha_g \frac{z^{12g-6}}{(12g-6)!} \right) T(z)^{4g-2}D(z)^{6g-3}.\]
\item[step iv)]
each map obtained in step iii) with $n$ vertices and $n+1-2g$ edges (represented by $z^{n+1-2g}$
in the generating series) can be rooted in $n$ possible ways.
In the EGF this multiplication by $n$ is performed by applying the differential operator 
$z\frac{\partial}{\partial z}-2g+1$.
\end{description}
As a conclusion, we have
\begin{equation}\label{eq:HgEGF}
\tilde{H}_g(z)= \Big(z\frac{\partial}{\partial z}-2g+1\Big) \cdot \Big(\alpha_g \frac{z^{12g-6}}{(12g-6)!} T(z)^{4g-2}D(z)^{6g-3}\Big).
\end{equation}

Let us set, for all $g,n$, $\tilde{h}_{g,n} := |\tilde{\setH}_{g,n}|$.

\begin{proposition}
\label{prop:AlmostAllTilde}
As $n$ tends to infinity, we have
$\tilde{h}_{g,n} \sim {h}_{g,n}$, i.e. almost all Hurwitz maps can be constructed as above.
\end{proposition}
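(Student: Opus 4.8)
The plan is to prove the asymptotic equivalence $\tilde h_{g,n}\sim h_{g,n}$ by a singularity analysis of the exponential generating functions, comparing the singular behaviour of $\tilde H_g$ near its dominant singularity with a matching lower bound for $H_g$. Concretely, I would first recall (or establish) that $T(z)=\exp(zT(z))$ has radius of convergence $e^{-1}$, with the classical square-root singularity $T(z)=1-\sqrt{2}\,\sqrt{1-ez}+O(1-ez)$ near $z=e^{-1}$, and that $D(z)=zT'(z)+T(z)$ therefore has a pole-type singularity of order $1/2$: $D(z)\sim \kappa\,(1-ez)^{-1/2}$ for an explicit constant $\kappa$. Plugging these into \eqref{eq:HgEGF}, the factor $T(z)^{4g-2}D(z)^{6g-3}$ behaves like a constant times $(1-ez)^{-(6g-3)/2}$, and applying the operator $z\frac{\partial}{\partial z}-2g+1$ raises the order of the singularity by one, giving $\tilde H_g(z)\sim c_g\,(1-ez)^{-(6g-1)/2}$ near $z=e^{-1}$ for an explicit $c_g>0$. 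Standard transfer theorems (Flajolet--Odlyzko, as in \cite[Chapter VI]{FS09}) then yield
\[
\tilde h_{g,n}=(n-1+2g)!\,[z^{n-1+2g}]\tilde H_g(z)\sim (n-1+2g)!\;\frac{c_g}{\Gamma((6g-1)/2)}\,e^{n}\,n^{(6g-3)/2}.
\]

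Next I would obtain the matching asymptotics for $h_{g,n}$ itself. The cleanest route is to invoke the known enumeration of unicellular Hurwitz maps (equivalently, of genus $g$ factorizations of the $n$-cycle), for which the generating function $H_g$ is rational in $T$ and $D$ (or one uses the formula of \cite{poulalhon2002factorizations} / \cite{SSV97}); its dominant singularity is again at $z=e^{-1}$ and one checks that the leading singular exponent coincides with that of $\tilde H_g$, with the same constant $c_g$. Alternatively, and more self-containedly, one can argue directly at the combinatorial level: the difference $\setH\setminus\tilde{\setH}$ consists exactly of Hurwitz maps whose ``scheme'' (the cubic map obtained by iteratively deleting degree-one vertices and suppressing degree-two vertices) is \emph{not} cubic, i.e. has at least one vertex of degree $\ge 4$. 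A scheme with a vertex of degree $\ge 4$ has strictly fewer edges in its scheme than the $6g-3$ edges of a cubic genus $g$ map, hence strictly fewer ``tree slots''; redoing the generating-function computation with such a scheme produces a singularity of order at most $(6g-3)/2 - 1/2 < (6g-1)/2$, and after the rooting operator still strictly smaller exponent than $(6g-1)/2$. Summing over the finitely many non-cubic scheme types of genus $g$ shows $h_{g,n}-\tilde h_{g,n}=O\!\big((n-1+2g)!\,e^{n} n^{(6g-3)/2 - 1/2}\big)=o(\tilde h_{g,n})$, which gives the claim.

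I would therefore structure the proof in three steps: (1) singularity analysis of $T$ and $D$ at $e^{-1}$; (2) transfer of \eqref{eq:HgEGF} to the asymptotics of $\tilde h_{g,n}$, keeping track of the effect of the operator $z\partial_z-2g+1$ on the singular exponent; (3) a matching upper bound on $h_{g,n}-\tilde h_{g,n}$ via the scheme decomposition of \emph{all} Hurwitz maps (allowing vertices of degree $\ge 4$ in the scheme), observing that non-cubic schemes contribute a strictly lower power of $n$. Step (3) requires justifying that every Hurwitz map decomposes uniquely over \emph{some} scheme — a routine extension of the cubic construction given above — and that there are only finitely many schemes of genus $g$; this is standard for unicellular maps but I would need to check that the consistency (Hurwitz) condition does not obstruct the decomposition, which it does not since the labels on the tree parts and on the scheme edges are handled exactly as in the cubic case.

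The main obstacle I anticipate is not any single computation but step (3): one must make precise what ``the scheme of an arbitrary Hurwitz map'' is and prove the decomposition is a bijection onto scheme-plus-decorations, including the correct combinatorial factors (degrees of vertices contribute $T^{(\deg-2)}$-type factors, degree-two vertices contribute $D$, etc.), so that the generating function of maps over a fixed non-cubic scheme can be written down and its singular exponent read off. Once that bookkeeping is in place, the conclusion $\tilde h_{g,n}\sim h_{g,n}$ is immediate from comparing exponents of $n$. If one prefers to avoid re-deriving the full scheme decomposition, the shortcut is to quote an existing exact formula for $h_{g,n}$ (e.g. from \cite{poulalhon2002factorizations}) and simply match its $n\to\infty$ asymptotics against the one obtained for $\tilde h_{g,n}$ in step (2); I would present the self-contained argument as the main proof and mention the shortcut as a remark.
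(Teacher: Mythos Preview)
Your proposal is correct and in fact covers \emph{both} strategies the paper discusses. The paper's actual proof is your ``shortcut'': it quotes the asymptotic $h_{g,n}\sim n^{n-2+5g}/(24^g g!)$ from \cite{poulalhon2002factorizations}, then carries out exactly your steps (1)--(2) to get the matching asymptotic for $\tilde h_{g,n}$ and compares constants. Your preferred ``main'' route --- the full scheme decomposition showing non-cubic schemes have fewer edges and hence a strictly smaller singular exponent --- is precisely what the paper relegates to a Remark immediately after the proof, noting it is longer but self-contained.

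One small correction if you pursue the citation route: your singular expansion of $T$ is off by a factor of $e$ (one has $T(e^{-1})=e$, not $1$; the expansion is $T(z)=e-e\sqrt{2}\,(1-ez)^{1/2}+O(1-ez)$). This does not affect your scheme-decomposition argument, which only needs the \emph{exponent} to drop, but it does matter for the constant-matching argument, where the final constant must come out exactly to $1/(24^g g!)$.
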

From now on, maps of  $\tilde{\setH}$ will be called \emph{dominant Hurwitz maps}.
A consequence of Proposition \ref{prop:AlmostAllTilde} is that $\bm H^g_n$ is close in total variation to a uniform
random element $\tilde{\bm H}^g_n$ in $\tilde{\setH}$. Therefore, in many proofs, we can replace
$\bm H^g_n$ by $\tilde{\bm H}^g_n$, which we will do without further notice.
\begin{proof}
On the one hand, the asymptotics of $h_{g,n}$ is given 
in \cite[Section 6.1]{poulalhon2002factorizations}\footnote{Our quantity
$h_{g,n}$ is denoted $c^{(n)}_{T^{n-1+2g}}$ in \cite{poulalhon2002factorizations}.
We warn the reader that there is a second asymptotic formula in \cite{poulalhon2002factorizations}
in terms of the number $m$ of edges which is incorrect.}:
\begin{equation}
\label{eq:Asympt_h}
h_{g,n} \sim \frac{n^{n-2+5g}}{24^g g!}.
\end{equation}

On the other hand, we use singularity analysis starting from the expression \eqref{eq:HgEGF}
to evaluate $\tilde h_{g,n}$.
It is known (see e.g. \cite[Example VI.8 p 403]{FS09}\footnote{Our series $T$ corresponds to $y/z$ in the notation of \cite[Example VI.8]{FS09}; as noted in the \href{https://ac.cs.princeton.edu/errata/}{errata} page of the book, a minus sign is missing in front of the main term of the expansion of $y-1$.}) that the smallest singularity of $T$ is at $\rho=1/e$ and 
that the expansion of $T$ around this singularity is
\begin{equation}\label{eq_devt_T}
T(z)=e-e\sqrt{2}(1-ez)^{1/2}+O(1-ez).
\end{equation}
Using singular differentiation \cite[Thm. VI.8 p. 419]{FS09}
 (all functions involved throughout the paper are trivially $\Delta$-analytic),
we infer from this that $D(z)=zT'(z)+T(z)$ also has its smallest singularity at $\rho=1/e$
with singular expansion
\begin{equation}\label{eq_devt_D}
D(z)=\frac{e}{\sqrt 2} (1-ez)^{-1/2} +O(1).
\end{equation}
Plugging $T(z)=e+o(1)$ and \cref{eq_devt_D} into \cref{eq:HgEGF}, we get
 a singular expansion for $\tilde{H}_g$:
\begin{align*}
 \tilde{H}_g(z) &=\Big(z\frac{\partial}{\partial z}-2g+1\Big) \cdot 
 \Big(  \frac{\alpha_g}{e^{12g-6}(12g-6)!} e^{4g-2} \big(\frac{e}{\sqrt 2}\big)^{6g-3} (1-ez)^{-(6g-3)/2} (1+o(1)) \Big)\\
 & =  \frac{\alpha_g}{(12g-6)! \, 2^{3g-2} \sqrt{2}} e^{-2g+1} \, \frac{(6g-3)}{2} (1-ez)^{-(6g-1)/2} (1+o(1)).
\end{align*}
We now apply the transfer theorem \cite[Chapter VI]{FS09} and we get
\begin{equation}
\label{eq:equiv_coef_Htilde}
 [z^{n+2g-1}] \tilde{H}_g(z) = \frac{\alpha_g (6g-3)}{(12g-6)! \, 2^{3g-1} \sqrt{2}} e^{-2g+1} \, 
e^{n+2g-1} \frac{n^{(6g-3)/2}}{\Gamma\big((6g-1)/2\big)} (1+o(1)),
\end{equation}
where $[z^k]F$ is the coefficient of $z^k$ in $F$
and $\Gamma$ is the standard $\Gamma$ function from complex analysis.
Using the classical formula $\Gamma(k+1/2)=\frac{(2k)!}{2^{2k}k!}\sqrt{\pi}$,
Stirling's approximation and \cref{eq:AlphaG} for $\alpha_g$, we have
\begin{align}
\nonumber
\tilde h_{g,n}&= (n+2g-1)! [z^{n+2g-1}] \tilde{H}_g(z) \\
\nonumber
&= \frac{n^n}{e^n} \sqrt{2\pi n}\, n^{2g-1} \frac{\alpha_g (6g-3) 2^{6g-2} (3g-1)!}{(12g-6)!\,  2^{3g-1} \sqrt{2\pi} (6g-2)!} e^n n^{(6g-3)/2} (1+o(1))\\
&= \frac{n^{n+5g-2} }{24^g g!} \, (1+o(1)).
\label{eq:Asympt_htilde}
\end{align}
Comparing \cref{eq:Asympt_h,eq:Asympt_htilde} concludes the proof.
\end{proof}

\begin{remark}
It is possible to define a scheme decomposition of any Hurwitz map,
using schemes with vertices of higher degree.
In general, the number of edges of the scheme dictates
the exponent of $n$ in the number of maps with that underlying scheme.
Therefore, since non-cubic schemes have fewer than $6g-3$ edges,
maps with non-cubic schemes do not contribute to the asymptotics of $h_{g,n}$.
This  gives an alternative proof that $\tilde{h}_{g,n} \sim {h}_{g,n}$,
not relying on previously known results (neither on the asymptotics of $h_{g,n}$ \cite{poulalhon2002factorizations},
nor on the explicit formula for the number of rooted cubic unicellular maps of genus $g$ \cite{Cha10}).
We decided not to present this since this is longer than the path followed here,
and rather similar to what is done for standard unicellular maps in \cite{Cha10}.
\end{remark}

\subsection{Preliminaries}
We start by introducing some terminology.

The \emph{$2$-core} of a Hurwitz map $M$ (denoted by $\core(M)$) is the map obtained by iteratively removing leaves (and the edges they are attached to) 
until there are no more leaves left. 
The vertices of $M$ that end up having degree $3$ or more in the $2$-core are called \emph{branching vertices}.
One can see that the maps in $\tilde{\setH}$
are characterized by the fact that all branching vertices have degree $3$
and that there are no adjacent branching vertices.

If one removes all edges of $\core(M)$ from $M$, then one obtains a collection of trees. 
For any vertex $V$ in $M$, we call $\tree(V)$ the Hurwitz tree 
(rooted at a vertex of the $2$-core) to which $V$ belongs. 

We will use standard terminology for trees for vertices of Hurwitz maps;
doing that we think at the vertex $V$ as a vertex of $\tree(V)$. In particular,
  we write $\root(V)$ for the root of $\tree(V)$ and $\desc(V)$ for the set of descendants of $V$ in $\tree(V)$ (including $V$ itself).
  The parent of $V$ in $M$ will refer to the parent of $V$ in $\tree(V)$
  (well-defined only if $V \ne \root(V)$, i.e. if $V$ is not on the $2$-core of $M$).
  Also, if $V$ and $V'$ are such that $\tree(V)=\tree(V')$,
  we let $\anc(V,V')$ to be their closest common ancestor 
  (this is not defined for vertices of the same map that are in different trees attached to the $2$-core).

For any triplet $(A,B,C)$ of points in $M$ such that $\spec(A) \prec \spec(B) \prec \spec(C)$, we call $\mathcal{S}_{A,B,C}$ the set \[\{A, \root(A),\root(B),\root(C)\}\] plus $\anc(A,B)$, $\anc(A,C)$ and $\anc(B,C)$ if they exists. Note that we always have $|\mathcal{S}_{A,B,C}|\leq 4$. A triplet $(A,B,C)$ is said to be \emph{generic} if
\begin{enumerate}
\item $B,C \notin \mathcal{S}_{A,B,C}$,
\item $|\mathcal{S}_{A,B,C}|= 4$ and no elements of $\mathcal{S}_{A,B,C}$ are adjacent to each other,
\item The elements of $\mathcal{S}_{A,B,C}$ are not branching vertices or adjacent to a branching vertex,
\item the root is not a corner of a descendant of $C$ nor of 
a descendant of the parent of $A$.
\end{enumerate}

{\bf The exploration of a Hurwitz map:}
Given a corner $\gamma$ incident to a vertex $V$, we call $\next(\gamma)$ 
 the next corner around $V$ in counterclockwise order,
  and $\inc(\gamma)$ the edge that is incident to $\gamma$ and $\next(\gamma)$.
Given a Hurwitz map $M$, start at the root corner, and go along the edges of $M$, keeping the edges on the left. This exploration defines an order $\prec$ on the corners of $M$ (it is the order in which they appear in the exploration). Given a vertex $V$, we call $\text{first}(V)$ the corner around $V$ that is visited first by the exploration.

For each vertex $V$ (except the root vertex), we say that the \emph{arrival edge} of $V$ is the edge $e$ that is visited right before $V$ in the exploration.
 In other words, $e=\inc(\next^{-1}(\text{first}(V)))$. Note that $e$ is not necessarily the edge with minimal label around $V$.
The following definition was given in \cite{chapuy2011new}.
\begin{definition}
A \emph{trisection} is a corner $\gamma$ incident to a vertex $V$, such that  $\next(\gamma)\prec \gamma$, but $\next(\gamma)\neq \text{first}(V)$.
\end{definition}
Note that the definition of trisections is independent from the edge labeling.
Besides, trisections can only be incident to vertices of degree 3 or more,
and vertices of degree $3$ can be incident to at most one trisection.

\begin{lemma}\label{lem_trisec}
There are $2g$ trisections in a  Hurwitz map of genus $g$.
Moreover, trisections are always incident to branching vertices. 
Finally, in a dominant Hurwitz map, a vertex is incident to at most $1$ trisection.
\end{lemma}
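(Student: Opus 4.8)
The statement to prove is Lemma~\ref{lem_trisec}, which has three parts: (a) there are exactly $2g$ trisections in a Hurwitz map of genus $g$; (b) every trisection is incident to a branching vertex; (c) in a dominant Hurwitz map, each vertex carries at most one trisection. Note that trisections are defined purely in terms of the exploration order, independently of the edge labeling, so I can work at the level of unicellular maps (forgetting the Hurwitz/consistency data) until part (c).

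For part (a), the plan is to invoke the theory of \cite{chapuy2011new}, where trisections were introduced precisely to get a combinatorial handle on the genus: the count of trisections in a unicellular map equals $2g$. Concretely, I would recall the bijective/counting argument: perform the face exploration of the unicellular map $M$; each corner $\gamma$ is either an ``ascent'' or a ``descent'' in the sense that $\gamma \prec \next(\gamma)$ or $\next(\gamma) \prec \gamma$; the corners $\text{first}(V)$ (one per vertex, the first visited) are distinguished, and a descent corner that is not of the form giving $\text{first}(V)$ is exactly a trisection. An Euler-characteristic / counting bookkeeping on the single face — comparing the number of edges $m = n-1+2g$, the number of corners $2m$, and the number of vertices $n$ — yields that the number of trisections is $2m - (2m - 2g) = 2g$. (This is exactly Chapuy's lemma; I would cite it rather than reprove it, since the excerpt already points to \cite{chapuy2011new}.)

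For part (b), suppose $\gamma$ is a trisection incident to $V$, so $\next(\gamma) \prec \gamma$ and $\next(\gamma) \neq \text{first}(V)$. Since $\next(\gamma)$ is a corner around $V$ strictly before $\gamma$ in the exploration and strictly after $\text{first}(V)$, the vertex $V$ must have at least three distinct corners $\text{first}(V)$, $\next(\gamma)$, $\gamma$ visited in that order; hence $\deg(V) \geq 3$. It then remains to observe $V$ is actually a branching vertex of $M$, i.e.\ it survives in the $2$-core with degree $\geq 3$: a vertex of degree $\geq 3$ in $M$ that is not branching would lie in a tree attached to the core with all but possibly one of its incident edges leading into finite subtrees; but the exploration enters and leaves each such pendant subtree consecutively, so around such a vertex the exploration is ``monotone with one wrap'' — precisely the configuration of $\text{first}(V)$ followed by a single descent that equals... — I need to argue this descent is the one producing a later visit, not a trisection. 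The cleanest route: a leaf-removal does not change the cyclic exploration structure at the surviving vertices in a way that creates or destroys trisections away from the removed leaf, and a degree-$2$ vertex carries no trisection (only two corners, the configuration $\next(\gamma)\prec\gamma$ with $\next(\gamma)\neq \text{first}(V)$ is impossible); iterating the pruning down to the $2$-core shows all trisections are supported on core vertices, and only those of core-degree $\geq 3$, i.e.\ branching vertices.

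For part (c): in a dominant Hurwitz map, by Proposition~\ref{prop:AlmostAllTilde} and the structural description just below it, all branching vertices have degree exactly $3$ and no two branching vertices are adjacent. By parts (a)--(b), all $2g$ trisections sit on branching vertices, each of degree $3$. A degree-$3$ vertex has three corners, so it is incident to at most one trisection (as already remarked right before the lemma: ``vertices of degree $3$ can be incident to at most one trisection'' — this is immediate since among the three corners, exactly one is $\text{first}(V)$, and of the remaining two, the descent condition plus the ``$\neq \text{first}(V)$'' condition can hold for at most one). This finishes the lemma. The main obstacle is part (b): making rigorous the claim that trisections are supported exactly on branching vertices requires a careful analysis of how the face exploration behaves around pendant subtrees and under leaf-pruning — the intuition (entering a subtree and exiting it are consecutive in the tour, so they cannot sandwich a ``return'' corner) is clear, but writing it cleanly is where the real work lies; I would likely phrase it via induction on the number of leaves, using the fact that removing a leaf deletes exactly its one corner from the exploration and merges the two adjacent corners of its neighbor, checking directly that this operation neither creates nor destroys a trisection except (possibly) by removing one at a degree-$3$ vertex that becomes degree $2$.
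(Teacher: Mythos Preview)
Your approach is essentially the paper's: cite \cite{chapuy2011new} for the count $2g$, and prove (b) by showing that leaf-removal preserves trisections, so all trisections survive into $\core(M)$ and hence sit at vertices of core-degree $\ge 3$. The paper carries this out exactly as you sketch, by analyzing the two corners $\gamma,\gamma'=\next(\gamma)$ adjacent to the deleted edge on the neighbor side and checking that $\gamma$ is never a trisection while $\gamma'$ is a trisection in $M$ iff the merged corner $\gamma^*$ is one in the pruned map.

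There is, however, a genuine gap in your part (c). You write that in a dominant Hurwitz map ``all branching vertices have degree exactly $3$'' and then argue directly in $M$ that a degree-$3$ vertex has only three corners. But branching vertices have degree $3$ \emph{in the $2$-core}, not in $M$: in the construction of $\tilde{\mathcal H}^g_n$ one glues a Hurwitz tree $T_V$ onto each cubic vertex of the scheme, so in $M$ a branching vertex may have arbitrarily many extra pendant edges. Your ``three corners'' count is therefore false in $M$. The correct route---and this is what the paper does---is to use the preservation statement you established in (b): the multiset of trisections at each vertex is identical in $M$ and in $\core(M)$; since in $\core(M)$ the branching vertex is genuinely cubic, it carries at most one trisection there, hence at most one in $M$. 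Relatedly, your caveat in (b) that leaf-removal might destroy a trisection ``at a degree-$3$ vertex that becomes degree $2$'' is not only unnecessary but actually false: the analysis of $\gamma,\gamma',\gamma^*$ shows the multiset is \emph{exactly} preserved, with no exceptions, and you need this exact preservation (not just ``no creation'') to push (c) through.
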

By abuse of terminology, we will say that a vertex is a trisection if it is incident to a trisection.
\begin{proof}
The first point holds for any unicellular map, see \cite[Lemma 3]{chapuy2011new}.

To prove the other statements,
 we recall that the 2-core of a map $M$ is obtained
by applying the following recursive operation: pick a leaf $l$, if there is one, 
and delete it together with its incident edge $e$.
 Each step is a local operation, which does not modify the set of trisections,
 except maybe for the corners that are incident to $e$. Let us see how to affect these corners.

\begin{figure}
\center
\includegraphics[scale=0.7]{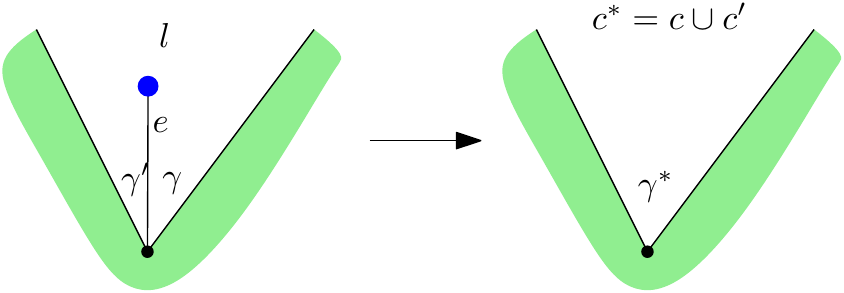}
\caption{A step of the core decomposition.}\label{fig_core_decomp}
\end{figure}
The corner incident to $l$ was obviously not a trisection and it is deleted. Let $\gamma$ and $\gamma'$ be the other corners that were incident to $e$, such that $\gamma'=\next(\gamma)$ (see Figure~\ref{fig_core_decomp}).
We have $\gamma\prec \gamma'$, unless the root is $\gamma'$ or the corner incident to $l$.
In any case, $\gamma$ was not a trisection in the initial map. 
The corners $\gamma$ and $\gamma'$ are merged into a corner $\gamma^*$ in the map.
It is easy to see that $\gamma'$ was a trisection in the initial map 
if and only if $\gamma^*$ is a trisection in the map obtained after deletion.
 Therefore, the multiset of vertices adjacent to trisections
 (taking as many copies of each vertex as the number of trisections adjacent to it) 
 is preserved when going from a map $M$ to its 2-core $\core(M)$.
 
Recall that by definition, the vertices of degree at least 3 in $\core(M)$ are the branching vertices of $M$.
Since trisections in $\core(M)$ are incident to vertices of degree 3 or more, 
in $M$, they can only be incident to branching vertices. This is the second statement of the lemma.

For the last statement, we assume that $M$ is a dominant Hurwitz map,.
In particular, its branching vertices are cubic in $\core(M)$;
this implies that they can be incident to at most one trisection in $\core(M)$, 
and the same should be true in $M$.
\end{proof}

Thanks to the above lemma, we can define a notion of \emph{good trisection}.

\begin{definition}
Let $A$ be a trisection in a dominant Hurwitz map $M$. It has exactly three neighbors that belong to the $2$-core, let us call them $B$, $C$ and $D$. The vertex $A$ is said to be a good trisection if and only if the root of $M$ is not a corner of a descendant of $A$, $B$, $C$ or $D$.
\end{definition}

\subsection{The bijection}

In this section we describe a bijection between subsets of marked 
unicellular Hurwitz maps of genera $g-1$ and $g$, respectively.
We will see in the next section that these sets are asymptotically dominant,
so that this bijection is in some sense an "asymptotic bijection"
between genus $g-1$ and genus $g$ unicellular Hurwitz maps.

We introduce two sets of maps with marked vertices.  
\begin{itemize}
\item
Let $\mathcal{B}_{g-1,n}$ be the set of tuples $(M,A,B,C,L)$
where $M$ is a map in $\tilde{\mathcal{H}}^{g-1}_n$,
$A$, $B$ and $C$ are three vertices of $M$ {such that $\spec(A) \prec \spec(B) \prec \spec(C)$}
 (which we refer to as distinguished vertices), 
and $L$ is a 2-element subset of the integer interval $[1,n-1+2g]$.
\item
Let $\mathcal{A}_{g-1,n}$ be the set of tuples $(M,A,B,C,L)$ in  $\mathcal{B}_{g-1,n}$
with the additional property that $(A,B,C)$ is generic. 
\end{itemize}

We also introduce two sets of maps with a marked trisection.

\begin{itemize}
\item
Let $\mathcal{D}_{g,n}$ be the set of couples $(M,A)$
where $M$ is a map in $\tilde{\mathcal{H}}^{g}_n$, and $A$ is a trisection of $M$.

\item
Let $\mathcal{C}_{g,n}$ be the set of couples $(M,A)$ in  $\mathcal{D}_{g,n}$
with the additional condition that $A$ is a good trisection.
\end{itemize}

Finally, given a vertex $V$ and a number $\vvv$, we say that the \emph{$\vvv$-corner} of $V$ is the corner around $V$ in which we could add an edge with label $\vvv$ while still satisfying the condition for Hurwitz maps (we assume that $\vvv$ is not the label of an edge of the map).
We now introduce a bijection between the sets $\mathcal{A}_{g-1,n}$ and $\mathcal{C}_{g,n}$.
\medskip

{\bf The operation $\Phi$:} 
Given $(M,A,B,C,L)\in\mathcal{A}_{g-1,n}$, the operation $\Phi$ is defined as follows.
First relabel the edges of $M$ in such a way that the labels belong to $[1,n-1+2g]\setminus L$ preserving the order between labels 
(i.e. we add $1$ to all labels which are at least $\min(L)$,
and then an additional $1$ to all that are at least $\max(L)$).
Now, we write $L = \{\vvv,\www \}$, 
assuming without loss of generality that the $\vvv$-corner of $A$ is visited before the $\www$-corner of $A$ in the exploration (this does not imply anything about who is greater between $\vvv$ and $\www$). 
Draw an edge with label $\vvv$ between the $\vvv$-corners of $A$ and $B$,
and an edge with label $\www$ between the $\www$-corners of $A$ and $C$ to obtain a map $M^*$.
We set $\Phi(M,A,B,C,L)=(M^*,A)$, i.e. $M^*$ with a marked vertex $A$.
We refer to Fig.~\ref{fig_bij} for a schematic representation of the mapping $\Phi$,
useful to follow the next proof.

\begin{figure}
\includegraphics[scale=0.8]{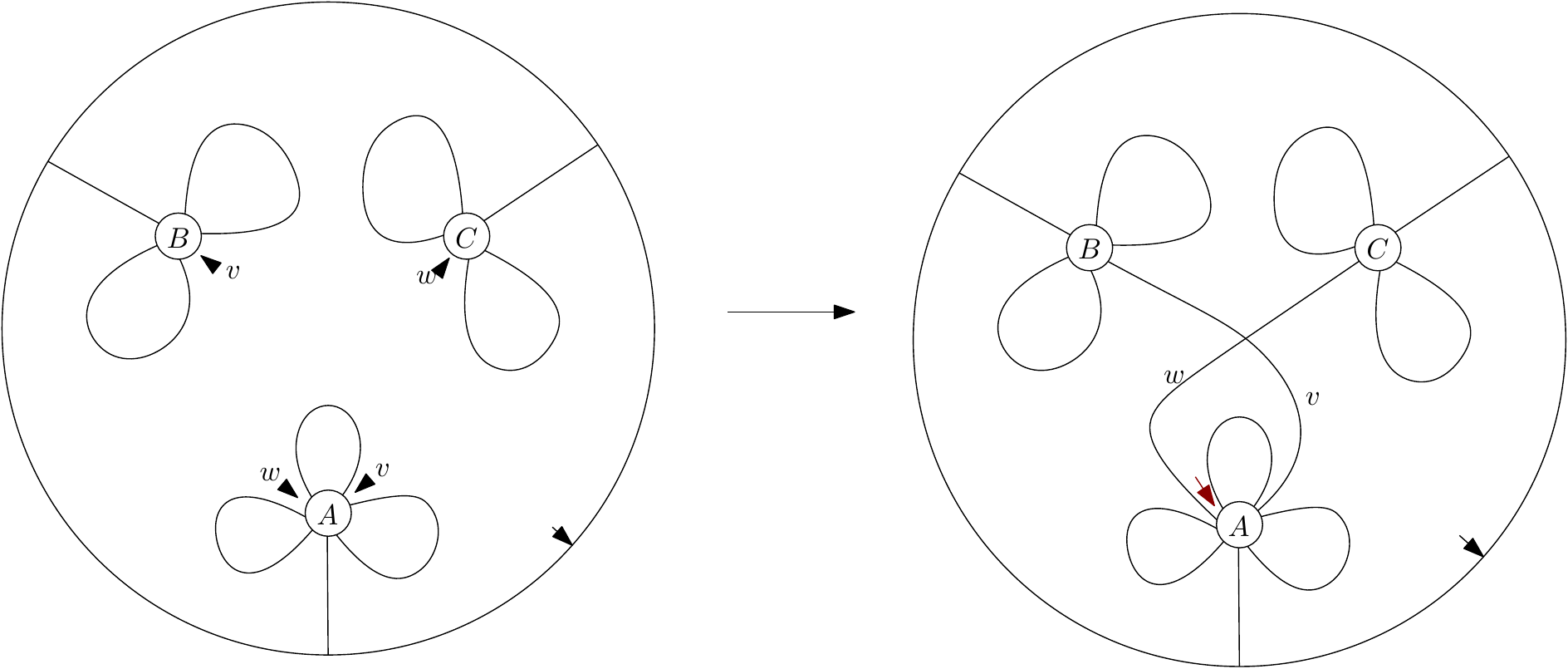}
\caption{Schematic representation of a map $M$ and its image $\Phi(M)$.
The "bubbles" represent the trees attached to $A$, $B$ and $C$, 
while the circle represents the rest of the map;
this representation encodes the fact that $(A,B,C)$ is generic in $M$.
In particular, in $M$, all corners of $A$ are visited first, then all corners of $B$,
then all corners of $C$. The black arrow on the outer circle represents the root,
the red arrow on the right hand side is a new trisection (see the proof of \cref{lem:set_bij}).
}\label{fig_bij}
\end{figure}

\begin{lemma}\label{lem:set_bij}
The image $\Im\Phi$ is included in $\mathcal{C}_{g,n}$.
\end{lemma}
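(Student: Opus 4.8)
The plan is to verify, for $(M^*,A):=\Phi(M,A,B,C,L)$ with $(M,A,B,C,L)\in\mathcal{A}_{g-1,n}$, each of the conditions defining $\mathcal{C}_{g,n}$: that $M^*$ is a unicellular Hurwitz map of genus $g$ with $n$ vertices, that $M^*$ is dominant, and that $A$ is a good trisection of $M^*$. The engine of the proof is an explicit description of the facial walk of $M^*$ from that of $M$. Cut the (unique) facial walk of $M$ at the root corner, the $\vvv$-corner of $A$, the $\www$-corner of $A$, the $\vvv$-corner of $B$ and the $\www$-corner of $C$, and write it as a cyclic word $\alpha\beta\gamma\delta\varepsilon$ with the arcs in this cyclic order. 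I would first record, as a preliminary observation, that genericity of $(A,B,C)$, the inequalities $\spec(A)\prec\spec(B)\prec\spec(C)$ and the choice made in the definition of $\Phi$ force these five corners to occur in $M$ in exactly this cyclic order, the corners of $A$, then of $B$, then of $C$ (with their attached subtrees) being visited consecutively, and the root corner lying outside the subtrees of $A$, $B$ and $C$; this is precisely the schematic picture of \cref{fig_bij}. Attaching the edge of label $\vvv$ between the $\vvv$-corners of $A$ and $B$, and then the edge of label $\www$ between the $\www$-corners of $A$ and $C$, then turns $\alpha\beta\gamma\delta\varepsilon$ into the cyclic word $\gamma\,[\vvv]\,\beta\,[\www]\,\varepsilon\,\alpha\,[\vvv]\,\delta\,[\www]$, where $[\vvv]$ and $[\www]$ stand for the traversals of the two new edges. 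Every claim below is read off from this word.

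I would then check that $M^*$ is a unicellular Hurwitz map of genus $g$ with $n$ vertices, and that it is dominant. Adding the edge of label $\vvv$ to the unicellular map $M$ splits its single face into two faces; by the arc ordering above, the $\www$-corner of $A$ lies on one of these and the $\www$-corner of $C$ on the other, so adding the edge of label $\www$ between them merges the two faces, and $M^*$ has a single face. Since $M^*$ has $n$ vertices and $n-1+2g$ edges, Euler's formula yields genus $g$. The edge-labeling is consistent by construction (the two insertions at $A$ being performed in the order forced by consistency), and a short check --- using that the vertex carrying the root corner is distinct from $A$, $B$, $C$, hence untouched by the surgery --- shows the root corner remains special; thus $M^*\in\mathcal{H}^g_n$. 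For dominance, $\core(M^*)$ is $\core(M)$ together with the two new edges and the tree-paths joining $A$, $B$, $C$ to $\core(M)$; using that $M$ is dominant and that $B,C\notin\mathcal{S}_{A,B,C}$ keeps $B,C$ off $\core(M)$ (so they become subdivision vertices of $\core(M^*)$), a direct inspection shows that the branching vertices of $M^*$ are exactly the unchanged branching vertices of $M$ together with the four vertices of $\mathcal{S}_{A,B,C}$, each acquiring degree exactly $3$ in $\core(M^*)$. The conditions that $|\mathcal{S}_{A,B,C}|=4$, that no two of its elements are adjacent, and that none is adjacent to a branching vertex of $M$, then say precisely that $M^*$ has only cubic branching vertices, no two adjacent, i.e.\ $M^*\in\tilde{\mathcal{H}}^g_n$.

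Finally I would show $A$ is a good trisection of $M^*$. Reading $\gamma\,[\vvv]\,\beta\,[\www]\,\varepsilon\,\alpha\,[\vvv]\,\delta\,[\www]$ from the root corner, one finds that $A$ --- which is visited in a single block in $M$, being strictly inside a tree there --- is visited in $M^*$ in exactly three blocks: the prefix, the suffix and the middle of its former block, occurring in cyclic order (prefix, middle, suffix) around $A$ but in time order (prefix, suffix, middle). This produces exactly two descents around $A$, hence exactly one trisection at $A$, namely the corner of $A$ just before the edge of label $\www$ (the red arrow of \cref{fig_bij}); in particular $A$ is a trisection of $M^*$. Its three neighbours on $\core(M^*)$ are $B$, $C$ and the parent $D$ of $A$ in $\tree(A)$; as $A,B,C,D$ lie on $\core(M^*)$, one computes $\desc_{M^*}(A)=\desc_M(A)$, $\desc_{M^*}(B)=\desc_M(B)$, $\desc_{M^*}(C)=\desc_M(C)$ and $\desc_{M^*}(D)=\desc_M(D)\setminus\desc_M(A)$. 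The genericity condition that the root is a corner of no descendant of $C$ nor of the parent of $A$ covers the cases of $\desc_{M^*}(A)$ (since $\desc_M(A)\subseteq\desc_M(D)$), of $\desc_{M^*}(C)$ and of $\desc_{M^*}(D)$; and the inequality $\spec(A)\prec\spec(B)$ --- together with the root corner being the special corner of the vertex of label $1$, visited before $A$ --- handles $\desc_{M^*}(B)$. Hence the root of $M^*$ is a corner of no descendant of $A$, $B$, $C$ or $D$, so $A$ is a good trisection and $(M^*,A)\in\mathcal{C}_{g,n}$.

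The main obstacle is the bookkeeping behind the preliminary observation and the block count: one must track exactly how the five arcs of the facial walk get permuted and which corners of $A$, $B$, $C$ and of their roots land in which face and in which visit block, and check that each genericity condition (and the $\spec$-ordering) is used exactly where it is needed; this splits naturally into a handful of cases according to which of $A$, $B$, $C$ lie in a common tree. One must also treat separately the degenerate situation where the $\vvv$- and $\www$-corners of $A$ coincide (both being the special corner of $A$), in which the two new edges at $A$ are inserted into one corner in the order dictated by consistency; the argument is then entirely analogous.
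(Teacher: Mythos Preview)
Your proof is correct and follows essentially the same approach as the paper's: both verify unicellularity, genus, consistency, dominance, and the good-trisection property by tracing how the facial walk of $M$ is rearranged after inserting the two edges, with the paper relying on \cref{fig_bij} where you make the cyclic word $\alpha[\vvv]\delta[\www]\gamma[\vvv]\beta[\www]\varepsilon$ explicit. One small point: to rule out the root being a descendant of $B$ you will need both inequalities $\spec(A)\prec\spec(B)\prec\spec(C)$ (not just the first), as the paper does, since $\spec(B)$ could otherwise lie in the suffix of the split arc $I_B$.
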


\begin{proof}
It is easy to see that the exploration process of $M^*$ visits all its corners (see Fig.~\ref{fig_bij}).
Therefore, $M^*$ only has one face, and by the Euler formula, it has genus $g$. 
Moreover, it is a Hurwitz map, because its labelling is consistent by construction.
Note that branching vertices of $M$ are obviously still branching vertices of $M^*$.
Also, the elements of $\mathcal{S}_{A,B,C}$ become branching vertices.
 The fact that $(A,B,C)$ is generic implies that $M^*$ is a dominant Hurwitz map: indeed, the new branching vertices all have degree $3$ in the $2$-core, and they are not adjacent to another branching vertex.

Now we observe that $A$ is a trisection in the map. More precisely, in $M^*$, let $e_\www$ be the edge labeled $\www$, and let $\gamma$ be the corner incident to $A$ such that $\inc(\gamma)=e_\www$. Then $\gamma$ is a trisection, because $\next(\gamma)\prec \gamma$ and $\next(\gamma)\neq\text{first}(V)$ (see Figure~\ref{fig_bij}).

We finally check that $A$ is a good trisection.
The neighbors of $A$ in the $2$-core of $M^*$ are $B$, $C$ and the parent $D$
of $A$ in $M$. 
The root is not a descendant of $A$ or $D$ in $M^*$ since 
it is not a descendant of $D$ in $M$ (condition iv) in the definition of a generic triple).
Again, by condition iv), it is not a descendant of $C$.
Finally note that the condition $\spec(A) \prec \spec(B) \prec \spec(C)$
imposed on elements of $\mathcal A_{g-1,n}$ forbids the root to be a descendant of $B$.
This proves that $A$ is a good trisection.
\end{proof}

{\bf The inverse operation $\Psi$:}
We start with a pair $(M^*,A) \in\mathcal C_{g,n}$. Let $e_1$, $e_2$ and $e_3$ be the edges incident to $A$ that belong to the $2$-core. Since the root does not belong to $\desc(A)$, one of these edges is the arrival edge of $A$. Wlog, say it is $e_1$ and that $e_1, e_2, e_3$ are in this cyclic order.
Let $B$ and $C$ be the respective other endpoints of $e_2$ and $e_3$, and $\vvv$ and $\www$ their respective labels. 
We call $M$ the map obtained from $M^*$ by deleting $e_2$ and $e_3$, and set $L=\{\vvv,\www\}$.
We relabel the edges of $M$ in the unique order-compatible way, so that the labels are in $[1,n-3+2g]$. 

Finally, we set $\Psi(M^*,A)=(M,A,B,C,L)$.

\begin{lemma}\label{lem_set_bij_inverse}
The image $\Im\Psi$ is included in $\mathcal{A}_{g-1,n}$.
\end{lemma}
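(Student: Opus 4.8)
The plan is to show three things for $(M,A,B,C,L) = \Psi(M^*,A)$: first, that $M$ is a well-defined unicellular Hurwitz map of genus $g-1$ with $n$ vertices; second, that $M$ is in fact dominant, i.e.\ $M \in \tilde{\mathcal{H}}^{g-1}_n$; and third, that the triple $(A,B,C)$ is generic (in particular $\spec(A)\prec\spec(B)\prec\spec(C)$ and $L$ is a valid $2$-element subset of $[1,n-1+2(g-1)] = [1,n-3+2g]$). These are exactly the defining conditions of $\mathcal{A}_{g-1,n}$.

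First I would treat the topological/combinatorial part. Since $A$ is a good trisection, the root is not a descendant of $A$, $B$, $C$, or the third $2$-core neighbour $D$ of $A$; hence $e_1$ (the arrival edge of $A$) is well-defined and distinct from $e_2,e_3$, so the deletion is unambiguous. Deleting $e_2$ and $e_3$ from $M^*$: because $A$ is a trisection of a dominant map, one checks (reading off Fig.~\ref{fig_bij} in reverse) that the exploration of $M$ still visits every corner, so $M$ is unicellular; Euler's formula with $n$ vertices and $(n-1+2g)-2 = n-3+2g = n-1+2(g-1)$ edges then forces genus $g-1$. The relabelling is order-compatible by construction, so consistency of the edge-labelling is inherited, and the root corner of $M^*$ — which is a special corner not incident to any descendant of $A$, hence untouched by the deletion — remains special in $M$. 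Thus $M$ is a genus $g-1$ unicellular Hurwitz map on $n$ vertices.

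Next, dominance of $M$. The branching vertices of $M$ are among those of $M^*$, minus possibly $A$, $B$, $C$ (whose degrees in the $2$-core drop by the deletion). Since $M^*$ is dominant, its branching vertices are cubic in $\core(M^*)$ with no two adjacent; removing $e_2,e_3$ can only lower degrees and break adjacencies, so after the operation all remaining branching vertices of $M$ are still cubic in $\core(M)$ and non-adjacent — this is precisely the characterization of $\tilde{\mathcal{H}}^{g-1}_n$ recorded after the definition of the $2$-core. Finally, genericity of $(A,B,C)$: I would verify the four conditions one at a time. The ordering $\spec(A)\prec\spec(B)\prec\spec(C)$ and conditions (i)–(iii) (that $B,C\notin\mathcal{S}_{A,B,C}$, that $|\mathcal{S}_{A,B,C}|=4$ with no two elements adjacent, and that no element of $\mathcal{S}_{A,B,C}$ is a branching vertex or adjacent to one) translate directly from the facts that $A$ is a \emph{good} trisection of a \emph{dominant} map: the three $2$-core edges at $A$ connect $A$ to three distinct vertices $B$, $C$, $D$ none of which is branching or adjacent to a branching vertex in $M$ (since in $M^*$ those would-be branching configurations were exactly the cubic non-adjacent ones created at $A$), and one identifies $D=\root(A)$, $B=\anc(A,B)=\root(B)$, etc., using that after deleting $e_2,e_3$ the trees of $B$ and $C$ become separate trees hanging off the $2$-core. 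Condition (iv), that the root is not a corner of a descendant of $C$ nor of the parent of $A$, is immediate from the good-trisection hypothesis ($D$ is the parent of $A$ in $M$, and the root avoids $\desc(C)$ and $\desc(D)$).

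The main obstacle I anticipate is the bookkeeping in the genericity verification — in particular, correctly matching the set $\mathcal{S}_{A,B,C}=\{A,\root(A),\root(B),\root(C),\anc(A,B),\anc(A,C),\anc(B,C)\}$ against the three neighbours $B,C,D$ of $A$ in $\core(M^*)$, and confirming that $|\mathcal{S}_{A,B,C}|$ is exactly $4$ (not fewer, which would happen if, say, two of the roots coincided, and not more, which is impossible). This requires a careful reading of how the trees $\tree(B)$, $\tree(C)$, and the portion of $\tree(A)$ above $D$ sit in $M$ versus $M^*$, and is the place where the ``generic'' and ``good'' hypotheses must be used in full. Everything else is routine once Fig.~\ref{fig_bij} is read backwards, and in fact this lemma together with Lemmas~\ref{lem:set_bij} sets up the verification (presumably in the following lemma) that $\Phi$ and $\Psi$ are mutually inverse.
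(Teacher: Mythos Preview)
Your overall structure matches the paper's: show $M$ is unicellular of genus $g-1$, then dominant, then verify genericity. The first two parts are essentially correct and coincide with the paper's argument.

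The genericity verification, however, contains a concrete error that is not merely bookkeeping. You write ``one identifies $D=\root(A)$, $B=\anc(A,B)=\root(B)$, etc.'' This is wrong. In $M^*$ the vertices $B$, $C$, $D$ have degree~$2$ in the $2$-core (they are neighbours of the branching vertex $A$ in a dominant map). After deleting $e_2,e_3$, each of $A$, $B$, $C$ has degree~$1$ in what remains of the old $2$-core, so the leaf-stripping that produces $\core(M)$ absorbs not only $A,B,C$ but the entire degree-$2$ chains emanating from them until the \emph{next} branching vertex of $M^*$ is reached. Thus $\root(A)$, $\root(B)$, $\root(C)$ in $M$ are those next branching vertices, not $D$, $B$, $C$ themselves. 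In particular your claim $B=\root(B)$ would put $B\in\mathcal{S}_{A,B,C}$, directly contradicting condition~(i). (A minor slip: $L$ is required to lie in $[1,n-1+2g]$, not $[1,n-3+2g]$.)

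The paper avoids this tangle with a clean counting argument you should adopt. Since $M^*$ is dominant it has exactly $4g-2$ branching vertices; since $M$ is also dominant (which you have already shown) it has $4(g-1)-2=4g-6$. Hence exactly four branching vertices of $M^*$ cease to be branching in $M$, and one checks that this set of four is precisely $\mathcal{S}_{A,B,C}$. Conditions (ii) and (iii) then fall out at once: elements of $\mathcal{S}_{A,B,C}$ were branching in the dominant map $M^*$, so are pairwise non-adjacent and non-adjacent to the surviving branching vertices of $M$. Your treatment of condition~(iv) is fine and agrees with the paper's.
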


\begin{proof}
Once again, it is easy to see from the exploration that $M$ has only one face (see Figure~\ref{fig_inverse_bij}). By Euler's formula, since we deleted two edges, the genus of $M$ has to be be $g-1$. 
Since we have only been deleting two edges, the edge labeling stays consistent, 
and the set of branching vertices of $M$ is included in that of $M^*$ ;
in particular there are no pairs of adjacent branching vertices and they are all of degree $3$
in the $2$-core. Hence $M\in \tilde{\mathcal{H}}^{g-1}_n$.

Now, we check that $(A,B,C)$ is a generic triplet.
 First, note that $A$ is a branching vertex of degree $3$ in the $2$-core of $M^*$,
 while $B$ and $C$ have degree $2$ ($B$ and $C$ are neighbors of the branching vertex $A$ in $M^*$ and thus not branching vertices themselves).
 If we delete the edges $e_2$ and $e_3$ from the $2$-core of $M^*$, 
 the vertices $A$, $B$ and $C$ have degree $1$ in the resulting map.
  Therefore $A$, $B$ and $C$ do not belong to the $2$-core of $M$ and cannot be ancestors of each other:
  in particular, condition i) of being generic is satisfied.
 Also, the set $\mathcal{S}_{A,B,C}$ in $M$ is exactly the set of vertices that were branching in $M^*$ but not anymore in $M$. 
 Therefore $|\mathcal{S}_{A,B,C}|=4$ and elements of
 $\mathcal{S}_{A,B,C}$ cannot be adjacent to each other,
 nor equal or adjacent to a branching vertex of $M$
 (recall that since it is a dominant Hurwitz map,
 $M^*$ have $4g-2$ branching vertices,
 and that no two of them can be adjacent). 
 Finally,
 since $e_1$ is the arrival edge of $A$ and since $A$ is a good trisection,
 the root of $M^*$ must be somewhere in the blue zone of Figure~\ref{fig_inverse_bij}, 
 and therefore same goes for the root of its image $M$.
 This shows condition iv) of the definition of generic.
\begin{figure}
\center
\includegraphics[scale=0.7]{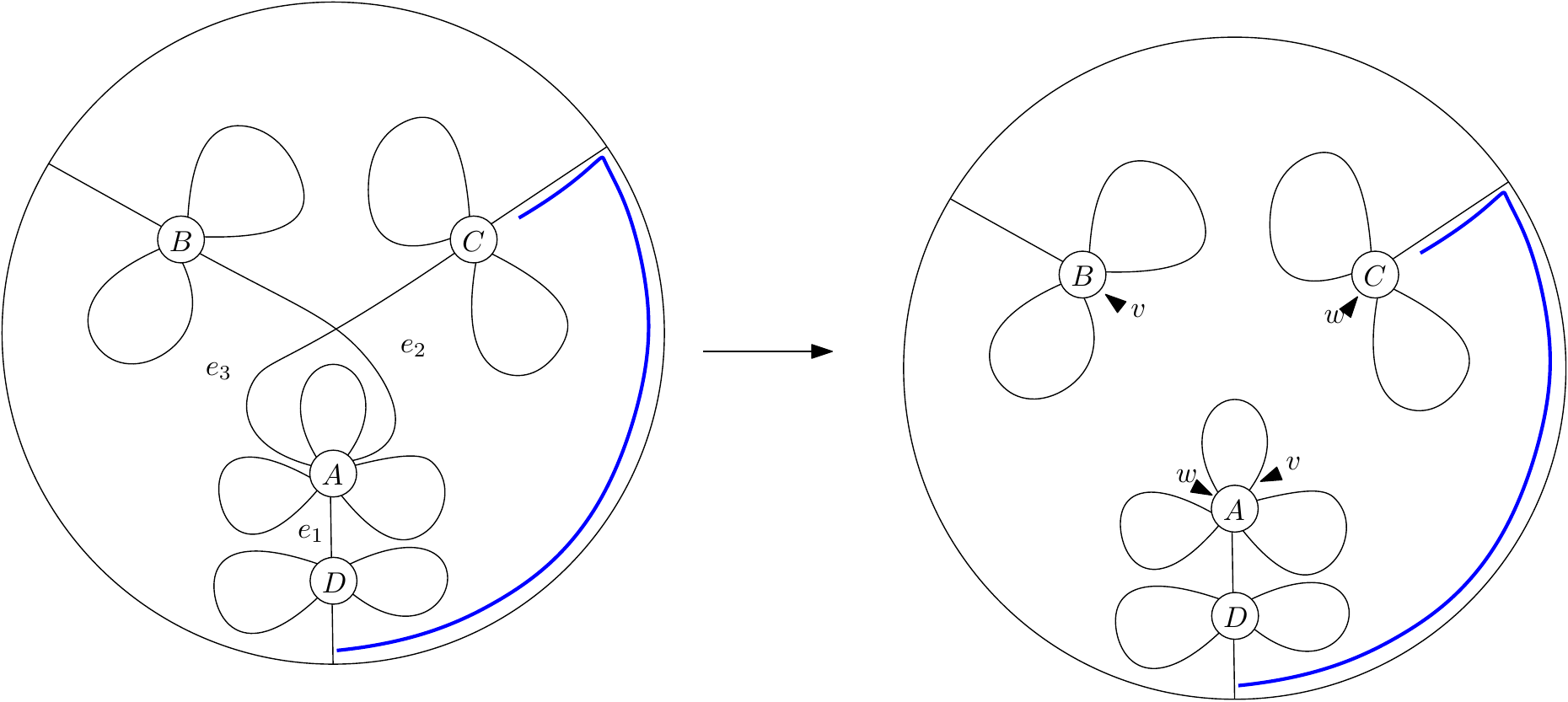}
\caption{The inverse bijection. The root has to be somewhere in the blue zone.}\label{fig_inverse_bij}
\end{figure}
\end{proof}

The following is now immediate by construction.
\begin{proposition}\label{prop_asympto_bij}
The operations $\Phi$ and $\Psi$ are inverse to each other;
 thus they are bijective mappings from $\mathcal{A}_{g-1,n}$ to $\mathcal{C}_{g,n}$
 and conversely.
\end{proposition}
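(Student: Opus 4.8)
The plan is to verify directly that $\Psi \circ \Phi = \mathrm{id}_{\mathcal{A}_{g-1,n}}$ and $\Phi \circ \Psi = \mathrm{id}_{\mathcal{C}_{g,n}}$; combined with \cref{lem:set_bij} and \cref{lem_set_bij_inverse}, which already guarantee that $\Phi$ and $\Psi$ land in the correct sets, this gives the bijectivity claim. Since both maps are defined by explicit local surgeries (adding, resp. deleting, two edges incident to the distinguished vertex $A$, plus an order-preserving relabeling), the verification is essentially bookkeeping, but three points need genuine care.

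First I would run $\Psi$ on $(M^*,A) = \Phi(M,A,B,C,L)$. In $M^*$ the three $2$-core edges at $A$ are the arrival edge $e_1$ of $A$ (which is the arrival edge of $A$ already in $M$, since the edges $v$ and $w$ we added are both ``outgoing'' at $A$ in the exploration, as recorded in Fig.~\ref{fig_bij}) together with the two new edges labeled $v$ and $w$. The definition of $\Psi$ deletes the two non-arrival $2$-core edges at $A$, i.e.\ exactly the edges labeled $v$ and $w$, recovering the underlying map $M$; their other endpoints are $B$ and $C$ and their labels recover the pair $L = \{v,w\}$. The one thing to check is that the \emph{cyclic order} in which $\Psi$ reads off $(e_1,e_2,e_3)$ reproduces $(B,C)$ in the right roles: here I would invoke the convention in the definition of $\Phi$ that the $v$-corner of $A$ is visited before the $w$-corner, so that $e_2$ (the edge following $e_1$ cyclically) is the one labeled $v$ and leads to $B$, and $e_3$ leads to $C$. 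The order-preserving relabelings in $\Phi$ and $\Psi$ are inverse to each other by construction (adding then removing $1$ above $\min L$ and $\max L$), so the labeled map is restored exactly. Genericity of $(A,B,C)$ in $M$ is then automatic since we started from an element of $\mathcal A_{g-1,n}$; alternatively it follows from \cref{lem_set_bij_inverse}.

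Conversely, starting from $(M^*,A) \in \mathcal C_{g,n}$ and applying $\Psi$ then $\Phi$: $\Psi$ produces $(M,A,B,C,L)$ with $L = \{v,w\}$ the labels of the two deleted edges $e_2,e_3$, and since $A$ is a good trisection with $e_1$ its arrival edge, the exploration of $M$ visits the $v$-corner of $A$ (where $e_2$ sat) before the $w$-corner (where $e_3$ sat), matching the convention used in $\Phi$. Thus $\Phi$ re-inserts an edge labeled $v$ from the $v$-corner of $A$ to the $v$-corner of $B$ and an edge labeled $w$ from the $w$-corner of $A$ to the $w$-corner of $C$; because the $v$- and $w$-corners are \emph{uniquely} determined by the Hurwitz (consistency) condition and because deletion in $\Psi$ did not merge any corners of $B$ or $C$ with others (as $B,C$ have degree exactly $2$ in the $2$-core of $M^*$, a consequence of dominance, so after deletion they become leaves and their corner structure is transparent), these re-inserted edges land in precisely the corners they occupied in $M^*$. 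The relabeling is undone as before, so we recover $(M^*,A)$ with the same marked trisection.

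The main obstacle I expect is the corner-bookkeeping in the last paragraph: making sure that the ``$v$-corner of $B$'' and ``$w$-corner of $C$'' into which $\Phi$ inserts edges are genuinely the same corners from which $\Psi$ deleted them, i.e.\ that insertion and deletion of edges at a vertex are honestly inverse operations at the level of corners once the Hurwitz consistency condition pins down the insertion slot. This is exactly where the hypotheses that $M^*$ is dominant (so $B,C$ are non-branching, degree-$2$ core vertices with no adjacent branching vertices) and that $A$ is a \emph{good} trisection (so the root sits in the ``blue zone'' of Fig.~\ref{fig_inverse_bij} and the arrival-edge bookkeeping is unambiguous) are used; with those in hand the picture is as in Figs.~\ref{fig_bij}--\ref{fig_inverse_bij} and the identities follow. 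I would phrase the write-up as ``it is straightforward to check from the construction, see Figs.~\ref{fig_bij} and \ref{fig_inverse_bij}'', spelling out only the order-of-visit and corner-identification points above.
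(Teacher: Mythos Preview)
Your proposal is correct and follows the same approach as the paper, which dispatches the proposition in a single line (``immediate by construction''). You have simply unpacked the bookkeeping---identifying the arrival edge $e_1$ with the old parent edge of $A$, matching the cyclic order $(e_1,e_2,e_3)$ to the convention that the $v$-corner is visited before the $w$-corner, and checking that the order-preserving relabelings cancel---that the paper leaves to Figures~\ref{fig_bij} and~\ref{fig_inverse_bij}.
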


\subsection{Almost all elements are in bijection}
The main goal of this section is to prove the following proposition.
The proofs mix analytic combinatorics techniques and results from the theory of random trees.
\begin{proposition}
\label{prop:almost_all_bij}
  For fixed $g\ge 1$, as $n$ tends to $+\infty$, we have the following.
\begin{itemize}
\item If $x$ is uniformly chosen in $\mathcal{B}_{g-1,n}$, then $x\in \mathcal{A}_{g-1,n}$ whp.
\item If $x$ is uniformly chosen in $\mathcal{D}_{g,n}$, then $x\in \mathcal{C}_{g,n}$ whp.
\end{itemize}
\end{proposition}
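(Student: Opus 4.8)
The plan is to show that, for a uniform element of $\mathcal{B}_{g-1,n}$ (resp. $\mathcal{D}_{g,n}$), the defining extra conditions of $\mathcal{A}_{g-1,n}$ (resp. $\mathcal{C}_{g,n}$) fail with probability $o(1)$. I will treat the two bullets separately but with the same underlying philosophy: condition on the scheme decomposition of the dominant Hurwitz map (which exists by \cref{prop:AlmostAllTilde}), so that the object is a fixed cubic scheme with $4g-2$ Hurwitz trees and $6g-3$ doubly rooted Hurwitz trees glued on; then the randomness reduces to (i) a uniform forest of a bounded number of Galton--Watson-type trees whose total size is $n-O(1)$, together with (ii) the independent uniform choices of $A,B,C$ (resp. of the trisection $A$) and of $L$. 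The key quantitative input is the singular expansion \eqref{eq_devt_T}--\eqref{eq_devt_D}: a uniform Hurwitz tree on $k$ edges is a conditioned Galton--Watson tree (Poisson offspring, in the standard way), so its height is $\O_P(\sqrt k)$, the subtree hanging below a uniform vertex has size $\Theta(k)$ with the usual size-biasing, and the probability that two (or three) independent uniform vertices have a prescribed ancestry relation, or are close to each other or to a fixed vertex like the root or a branching vertex, is $O(k^{-1/2})=o(1)$.

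For the first bullet, I would enumerate the ways genericity can fail. Condition (i), $B,C\notin\mathcal S_{A,B,C}$, fails only if $B$ or $C$ is a root of one of the $O(g)$ trees, or equals $A$, or is an ancestor of one of the marked vertices — each a $o(1)$ event since $B,C$ are (essentially) uniform among $n-O(1)$ vertices and the ``bad'' sets have size $O(1)$ or are low-lying ancestral lines of size $\O_P(\sqrt n)$. Condition (ii), $|\mathcal S_{A,B,C}|=4$ with no two elements adjacent: $|\mathcal S_{A,B,C}|<4$ forces a coincidence among the four roots/ancestors, again $o(1)$; non-adjacency of two uniform-ish points (or a point and its ancestor-of-a-sibling) holds whp for the same reason. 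Condition (iii), the elements of $\mathcal S_{A,B,C}$ avoid the $O(g)$ branching vertices and their neighbours: again $O(n^{-1/2})$ per forbidden vertex. Condition (iv), the root is not a corner of a descendant of $C$ nor of the parent of $A$: here the root is the distinguished root corner of the dominant map, i.e. a uniform corner, so this is the probability that a uniform corner lies in a size-biased subtree — a genuinely $\Theta(1)$ event a priori, so this is where I expect the real work. I would handle it by noting that $A,B,C$ are themselves uniform, so ``root $\in\desc(C)$'' is, after symmetrizing, the probability that among three uniform vertices one is an ancestor of the root vertex, which by exchangeability is comparable to the probability that two uniform vertices are in ancestor relation, hence $o(1)$; the descendants-of-parent-of-$A$ case is essentially the same with one extra edge.

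For the second bullet, a uniform $(M,A)\in\mathcal D_{g,n}$ is a uniform dominant Hurwitz map together with a uniform one of its $2g$ trisections (all incident to distinct branching vertices, by \cref{lem_trisec}); equivalently, by a $2g$-to-one argument, $M$ is close in distribution to a size-biased-by-number-of-trisections dominant map, which since that number is the constant $2g$ is just a uniform dominant map, and $A$ is a uniform trisection vertex, hence essentially a uniform branching vertex. Being a \emph{good} trisection requires the root corner not to lie in the descendant subtree of $A$ or of its three $2$-core neighbours $B,C,D$; these are four of the $O(g)$ trees hanging off the scheme, and the probability that a uniform corner falls into any one prescribed tree of the forest is (size of that tree)$/(2n+O(1))$. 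By the singular analysis each individual Hurwitz tree in the decomposition has size $\O_P(\sqrt n)$ (it is one of $O(g)$ i.i.d.-like trees summing to $n-O(g)$, but a \emph{named} one — say the one rooted at a prescribed scheme vertex — is of size $\O_P(\sqrt n)$, not $\Theta(n)$, because the doubly rooted trees carry the bulk of the mass: indeed $D(z)$ has a $(1-ez)^{-1/2}$ singularity while $T(z)$ is finite at $\rho$). Wait — the doubly rooted trees are also $O(g)$ in number and sum with the simple trees to $n$, so \emph{some} tree has size $\Theta(n)$; the point is rather that $A,B,C,D$ are branching vertices and the trees rooted at branching vertices are the \emph{simple} Hurwitz trees $T_V$, whose sizes are governed by $T(\rho)<\infty$, hence $\O_P(1)$ each in fact — so the probability the root falls in one of these four is $O(1/n)=o(1)$. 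I would make this precise by extracting from \eqref{eq:HgEGF} the joint law of the tree-sizes (a simple ratio of coefficients), which shows the simple-tree sizes converge in law to independent finite random variables.

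The main obstacle I anticipate is condition (iv) in the first bullet: unlike the other conditions it is not ``two random points are far apart'' but ``a uniform corner misses a size-biased subtree'', and one must argue carefully that re-expressing the root as just another uniform vertex (which is legitimate because all of $A,B,C$ and the root are exchangeable uniform choices, given the scheme and tree-sizes) turns it into a two-uniform-points ancestry statement of probability $O(n^{-1/2})$. A secondary technical point is to justify rigorously that conditioning on the scheme decomposition is harmless — i.e. that a uniform element of $\mathcal B_{g-1,n}$ (resp. $\mathcal D_{g,n}$) is, up to $o(1)$ total variation, the ``build a dominant map from a uniform forest'' model with independently marked vertices — which follows from \cref{prop:AlmostAllTilde} together with the product form of \eqref{eq:HgEGF}, but needs to be stated cleanly before the union bound over bad events is run.
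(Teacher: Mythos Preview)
Your treatment of the first bullet is broadly correct and close in spirit to the paper's argument: condition on the scheme decomposition, observe that $A,B,C$ land in doubly-rooted trees w.h.p., and use conditioned Galton--Watson estimates to show the various coincidences and adjacencies each have probability $o(1)$. The paper's proof is slightly more careful in one respect you gloss over: it does a genuine case analysis according to whether $A,B,C$ fall in the same doubly-rooted tree or in distinct ones (these events all have probability bounded away from $0$), and in each case invokes the known joint scaling of distances between uniform marked points and their pairwise common ancestors in a conditioned critical Galton--Watson tree. For condition (iv) the paper uses \cref{lem_small_desc_trees} (the parent of a uniform vertex has $O_P(1)$ descendants) rather than your exchangeability-of-root-and-$C$ argument, but your route would also work.

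The second bullet has a genuine gap. You write that $B,C,D$ (the $2$-core neighbours of the trisection $A$) are branching vertices, so the trees hanging off them are the simple Hurwitz trees $T_V$. This is false: in a dominant map no two branching vertices are adjacent, so $B,C,D$ are \emph{not} branching vertices. They are the marked endpoints of the doubly-rooted trees $D_V$ incident to $A$, and $\desc(B)$ consists of the part of $D_V$ hanging off $B$ away from the core path. Since $D_V$ may well have size $\Theta(n)$, you cannot conclude $|\desc(B)|$ is small just from the scheme picture. The paper's fix is a separate probabilistic fact about conditioned critical Galton--Watson trees: the root is w.h.p.\ not a branching point, in the sense that one of its children already carries mass $|D_V|(1-o(1))$; since the second marked vertex is (conditionally) uniform it lies in that big subtree w.h.p., forcing the core path through that child and leaving $|\desc(B)|=o(|D_V|)=o(n)$. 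This step is missing from your plan.

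A smaller correction: your claim that the simple trees $T_V$ have size $O_P(1)$ because $T(\rho)<\infty$ is not right under the conditioning on total size~$n$; \cref{lem_small_tree} computes $\E[|T_V|]\sim C\sqrt{n}$, so the correct bound is $O_P(\sqrt{n})$. This is harmless for your purposes (it still gives the root misses $\tree(A)$ w.h.p.), but the reasoning you give does not establish it.
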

We start with the first item.
We note that a uniform element $(M,A,B,C,L)$ in $\mathcal{B}_{g-1,n}$ is taken as follows.
First take a uniform $M$ in $\tilde{\mathcal{H}}^{g-1}_n$. Second take a uniform random 3-element
subset of the vertex set of $M$ and name its elements $A$, $B$ and $C$
in such a way that $\spec(A) \prec \spec(B) \prec \spec(C)$.
Finally, take a uniform 2-element subset $L$ of $[1,n-1+2g]$,
independently from the rest.

Furthermore by construction, a uniform random map $M$ in $\tilde{\mathcal{H}}^{g-1}_n$ can be obtained
by 
\begin{itemize}
\item taking uniformly at random the scheme $M_1$ in a finite set (see the step i) and ii)
of the construction of $\setH$ in \cref{sec:hurwmaps});
\item
and then taking a tuple of simply and doubly rooted Hurwitz trees 
\[\Big( (T_V)_{V \in M_1;\deg(V)=3},\, (D_V)_{V \in M_1;\deg(V)=2} \Big)\]
uniformly at random such that the sum of their sizes is equal to $n$.
\end{itemize}
In the second step, the size of the individual trees $T_V$ and $D_V$ are random
but it is important to notice that, conditionally on its size $n_V$, each $T_V$ (resp. $D_V$)
is a uniform simply rooted (resp. doubly rooted) Hurwitz tree of size $n_V$.

We first state and prove some lemmas about the respective sizes
 of various parts of the random map $M$: trees attached to branching vertices
 and the descendant sets of (the parents of) the random vertices
 $A$, $B$ and $C$.

\begin{lemma}\label{lem_small_tree}
Let $H$ be uniform in $\setH$, and $V$ be a branching vertex of $H$. Then 
$|tree(V)|$ is $\O_P(\sqrt{n})$.
\end{lemma}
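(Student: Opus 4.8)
The idea is to use the scheme decomposition of $\tilde{\setH}$ recalled in \cref{sec:hurwmaps}, together with the singularity analysis of the generating functions $T$ and $D$, to control the law of the size of a single tree attached to a branching vertex. Since $\bm H^g_n$ is close in total variation to a uniform element $\tilde{\bm H}^g_n$ of $\tilde{\setH}$ (\cref{prop:AlmostAllTilde}), it suffices to prove the statement for $\tilde{\bm H}^g_n$. By the description of a uniform element of $\tilde{\mathcal H}^g_n$, one first chooses the scheme $M_1$ uniformly in a finite set; this has no asymptotic effect, and $\tree(V)$ for a branching vertex $V$ is then one of the $4g-2$ simply rooted Hurwitz trees glued to the degree-$3$ vertices. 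Conditionally on the vector of sizes of the $4g-2$ simply rooted and $6g-3$ doubly rooted Hurwitz trees summing to $n$, each tree is a uniform Hurwitz tree of its prescribed size, so the task is reduced to a one-dimensional statement about the marginal law of the size of one tree in this size-constrained tuple.

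\textbf{Key steps.} First I would fix the scheme $M_1$ and let $m = n - 1 + 2g$; write $s$ for the size (number of edges) of the distinguished tree $\tree(V)$. The probability that $\tree(V)$ has size $s$ is proportional, by the labelled-product structure, to
\[
[z^{n-1-s}]\Big( T(z)^{4g-3} D(z)^{6g-3} \Big) \cdot [z^{s}]\,T(z),
\]
up to the overall normalization $[z^{n-1}]\big(T(z)^{4g-2}D(z)^{6g-3}\big)$ (I am being slightly informal about the edge-count shift coming from step i), but it is a fixed additive constant). Using $T(z) = e - e\sqrt{2}(1-ez)^{1/2} + O(1-ez)$ and $D(z) = \tfrac{e}{\sqrt2}(1-ez)^{-1/2} + O(1)$ from \cref{eq_devt_T,eq_devt_D}, the product $T^{4g-3}D^{6g-3}$ has a singularity of type $(1-ez)^{-(6g-3)/2}$ at $\rho = 1/e$, so its coefficients behave like $e^{k} k^{(6g-5)/2}$ up to constants; meanwhile $[z^s]T(z) = s^{s-1}/s! \sim e^s/(\sqrt{2\pi}\, s^{3/2})$ by Stirling. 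Plugging in $k = n-1-s$, the probability that $|\tree(V)| = s$ is, up to constants and lower-order factors,
\[
\frac{(n-1-s)^{(6g-5)/2}}{s^{3/2}\, n^{(6g-3)/2}},
\]
valid uniformly for $s$ in the bulk range, with the appropriate interpretation at the boundary $s$ close to $n$. Summing this over $s \ge K\sqrt n$ gives a bound of order $\int_{K\sqrt n}^{n} s^{-3/2}\,ds \cdot \text{(const)} = O(1/\sqrt{K\sqrt n} \cdot \sqrt n) = O(K^{-1/2})$ once one is careful with the factor $(n-1-s)^{(6g-5)/2}/n^{(6g-3)/2} = O(1/n)$ near the top and $O(1)$ in the bulk; more precisely one splits the sum at $s = n/2$ and estimates each part. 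This shows $\P(|\tree(V)| \ge K\sqrt n) \le C/\sqrt K$ for a constant $C = C(g)$ uniform in $n$, which is exactly the assertion $|\tree(V)| = \O_P(\sqrt n)$. Finally I would remark that there are only finitely many schemes and finitely many branching vertices, so a union bound over the choice of scheme and over which of the $4g-2$ degree-$3$ slots carries $V$ preserves the conclusion.

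\textbf{Main obstacle.} The delicate point is making the coefficient asymptotics uniform in $s$ across the whole range $1 \le s \le n-1$, rather than just pointwise for fixed $s$: the transfer theorem gives $[z^k] (1-ez)^{-\alpha} \sim e^k k^{\alpha-1}/\Gamma(\alpha)$ for each fixed $k\to\infty$, but here $k = n-1-s$ ranges from $O(1)$ to $n-1$ as $s$ varies, so one needs a local-limit-type or uniform estimate on the coefficients of $T^{4g-3}D^{6g-3}$ (equivalently, a uniform bound $[z^k](\cdots) = O(e^k (k+1)^{(6g-5)/2})$ valid for all $k \ge 0$). Such uniform bounds on coefficients of algebraic-logarithmic singular functions are standard — they follow from the transfer theorem combined with the boundedness of the remainder in the singular expansion on a $\Delta$-domain, or alternatively one can invoke known local limit theorems for the size of a subtree in a conditioned Galton--Watson tree (the Hurwitz trees being Cayley trees, i.e.\ $\mathrm{Poisson}(1)$ Galton--Watson trees conditioned on their size). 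I expect the cleanest route is to phrase $\tree(V)$ directly as the subtree rooted at a fixed vertex-position inside a conditioned Galton--Watson forest and cite the relevant tail estimate, avoiding repeating the singularity analysis; but either way the one real piece of work is controlling the $s$ close to $n$ regime and the uniformity, everything else being bookkeeping with the constants already appearing in the proof of \cref{prop:AlmostAllTilde}.
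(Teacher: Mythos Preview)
Your approach is valid in outline but takes a different, longer route than the paper. The paper avoids the uniformity obstacle you identify by computing the \emph{expectation} rather than the tail: it introduces a bivariate generating function $\tilde H_g(z,u)$ with $u$ marking $|\tree(V)|$, so that $\E[|\tree(V)|]=[z^{n+2g-1}]\partial_u\tilde H_g(z,1)\big/[z^{n+2g-1}]\tilde H_g(z)$; each side is a single coefficient, estimated by one application of the transfer theorem (using $T'(z)\sim \tfrac{e^2}{\sqrt2}(1-ez)^{-1/2}$), giving $\E[|\tree(V)|]=O(\sqrt n)$, and Markov's inequality finishes. Your direct tail estimate, once the arithmetic is corrected (the normalising denominator should carry exponent $(6g-5)/2$ rather than $(6g-3)/2$, since the rooting operator $z\partial_z-2g+1$ appears on both sides and cancels; and the stray $\cdot\sqrt n$ in your integral bound should be dropped), actually proves more --- it yields $\P(|\tree(V)|\ge K)=O(K^{-1/2})$ uniformly in $n$, hence $|\tree(V)|=\O_P(1)$ --- but at the cost of the uniform coefficient bound you rightly flag as the real work. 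Neither slip affects the final conclusion.
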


\begin{proof}
We can modify $\tilde{H}_g$ by adding a variable $u$ that records the size 
$|\tree(V)|$ of an arbitrary tree attached to a branching vertex:
\[\tilde{H}_g(z,u)=\Big(z\frac{\partial}{\partial z}-2g+1\Big) \cdot \Big(\alpha_g \frac{z^{12g-6}}{(12g-6)!} T(uz)T(z)^{4g-3}D(z)^{6g-3}\Big).\]
Then it is classical (see, e.g., \cite[p. 159]{FS09}) 
that the expected size of $|\tree(V)|$ can be recovered 
from this bivariate generating series by the formula
\[\E\big[ |\tree(V)| \big] = \frac{[z^n]\tilde{H}'_g(z,1)}{[z^n]\tilde{H}_g},\]
where $\tilde{H}'_g$ denotes the derivative of $\tilde{H}_g$ with respect to $u$.
In our case, we have
\[\tilde{H}'_g(z,1)=\Big(z\frac{\partial}{\partial z}-2g+1\Big) \cdot \Big(\alpha_g \frac{z^{12g-5}}{(12g-6)!} T'(z) \, T(z)^{4g-3}\, D(z)^{6g-3}\Big).\]

But, from \eqref{eq_devt_T} and singular differentiation, we get
 \[T'(z)=\frac{e^2}{\sqrt 2} (1-ez)^{-1/2} +O(1).\]
Recalling the singular expansions \eqref{eq_devt_T} and \eqref{eq_devt_D} for $T$ and $D$,
we have the following expansion for $\tilde{H}'_g(z,1)$,
\[\tilde{H}'_g(z,1) = C (1-ez)^{-3g} (1+o(1)),\]
where, here and below, $C$ is a constant which we do not need to explicit
and whose value can change from line to line.
By the transfer theorem, this yields
\[[z^n]\tilde{H}'_g(z,1) = C e^n n^{3g-1} (1+o(1)) \]
and finally, using \eqref{eq:equiv_coef_Htilde},
\[\E\big[ |\tree(V)| \big]=\frac{[z^n]\tilde{H}'_g(z,1)}{[z^n]\tilde{H}_g}=C\sqrt{n} (1+o(1)).\]
We conclude by applying Markov inequality.
\end{proof}

\begin{lemma}\label{lem_small_desc_trees}
Using the above notation, the parents of $A$, $B$ and $C$
all have $\O_P(1)$ descendents.
\end{lemma}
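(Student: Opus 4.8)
The plan is to carry over the generating-function / singularity-analysis technique used in the proof of \cref{lem_small_tree}, but now recording the size of the descendant set of a \emph{uniform random vertex} of the map rather than the size of a whole tree attached to a branching vertex. First I would recall the construction of a uniform dominant Hurwitz map: choosing a scheme $M_1$ from a finite set, then a tuple of simply and doubly rooted Hurwitz trees whose total size is $n$. Picking the vertex $A$ uniformly among the $n$ vertices amounts to pointing one vertex in this tuple of trees. Since the parent of $A$ (when it exists) is the parent of $A$ in $\tree(A)$, the set $\desc(\text{parent}(A))$ is, conditionally on $A$ lying in a given tree $T_V$ (or $D_V$), exactly the fringe subtree of $T_V$ hanging from the parent of $A$. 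So the quantity I want to control is: in a uniform (simply or doubly) rooted Hurwitz tree, pick a uniform vertex, and look at the size of the fringe subtree rooted at its parent.

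The key computational step is to introduce an extra variable $u$ in $\tilde H_g$ marking, for a uniform pointed vertex, the size of the fringe subtree at its parent. Concretely, to point a vertex and mark its parent's fringe subtree, I would modify the tree generating series as follows: replace, in $T(z)=\exp(zT(z))$, one occurrence of the "subtree below a child" $T(z)$ by $z T(uz)$ divided appropriately so as to both point the chosen vertex and weight the whole subtree above it (including the parent) by $u$; more precisely the pointing-a-vertex-and-recording-ancestor-subtree operator produces a series of the shape $z T(z)\,\partial_u\big[\text{something}\big]$ that I would spell out carefully. The upshot is a bivariate series $\widehat H_g(z,u)$ with $\widehat H_g(z,1)$ counting maps with a pointed vertex (EGF $z\partial_z \tilde H_g$ up to the shift), and $\partial_u \widehat H_g(z,1)$ whose coefficients give $\E[\,|\desc(\text{parent}(A))|\,]$ after dividing by $[z^n]\big(z\partial_z\tilde H_g\big)$. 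Because $\partial_u$ applied to $T(uz)$ at $u=1$ brings down a factor $zT'(z)$ but this factor sits \emph{below} a fixed vertex (so it is not multiplied by any further $D$-type series that would raise the singularity exponent), the singular exponent of $\partial_u \widehat H_g(z,1)$ at $\rho=1/e$ is the \emph{same} as that of the pointed-map series $z\partial_z \tilde H_g$. Transfer theorem then gives $\E[\,|\desc(\text{parent}(A))|\,]=C(1+o(1))$, a constant, and Markov's inequality yields $|\desc(\text{parent}(A))|=\O_P(1)$. The same argument applies verbatim to $B$ and $C$ (they are taken from the same uniform triple; one may just bound each marginally, since an $\O_P(1)$ bound on each of three variables gives an $\O_P(1)$ bound on their maximum).

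One subtlety I would address: $A$ may itself be a root of its tree (i.e.\ lie on the $2$-core), in which case it has no parent. This is a negligible event — by \cref{lem_small_tree} the trees attached to branching vertices, and similarly the doubly rooted trees along core edges, have size $\O_P(\sqrt n)$ each while there are only $O(g)$ of them, so the core has $\O_P(\sqrt n)=o(n)$ vertices and a uniform $A$ lands on the core with probability $o(1)$; on the complementary event the parent is well defined and the fringe-subtree analysis above applies. A second point of care is that the fringe subtree at the parent of a \emph{uniform} vertex is not the same as the fringe subtree at a uniform \emph{edge} or at the parent of the root of a size-biased tree, so I must make sure the marking operator genuinely corresponds to "pick one of the $n$ vertices uniformly", which is why the pointing must be done at the level of the full map EGF (via $z\partial_z - 2g+1$ applied after the $u$-marking inside a single tree factor), exactly as the expectation formula $\E[\cdot]=[z^n]\partial_u\widehat H_g(z,1)/[z^n]\widehat H_g(z,1)$ requires.

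The main obstacle I anticipate is purely bookkeeping: writing down the correct bivariate series $\widehat H_g(z,u)$ so that $u$ marks precisely the size of the parent's fringe subtree of a uniformly pointed vertex, and then doing the singular differentiation (Theorem VI.8 in \cite{FS09}) cleanly enough to read off that the singular exponent is unchanged — i.e.\ that $\partial_u\widehat H_g(z,1)$ has a pole of the \emph{same} order at $1/e$ as $z\partial_z\tilde H_g$, not a larger one. Once that exponent match is established, the conclusion $\E[\,|\desc(\text{parent}(A))|\,]=\Theta(1)$ and hence the $\O_P(1)$ statement follow immediately from the transfer theorem and Markov's inequality, exactly as in \cref{lem_small_tree}.
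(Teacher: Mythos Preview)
Your approach has a genuine gap: the expectation you aim to bound is \emph{not} $O(1)$, so Markov's inequality cannot yield $\O_P(1)$. Concretely, in a conditioned critical Galton--Watson tree of size $n$ (which is what each $T_V$ or $D_V$ is, conditionally on its size), the fringe subtree at a uniform random vertex converges in distribution to a finite tree, but its \emph{expected} size diverges like $\sqrt n$. Indeed
\[
\E\big[|\theta_A|\big] \;=\; \frac1n\sum_{v} |\theta_v| \;=\; \frac1n\sum_{v}\big(\mathrm{depth}(v)+1\big) \;=\; \Theta(\sqrt n),
\]
since the total path length of such a tree is $\Theta(n^{3/2})$; and $|\theta_{\mathrm{parent}(A)}|\ge |\theta_A|$, so the parent's fringe subtree also has expectation $\Theta(\sqrt n)$. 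In the analytic language, your $\partial_u\widehat H_g(z,1)$ will have singular exponent strictly larger (by $1/2$) than that of the pointed-map series $z\partial_z\tilde H_g$, precisely because this extra $\sqrt n$ shows up; your intuition that ``the factor $zT'(z)$ sits below a fixed vertex and so does not raise the exponent'' is incorrect --- it is exactly that factor $T'(z)\sim C(1-ez)^{-1/2}$ that raises the exponent. Markov then only gives $\O_P(\sqrt n)$, which is the content of \cref{lem_small_tree}, not the stronger $\O_P(1)$ claimed here.

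The paper avoids moment methods entirely and uses a distributional argument: conditionally on its size, $\tree(A)$ is a conditioned $\Poisson(1)$ Galton--Watson tree and $A$ is uniform in it, so one can invoke the local-limit result of Stufler \cite[Theorem~5.1]{stufler2019local} that the fringe subtree rooted at the \emph{parent} of a uniform vertex converges in distribution to an a.s.\ finite limiting tree. Distributional convergence to a finite limit gives stochastic boundedness directly, bypassing the unbounded first moment. If you want to repair your analytic approach, you would need to control the full distribution (e.g.\ show $\P(|\theta_{\mathrm{parent}(A)}|>K)\to 0$ as $K\to\infty$ uniformly in $n$), which in generating-function terms means extracting coefficients of $\widehat H_g(z,u)$ for fixed $u<1$ rather than differentiating at $u=1$ --- essentially reproving the local-limit statement.
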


Before proving the lemma, we make two comments:
\begin{itemize}
\item if $A$, $B$ or $C$ is on the $2$-core of $M$,
its parent is ill-defined. We will see later in the proof of \cref{prop:almost_all_bij}
that this happens with probability tending to $0$;
\item
the lemma implies in particular
that  $A$, $B$ and $C$ themselves have $\O_P(1)$ descendents in $M$.
\end{itemize}
\begin{proof}
It is enough to prove that the number of descendents
of the parent of a uniform random vertex $A$ in a uniform random unicellular
Hurwitz $M$ of genus $g-1$ has $\O_P(1)$ descendents.
If the size of $\tree(A)$ is bounded (along a subsequence),
then the lemma is trivial (along that subsequence).
We therefore assume that the size of $\tree(A)$ tends to infinity.

Conditioning on the $2$-core of $M$ and on which vertex of this $2$-core
is $\root(A)$, then $\tree(A)$ conditioned to its size is a uniform random Hurwitz tree of this size and $A$ is a uniform random vertex in $\tree(A)$.
A uniform random Hurwitz tree is the same as a uniform random Cayley tree
(endowed with its unique plane embedding making the labeling consistent);
it is known that such trees are distributed as conditioned Galton-Watson
trees with offspring distribution $\Poisson(1)$ (see e.g.  \cite[Example $10.2$]{Jan12}).
For such trees in the critical case (as here), 
it is known (see \cite[Theorem 5.1]{stufler2019local}),
that the fringe subtree rooted at the parent of
a uniform random vertex\footnote{The fringe subtree
is the tree consisting at a node and all its descendents.} 
tends to a limiting tree, which is finite a.s.
In particular its size is stochastically bounded, proving the lemma.
\end{proof}

After this preparation we can prove \cref{prop:almost_all_bij}.

\begin{proof}[Proof of Proposition \ref{prop:almost_all_bij}]
We consider the first item:
we need to prove that if $(M,A,B,C,L)$ is a uniform random in $\mathcal B_{g-1,n}$,
 then $(A,B,C)$ is generic with high probability
We will prove in fact that a stronger statement holds with high probability:
 $(A,B,C)$ is generic, $B$ and $C$ are not neighbors of any element in $\mathcal S_{A,B,C}$ nor of a branching vertex,
 and the root is not a descendant of the parent of $B$ or $C$.
The point is that this is a symmetric condition on $(A,B,C)$
 (in the definition of generic,
$A$ is not a neighbor of other elements of $\mathcal S_{A,B,C}$,
and the root should not be a descendant of the parent of $A$).
Thus we can prove it
for a uniform random triple $(A,B,C)$ of vertices of $M$,
without the constraint $\spec(A) \prec \spec(B) \prec \spec(C)$
imposed above.

First we note that, in the random map $M$ the root can be chosen at the end,
as a uniform random special corner in the map.
In particular, \cref{lem_small_desc_trees} implies that, with high probability,
the root is not a descendant of the parent of $A$, $B$ or $C$.

Let us now discuss conditions i), ii) and iii) of being generic.
We first note that as an immediate consequence of \cref{lem_small_tree},
the uniform random points $A$, $B$ and $C$
are not elements of a single-rooted tree $T_V$ in the construction of $M$,
but of some doubly-rooted tree $D_V$.
In particular, elements of $\mathcal S_{A,B,C}$ are not branching vertices.
We denote by $\dt(A)$, $\dt(B)$ and $\dt(C)$ the doubly rooted trees
containing $A$, $B$ and $C$.

We may have $\dt(A)=\dt(B)$, $\dt(A)=\dt(C)$ or $\dt(B)=\dt(C)$;
such events happen 
with probability bounded away from $0$ at the limit $n \to +\infty$.
We make a case distinction.

We first condition on the fact that $\dt(A)$, $\dt(B)$ and $\dt(C)$
are distinct.
We look at the doubly-rooted tree $\dt(A)$, considering one of the distinguished vertices, call it $R$ as the root and the other one, call it $P$,
a marked vertex; the vertex $\root(A)$ (i.e. the vertex of $\core(M)$ closest to $A$) is then the closest common ancestor of $A$ and $P$ 
in $\dt(A)$.

There are only $6g-3$ doubly rooted trees attached to vertices of $M_1$. Hence, 
if we choose a sequence $\omega_n=o(n)$, we have
\begin{align*}
\sum_{V \in M_1, deg(V)=2} |D_V| \mathds{1}_{|D_V|\leq \omega_n} \leq (6g-3) \omega_n = o(n).
\end{align*}
In particular, with high probability, $dt(A), dt(B), dt(C)$ have sizes $\geq \omega_n$ since $A, B$ and $C$ are uniform.
We now use the fact that the tree $\dt(A)$, conditioned to its size $n'_A$, is a uniform random Hurwitz tree of size $n'_A$ with two marked vertices $A$ and $P$.
As explained above, it has the same distribution as a
conditioned Galton-Watson
tree with offspring distribution $\Poisson(1)$ conditioned to its size.
In such trees (when the offspring distribution is critical and has finite variance), it is known that
the distances between marked vertices and their closest common ancestors,
between their closest common ancestors and between the closest common ancestors and the root of the tree are all of order $\sqrt{n'_A}$
(in fact, the limiting law of these distances after rescaling by $\sqrt{n'_A}$
is known see, e.g., \cite{aldous1993}).
We conclude that the distance between $A$, $P$, $R$ and $\root(A)$ are all of order $\sqrt{n'_A} \ge \sqrt{\omega_n}$.
In particular, w.h.p., $A$ and $\root(A)$ are distinct, not neighbors from each other and not neighbors of a branching vertex in $M$
(by construction, among the vertices of $\dt(A)$,
only $R$ and $P$, the two roots of $D_V$
are neighbors of a branching vertex in $M$).
The same holds, replacing $A$ by $B$ and $C$, proving
that $(A,B,C)$ is generic w.h.p.

Consider now the case where $\dt(A)=\dt(B)=\dt(C)$ is the same doubly rooted tree $D$
of size $n'_D$. Again, for any sequence $\omega_n=o(n)$,
one has $n'_D \ge \omega_n$ with high probability.
As above, we see one of the distinguish vertices of $D$, say $R$,
of $D$ as the root, and the other, say $P$, as a marked vertex.
Then the set $\mathcal S_{A,B,C} \setminus \{A\}$ is exactly the set of closest common
ancestors of some pairs of vertices among $\{A,B,C,P\}$ in $T$.
Using \cite[Theorem 2.11]{LG05} and the fact that $A,B,C,P$ are i.i.d. uniform random vertices in the tree, we know that with high probability,
$\mathcal S_{A,B,C}$ has cardinality $3$, 
and that these points are at distance $\Theta(\sqrt{n'_D})$
from each other and from $A,B,C,P$ and $R$.
We conclude that with high probability $(A,B,C)$ is generic.

The case where two of $\dt(A)$, $\dt(B)$ and $\dt(C)$ coincide but
where the third one is distinct, is treated similarly.
This proves the first item of Proposition \ref{prop:almost_all_bij}.
\bigskip

We now consider the second item of Proposition \ref{prop:almost_all_bij}.
 Take $M$ a uniform map in $\tilde{\mathcal{H}}_n^g$. 
 We prove that, with high probability, the corner root is not incident to a descendant of a branching
 vertex or a descendant of one of its neighbors.
Since trisections are incident to branching vertices by Lemma \ref{lem_trisec}, 
this will conclude the proof.

We know by Lemma \ref{lem_small_tree} that $|tree(A)|=\O_P(\sqrt{n})$, and thus with high probability the root corner is not incident to a vertex of $tree(A)$. Now let $B, C, D$ be the vertices of $\core(M)$ adjacent to $A$ and $dt(B), dt(C), dt(D)$ be the doubly rooted Hurwitz trees containing $B, C, D$ respectively. We need to prove that with high probability the root corner is not a descendant of $B$ in $M$. We can assume without loss of generality that $B$ is the root of $dt(B)$, which has size at most $n$. Remark that if $|dt(B)| \leq \sqrt{n}$, it is clear that the root is not in $dt(B)$ with high probability.

Let us therefore restrict ourselves to the case $|dt(B)| \geq \sqrt{n}$. In this case, using again the fact that $dt(B)$ conditioned to its size is distributed as a size-conditioned $Poisson(1)$-Galton-Watson tree, it is known that its root is not a branching point, in the sense that one of its children has a fringe subtree of size $|dt(B)|(1-o(1))$. Hence, the additional marked vertex in $dt(B)$ lies with high probability in this fringe subtree, and the rest of the tree (corresponding to the descendants of $B$ in $M$)
has size $o(|dt(B)|)=o(n)$. Therefore, with high probability the root is not a descendant of $B$. The same applies for $C$ and $D$, and the result follows.
\end{proof}

As a consequence of \cref{prop_asympto_bij,prop:almost_all_bij},
 we will see how to use $\Phi$
to recursively generate an asymptotically uniform element of $\setH$.

Formally, let $\Phi^*$ be the following random operation.
Given a map $M\in \setHH$, sample three uniform vertices $A,B,C$ in $M$
($A,B,C$ are named so that $\spec(A) \prec \spec(B) \prec \spec(C)$), 
and  take $\{\vvv,\www\}$ uniform in $[1,n-1+2g]$. 
If $M$ is not in $\tilde{\mathcal{H}}^{g-1}_n$ or $(M,A,B,C,\vvv,\www)\not\in \mathcal{A}_{g-1,n}$, set $\Phi^*(M)=\dag$.
Otherwise, let $(M',A)=\Phi(M,A,B,C,\vvv,\www)$ and set $\Phi^*(M)=M' \in \setH$.

\begin{theorem}\label{thm_asympto_bij}
We recall that, for $g\ge 0$ and any $n\ge 1$, we denote by $\bm H^g_n$
a uniform random element in $\setH$. Then the above defined random mapping $\Phi^*$
has the property that
\[d_{TV}(\Phi^*(\bm H^{g-1}_n),\bm H^g_n)\to 0\] 
as $n\to\infty$.
\end{theorem}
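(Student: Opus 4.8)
The plan is to deduce Theorem~\ref{thm_asympto_bij} from the bijection $\Phi\colon \mathcal A_{g-1,n}\to\mathcal C_{g,n}$ (Proposition~\ref{prop_asympto_bij}) together with the "almost all elements are in bijection" statement (Proposition~\ref{prop:almost_all_bij}) and the fact that dominant Hurwitz maps are asymptotically all Hurwitz maps (Proposition~\ref{prop:AlmostAllTilde}). First I would set up the measure-theoretic bookkeeping. Let $\tilde{\bm H}^{g-1}_n$ be uniform in $\tilde{\mathcal H}^{g-1}_n$; since $h_{g-1,n}\sim\tilde h_{g-1,n}$ by \cref{prop:AlmostAllTilde}, we have $d_{TV}(\bm H^{g-1}_n,\tilde{\bm H}^{g-1}_n)\to 0$, and because the map $M\mapsto\Phi^*(M)$ is a deterministic function of $M$ together with the independent random data $(A,B,C,\{\vvv,\www\})$, total variation distance cannot increase: $d_{TV}(\Phi^*(\bm H^{g-1}_n),\Phi^*(\tilde{\bm H}^{g-1}_n))\to 0$. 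So it suffices to prove $d_{TV}(\Phi^*(\tilde{\bm H}^{g-1}_n),\bm H^g_n)\to 0$, and again by \cref{prop:AlmostAllTilde} it suffices to compare $\Phi^*(\tilde{\bm H}^{g-1}_n)$ with $\tilde{\bm H}^g_n$, uniform in $\tilde{\mathcal H}^g_n$.

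Next I would make the triple $(\tilde{\bm H}^{g-1}_n,A,B,C,\{\vvv,\www\})$ into a uniform element of $\mathcal B_{g-1,n}$: this is exactly the description given before \cref{lem_small_tree} (uniform map, uniform $3$-subset of vertices relabelled by the $\spec$-order, uniform $2$-subset of edge-labels, all independent). By the first item of \cref{prop:almost_all_bij}, this uniform element of $\mathcal B_{g-1,n}$ lies in $\mathcal A_{g-1,n}$ with high probability; on that event $\Phi^*$ agrees with $\Phi$. Hence $\Phi^*(\tilde{\bm H}^{g-1}_n)$ is at total variation distance $o(1)$ from $\Phi(\bm X_n)$, where $\bm X_n$ is uniform in $\mathcal A_{g-1,n}$. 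Since $\Phi\colon\mathcal A_{g-1,n}\to\mathcal C_{g,n}$ is a bijection, $\Phi(\bm X_n)$ is uniform in $\mathcal C_{g,n}$; forgetting the marked vertex, this pushes forward to a distribution on $\tilde{\mathcal H}^g_n$ weighted by the number of good trisections. Now I would use that, by \cref{lem_trisec}, every genus $g$ Hurwitz map has exactly $2g$ trisections, and by the second item of \cref{prop:almost_all_bij} a uniform element of $\mathcal D_{g,n}$ (uniform dominant map with a uniform marked trisection) is in $\mathcal C_{g,n}$ with high probability; equivalently, the expected number of non-good trisections in $\tilde{\bm H}^g_n$ is $o(1)$, so with high probability all $2g$ trisections are good. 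Therefore the distribution of the first coordinate of a uniform element of $\mathcal C_{g,n}$ is $\tilde{\mathcal H}^g_n$-uniform weighted by (number of good trisections), which is $2g$ with probability $1-o(1)$; a standard computation then shows this weighted law is $o(1)$-close in total variation to the uniform law on $\tilde{\mathcal H}^g_n$. Chaining the three $o(1)$ bounds gives $d_{TV}(\Phi^*(\bm H^{g-1}_n),\bm H^g_n)\to 0$.

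The main obstacle is the last comparison: one must be careful that "forgetting the mark" turns the uniform measure on $\mathcal C_{g,n}$ into a size-biased (by good-trisection-count) measure on $\tilde{\mathcal H}^g_n$, and that this bias is asymptotically trivial. Concretely, writing $N(M)$ for the number of good trisections of $M\in\tilde{\mathcal H}^g_n$, the pushforward gives mass proportional to $N(M)$; since $\E[2g-N(\tilde{\bm H}^g_n)]=o(1)$ by the second item of \cref{prop:almost_all_bij} (rephrased via $\mathcal D_{g,n}$), one has $N(M)=2g$ for a $1-o(1)$ fraction of mass under either measure, and the total variation distance between the uniform and the $N$-weighted measures is bounded by $\tfrac{1}{2g}\,\E[\,|2g-N(\tilde{\bm H}^g_n)|\,]\cdot(1+o(1))=o(1)$. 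The only other point requiring care is that all the "with high probability" events above are genuinely $o(1)$ in total variation and not merely in probability, which is automatic since we are comparing laws of discrete random variables and the bad events have probability $o(1)$.
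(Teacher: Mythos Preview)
Your proof is correct and follows essentially the same route as the paper: condition on landing in $\mathcal A_{g-1,n}$ (which has probability $1-o(1)$ by Proposition~\ref{prop:AlmostAllTilde} and Proposition~\ref{prop:almost_all_bij}(i)), use the bijection to get uniformity on $\mathcal C_{g,n}$, and then compare to uniformity on $\tilde{\mathcal H}^g_n$ via Proposition~\ref{prop:almost_all_bij}(ii). The only cosmetic difference is in the last step: the paper compares the uniform laws on $\mathcal C_{g,n}$ and $\mathcal D_{g,n}$ directly (total variation $o(1)$) and then notes that forgetting the mark in $\mathcal D_{g,n}$ gives \emph{exactly} the uniform law on $\tilde{\mathcal H}^g_n$ since every map has $2g$ trisections, which avoids your size-bias computation, but your argument is equally valid.
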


\begin{proof}
We first condition on $(\bm H^{g-1}_n,A,B,C,\vvv,\www)$ being in $\mathcal{A}_{g-1,n}$.
Then $(\bm H^{g-1}_n,A,B,C,\vvv,\www)$ 
is uniformly distributed in $\mathcal{A}_{g-1,n}$ . 
By Theorem~\ref{prop_asympto_bij}, if we let \[(M,A)= \Phi(\bm H^{g-1}_n,A,B,C,\vvv,\www),\] then $(M,A)$ is uniform in $\mathcal{C}_{g,n}$.

By the second point of Proposition~\ref{prop:almost_all_bij}, the uniform measures in $\mathcal{C}_{g,n}$ and $\mathcal{D}_{g,n}$ have total variation distance $o(1)$. 
Moreover if $(M^*,A)$ is uniform in $\mathcal{D}_{g,n}$,
then $M^*$ is uniform in $\setHH$ as every map of $\setHH$ has exactly $2g$ trisections (Lemma~\ref{lem_trisec}).
We conclude that, conditioning on $(\bm H^{g-1}_n,A,B,C,\vvv,\www)$ being in $\mathcal{A}_{g-1,n}$,
\[d_{TV}(\Phi^*(\bm H^{g-1}_n),\bm H^g_n)\to 0.\] 
Since the conditioning event has probability tending to $1$
(see Proposition~\ref{prop:AlmostAllTilde} and the first point of Proposition~\ref{prop:almost_all_bij}), the same holds without conditioning.
\end{proof}

\subsection{From Hurwitz maps to factorizations}
Here we transfer our asymptotically uniform generation algorithm for unicellular
Hurwitz maps (\cref{thm_asympto_bij}) to factorisations of the long cycle.
This will complete the proof of Theorem~\ref{thm:RandomGenerationFacto}.
We need one more definition and one more lemma before proceeding to the proof.

Let us define $\Phi'$ to be just like $\Phi$ but by pairing the "wrong" corners. 
More precisely, given $(M,A,B,C,\vvv,\www)\in\mathcal{A}_{g-1, n}$, the operation $\Phi'$ 
is defined as follows.

Relabel the edges of $M$ in such a way that they belong to $[1,n-1+2g]\setminus \{\vvv,\www\}$ while keeping the order between them (this means adding 1 or 2 to some of the labels).
As for $\Phi$, say wlog that the $\vvv$-corner of $A$ is visited before the $\www$-corner of $A$ in the exploration (again, this does not imply anything about who is greater between $\vvv$ and $\www$).
 Draw an edge (with label $\vvv$) between the $\vvv$-corners of $A$ and $C$, 
 and an edge  between the $\www$-corners of $A$ and $B$.
 We obtain a map $M^*$ and set $\Phi'(M,A,B,C,\vvv,\www)=(M^*,A)$.

\begin{lemma}\label{lem_bad_phi}
For any $(M,A,B,C,\vvv,\www)\in\mathcal{C}_{g,n}$, writing $\Phi'(M,A,B,C,\vvv,\www)=(M^*,A)$,
 the map $M^*$ is {\bf not} unicellular.
\end{lemma}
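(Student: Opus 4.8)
The statement asserts that pairing the "wrong" corners in the operation $\Phi$ always produces a map with more than one face. The natural strategy is to track the face structure during the exploration, exactly as in the proof of \cref{lem:set_bij}, but now noting that the wrong pairing causes the exploration to close up prematurely. The plan is to compare the explorations of $M^*=\Phi(M,A,B,C,\vvv,\www)$ and $M'^* = \Phi'(M,A,B,C,\vvv,\www)$, both obtained from the same genus-$(g-1)$ dominant Hurwitz map; since $(A,B,C)$ is generic, in the original map all corners of (the tree attached to) $A$ are visited first in the exploration, then all corners of $B$, then all corners of $C$ (this is precisely the content encoded by the "bubbles" picture, \cref{fig_bij}). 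Call $\gamma^A_\vvv, \gamma^A_\www$ the $\vvv$- and $\www$-corners of $A$ (with $\gamma^A_\vvv \prec \gamma^A_\www$ by our wlog convention), and similarly $\gamma^B_\www$ the $\www$-corner of $B$ and $\gamma^C_\vvv$ the $\vvv$-corner of $C$.

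First I would recall the elementary fact that adding an edge to a map either merges two faces into one or splits one face into two, and that for a unicellular map every added chord must split the unique face. Here we add two edges in sequence starting from $M$ (which is unicellular of genus $g-1$): the first edge splits the face into two faces $F_1, F_2$, and the second edge is unicellular-producing if and only if its two endpoints lie on a common face among $\{F_1,F_2\}$ and on "opposite sides" in the appropriate sense — i.e., the two new corners it is incident to lie in the same face and the edge, when drawn, re-merges nothing but instead the total face count comes back to one. The key computation is then purely local: in $\Phi$, the edge labeled $\vvv$ joins $\gamma^A_\vvv$ to the $\vvv$-corner of $B$, and the edge labeled $\www$ joins $\gamma^A_\www$ to the $\www$-corner of $C$; one checks (following the arrows in \cref{fig_bij}) that after the first insertion the four relevant corners $\gamma^A_\www$, $\www$-corner of $C$ land in the two distinct faces $F_1$ and $F_2$, so the second edge re-merges them and unicellularity is restored. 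By contrast, in $\Phi'$ the $\vvv$-labeled edge joins $\gamma^A_\vvv$ to the $\vvv$-corner of $C$ and the $\www$-labeled edge joins $\gamma^A_\www$ to the $\www$-corner of $B$; because $B$'s corners are all visited before $C$'s in the exploration of $M$, the arc of the face boundary of $M$ from $\gamma^A_\vvv$ to $\gamma^C_\vvv$ strictly contains the arc from $\gamma^A_\www$ to $\gamma^B_\www$ (or they are nested the other way), so both new edges end up with their two endpoints on the \emph{same} face among $\{F_1,F_2\}$ — hence the second edge splits that face again and $M^*$ has (at least) two faces. Equivalently, and more cleanly: the cyclic sequence of the four new corners along the face boundary of $M$ is, in the $\Phi$ case, of "linked/crossing" type $\vvv\text{-}A,\ \www\text{-}A,\ \vvv\text{-}B,\ \www\text{-}C$ up to the relevant rotations, whereas in the $\Phi'$ case it is of "unlinked/nested" type, and a pair of chords on a disk boundary yields a connected (one-face) result iff they cross. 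This crossing-versus-nesting dichotomy, read off from the genericity of $(A,B,C)$ and the exploration order, is the heart of the argument.

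Concretely, the steps I would carry out are: (1) recall that $\Phi(M,A,B,C,\vvv,\www)$ is unicellular by \cref{lem:set_bij} and isolate from its proof the statement that the two new edges "cross" as chords along the face boundary of $M$; (2) describe the face boundary of $M$ as a cyclic word in which the block of corners of $\tree(A)$, then of $\tree(B)$, then of $\tree(C)$ appear as three disjoint arcs (using genericity, condition (iv), and $\spec(A)\prec\spec(B)\prec\spec(C)$ to pin down the order); (3) locate $\gamma^A_\vvv$ and $\gamma^A_\www$ within the $A$-arc (with $\gamma^A_\vvv\prec\gamma^A_\www$), $\gamma^B_\www$ within the $B$-arc, and $\gamma^C_\vvv$ within the $C$-arc; (4) observe that the chord $\{\gamma^A_\vvv,\gamma^C_\vvv\}$ and the chord $\{\gamma^A_\www,\gamma^B_\www\}$ do \emph{not} cross (their endpoints interleave as $A,A,B,C$ along the cycle, i.e. one chord is "inside" the region cut off by the other), whereas for $\Phi$ the endpoints interleave as $A,A,B,C$ in the \emph{other} matching, which does cross; (5) conclude via the standard fact that adding two non-crossing chords to the boundary of a single face yields a map with three faces (two splits), so in particular $M^*$ is not unicellular.

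The main obstacle I anticipate is step (3)–(4): being careful about \emph{where exactly} within each arc the $\vvv$- and $\www$-corners sit, and in particular verifying that the relative cyclic order of the four corners is genuinely of "nested" type for $\Phi'$ regardless of whether $\vvv<\www$ or $\vvv>\www$ as integers — the paper deliberately warns that the wlog convention "$\gamma^A_\vvv \prec \gamma^A_\www$ in the exploration" says nothing about the numerical order of $\vvv$ and $\www$. One must check that the position of the $\www$-corner of $B$ relative to the $\vvv$-corner of $C$, and of both relative to the two corners of $A$, is forced purely by the exploration order of the blocks $A\prec B\prec C$ together with the labels, and does not depend on extra choices. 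Once this local picture is nailed down — most transparently by drawing the analogue of \cref{fig_bij} with the two edges crossed the wrong way, as in \cref{fig_inverse_bij} but with swapped attachments — the Euler-formula conclusion that $M^*$ has genus $g-1$ with two faces, hence is not unicellular, is immediate.
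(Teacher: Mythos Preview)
Your approach is correct and will work. The core insight --- that the two added edges in $\Phi'$ are \emph{nested} rather than \emph{crossing} as chords along the single face boundary of $M$, hence produce three faces rather than one --- is sound, and your identification of the cyclic order $\gamma^A_\vvv \prec \gamma^A_\www \prec \gamma^B_\www \prec \gamma^C_\vvv$ (forced by genericity together with $\spec(A)\prec\spec(B)\prec\spec(C)$) is exactly what is needed. One small slip: in your first description of the ``unicellular-producing'' criterion you wrote that the second edge's endpoints should lie on a \emph{common} face among $\{F_1,F_2\}$; in fact it is when they lie on \emph{different} faces that the edge merges them and restores unicellularity (as you yourself say correctly two sentences later). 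In the nested case both endpoints of the second chord land on the same $F_i$, hence a second split, hence three faces.

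The paper's proof takes a shorter, more direct route: rather than invoking the crossing/nesting dichotomy, it simply traces the exploration of $M^*$ starting at the root, observing that it traverses the portion $p$ of $M$ visited before $\gamma^A_\vvv$, crosses the $\vvv$-edge to $C$, traverses the portion $p'$ of $M$ visited after $\gamma^C_\vvv$, and returns to the root --- never having visited $B$. This exhibits a proper face of $M^*$ explicitly, and one sentence suffices. Your route establishes slightly more (that $M^*$ has exactly three faces and hence genus $g-1$), at the cost of more setup. Both arguments rest on the same structural fact --- that every corner of $B$ lies strictly between $\gamma^A_\vvv$ and $\gamma^C_\vvv$ in the exploration of $M$ --- which you correctly flag as the point requiring care; it follows from genericity exactly as in the proof of \cref{lem:set_bij}, and is insensitive to whether $\vvv<\www$ or $\vvv>\www$ numerically.
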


\begin{proof}
Let $p$ be the part of $M$ that is visited before the $\vvv$-corner of $A$ in the exploration, and $p'$ be the part of $M$ that is visited after the $\vvv$-corner of $C$ in the exploration. Then, in $M^*$, the exploration starts at the root, visits $p$, then goes along the edge with label $\vvv$, then visits $p'$, then is back at the root. It did not visit $B$, therefore $M^*$ has more than one face, and is not unicellular. See Figure~\ref{fig_fausse}.
\begin{figure}
\center
\includegraphics[scale=0.7]{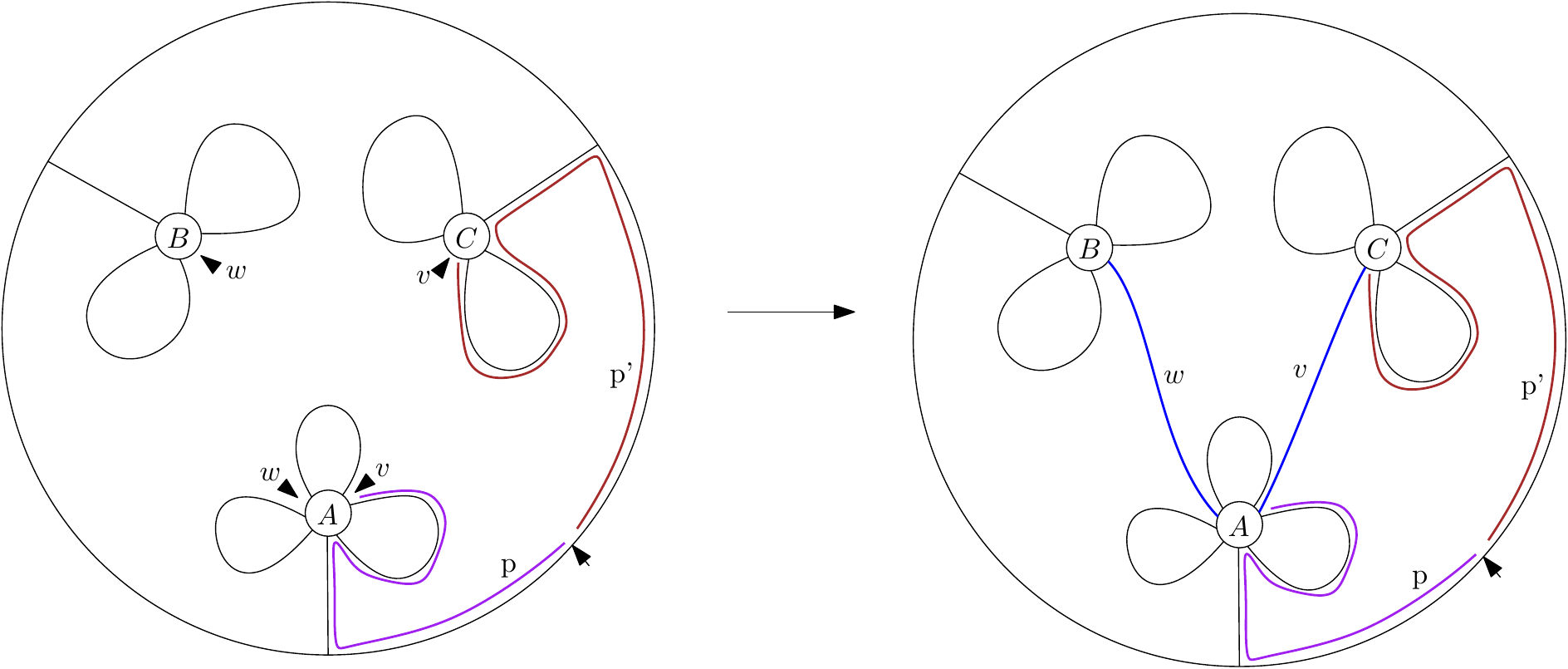}
\caption{The operation $\Phi'$. The part $p$ is in purple, and $p'$ is in brown.}\label{fig_fausse}
\end{figure}
\end{proof}

We can now prove Theorem~\ref{thm:RandomGenerationFacto}:

\begin{proof}[Proof of Theorem~\ref{thm:RandomGenerationFacto}]
Let us couple $\rFF$ and $\rHH$, so that $\rHH$ is the map associated to $\rFF$. 
As in \cref{ssec:generation}, we take $\aaa<\bbb<\ccc$ uniformly at random, and call $A$ (resp. $B$, $C$) 
the vertex of $\rHH$ that is labeled $\aaa$ (resp. $\bbb$, $\ccc$) 
by the labeling algorithm of \cref{lem:Unicellular+Relabeling}. Also, take $\{\vvv,\www\}$ uniform in $[1,n-1+2g]$.
With probability tending to $1$, $\rHH$ belongs to $\tilde{\mathcal{H}}^{g-1}_n$ and 
$(\rHH,A,B,C,\vvv,\www)$ is in $\mathcal A_{g-1,n}$.

Now, at step i) of the algorithm, we are given two factorizations $\overline F_1$ and $\overline F_2$. 
They correspond\footnote{See the proof of \cref{lem:Unicellular+Relabeling}, where a Hurwitz edge- and vertex- labeled map is associated to any sequence of transpositions; faces of the maps then correspond to cycles 
in the product of all transpositions.}
 to the maps $\Phi(\rHH,A,B,C,\vvv,\www)$ and $ \Phi'(\rHH,A,B,C,\vvv,\www)$, but we do not know which one is which
 (recall that we may have $\vvv<\www$ or $\vvv>\www$, depending on which corner of $A$ is visited first). 
 However, by Lemma~\ref{lem_bad_phi}, we know that,
 if $(\rHH,A,B,C,\vvv,\www)$ is in $\mathcal A_{g-1,n}$ (which happens with probability tending to $1$), the factorization corresponding to 
$\Phi'(\rHH,A,B,C,\vvv,\www)$ is not a factorization of a long cycle.
 Therefore, w.h.p., the factorization $\overline{\rFF}$ selected at step i)
 of the algorithm corresponds to the map $\Phi(\rHH,A,B,C,\vvv,\www)$.
 
Note however that the map $\Phi(\rHH,A,B,C,\vvv,\www)$ has two natural labelings of its vertices: the first one is inherited from $\rHH$ by keeping the same labels of the vertices, which we call {\em inherited labeling}
and the second one is defined by the labeling algorithm of Lemma~\ref{lem:Unicellular+Relabeling} ;
we call the latter the {\em natural labeling}. 
Calling $V_i$ (resp. $W_i$) the vertex with label $i$ in the inherited labeling (resp. the natural labeling),
we let $\tilde \sigma$ be the permutation such that $V_i=W_{\tilde \sigma(i)}$ for all $i$.

The factorization $\overline\rFF$ of a cycle $\zeta$ that we get at the end of step i) 
corresponds to the map with the inherited labeling.
The map $\Phi(\rHH,A,B,C,\vvv,\www)$ with the natural labeling
corresponds to the factorization ${\tilde \sigma}(\overline{\rFF})$, obtained from $\overline{\rFF}$ 
by replacing each transposition $\transpo=(i \,j)$
by $\tilde\sigma^{-1} \transpo \tilde\sigma=(\tilde\sigma(i) \,\tilde\sigma(j))$.
Note that ${\tilde\sigma}(\overline{\rFF})$ is a factorization of $\tilde\sigma^{-1} \zeta \tilde\sigma$.
On the other hand, we know that maps with their natural labelings
always correspond to factorizations of the cycle $(1\, \cdots\, n)$;
we conclude that $\tilde\sigma^{-1} \zeta \tilde\sigma=(1\, \cdots\, n)$.

Note also that $\tilde\sigma(1)=1$, since both labelings assign $1$ to the root.
These two conditions together determine uniquely $\tilde\sigma$,
in particular $\tilde\sigma$ coincides with the permutation $\sigma$
introduced in the step ii) of the algorithm of \cref{ssec:generation}.
Recall that $\Lambda(\rFF)={\sigma}(\overline{\rFF})$ by construction.
Since we have proved $\sigma=\tilde\sigma$,
we have  $\Lambda(\rFF)={\tilde\sigma}(\overline{\rFF})$;
but the latter corresponds to the map $\Phi(\rHH,A,B,C,\vvv,\www)$ with its natural labeling.
We conclude that $\Lambda(\rFF)$ is asymptotically close
to $\rF$ by invoking \cref{thm_asympto_bij}.
\end{proof}

We can in fact say more about the permutation $\sigma$ 
appearing in step ii) of the generation algorithm.
The following proposition will be useful to prove our scaling limit result.

\begin{proposition}
\label{prop:labelingalgorithm}
Let $\bm F^{g-1}_n$ be a random uniform factorization of genus $g-1$ of the cycle $(1\, \cdots\, n)$.
Then the permutation $\sigma$ appearing in the step ii) of the computation of $\Lambda(\rFF)$
is of the form 
\begin{equation}
\label{eq:sigma}
\sigma(i)=\begin{cases}
i &\text{ if $i\leq \aaa'$ or $i> \ccc'$;}\\
i+\ccc'-\aaa''&\text{ if $\aaa'<i\le \aaa''$;}\\
i-\aaa''+\aaa'+\ccc'-\bbb'&\text{  if $\aaa''< i\le \bbb'$;}\\
i-\bbb'+\aaa'&\text{  if $\bbb'< i\leq \ccc'$,}
\end{cases},\end{equation}
for some $\aaa'=\aaa+\O_P(1)$, $\aaa''=\aaa+\O_P(1)$, $\bbb'=\bbb+\O_P(1)$ and $\ccc'=\ccc+\O_P(1)$.
\end{proposition}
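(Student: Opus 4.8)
The plan is to unwind the definition of $\sigma$ from the proof of Theorem~\ref{thm:RandomGenerationFacto} and to track the vertex labels explicitly through the operation $\Phi$. Recall that $\sigma=\tilde\sigma$, the permutation defined by $V_i=W_{\tilde\sigma(i)}$, where $V_i$ is the vertex carrying label $i$ in the \emph{inherited} labeling of $M^*:=\Phi(\rHH,A,B,C,\vvv,\www)$ (i.e.\ the label it already had in $\rHH$ via \cref{lem:Unicellular+Relabeling}) and $W_j$ is the vertex carrying label $j$ in the \emph{natural} labeling of $M^*$ (recomputed by running the relabeling algorithm on $M^*$). So the question is purely combinatorial: how does the exploration of $M^*$ reassign labels compared with the exploration of $\rHH$, given that $M^*$ is obtained from $\rHH$ by adding the two chords at the $\vvv$- and $\www$-corners of $A$, connecting to $B$ and $C$?

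First I would set up notation in terms of the exploration order on $\rHH$. Since $(A,B,C)$ is generic and $\spec(A)\prec\spec(B)\prec\spec(C)$, in $\rHH$ the exploration visits, in order: a prefix $p_1$ of corners up to (and including) the subtree hanging at $A$, then a block $p_2$ containing the subtree at $B$, then a block $p_3$ containing the subtree at $C$, then a suffix $p_4$. The vertices receiving labels $1,\dots,n$ are exactly those whose special corner is crossed; write $\aaa$ for the label of $A$ (so $A=V_\aaa$), $\bbb$ for that of $B$, $\ccc$ for that of $C$. Now in $M^*$ the two new edges redirect the face-tour: when the exploration of $M^*$ reaches $A$, instead of continuing into $p_2$ it jumps across the new chord into (part of) $p_3$, tours the piece near $C$, comes back, then goes into $p_2$ near $B$, etc. Drawing the face-tour of $M^*$ as in Fig.~\ref{fig_bij}, one reads off that the natural labels of $M^*$ are a \emph{block permutation} of the inherited ones: the vertices of $p_1$ keep their labels (these are the $i\le\aaa'$ with $\aaa'$ the last label assigned before the rerouting begins, which is $\aaa$ up to the $\O_P(1)$ fluctuation coming from small descendant trees, cf.\ \cref{lem_small_desc_trees} and \cref{lem_small_tree}); the block near $C$ gets shifted to come right after $p_1$; then the block near $B$; then the remaining tail, with the tail beyond $\ccc$ unchanged. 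Matching the three interior linear pieces of \eqref{eq:sigma} to the three shifted blocks (the shift constants are exactly the differences of block lengths) and checking the boundary cases gives the four-case formula.

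The routine part is the bookkeeping: defining $\aaa',\aaa'',\bbb',\ccc'$ as the precise boundary labels of the four blocks in $M^*$'s natural labeling and verifying that each of the three shift amounts equals the stated affine expression — this is forced once one knows the block structure, since $\sigma$ must be a bijection of $[1,n]$ and the three interior slopes are all $+1$, so the intercepts are determined by continuity of the block endpoints. I would also note $\sigma(1)=1$ (the root keeps label $1$ in both labelings) and that $\sigma$ fixes everything past $\ccc'$, both visible from the picture.

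The main obstacle — really the only nontrivial point — is proving $\aaa'=\aaa+\O_P(1)$, $\aaa''=\aaa+\O_P(1)$, $\bbb'=\bbb+\O_P(1)$, $\ccc'=\ccc+\O_P(1)$, i.e.\ that the block boundaries move only by a tight amount. The boundaries differ from $\aaa,\bbb,\ccc$ precisely by the number of vertices whose special corners lie in the finitely many small trees that get "reshuffled" by the rerouting: the descendant trees of (the parents of) $A$, $B$, $C$, and the portion of $\tree(A)$ between $A$ and its root. By \cref{lem_small_desc_trees} these descendant sets have $\O_P(1)$ vertices, and the relevant corrections are controlled by how many special corners fall into these sets, which is at most their size; a short argument of the same flavour as in \cref{lem_small_desc_trees} (again invoking that fringe subtrees at the parent of a uniform vertex in a critical Poisson-GW tree are stochastically bounded) handles the part of $\tree(A)$ near its root. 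Assembling these $\O_P(1)$ error terms and feeding the block-permutation description through the identification $\sigma=\tilde\sigma$ from the proof of Theorem~\ref{thm:RandomGenerationFacto} then yields \eqref{eq:sigma}.
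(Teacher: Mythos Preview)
Your approach is essentially the same as the paper's: trace the face exploration of $M^*=\Phi(\rHH,A,B,C,\vvv,\www)$ and read off that the natural labeling is a block permutation of the inherited one, with block boundaries $\aaa',\aaa'',\bbb',\ccc'$ defined as the last labels assigned (in the exploration of $\rHH$) before crossing the $\vvv$-corner of $A$, the $\www$-corner of $A$, the $\vvv$-corner of $B$, and the $\www$-corner of $C$ respectively; then invoke \cref{lem_small_desc_trees} for the $\O_P(1)$ estimates. The paper spells out the five blocks $[1,\aaa']$, $[\bbb'+1,\ccc']$, $[\aaa''+1,\bbb']$, $[\aaa'+1,\aaa'']$, $[\ccc'+1,n]$ in that order (you are a bit vague here and should make the fourth block explicit), from which \eqref{eq:sigma} follows mechanically.

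One small correction: you do \emph{not} need any control on ``the portion of $\tree(A)$ between $A$ and its root'', and indeed that path can have length $\Theta(\sqrt n)$, so an argument ``of the same flavour'' as \cref{lem_small_desc_trees} would not work there. The point is that $\aaa,\aaa',\aaa''$ are all labels assigned at or between \emph{corners of the vertex $A$ itself}; since between any two corners of $A$ the exploration stays inside $\desc(A)$ (the root is not a descendant of the parent of $A$ by genericity, so the exploration enters from the parent side), the differences $|\aaa-\aaa'|$ and $|\aaa-\aaa''|$ are at most $|\desc(A)|=\O_P(1)$ directly by \cref{lem_small_desc_trees}. The same applies at $B$ and $C$. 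Your citation of \cref{lem_small_tree} is not needed here.
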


\begin{proof}
Let $\bm H^{g-1}_n$ be the random Hurwitz map associated to $\bm F^{g-1}_n$.
As seen in the proof of \ref{thm:RandomGenerationFacto},
w.h.p. the factorization $\Lambda(\rFF)$ is well-defined and corresponds to the map $\Phi^*(\bm H^{g-1}_n)$.
We recall also that $\Phi^*(\bm H^{g-1}_n)$ has two labelings of its vertices:
the  labeling $(V_i)_{1 \le i \le n}$ inherited from $\bm H^{g-1}_n$ and the natural one $(W_i)_{1 \le i \le n}$.

Consider first the exploration of the unique face of $\bm H^{g-1}_n$,
leading to the labeling $(V_i)_{1 \le i \le n}$. We define the following quantities:
$\aaa'$ (resp. $\aaa''$, $\bbb'$ and $\ccc'$) is the last label assigned before crossing 
the $\vvv$-corner of $A$, (resp. the $\www$-corner of $A$, the $\vvv$-corner of $B$
and the $\www$-corner of $C$).

Now it is directly seen (e.g. on Figure~\ref{fig_bij})
 that the exploration of $\Phi(\bm H^{g-1}_n,A,B,C,\vvv,\www)$,
which leads to the labeling $(W_i)_{1 \le i \le n}$, behaves this way:
\begin{itemize}
\item it first visits the special corners of $V_i$ for $i\in [1,\aaa']$ (in that order),
\item then goes along the edge labeled $\vvv$ from $A$ to $B$,
\item then visits the special corners of $V_i$ for $i\in [\bbb'+1,\ccc']$ (in that order),
\item then goes along the edge labeled $\www$ from $C$ to $A$,
\item then visits the special corners of $V_i$ for $i\in [\aaa''+1,\bbb']$ (in that order),
\item then goes along the edge labeled $\vvv$ from $B$ to $A$,
\item then visits the special corners of $V_i$ for $i\in [\aaa'+1,\aaa'']$ (in that order),
\item then goes along the edge labeled $\www$ from $A$ to $C$
\item and finally visits the special corners of $V_i$ for $i\in [\ccc'+1,n]$ (in that order).
\end{itemize}
Recalling that $\sigma$ is the unique permutation such that $V_i=W_{\sigma(i)}$ for all $i \le n$,
we get that $\sigma$ is indeed given by \eqref{eq:sigma}.
Now, by Lemma~\ref{lem_small_desc_trees}, the trees attached to $A$, $B$ and $C$ have size $\O_P(1)$, implying $\aaa'=\aaa+\O_P(1)$, $\aaa''=\aaa+\O_P(1)$, $\bbb'=\bbb+\O_P(1)$ and $\ccc'=\ccc+\O_P(1)$.
\end{proof}

\section{Background on trees and laminations}
\label{sec:background}

In this section, we recall some known facts about the connection between trees and laminations. In particular, we provide a rigorous framework for the convergence of Theorems \ref{thm:main} and \ref{thm:mainprocess}.

\subsection{Duality between trees and laminations}
\label{ssec:background_lamination}

We first  define \textit{plane trees}, following Neveu's formalism \cite{Nev86}. First, let $\N^* = \left\{ 1, 2, \ldots \right\}$ be the set of all positive integers, and $\mathcal{U} = \cup_{n \geq 0} (\N^*)^n$ be the set of finite sequences of positive integers, with $(\N^*)^0 = \{ \emptyset \}$ by convention.

By a slight abuse of notation, for $k \in \Z_+$, we write an element $u$ of $(\N^*)^k$ by $u=u_1 \cdots u_k$, with $u_1, \ldots, u_k \in \N^*$. For $k \in \Z_+$, $u=u_1\cdots u_k \in (\N^*)^k$ and $i \in \Z_+$, we denote by $ui$ the element $u_1 \cdots u_ki \in (\N^*)^{k+1}$. A plane tree $T$ is formally a subset of $\mathcal{U}$ satisfying the following three conditions:

(i) $\emptyset \in T$ (the tree has a root); 

(ii) if $u=u_1\cdots u_n \in T$, then, for all $k \leq n$, $u_1\cdots u_k \in T$ (these elements are called ancestors of $u$, and the set of all ancestors of $u$ is called its ancestral line; $u_1 \cdots u_{n-1}$ is called the \textit{parent} of $u$); 

(iii) for any $u \in T$, there exists a nonnegative integer $k_u(T)$ such that, for every $i \in \N^*$, $ui \in T$ if and only if $1 \leq i \leq k_u(T)$ ($k_u(T)$ is called the number of children of $u$, or the outdegree of $u$).

See an example of a plane tree on Fig. \ref{fig:arbconlam}, left. The elements of $T$ are called \textit{vertices}, and we denote by $|T|$ the total number of vertices in $T$. The height of $T$, which is the maximum $k \geq 0$ such that $(\N^*)^k \cap T$ is nonempty, is denoted by $H(T)$. In the sequel, by tree we always mean plane tree unless specifically mentioned.

The \textit{lexicographical order} $\prec$ on $\mathcal{U}$ is defined as follows:  $\emptyset \prec u$ for all $u \in \mathcal{U} \backslash \{\emptyset\}$, and for $u,w \neq \emptyset$, if $u=u_1u'$ and $w=w_1w'$ with $u_1, w_1 \in \N^*$, then we write $u \prec w$ if and only if $u_1 < w_1$, or $u_1=w_1$ and $u' \prec w'$. The lexicographical order on the vertices of a tree $T$ is the restriction of the lexicographical order on $\mathcal{U}$; for every $0 \leq k \leq |T|-1$ we write $v_k(T)$ for the $(k+1)$-th vertex of $T$ in the lexicographical order.

We do not distinguish between a finite tree $T$, and the corresponding planar graph where each vertex is connected to its parent by an edge of length $1$, in such a way that the vertices at the same distance from the root are sorted from left to right in lexicographical order. Finally, for any two vertices $x,y \in T$, we denote by $\llbracket x, y \rrbracket$ the unique path between $x$ and $y$ in the tree $T$.

\emph{The contour function of a tree}

Let $T_n$ be a plane tree with $n$ vertices. We first define its contour function $(C_t(T_n))_{0 \leq t \leq 2n}$ as follows: imagine that a particle explores the tree from left to right at unit speed along the edges, starting from the root and . Then, for $0 \leq t \leq 2n-2$, $C_t(T_n)$ denotes its distance from the root. By convention, we set $C_t(T_n)=0$ for $2n-2 \leq t \leq 2n$. Notice that this contour function is nonnegative, continuous, and goes back to $0$ at time $2n-2$.

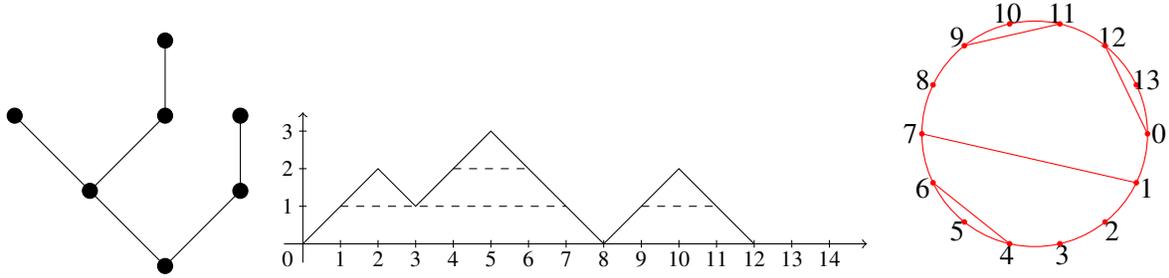
\begin{figure}[!ht]
\begin{tabular}{c c c}
\begin{tikzpicture}
\draw (1,2) -- (1,1) -- (0,0) -- (-1,1)--(0,2)--(0,3) (-1,1) -- (-2,2);

\draw[fill=black] (0,0) circle (.1);

\draw[fill=black] (1,1) circle (.1);

\draw[fill=black] (-1,1) circle (.1);

\draw[fill=black] (0,2) circle (.1);

\draw[fill=black] (0,3) circle (.1);

\draw[fill=black] (-2,2) circle (.1);

\draw[fill=black] (1,2) circle (.1);
\end{tikzpicture}
&
\begin{tikzpicture}[scale=.5, every node/.style={scale=0.7}]
\draw (0,0) -- (1,1) -- (2,2) -- (3,1) -- (4,2) -- (5,3) -- (6,2) -- (7,1) -- (8,0) -- (9,1) -- (10,2) -- (11,1) -- (12,0);
\draw[->] (0,-.5) -- (0,3.5);
\draw[->] (-.5,0) -- (15,0);
\draw (1,.1) -- (1,-.1);
\draw (1,-.4) node{1};
\draw (2,.1) -- (2,-.1);
\draw (2,-.4) node{2};
\draw (3,.1) -- (3,-.1);
\draw (3,-.4) node{3};
\draw (4,.1) -- (4,-.1);
\draw (4,-.4) node{4};
\draw (5,.1) -- (5,-.1);
\draw (5,-.4) node{5};
\draw (6,.1) -- (6,-.1);
\draw (6,-.4) node{6};
\draw (7,.1) -- (7,-.1);
\draw (7,-.4) node{7};
\draw (8,.1) -- (8,-.1);
\draw (8,-.4) node{8};
\draw (9,.1) -- (9,-.1);
\draw (9,-.4) node{9};
\draw (10,.1) -- (10,-.1);
\draw (10,-.4) node{10};
\draw (11,.1) -- (11,-.1);
\draw (11,-.4) node{11};
\draw (12,.1) -- (12,-.1);
\draw (12,-.4) node{12};
\draw (13,.1) -- (13,-.1);
\draw (13,-.4) node{13};
\draw (14,.1) -- (14,-.1);
\draw (14,-.4) node{14};
\draw (.1,1) -- (-.1,1);
\draw (-.4,1) node{1};
\draw (.1,2) -- (-.1,2);
\draw (-.4,2) node{2};
\draw (.1,3) -- (-.1,3);
\draw (-.4,3) node{3};
\draw (-.4,-.4) node{0};
\draw [dashed](1,1) -- (7,1);
\draw [dashed](4,2) -- (6,2);
\draw [dashed](9,1) -- (11,1);
\end{tikzpicture}
&
\begin{tikzpicture}[scale=1.5, every node/.style={scale=.9}]
\foreach \i in {0,...,13}
{
\draw[auto=right] ({1.1*cos(-(\i)*360/14)},{1.1*sin(-(\i)*360/14)}) node{\i};
\draw[red,fill=red] ({cos(-(\i-1)*360/14)},{sin(-(\i-1)*360/14)}) circle (.02);
}
\draw[red] (0,0) circle (1);
\draw[red] ({cos(-360*0/14)},{sin(-360*0/14)}) -- ({cos(-360*12/14)},{sin(-360*12/14)});
\draw[red] ({cos(-360*7/14)},{sin(-360*7/14)}) -- ({cos(-360*1/14)},{sin(-360*1/14)});
\draw[red] ({cos(-360*4/14)},{sin(-360*4/14)}) -- ({cos(-360*6/14)},{sin(-360*6/14)});
\draw[red] ({cos(-360*9/14)},{sin(-360*9/14)}) -- ({cos(-360*11/14)},{sin(-360*11/14)});
\end{tikzpicture}
\end{tabular}
\caption{A tree $T$, its contour function $C(T)$ and the associated lamination $\bL(T)$.}
\label{fig:arbconlam}
\end{figure}

\bigskip

\emph{The lamination associated to a tree}

Starting from the contour function of $T_n$, we can define a lamination. Let $\emptyset$ be the root of $T_n$. For any vertex $V \in T_n \backslash \{ \emptyset \}$ that is not the root, define $g_V$ (resp. $d_V$) the first (resp. last) time at which $V$ is visited by the contour function of $T_n$, and set $c(V) := [e^{-2\pi i g_V/(2n)},e^{-2\pi i d_V/(2n)}]$, a chord of the unit disk. Then, the lamination $\bL(T_n)$ is defined as 
\begin{align*}
\bL(T_n) := \bS^1 \cup \bigcup_{V \in T_n \backslash \{\emptyset\}} c(V),
\end{align*} 
where we recall that $\bS^1$ denotes the unit circle. By continuity of the contour function, $\bL(T_n)$ is a closed subset of the disk, and hence indeed a lamination. One can see an example of a tree along with its contour function and its lamination on Fig. \ref{fig:arbconlam}.

\bigskip

\emph{The lamination-valued process associated to a labelled tree}

Let us now consider a labeling of the edges of $T_n$, from $1$ to $n-1$. We construct from it a discrete lamination-valued process $(\bL_k(T_n))_{0 \leq k \leq n-1}$, as follows. For any $0 \leq k \leq n-1$, denote by $V_k$ the vertex of $T_n$ such that the edge from $V_k$ to its parent is labelled $k$ (by an abuse of notation, we say that $V_k$ is itself labelled $k$). We define, for $t \geq 0$:
\begin{align*}
\bL_t(T_n) := \bS^1 \cup \bigcup_{i=1}^{(n-1) \wedge \lfloor t\rfloor} c(V_i).
\end{align*} 

In particular, for any $t \geq n-1$, $\bL_t(T_n)=\bL(T_n)$.

\bigskip

\emph{Construction of a tree from a continuous function}

We provide here the "reverse" construction of a tree from a continuous function.
Let us take $f: [0,1] \rightarrow \R_+$ be a continuous function satisfying $f(0)=f(1)=0$. Then, one can construct a tree $\Tree(f)$ as follows: define a pseudo-distance $d_f$ on $[0,1]$ as:
\begin{align*}
\forall x,y \in [0,1], d_f(x,y) = f(x)+f(y)-2 \inf_{u \in [x\wedge y, x \vee y]} f(u).
\end{align*}
In particular, $d_f(0,1)=0$.
This defines an equivalence relation $\sim_f$ on $[0,1]$:
\begin{align*}
\forall x,y \in [0,1], x \sim_f y \Leftrightarrow f(x)=f(y)=\inf_{u \in [x\wedge y,x \vee y]} f(u).
\end{align*}
One can check that $\sim_f$ is indeed an equivalence relation. In particular $0 \sim_f 1$. Finally, define $\Tree(f)$ as
\begin{align*}
\Tree(f)=[0,1] / \sim_f.
\end{align*}

Therefore, $d_f$ induces a distance on $\Tree(f)$, which we still denote by $d_f$ by a small abuse of notation. Furthermore, as $f$ is continuous, $\Tree(f)$ endowed with this distance is compact. For any $x,y \in T$, we denote by $\llbracket x, y \rrbracket$ the unique path from $x$ to $y$ in $T$.

Let us immediately define some important notions about trees. We say that an equivalence class $u \in \Tree(f)$ is a leaf of the tree is an equivalence class $x$ such that $\Tree(f) \backslash \{u \}$ is connected. The volume measure $h$, or mass measure on $\Tree(f)$, is defined as the projection on $\Tree(f)$ of the Lebesgue measure on $[0,1]$. Finally, the length measure $\ell$ on $\Tree(f)$, supported by the set of non-leaf points, is the unique $\sigma$-finite measure on this set such that, for $u, v \in \Tree(f)$, $\ell(\llbracket u, v \rrbracket) =d_f(u,v)$. See \cite{ALW17} for further details about this length measure. This $\sigma$-finite measure expresses the intuitive notion of length of a branch in the tree. For any tree $T$, any $x \in T$, define $\theta_x(T)$ as the set of points $y$ in $T$ such that $x \in \llbracket \emptyset,y \rrbracket$, where $\emptyset$ denotes the root of $T$. We then define $|\theta_x(T)|$ as the mass measure of this subtree.

Finally, observe that, for a finite planar tree $T$ of size $|T|$, the metric space $\Tree(C_{2|T|\cdot}(T))$ is simply obtained from $T$ by replacing all edges by a line segment of length $1$.
\bigskip

\emph{Construction of a lamination-valued process from a continuous function}

As in the previous paragraph, let $f$ be a continuous nonnegative function such that $f(0)=f(1)=0$. Let $\cEG(f)$ be its epigraph, that is, 
\begin{align*}
\cEG(f) := \{ (s,t) \in \R^2, 0 \leq s \leq 1, 0 < t < f(s) \}.
\end{align*}

We now define a Poisson point process $\cN$ on $\cEG(f) \times \R_+$, of intensity 
\begin{align*}
\frac{ds \, dt}{d(s,t,f)-g(s,t,f)} \mathds{1}_{(s,t) \in \cEG(f)} \times dr,
\end{align*}
where, for $(s,t) \in \cEG(f)$, $g(s,t)= \sup \{u<s, f(u)=t\}$ and $d(s,t)=\inf \{u>s, f(u)=t\}$.

This allows us to define a lamination-valued process $(\bL_c(f))_{c \geq 0}$ as follows. For any $(s,t) \in \cEG(f)$, define the chord $c(s,t)$ as
\begin{align*}
c(s,t) := \left[ e^{-2\pi i g(s,t,f)}, e^{-2\pi i d(s,t,f)} \right].
\end{align*}

Then, for $c \geq 0$, define $\bL_c(f)$ as 
\begin{align*}
\bL_c(f) := \overline{\bS^1 \cup \bigcup_{\substack{((s,t),x) \, \in \, \cN \\ x \leq c}} c(s,t)}.
\end{align*}

Define also
\begin{align*}
\bL_\infty(f) := \overline{\bS^1 \cup \bigcup_{(s,t) \in \cEG(f)} c(s,t)}.
\end{align*}

There is a natural projection map from $\cEG(f)$ to $\Tree(f)$,
associating to any $(s,t)$ the equivalence class of $g(s,t)$.
Through this mapping,  the Poisson point process $\cN(f)$ 
is sent to a Poisson point process $\cP(\Tree(f))$ on $\Tree(f) \times \R_+$, of intensity $d\ell \times dr$.
For any $c \geq 0$, we set 
$$\cP_c(\Tree(f)) := \{ u \in \Tree(f), \exists (u,x) \in \cP(\Tree(f)), x \leq c \}$$
 the set of points appearing on $\Tree(f)$ before time $c$.

\subsection{Random trees}

\text{  }

\emph{Aldous' Continuum Random Tree}

We introduce here the random tree $\cT_\infty := \Tree(\mathbbm{e})$, constructed from the standard Brownian excursion $\mathbbm{e}$. This tree, which arises in the literature as the scaling limit of various models of random trees, is a random compact metric space, which is also called the continuum random tree (CRT). See Fig. \ref{fig:arbconlamcontinuous}, left for a simulation of $\cT_\infty$. A famous property of this tree is that its mass measure $h$ is supported by the set of its leaves.

\bigskip

\emph{Galton-Watson trees}

Let $\mu$ be a probability distribution on $\Z_+$. We assume in what follows that $\mu$ is critical (that is, with expectation $1$) and has finite variance. A $\mu$-Galton-Watson tree (or $\mu$-GW tree) is a variable $T$ taking its values in the set of finite plane trees such that, for any tree $\tau$, $\P(T=\tau) = \prod_{V \in \tau} \mu_{k_V(\tau)}$, where we recall that $k_V(\tau)$ denotes the number of children of the vertex $V$. For convenience, we will always assume that $\mu_i>0$ for all $i \in \Z_+$, although our results generalize easily if we remove this assumption.

The asymptotic behaviour of large Galton-Watson trees has been extensively studied since the pioneering work of Aldous \cite{aldous1993}. In particular, Aldous shows  the convergence of discrete Galton-Watson trees after renormalization, as their size grows, towards the CRT:

\begin{theorem}[Aldous \cite{aldous1993}, Le Gall \cite{LG05}]
\label{thm:aldleg}
Let $\mu$ be a critical distribution on $\Z_+$ with finite variance $\sigma^2$. For $n \in \N$, denote by $T_n$ a $\mu$-Galton-Watson tree conditioned to have $n$ vertices. Then, the following holds in distribution for the Gromov-Hausdorff distance:
\begin{align*}
\frac{\sigma T_n}{\sqrt{2 n}} \underset{n \rightarrow \infty}{\overset{(d)}{\rightarrow}} \cT_\infty.
\end{align*}

Moreover, jointly with this convergence, the following holds for the $J1$ Skorokhod topology on $\mathcal D([0,1], \R_+)$: letting $\tilde C_T(t)=\frac{\sigma C_{2nt}(T_n)}{\sqrt{2 n}}$,
\begin{align*}
\tilde C_T \underset{n \rightarrow \infty}{\overset{(d)}{\rightarrow}} \mathbbm{e},
\end{align*}
where $\mathbbm{e}$ has the law of the standard Brownian excursion.
\end{theorem}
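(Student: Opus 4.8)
The plan is to follow the by-now-classical route of Aldous and Le~Gall: encode $T_n$ by a one-dimensional lattice path, establish a Donsker-type invariance principle for this path under the conditioning $\{|T|=n\}$, transfer the convergence to the height and contour functions, and conclude via the Lipschitz continuity of the coding map $f\mapsto\Tree(f)$. For the first step, order the vertices of $T_n$ lexicographically as $v_0(T_n),\dots,v_{n-1}(T_n)$ and define the \emph{\L{}ukasiewicz path} by $W^{(n)}_0=0$ and $W^{(n)}_{k+1}=W^{(n)}_k+k_{v_k(T_n)}(T_n)-1$. A standard bijective argument shows that, for a $\mu$-Galton--Watson tree conditioned to have $n$ vertices, the path $(W^{(n)}_k)_{0\le k\le n}$ is distributed as a random walk with i.i.d.\ increments of law $\nu(j)=\mu(j+1)$ ($j\ge -1$), started at $0$ and conditioned to stay nonnegative on $\{0,\dots,n-1\}$ and to equal $-1$ at time $n$; criticality of $\mu$ makes $\nu$ centred with variance $\sigma^2$. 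For the second step, Donsker's theorem applies to the unconditioned walk; the conditioning event has probability of the correct polynomial order $n^{-3/2}$ (by the cycle lemma / Kemperman's formula together with the local central limit theorem), so, using either absolute continuity of the Brownian excursion with respect to the Brownian meander or a Vervaat-type cyclic shift, one obtains
\[
\Big(\tfrac{1}{\sigma\sqrt n}\,W^{(n)}_{\lfloor nt\rfloor}\Big)_{0\le t\le 1}\ \xrightarrow[n\to\infty]{(d)}\ (\mathbbm{e}_t)_{0\le t\le 1}
\]
for the uniform topology on $C([0,1],\R_+)$.

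The third step is the transfer to the height and contour functions. The generation $H_k(T_n)$ of $v_k(T_n)$ is recovered deterministically from the walk via
\[
H_k(T_n)=\#\Big\{\,0\le j\le k-1:\ W^{(n)}_j=\min_{j\le\ell\le k}W^{(n)}_\ell\,\Big\}.
\]
This functional is not continuous for the sup-norm, so the convergence cannot be pushed forward directly; instead one proves, with probability tending to $1$, that the weak descending records of $W^{(n)}$ are $o(\sqrt n)$-dense, which simultaneously identifies the scaling limit of $(n^{-1/2}H_{\lfloor nt\rfloor}(T_n))_{t\in[0,1]}$ as a deterministic multiple of $\mathbbm{e}$ and makes this convergence joint with the previous one. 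The contour function $(C_t(T_n))_{0\le t\le 2n}$ is a deterministic time change of the height function, and a standard estimate bounds its uniform distance to the suitably reparametrised height function by the maximal increment of the latter, which is negligible compared with $\sqrt n$ with probability tending to $1$; hence $(n^{-1/2}C_{2nt}(T_n))_{t\in[0,1]}$ converges to the same multiple of $\mathbbm{e}$, and fixing the constant as in the statement gives $\tilde C_T\to\mathbbm{e}$.

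For the last step, recall (as noted in the excerpt) that $\Tree(C_{2n\cdot}(T_n))$ is exactly $T_n$ with every edge replaced by a unit segment, so it differs from $T_n$ in Gromov--Hausdorff distance by at most $1$, i.e.\ by $o(\sqrt n)$ after rescaling. Moreover the coding map is $2$-Lipschitz, $d_{GH}(\Tree(f),\Tree(g))\le 2\|f-g\|_\infty$ for continuous nonnegative $f,g$ vanishing at $0$ and $1$. Realising the convergence $\tilde C_T\to\mathbbm{e}$ almost surely through Skorokhod's representation theorem and combining it with the Lipschitz bound yields $\tfrac{\sigma}{\sqrt{2n}}\,T_n\to\Tree(\mathbbm{e})=\cT_\infty$ for the Gromov--Hausdorff distance, jointly with $\tilde C_T\to\mathbbm{e}$. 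Since all the functions at play are continuous, the $J1$ Skorokhod convergence on $\mathcal D([0,1],\R_+)$ is an immediate consequence of the uniform convergence.

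The main obstacle is the third step: the passage from the \L{}ukasiewicz walk to the height and contour functions relies on a genuinely probabilistic control of the spacing of weak records of the walk, not on any soft continuity statement. This record estimate is the technical heart of Aldous' and Le~Gall's arguments, and --- as the rest of the present article makes clear --- it is precisely the kind of fine information about conditioned Galton--Watson trees that the scaling-limit and genus-process results will in turn depend on.
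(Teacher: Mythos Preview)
Your sketch is a faithful outline of the classical Aldous--Le~Gall argument and is essentially correct as a proof strategy. However, there is nothing to compare it against: the paper does not prove this theorem. It is stated in the background section with attribution to \cite{aldous1993} and \cite{LG05} and used as a black box; no argument is given or even sketched. So your proposal is not ``the same approach as the paper'' nor ``a different route'' --- the paper simply cites the result.

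If anything, your write-up is more detailed than necessary for a result quoted from the literature. The one place where you could be more careful is the identification of the multiplicative constant: the passage from the \L{}ukasiewicz walk (which converges to $\sigma\,\mathbbm{e}$ after scaling by $\sqrt{n}$) to the height function picks up a factor $1/\sigma^2$ from the record-counting formula, yielding $H_{\lfloor nt\rfloor}\approx \tfrac{2}{\sigma}\sqrt{n}\,\mathbbm{e}_t$ and hence $\tilde C_T\to\mathbbm{e}$ with the normalisation in the statement. You allude to ``fixing the constant'' but do not carry it through; in a self-contained proof this is the step most often garbled.
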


\subsection{Convergence of lamination processes of Galton-Watson trees}
\label{ssec:chords_chords}
From now on, we assume that $\mu$ has finite variance;
in fact the only case of interest for this article is when $\mu$
is a $\Poisson(1)$ distribution.
Then, jointly with the convergence of Theorem \ref{thm:aldleg}, 
the following was proven in \cite[Theorem $3.3$ and Proposition $4.3$]{thevenin2019geometric}:

\begin{theorem}
\label{thm:cv_lamination_G0}
In distribution, for the $J1$ Skorokhod topology on $\mathcal D(\R_+,\SetSieve(\D))$:
\begin{align*}
\left(\bL_{(\sigma/\sqrt{2})c\sqrt{n}}(T_n)\right)_{c \geq 0} \underset{n \rightarrow \infty}{\overset{(d)}{\rightarrow}} \left(\bL_c(\mathbbm{e})\right)_{c \geq 0}.
\end{align*}
\end{theorem}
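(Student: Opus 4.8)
The plan is to upgrade the convergence $\tilde C_T\to\mathbbm{e}$ of the contour functions (Theorem~\ref{thm:aldleg}) into a convergence of the whole chord-valued process, via a Poisson approximation of the uniform random edge-labelling. I would work on a probability space carrying the almost sure convergence $\tilde C_T\to\mathbbm{e}$ (Skorokhod representation), with the edge-labelling of $T_n$ as an independent layer. Since the labelling is uniform, the chords present at discrete time $m$ are those of a uniform random $m$-subset of the $n-1$ edges; it is convenient to realize this by attaching to each edge $e$ an independent random variable $x_e$ uniform on $[0,\sqrt{2n}/\sigma]$ and calling $e$ \emph{active at (rescaled) time $c$} when $x_e\le c$. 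A standard coupling shows that, uniformly for $c$ in any compact interval, the active set $\{e:x_e\le c\}$ and the set $\{V_1,\dots,V_{\lfloor(\sigma/\sqrt2)c\sqrt n\rfloor}\}$ of the first $\lfloor(\sigma/\sqrt2)c\sqrt n\rfloor$ edges differ only by $o_P(\sqrt n)$ edges, so it suffices to prove the convergence with the lamination process built from the variables $x_e$.

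Fix $\epsilon>0$ and write $\bL^{\ge\epsilon}_{\cdot}$ for the variant of the lamination process keeping only the chords of Euclidean diameter at least $\epsilon$; an edge $e$ contributes to it iff the subtree of $T_n$ rooted at $V_e$ has size between $\delta(\epsilon)\,n$ and $(1-\delta(\epsilon))\,n$, for a suitable $\delta(\epsilon)>0$. Along the convergence of Theorem~\ref{thm:aldleg}, the rescaled edge-counting measure of $T_n$, pushed onto $\frac{\sigma}{\sqrt{2n}}T_n\cong\Tree(\tilde C_T)$, converges, in restriction to the set $\{u\in\Tree(\mathbbm{e}):|\theta_u(\Tree(\mathbbm{e}))|\ge\delta(\epsilon)\}$, to the length measure $\ell$ of $\Tree(\mathbbm{e})$; this set is a.s.\ a finite subtree, so $\ell_\epsilon:=\ell(\{|\theta_u|\ge\delta(\epsilon)\})<\infty$ a.s., and the number of $\epsilon$-relevant edges is $\frac{\sqrt{2n}}{\sigma}\ell_\epsilon(1+o_P(1))$. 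Each such edge is active by time $c$ with probability $\frac{\sigma c}{\sqrt{2n}}(1+o(1))$, asymptotically independently, and, conditioned on being active, its rescaled activation time is asymptotically uniform; a Poisson limit theorem for point processes then shows that the pairs (rescaled location of $V_e$ in $\Tree(\mathbbm{e})$, activation time) over the $\epsilon$-relevant edges converge to the restriction to $\{|\theta_u|\ge\delta(\epsilon)\}$ of the Poisson point process $\cP(\Tree(\mathbbm{e}))$ of intensity $d\ell\times dr$. Since $\tilde C_T\to\mathbbm{e}$ uniformly and $\mathbbm{e}$ a.s.\ has no local extremum at the (a.s.\ distinct) locations of these finitely many points, the associated chords $c(V_e)$ converge to the continuous chords $c(s,t)$. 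This yields the $J1$ convergence $\bigl(\bL^{\ge\epsilon}_{(\sigma/\sqrt2)c\sqrt n}(T_n)\bigr)_{c\ge0}\to\bigl(\bL^{\ge\epsilon}_c(\mathbbm{e})\bigr)_{c\ge0}$, the jump times being the activation times of these finitely many chords.

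To conclude, I would let $\epsilon\to0$, using that any chord of Euclidean diameter $<\epsilon$ lies within Hausdorff distance $\epsilon$ of the unit circle $\bS^1$, which is contained in $\bL_t(T_n)$ and $\bL_c(\mathbbm{e})$ for all $t,c$. Hence
\[
d_{\mathrm H}\bigl(\bL_c(\mathbbm{e}),\bL^{\ge\epsilon}_c(\mathbbm{e})\bigr)\le\epsilon
\quad\text{and}\quad
d_{\mathrm H}\bigl(\bL_t(T_n),\bL^{\ge\epsilon}_t(T_n)\bigr)\le\epsilon
\qquad\text{for all }t,c,
\]
so the $\epsilon$-truncated processes approximate the full ones uniformly in time, hence in the $J1$ topology, with an error at most $\epsilon$ uniformly in $n$. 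A $3\epsilon$-argument combining this with the previous paragraph gives the announced convergence; being carried out on the coupling space, it holds jointly with the convergence of Theorem~\ref{thm:aldleg}.

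The step I expect to be the main obstacle is the second one: establishing simultaneously the convergence of the time-marked point set of active edges to the Poisson point process $\cP(\Tree(\mathbbm{e}))$ and the convergence of the individual chords $c(V_e)$. This forces one to combine a Poisson approximation (the uniform labelling is exchangeable but not i.i.d., so the discrepancy must be controlled quantitatively), the Gromov--Hausdorff convergence of the trees, and the functional convergence of the contour functions, and to verify that small subtrees cannot yield large chords in the limit --- here this is automatic since diameter is monotone under the maps involved, but it is precisely the kind of discontinuity that, in the companion statement Theorem~\ref{thm:mainprocess}, genuinely requires extra work.
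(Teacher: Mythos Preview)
The paper does not prove this theorem: it is quoted as a black box from \cite[Theorem~3.3 and Proposition~4.3]{thevenin2019geometric}, so there is no in-paper proof to compare against. What the paper does extract from that reference, and uses downstream, is the discretised ``chord-to-chord correspondence'' of Proposition~\ref{prop:chordtochord}, which is essentially the statement that for each $M,\eta,s,D$ the joint law of (positions in arcs of width $2\pi/s$, appearance time-windows of width $M/D$) of the chords of length $>\eta$ converges. Your sketch is a plausible direct proof and, once the $\epsilon$-truncation and Poissonisation are in place, is precisely a continuous-parameter version of that correspondence.

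Two small points. First, the bound ``differ only by $o_P(\sqrt n)$ edges'' for the symmetric difference between the uniform-labelling and i.i.d.-activation models is correct but not the relevant one: $o_P(\sqrt n)$ edges could a priori contain many large chords. What you actually need (and what the natural coupling via common i.i.d.\ uniforms $U_e$ gives) is that on the $O_P(\sqrt n)$ many $\epsilon$-relevant edges the symmetric difference has cardinality $o_P(1)$; this follows from $|U_{(m)}-m/(n-1)|=O_P(n^{-3/4})$ for $m\asymp\sqrt n$. Second, your claim that the rescaled edge-counting measure on $\{|\theta_v(T_n)|\ge\delta n\}$ converges to $\ell$ restricted to $\{|\theta_u(\cT_\infty)|\ge\delta\}$ is true but needs a word of justification beyond Gromov--Hausdorff convergence: it follows from the uniform convergence of contour functions together with the a.s.\ fact that $\mathbbm e$ has no interval on which $|\theta_\cdot|$ is constant equal to $\delta$. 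With these two clarifications, your outline is correct, and the obstacle you single out is indeed where the substance lies.
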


\begin{figure}
\includegraphics[scale=0.4]{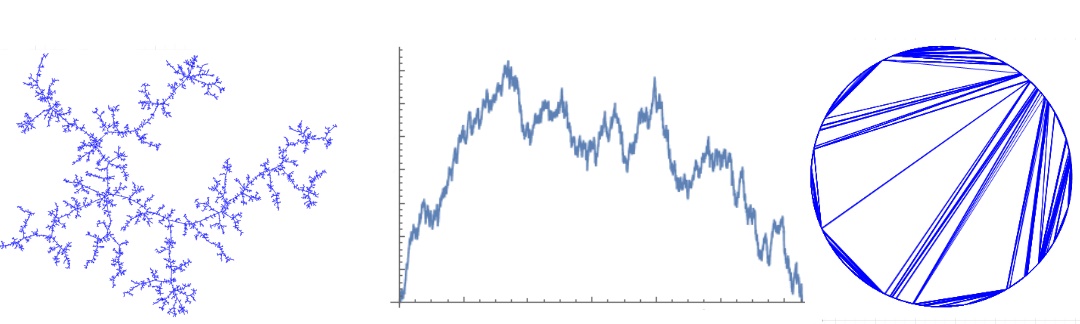}
\caption{An approximation of $\left( \cT_\infty, \be, \bL_\infty^{(2)} \right)$.}
\label{fig:arbconlamcontinuous}
\end{figure}

In what follows, we will denote by $\left(\bL_c^{(2)}\right)_{c \geq 0}$ the limiting process $\left(\bL_c(\mathbbm{e})\right)_{c \geq 0}$. In particular, $\bL_\infty^{(2)}:=\lim_{c \rightarrow \infty} \bL_c^{(2)}$ is Aldous' Brownian triangulation, informally presented in the introduction. 
See Fig. \ref{fig:arbconlamcontinuous}, right for an approximation of $\bL_\infty^{(2)}$.

\emph{The chord-to-chord correspondence:} 
We will need a more concrete version of the convergence in \cref{thm:cv_lamination_G0},
stating that for any $M>0$, any chord $c$ in the limit $\bL_M^{(2)}$, there is a sequence of chords
in $\bL_{(\sigma/\sqrt{2})M\sqrt{n}}(T_n)$ appearing roughly at the same time as $c$
and converging to $c$.
We refer to this as the chord-to-chord correspondence and we present it now formally.
This section follows \cite{thevenin2019geometric} (see in particular Proposition 2.5 there).

\bigskip

For an integer $s \geq 1$ and $k \in [ 0,s-1 ]$, we denote by $x_k$ the arc $(e^{-2\pi i\, k/s}, e^{-2\pi i\, (k+1)/s})$, and set $I_s := \{ x_k, 0 \leq k \leq s-1 \}$.
We also split the time interval $[0,M]$ into pieces (forgetting finitely many points):
 for any $D \in \Z_+, D \geq 1$,
let $R^M_D := \{ [jM/D, (j+1)M/D], j \in [ 0,D-1] \}$.

Fix $M>0, \eta>0, s,D \geq 1$ and an integer $K \geq 0$. 
In what follows (including in union and summation index),
 $Y=(y_1, \ldots, y_K)$ and $Z=(z_1, \ldots, z_K)$ are lists in $I_s^K$,
 and $R=(R_1,\dots,R_K)$ is a list of time windows in $(R^M_D)^K$. 
 Triples $(Y,Z,R)$ obtained from each other by the simultaneous action of a permutation
 in the symmetric group $S_K$ on $Y$, $Z$ and $R$ are considered identical.
 We now define the event $E^M_{Y,Z,R}(\kL)$, 
 for a nondecreasing lamination-valued process $\kL := (L_r)_{r \geq 0}$.

$E^M_{Y,Z,R}(\kL): L_M \text{ has exactly } K \text{ chords of length } >\eta, \text{ which can be indexed so that the  } i-\text{th one connects the arcs } y_i \text{ and } z_i \text{ and has appeared in the process during the time window } R_i.$

The following proposition is a consequence of the definition 
of the Skorokhod topology on $\mathcal D(\R_+,\SetSieve(\D))$. Roughly speaking, if two lamination-valued processes have their large chords close to each other, then the processes are close to each other for the Skorokhod topology.

\begin{proposition}
\label{prop:constant}
Fix $\eta,s,D>0$. Then there exists a (deterministic) constant $C(\eta,s,D)$ depending only on $\eta,s,D$ such that:
\begin{itemize}
\item[(i)] for any two lamination-valued processes $\kL_1:=(L^1_c)_{c \geq 0}, \kL_2:=(L^2_c)_{c \geq 0}$, any $M>0$, any $K \geq 0$, any $(Y,Z,R) \in I_s^K \times I_s^K \times R_D^M$, conditionally on $E^M_{Y,Z,R}(\kL^1)$, $E^M_{Y,Z,R}(\kL^2)$:
\begin{align*}
d_{Sk} \left( (L^1_c)_{0\leq c \leq M}, (L^2_c)_{0\leq c \leq M} \right) \leq C(\eta,s,D),
\end{align*}
where we recall that $d_{Sk}$ denotes the Skorokhod distance on $\mathcal{D}([0,M], \SetSieve(\D))$.

\item[(ii)]$C(\eta,s,D) \rightarrow 0$ as $\eta^{-1},s,D$ all go to $+ \infty$.
\end{itemize}
\end{proposition}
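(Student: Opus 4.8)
The plan is to exhibit, on the event $E^M_{Y,Z,R}(\kL^1)\cap E^M_{Y,Z,R}(\kL^2)$, an explicit time change witnessing that the two restricted processes are Skorokhod-close, with an error controlled by $\eta$, $s$ and $D$ only. I would proceed through three successive reductions, combined by the triangle inequality. First, since a chord of length at most $\eta$ lies within Hausdorff distance $\eta$ of the unit circle $\bS^1$ (every point of the chord is within $\eta$ of one of its endpoints, which lies on $\bS^1$), the sieve $\bar L^k_c$ obtained from $L^k_c$ by erasing all chords of length $\le\eta$ satisfies $d_H(L^k_c,\bar L^k_c)\le\eta$ for every $c$ and $k\in\{1,2\}$; using the identity time change this gives $d_{Sk}\big((L^k_c)_{c\le M},(\bar L^k_c)_{c\le M}\big)\le\eta$, so it suffices to bound $d_{Sk}\big((\bar L^1_c)_{c\le M},(\bar L^2_c)_{c\le M}\big)$.

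Second, I would match the large chords. On $E^M_{Y,Z,R}(\kL^k)$ one may index the $K$ chords of $L^k_M$ of length $>\eta$ as $c^k_1,\dots,c^k_K$, with $c^k_i$ joining the arcs $y_i$ and $z_i$ and appearing at a time $\tau^k_i\in R_i$; since $\kL^k$ is nondecreasing this means $\bar L^k_c=\bS^1\cup\{c^k_i:\tau^k_i\le c\}$. Two chords whose endpoints lie in the same pair of arcs of $I_s$ have corresponding endpoints within Euclidean distance $2\sin(\pi/s)\le 2\pi/s$, hence are at Hausdorff distance at most $2\pi/s$. Therefore, setting $L^\flat_c:=\bS^1\cup\{c^2_i:\tau^1_i\le c\}$ (the chords of the second process placed at the appearance times of the first) and using $d_H(\bigcup_i A_i,\bigcup_i B_i)\le\max_i d_H(A_i,B_i)$, one gets $d_H(\bar L^1_c,L^\flat_c)\le 2\pi/s$ for all $c$, whence $d_{Sk}\big((\bar L^1_c)_{c\le M},(L^\flat_c)_{c\le M}\big)\le 2\pi/s$. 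It then remains to compare $L^\flat$ with $\bar L^2_c=\bS^1\cup\{c^2_i:\tau^2_i\le c\}$: these two processes are built from the \emph{same} chords $c^2_1,\dots,c^2_K$, the $i$-th one appearing at time $\tau^1_i$ in the first and $\tau^2_i$ in the second, both times lying in the window $R_i$ of width $M/D$.

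Third, I would align the appearance times. When the windows $R_1,\dots,R_K$ actually occupied by the large chords are pairwise disjoint, the families $(\tau^1_i)_i$ and $(\tau^2_i)_i$ are listed in the same strict order, so one can pick an increasing homeomorphism $\lambda$ of $[0,M]$ with $\lambda(\tau^1_i)=\tau^2_i$ for every $i$ — for instance affine between consecutive constraint points and on the two end segments. Then $\bar L^2_{\lambda(c)}=L^\flat_c$ for all $c$ by construction, while $|\lambda(c)-c|\le\max_i|\tau^1_i-\tau^2_i|\le M/D$, since on each piece $\lambda(c)-c$ is a convex combination of the values $\lambda(\tau^1_i)-\tau^1_i$. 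Hence $d_{Sk}\big((L^\flat_c)_{c\le M},(\bar L^2_c)_{c\le M}\big)\le\psi(D)$, where $\psi(D)$ is the contribution of a time change of size $M/D$ to the Skorokhod metric (with the normalisation in use, $\psi(D)\to 0$ as $D\to\infty$). Combining the three bounds proves the proposition with $C(\eta,s,D)=2\eta+2\pi/s+\psi(D)$: part (i) is then immediate, and part (ii) follows since each summand tends to $0$ as $\eta^{-1}$, $s$ and $D$ all go to infinity.

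The hard part — and, I expect, the main obstacle to a perfectly uniform statement — is the disjointness hypothesis in the last step: if two or more large chords happen to appear within a single window $R_i$, their order of appearance may differ in $\kL^1$ and $\kL^2$, no time change can then align the jumps exactly, and one is left with a discrepancy that stays $\Theta(1)$ in Hausdorff distance even though it is localised in a time interval of width $M/D$. This is benign in the intended applications, where the proposition is invoked for a discrete lamination-valued process and its scaling limit, whose finitely many chords of length $>\eta$ appear at a.s.\ distinct times in a fixed order, so that for $D$ large the occupied windows are automatically disjoint and the construction above applies verbatim.
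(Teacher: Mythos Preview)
Your three-step decomposition---discard chords of length $\le\eta$ at cost $\eta$; match the remaining $K$ chords by arc pair at cost $2\pi/s$; align appearance times with a piecewise-affine time change of distortion $M/D$---is exactly the intended argument. The paper does not actually prove this proposition: it is asserted as ``a consequence of the definition of the Skorokhod topology'' and used as a black box, so there is nothing more detailed to compare your write-up against.

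Your final paragraph is not a minor caveat but the correct identification of a genuine gap in the statement as literally written. When two large chords share a window $R_i=R_j$ but connect different arc pairs, their order of appearance may be reversed between $\kL^1$ and $\kL^2$, and then no increasing $\lambda$ can align the jumps. Concretely, take the non-crossing chords $c_1=[1,i]$ and $c_2=[-1,-i]$ (both of length $\sqrt 2$), appearing in the same window $[0,M/D]$ but in opposite orders in the two (lamination-valued, nondecreasing) processes. At any time $t$ with $\bar L^1_t=\bS^1\cup c_1$, every possible value of $\bar L^2_{\lambda(t)}$---namely $\bS^1$, $\bS^1\cup c_2$, or $\bS^1\cup c_1\cup c_2$---lies at Hausdorff distance at least $1-1/\sqrt 2$ from $\bS^1\cup c_1$. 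Hence $d_{Sk}\ge 1-1/\sqrt 2$ for every $\eta<\sqrt 2$, $s\ge 8$ and $D\ge 1$, and part~(ii) fails in this generality. The repair is exactly what you describe: in the paper's only use of the proposition one of the processes is $(\bL^{(2)}_c)_{c\ge 0}$, whose finitely many chords of length $>\eta$ have a.s.\ distinct appearance times, so one may further restrict to those $(Y,Z,R)$ with pairwise distinct $R_i$---this costs an extra $\delta$ in probability once $D$ is large---and then your construction goes through verbatim with $C(\eta,s,D)=2\eta+2\pi/s+\psi(D)$.
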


In what follows, let $T_n$ be a $\mu$-GW tree conditioned to have $n$-vertices. 
We now let $\kL_n := (\bL_{(\sigma/\sqrt{2}) c \sqrt{n}}(T_n))_{c \geq 0}$ and $\kL := (\bL_c^{(2)})_{c \geq 0}$.

\begin{proposition}[Chord-to-chord correspondence]
\label{prop:chordtochord}
The following holds:
\begin{itemize}
\item[(i)] For any $K>0$, $Y,Z,R \in I_s^K \times I_s^K \times  R_D^M$: $\P\left( E^M_{Y,Z,R}(\kL_n) \right) \underset{n \rightarrow \infty}{\rightarrow} \P\left( E^M_{Y,Z,R}(\kL) \right)$.
\item[(ii)] $\P \left( \bigcup_{K \geq 0} \bigcup_{Y,Z,R} E^M_{Y,Z,R}(\kL) \right)=1$.
\item[(iii)] The events $\left(E^M_{Y,Z,R}(\kL), (Y,Z,R) \in I_s^K \times I_s^K \times R_D^M \right)$ are almost surely disjoint.
\end{itemize}
\end{proposition}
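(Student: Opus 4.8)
The proposition is essentially Proposition~2.5 in \cite{thevenin2019geometric}, and is obtained the same way: the only inputs are \cref{thm:cv_lamination_G0} (which gives $\kL_n\to\kL$ in distribution in $\mathcal D(\R_+,\SetSieve(\D))$, where $\kL=(\bL^{(2)}_c)_{c\ge0}$) together with some almost sure regularity of the limit process. Fix $M,\eta,s,D$ and let $\Omega_0$ be the event on which: $\bL^{(2)}_M$ has finitely many chords of length $>\eta$ and none of length exactly $\eta$; no chord of $\bL^{(2)}_M$ has an endpoint of the form $e^{-2\pi i k/s}$; and no atom of the Poisson process $\cN(\be)$ defining $(\bL^{(2)}_c)_c$ occurs at a time in $\{jM/D:0\le j\le D\}$. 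Finiteness holds because a chord of length $>\eta$ comes from an atom $((s,t),x)$ with $d(s,t)-g(s,t)$ bounded away from $0$ and $1$, so that the intensity of long chords appearing before time $M$ is at most a constant (depending on $\eta$) times $M$ times the a.s.\ finite area of $\cEG(\be)$; the same bound with $\eta/2$ shows the closure in the definition of $\bL^{(2)}_M$ creates no new long chord. The three remaining properties hold because the corresponding set of bad atoms is Lebesgue--null in the relevant variable, hence a.s.\ carries no atom; all this is standard for the Brownian excursion lamination process, see \cite{thevenin2019geometric}. On $\Omega_0$ there is exactly one (symmetrised) triple $(Y^*,Z^*,R^*)$ with $E^M_{Y^*,Z^*,R^*}(\kL)$ true: $K$ is the number of long chords of $\bL^{(2)}_M$, and $y_i^*,z_i^*,R_i^*$ record the arcs of $I_s$ containing the endpoints of the $i$-th long chord and the window of $R^M_D$ containing its appearance time. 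Since $\P(\Omega_0)=1$, this gives item~(ii) (the union contains $\Omega_0$) and item~(iii) (two distinct triples cannot both hold on $\Omega_0$).

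For item~(i), by Skorokhod's representation theorem we may assume the convergence $\kL_n\to\kL$ holds almost surely in the $J1$ topology; as $E^M_{Y,Z,R}$ is a measurable functional of the trajectory, it suffices to show $\mathds{1}_{E^M_{Y,Z,R}(\kL_n)}\to\mathds{1}_{E^M_{Y,Z,R}(\kL)}$ a.s.\ and conclude by bounded convergence. Work on $\Omega_0$, with its unique valid triple $(Y^*,Z^*,R^*)$; it is enough to prove that the long-chord data of $\kL_n$ equals $(Y^*,Z^*,R^*)$ for $n$ large. Since $M$ is a.s.\ a continuity time of $\kL$, the $J1$ convergence gives $L^n_M\to L_M=\bL^{(2)}_M$ for the Hausdorff distance. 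Two elementary facts then drive the matching. First, distinct long chords of $\bL^{(2)}_M$ are approximated by distinct chords of $L^n_M$ (Hausdorff closeness forces chords close to two distinct limit chords to be distinct for $n$ large), so $L^n_M$ has at least $K$ chords of length $>\eta$. Second, if $\gamma_n$ is a chord of $L^n_M$ with $|\gamma_n|\ge\eta$, then along any subsequence $\gamma_n$ converges to a chord of $\bL^{(2)}_M$: its middle third stays at distance $\ge\delta(\eta)>0$ from $\bS^1$, hence lies within $o(1)$ of the finitely many chords of $\bL^{(2)}_M$ longer than a threshold depending only on $\delta(\eta)$, hence, being connected, within $o(1)$ of a single one of them, and being a segment it converges to that chord, whose length is $\ge\eta$, thus $>\eta$ on $\Omega_0$. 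Combined with the \enquote{no multiplicity} property of the chord-to-chord correspondence --- distinct large chords of $\kL_n$ have distinct limits, and each large chord of $\bL^{(2)}_M$ is the limit of exactly one large chord of $\kL_n$, with appearance times converging --- this produces, for $n$ large, a bijection between the chords of $L^n_M$ of length $>\eta$ and the $K$ long chords of $\bL^{(2)}_M$, respecting appearance times. Finally, on $\Omega_0$ the endpoints of the long chords of $\bL^{(2)}_M$ lie in the interiors of the arcs of $I_s$ and their appearance times in the interiors of the windows of $R^M_D$; since in $J1$ the jump times and sizes of $\kL_n$ converge to those of $\kL$, for $n$ large the matched chords of $\kL_n$ have endpoints in the same arcs and appear in the same windows, so the long-chord data of $\kL_n$ is $(Y^*,Z^*,R^*)$. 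This proves item~(i).

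The only point that is \emph{not} a formal consequence of the abstract $J1$ convergence in \cref{thm:cv_lamination_G0} is the \enquote{no multiplicity} statement used above: a priori two large chords of $\kL_n$ could be nested with a vanishingly small gap and converge to the same limit chord. Ruling this out uses the concrete structure behind $\kL_n$ and $\kL$ (the Poisson point process on $\cEG(\be)$, resp.\ on the underlying Galton--Watson tree), exactly as in the proof of Proposition~2.5 of \cite{thevenin2019geometric}; I expect this to be the main obstacle, the rest being elementary plane geometry of chords. The underlying reason care is needed is the discontinuity of the functionals $\mathds{1}_{E^M_{Y,Z,R}}$ --- an arbitrarily small perturbation can change the number of long chords or move an endpoint across an arc boundary --- which is precisely why the regularity properties collected in $\Omega_0$ (finitely many long chords, atomless laws of lengths, endpoints and appearance times) are essential; \cref{prop:constant} is the companion, reverse-direction statement, that controlling the large chords controls the whole trajectory for $d_{Sk}$.
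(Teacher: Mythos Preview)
Your proposal is correct and aligns with the paper's own proof, which is even terser: the paper simply cites \cite[Proposition~2.5]{thevenin2019geometric} for item~(i) and dismisses (ii) and (iii) as clear from the a.s.\ finiteness of the number of chords of length $>\eta$ in $\bL^{(2)}_M$. Your treatment of (ii) and (iii) via the regularity event $\Omega_0$ is a correct unpacking of that one-line justification, and your sketch for (i) --- including the honest identification of the \enquote{no multiplicity} issue as the one step that genuinely requires the concrete Poisson/tree structure from \cite{thevenin2019geometric} rather than abstract $J_1$ convergence --- is exactly the content behind the citation.
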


\begin{proof}
Item (i) is proved in \cite[Proposition $2.5$]{thevenin2019geometric}, while (ii) and (iii) are clear since $\bL_M^{(2)}$ has a.s. a finite number of chords of length $>\eta$.
\end{proof}

The two above propositions allow us to consider a particular coupling 
between the discrete processes $\kL_n$ and the limit $\kL$,
 depending on thresholds $\epsilon,\delta>0$.
 This coupling will be useful in the proofs in the next sections,
 so we describe it now.
 
 Fix $\epsilon,\delta>0$ and choose $\eta,s,D$ such that $C(\eta,s,D)<\epsilon$
 (they exist by \cref{prop:constant} (ii)).
 By \cref{prop:chordtochord} (ii),
 there exists $K_0$ such that
 \begin{equation}
   \P \left( \bigcup_{K=0}^{K_0} \bigcup_{Y,Z,R} E^M_{Y,Z,R}(\kL) \right) \ge 1-\frac{\delta}2.
   \label{eq:DefK0}
\end{equation}
 Now, there is a finite number of events $E^M_{Y,Z,R}(\kL)$, with $K\le K_0$.
 Hence, by \cref{prop:chordtochord} (i),
 one can choose $N_0$ so that, for any $n \geq N_0$,
 \[\sum_{K \le K_0} \sum_{Y,Z,R} \Big| \P\left( E^M_{Y,Z,R}(\kL_n) \right) -
 \P\left( E^M_{Y,Z,R}(\kL) \right) \Big| \le \delta/2.\]
 Therefore, one can couple $\kL_n$ and $\kL$ such that,
 with probability $1-\delta/2$, for any $n \geq N_0$, $E^M_{Y,Z,R}(\kL)$ holds if and only if $E^M_{Y,Z,R}(\kL_n)$ holds
 (for all $K \le K_0$, $Y$, $Z$ and $R$).
 By \cref{prop:constant} (i), recalling \cref{eq:DefK0}, this implies that
 with probability $1-\delta$, one has
\[d_{Sk} \left( \kL^M_n , \kL^M \right) \leq C(\eta,s,D) <\eps.\]
 Furthermore, let us do the following observation:
 if both $E^M_{Y,Z,R}(\kL)$ and $E^M_{Y,Z,R}(\kL_n)$ hold 
 (for some $n$, $K$, $Y$, $Z$ and $R$),
 then for any $i \le K$, the $i$-th chords of length $>\eta$
 in $\bL_{(\sigma/\sqrt{2})M\sqrt{n}}(T_n)$ and $\bL_M^{(2)}$ are at Hausdorff distance $<2\pi/s$ 
 from each other and appear at times that differ by at most $M/D$.

\begin{remark}
Notice that it may happen that chords in the discrete process of length $>\eta$ converge to a chord in the limit that has length exactly $\eta$. However, at $\eta$ fixed, this happens with probability $0$ as, almost surely, no chord in the limiting lamination $\bL_M^{(2)}$ has size exactly $\eta$.
Similarly, the probability that $\bL_M^{(2)}$ has a chord
appearing at time exactly $jM/D$ for some $j$, or a chord of length $>\eta$
with extremity exactly $e^{-2\pi i\, k/s}$ is zero; therefore taking open or closed
time intervals or circle arcs in our discretization is irrelevant.
\end{remark}

\section{Sieves and rotations}
\label{sec:rotations}

In this section, we construct the limiting process $(\Sieve_c^g, c \geq 0)$ in genus $g$ out of the limiting process in genus $0$ by applying rotations to it, in the way described in Section \ref{ssec:generation}.

\subsection{Framework}
\label{ssec:framework}
Let us recall the construction of the limiting sieve $\Sieve_\infty^g$ of \cref{thm:main}.
We start from the Brownian lamination $\bL_\infty^{(2)} := \Sieve_\infty^0$, and $g$ independent uniform
random sets $(\{A_i,B_i,C_i\})_{1 \le i \le g}$ on the unit circle.
We take the convention that $(1,A_i,B_i,C_i)$ appear in this order clockwise on the unit circle.
For simplicity, recalling the definition of the rotation operations in Section \ref{ssec:generation}, we write $R_i=R_{\{A_i,B_i,C_i\}}$ and $R_j^h=R_h \circ \cdots \circ R_j$
for $1 \le j <h\le g$ and $R^g=R_1^g$.
We then have by definition $\bm \Sieve^g_\infty=R^g(\bL_\infty^{(2)})$.

For each $i\le g$, the points obtained by applying to $A_i,B_i,C_i$ 
later rotations will play a particular role:
we therefore set 
\[
A'_i=R^g_{i}(A_i) ;\qquad
B'_i=R^g_{i}(B_i) ;\qquad
C'_i=R^g_{i}(C_i).
\]

Other points of interest are the points sent to $A_i$, $B_i$, $C_i$
by the first $i-1$ rotations, namely we define
\[
A''_i=(R^{i-1}_{1})^{-1}(A_i) ;\qquad
B''_i=(R^{i-1}_{1})^{-1}(B_i) ;\qquad
C''_i=(R^{i-1}_{1})^{-1}(C_i).
\]
Finally, throughout the paper we denote by $E$ the
set $\bigcup_{i=1}^g \{ A''_i, B''_i, C''_i \}$.

For the proof of our main result (\cref{thm:mainprocess}),
it will be important to keep track of the Hausdorff distance between the images of chords in a sieve after some rotations. We start with a remark.

\begin{remark}
\label{rem:continuity}
The operators $R_{\{A,B,C\}}$
 (and thus more generally the $R_j^h$)
are not continuous.
Indeed let $c=[P,Q]$ be a chord of length $\eps>0$ around $A$ 
(i.e. $A$ belongs to the smallest of the two arcs $\arc{PQ}$ or $\arc{QP}$;
 we will use this terminology of {\em chord around a point}
throughout the paper).
If $\eps$ is sufficiently small, one of its extremities, say $P$, belongs
to $\arc{CA}$ and the other, say $Q$, to $\arc{AB}$.
By construction, $R_{\{A,B,C\}}(P)=P$ is at distance $<\eps$ from $A$,
while  $R_{\{A,B,C\}}(Q)$ belongs to $R_{\{A,B,C\}}(\arc{AB})=\arc{DC}$,
where $D=R_{\{A,B,C\}}(A) \ne A$.
Moreover, $R_{\{A,B,C\}}(Q)$ is at distance $<\eps$ from $D$.
The image of $c$ is therefore at Hausdorff distance $<\eps$
from the chord $[A,D]$.
We note that adding $c$ can only move a sieve 
containing the circle by $\eps$ in the Hausdorff space, 
while adding $[A,D]$ is a macroscopic change.
Similarly, a small chord around $B$ will be transformed to a chord
close to $[C,A]$, while small chords around $C$ are mapped to
chords closed to $[D,C]$.
This shows that $R_{\{A,B,C\}}$ is not continuous.

Furthermore, the Brownian triangulation $\bL^{(2)}_\infty$ almost surely contains short chords
around almost all points (see proof of \cref{lem:Triangles_In_XgInfty} below),
 so that $R_{\{A,B,C\}}$ is not even continuous
on a set of measure $1$ with respect to the distribution of the Brownian triangulation.
\end{remark}\medskip

In good cases, we can nevertheless control Hausdorff distances
between chords after rotation.
The following lemma provides such a control. In this lemma, $d$ denotes the usual Euclidean distance in the plane.
\begin{lemma}
\label{lem:chorddistances}
\begin{itemize}
\item[(i)] For any chords $c=[P,Q], c'=[P',Q']$, 
\begin{align*}
d_H(c,c') \leq \max \{ d(P,P'), d(Q,Q') \}.
\end{align*}
\item[(ii)]
Fix $\eta>0$ and let $c:=[P,Q]$ be a chord in the disk of length $>\eta$. Then, for $\epsilon>0$ small enough (depending on $\eta$), for any chord $c'$ such that $d_H(c,c')<\epsilon$, one can label the endpoints of $c'$ by $P'$ and $Q'$ in a unique way so that $d(P,P')<d(Q,P')$ and $d(Q,Q') < d(Q,P')$
(informally, this corresponds to pairing the closest of these endpoints).
 Furthermore, there exists a constant $s_\eta > 0$ depending only on $\eta$ such that
\begin{equation}
\label{eq:compare_distance}
d_H(c,c') \leq \max \{ d(P,P'), d(Q,Q') \} \leq s_\eta d_H(c,c').
\end{equation}

\item[(iii)] Let $g \geq 1$ and $(A_i,B_i,C_i)_{1 \leq i \leq g}$ be $g$ triples of points on the disk so that all $3g$ points are distinct. 
Recall that $E$ denotes the set $\bigcup_{i=1}^g \{ A''_i, B''_i, C''_i \}$. Let $c:=[P,Q]$ be a chord of the disk of length $>\eta$ and let $c':=[P',Q']$ as in (ii). Assume that $P$ and $P'$ are in the same connected component of $\bS^1 \backslash E$, and that $Q$ and $Q'$ are also in the same component (possibly different from the one containing $P$ and $P'$). Then:
\begin{align*}
d(R^g(P),R^g(P'))=d(P,P'),\quad d(R^g(Q),R^g(Q'))=d(Q,Q').
\end{align*}
In particular, by (i) and (ii), 
\begin{align*}
d_H(R^g(c),R^g(c'))\leq \max \{ d(P,P'), d(Q,Q') \} \leq s_\eta d_H(c,c').
\end{align*}
\end{itemize}
\end{lemma}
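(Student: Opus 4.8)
The plan is to treat the three items in order, since each one builds on the previous.

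For item (i), I would argue purely geometrically. Recall that the Hausdorff distance between two compact sets is $d_H(c,c')=\max\{\sup_{x\in c}d(x,c'),\sup_{y\in c'}d(y,c)\}$. Given the pairing $P\leftrightarrow P'$, $Q\leftrightarrow Q'$, any point $x$ on the segment $[P,Q]$ can be written $x=(1-t)P+tQ$ for some $t\in[0,1]$; then the point $x'=(1-t)P'+tQ'$ lies on $c'$ and $d(x,x')\le (1-t)d(P,P')+t\,d(Q,Q')\le\max\{d(P,P'),d(Q,Q')\}$ by convexity. Hence $\sup_{x\in c}d(x,c')\le\max\{d(P,P'),d(Q,Q')\}$, and symmetrically for the other term; taking the max gives the claim. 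This is a short convexity estimate and presents no obstacle.

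For item (ii), the lower bound $d_H(c,c')\le\max\{d(P,P'),d(Q,Q')\}$ is just (i). The content is the existence of the good pairing and the reverse inequality $\max\{d(P,P'),d(Q,Q')\}\le s_\eta\, d_H(c,c')$. For the pairing: since $c$ has length $>\eta$, if $\epsilon$ is small enough (say $\epsilon<\eta/4$) the two endpoints $P,Q$ of $c$ are at distance $>\eta$ from each other, so for any $c'$ with $d_H(c,c')<\epsilon$ each endpoint of $c'$ is within $\epsilon$ of exactly one of $P,Q$ (it cannot be within $\epsilon/$something of both), and the two endpoints of $c'$ cannot both be close to the same endpoint of $c$ — otherwise $c'$ would be a very short chord and could not be Hausdorff-close to the long chord $c$. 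This fixes the labelling uniquely. For the reverse inequality, I would note that $d(P,c')\le d_H(c,c')$, and $d(P,c')$ is realized by the orthogonal projection of $P$ onto the line through $c'$; because the chord $c'$ has length bounded below (again by $\eta/2$, say, once $\epsilon$ is small) and $P$ projects near the endpoint $P'$, elementary planar trigonometry bounds $d(P,P')$ by a constant multiple of $d(P,c')$, the constant depending only on the lower bound $\eta$ on the lengths (it blows up as the chords get short, which is exactly why $s_\eta$ depends on $\eta$). Same for $Q,Q'$. Taking $s_\eta$ to be this trigonometric constant finishes item (ii).

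For item (iii), the key observation is that $R^g$ is a \emph{piecewise isometry} of the disk: it is built as a composition of the maps $R_{\{A_i,B_i,C_i\}}$, each of which is a piecewise rotation of the circle (hence a piecewise isometry), extended to chords. By construction of the points $A''_i,B''_i,C''_i$ and of the set $E$, the successive rotations $R_1,\dots,R_g$ never split the arc between two consecutive points of $E$: $E$ is precisely the set of preimages (under the appropriate partial compositions) of all the ``cut points'' of all the $R_i$. Concretely, a connected component of $\bS^1\setminus E$ is mapped by $R^g$ onto an arc of the circle by a single rigid rotation, hence isometrically. Therefore, if $P$ and $P'$ lie in the same component of $\bS^1\setminus E$, then $d(R^g(P),R^g(P'))=d(P,P')$, and likewise for $Q,Q'$; note this uses that $d$ on the circle and the ambient Euclidean $d$ agree up to the usual monotone reparametrization, and a rotation preserves both. (Strictly I would phrase the isometry statement for the arc-length/angular metric and then transfer, or simply observe a rotation of the plane is a Euclidean isometry so $d(R^g(P),R^g(P'))=d(P,P')$ directly when $P,P'$ are in the same component.) Combining this with (i) applied to the chords $R^g(c)=[R^g(P),R^g(Q)]$ and $R^g(c')=[R^g(P'),R^g(Q')]$, and then with (ii), yields
\[
d_H(R^g(c),R^g(c'))\le\max\{d(R^g(P),R^g(P')),d(R^g(Q),R^g(Q'))\}=\max\{d(P,P'),d(Q,Q')\}\le s_\eta d_H(c,c').
\]
The main obstacle, and the step deserving the most care, is the bookkeeping in item (iii): checking that the definition of $E$ as $\bigcup_i\{A''_i,B''_i,C''_i\}$ really does capture \emph{all} discontinuity points of the composition $R^g$ — i.e.\ that between consecutive points of $E$ the map $R^g$ is genuinely a single rotation and not merely continuous. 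This is an induction on $g$: assuming $R^{g-1}$ is a single rotation on each component of $\bS^1\setminus\bigcup_{i<g}\{A''_i,B''_i,C''_i\}$, one checks that composing with $R_g=R_{\{A_g,B_g,C_g\}}$ adds exactly the three new cut points $A_g,B_g,C_g$, whose $R^{g-1}_1$-preimages are (on each piece, by the inductive rigidity) well-defined single points, which are by definition $A''_g,B''_g,C''_g$; one must also check the three points $D$-type images introduced internally by $R_g$ do not create additional cuts relevant on $\bS^1$ (they are images, not preimages, so they lie in the target and are irrelevant for the domain partition). Once this combinatorial claim is nailed down, items (i) and (ii) plug in mechanically.
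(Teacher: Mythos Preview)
Your approach is essentially the same as the paper's. Item (i) is proved identically via the convex-combination argument, and item (iii) is dispatched by both you and the paper as immediate from the definition of $R^g$ as a piecewise rotation (the paper in fact gives less detail than you do on the bookkeeping about $E$).

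One small imprecision in your treatment of (ii): from $d_H(c,c')<\epsilon$ you only get that each endpoint of $c'$ is within $\epsilon$ of the \emph{segment} $c$, not directly of one of the endpoints $P,Q$. You need the extra observation that a point on the unit circle which is $\epsilon$-close to a chord of length $>\eta$ must in fact be close to one of that chord's endpoints (since the sagitta of the arc is bounded below by $1-\sqrt{1-\eta^2/4}$). The paper handles this by choosing explicit coordinates $P=e^{i\alpha},\,Q=e^{-i\alpha}$ and checking that for $\epsilon<1-\sqrt{1-\eta^2/4}$ the endpoints of $c'$ are forced into opposite half-circles; it then proves the reverse inequality by an explicit Thales-type computation bounding $d(P',c)$ below by a constant times $d(P',P)$. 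Your projection/trigonometry sketch is the same idea carried out on the other side ($d(P,c')$ instead of $d(P',c)$), and would go through once that missing step is inserted.
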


\begin{proof}[Proof of Lemma \ref{lem:chorddistances}]
Remark first that (iii) is straightforward by definition of the rotation operation. Thus, we only have to prove (i) and (ii). 

We start by proving (i).
Let $X$ be a point in $c$. Then, identifying points and their Euclidean coordinates,
we have $X=\alpha P +(1-\alpha) Q$ for some $\alpha \in [0,1]$.
Set $X'=\alpha P' +(1-\alpha) Q'$, which is a point of the chord $c'$.
Then, using the fact that Euclidean distance is associated with a norm $\Vert \cdot \Vert$, we get:
\begin{multline*}
d(X,c') \le \Vert X - X'\Vert  \le \big\Vert  \alpha (P-P') +(1-\alpha) (Q-Q') \big\Vert \\
\le \alpha \Vert P-P'\Vert  +(1-\alpha) \Vert Q-Q'\Vert   \le  \max \{ d(P,P'), d(Q,Q') \}.
\end{multline*}
By symmetry, we get that for all $Y'$ in $c'$, one has 
\[d(Y',c) \le \max \{ d(P,P'), d(Q,Q') \},\]
concluding the proof of (i).

Let us now prove (ii).
For convenience and without loss of generality, we set $P=e^{i \alpha}, Q=e^{-i\alpha}$, where $\alpha \in [0,\pi/2]$, and set $\ell:=2\sin \alpha$ the Euclidean length of $c$. 
We want to show that for $\eps>0$ the condition $d_H(c,c')<\eps$ forces
one of the extremity of $c'$ to be in the upper half-circle and the other in the lower half circle.
First, we remark that $d(1,c)=1-\sqrt{1-\ell^2/4} \geq 1-\sqrt{1-\eta^2/4}$, and that $d(-1,c)\ge d(1,c)$. 
Hence, if $\epsilon<1-\sqrt{1-\eta^2/4}$, the extremities of $c'$ are different from $1$ and $-1$. 
In addition, if one assumes that both are in the upper half-circle,
 then $d(Q,c')>\ell/2\geq \eta/2 \geq 1-\sqrt{1-\eta^2/4}$. 
Thus, for $\epsilon$ small enough, $c'$ has one extremity in each half-circle,
and one can define $P'$ and $Q'$ unambiguously.

Let us now prove the right inequality in \cref{eq:compare_distance}. 
Observe that it immediately follows from the two following statements,
 where we write $P'=e^{\pi i  \beta}$ with $\beta \in  (0,\pi)$.
\begin{itemize}
\item[(a)] if $0 < \beta \le \alpha$ or $\pi-\alpha \leq \beta < \pi$,
  then $d(P',c) \geq K(\eta) d(P',P)$ for some constant $K(\eta)$ (independent of $\beta$).
\item[(b)] if $\alpha < \beta \leq \pi - \alpha$ then $d(P',c)=d(P,P')$.
\end{itemize}
Item (b) is trivial, so we focus on (a).
In this case $P'$ has an orthogonal projection on $c$, say $U$,
so that $d(P',c)=d(P',U)$. 
Call $V=(0,\sqrt{1-\ell^2/4})$ (resp. $W$) the intersection of $[P,Q]$ (resp. $[P,P']$) 
and the horizontal axis.
We note that $d(P,V)=\ell/2 \le 1$, while $d(V,W) \ge 1-\sqrt{1-\ell^2/4} \ge 1-\sqrt{1-\eta^2/4}$ 
since $W$ is outside the unit disk.
We immediately get by Thales' theorem:
\begin{align*}
d(P,U) \le \frac{d(P,U)}{d(P,V)}=\frac{d(P',U)}{d(V,W)} \le \frac{d(P',U)}{1-\sqrt{1-\eta^2/4}},
\end{align*}
On the other hand, we have $d(P',U)^2+d(P,U)^2=d(P,P')^2$. 
Both identities together prove (a) (recalling that $d(P',c)=d(P',U)$).
\end{proof}

\subsection{Proof of \cref{thm:cvginfty}}
\label{ssec:ProofGToInfty}
In this section, we prove that $\bm \Sieve^g_\infty$
converges to the disk $\D$ as $g$ tends to $+\infty$.
We start with a lemma.
\begin{lemma}
\label{lem:Triangles_In_XgInfty}
Almost surely, for each $i \le g$,
the three chords $[A'_i,B'_i]$, $[B'_i,C'_i]$ and $[C'_i,A'_i]$ belong to $\bm \Sieve^g_\infty$.
\end{lemma}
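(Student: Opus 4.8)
The plan is to prove \cref{lem:Triangles_In_XgInfty} by reverse induction on $i$, peeling off the rotations one at a time and reducing everything to a statement about the Brownian triangulation $\bL^{(2)}_\infty=\Sieve^0_\infty$.

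First I would observe the basic reduction. Recall that $A'_i=R^g_i(A_i)$ and similarly for $B'_i,C'_i$, and that $\Sieve^g_\infty=R^g(\bL^{(2)}_\infty)=R^g_{i+1}\big(R^g_i(\bL^{(2)}_\infty)\big)$ (with the convention $R^g_{g+1}=\mathrm{id}$). So it suffices to show that the three chords $[A_i,B_i]$, $[B_i,C_i]$, $[C_i,A_i]$ belong to $R^{i-1}_1(\bL^{(2)}_\infty)$: applying $R^g_i$ to these chords then produces exactly $[A'_i,B'_i]$, $[B'_i,C'_i]$, $[C'_i,A'_i]$ in $R^g_1(\bL^{(2)}_\infty)=\Sieve^g_\infty$, using that $R^g_i$ sends chords to chords and is injective on sieves. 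Now, $R^{i-1}_1$ is a composition of $i-1$ piecewise rotations (each determined by a uniform random triple independent of $A_i,B_i,C_i$), and the $A''_i,B''_i,C''_i$ defined in the excerpt are precisely the preimages $(R^{i-1}_1)^{-1}(A_i),(R^{i-1}_1)^{-1}(B_i),(R^{i-1}_1)^{-1}(C_i)$. Hence it is enough to show that the three chords $[A''_i,B''_i]$, $[B''_i,C''_i]$, $[C''_i,A''_i]$ belong to $\bL^{(2)}_\infty$ — again because $R^{i-1}_1$ sends chords to chords. Since $A_i,B_i,C_i$ are i.i.d.\ uniform on $\bS^1$ and independent of the $i-1$ rotations, and $R^{i-1}_1$ is a measure-preserving bijection of $\bS^1$ (it is piecewise a rotation), the triple $(A''_i,B''_i,C''_i)$ is again i.i.d.\ uniform on $\bS^1$ and independent of $\bL^{(2)}_\infty$. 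So everything reduces to the following claim: \emph{for three i.i.d.\ uniform points $X,Y,Z$ on the circle, independent of the Brownian triangulation, almost surely the chords $[X,Y]$, $[Y,Z]$, $[Z,X]$ all belong to $\bL^{(2)}_\infty$.}

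To prove this last claim I would use the standard structure of Aldous' Brownian triangulation as coded by the Brownian excursion $\be$: $\bL^{(2)}_\infty=\bL_\infty(\be)$, and its chords are the $c(s,t)=[e^{-2\pi i g(s,t)},e^{-2\pi i d(s,t)}]$ for $(s,t)$ running over the epigraph $\cEG(\be)$; equivalently, $[e^{-2\pi i u},e^{-2\pi i v}]$ is a chord of $\bL^{(2)}_\infty$ whenever $\be(u)=\be(v)=\min_{[u,v]}\be$. Write $X=e^{-2\pi i U}$, $Y=e^{-2\pi i V}$, $Z=e^{-2\pi i W}$ with $U,V,W$ i.i.d.\ uniform on $[0,1]$, independent of $\be$. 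It is a classical fact about the Brownian excursion that for a uniform point $U$ independent of $\be$, almost surely the level set $\{t:\be(t)=\be(U)\}$ is a perfect set accumulating at $U$ from both sides (local minima of $\be$ form a dense set of measure zero, and $U$ a.s.\ avoids them, a.s.\ is not a point of increase/decrease, etc.), and moreover for two independent uniform points $U,V$ one has $\min_{[U\wedge V, U\vee V]}\be$ attained at a unique interior point $m$ distinct from $U,V$; the chord $[X,Y]$ is then $[e^{-2\pi i a},e^{-2\pi i b}]$ where $a$ (resp.\ $b$) is the last (resp.\ first) time before (resp.\ after) $m$ at which $\be$ returns to its value $\be(m)$, and $a\le U\wedge V\le U\vee V\le b$, and this is genuinely a chord of $\bL^{(2)}_\infty$. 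This is exactly the content of the well-known description of the Brownian triangulation (Aldous \cite{Ald94b}, see also \cite{thevenin2019geometric}): any two points on the circle chosen independently of $\be$ are a.s.\ joined by a chord of $\bL^{(2)}_\infty$, or lie on a common chord endpoint — and since $U\ne V$ a.s., they lie on a genuine chord. Applying this to the three pairs $(U,V)$, $(V,W)$, $(W,U)$ and intersecting the three almost-sure events gives the claim.

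The main obstacle — really the only non-routine point — is making precise and correct the assertion that \emph{two independent uniform points on the circle are almost surely joined by a chord of the Brownian triangulation}, including the edge cases (the two points could a priori both be endpoints of a chord without the segment between them being the chord, or one could be a point where $\be$ has a strict local minimum). I would handle this by invoking the known almost-sure properties of $\be$: the set of times that are local minima is countable, hence a.s.\ avoided by $U,V,W$; a.s.\ $U\ne V\ne W\ne U$; and a.s., for $u<v$ chosen independently of $\be$, $\inf_{[u,v]}\be<\be(u)\wedge\be(v)$ with the infimum attained at a single point, from which the chord joining $e^{-2\pi i u}$ and $e^{-2\pi i v}$ in $\bL^{(2)}_\infty$ is well-defined and contains no other marked point in its interior. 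Since the excerpt already grants us the construction of $\bL_\infty(f)$ and its chords $c(s,t)$, and since this circle-of-arguments is exactly the classical description of $\bL^{(2)}_\infty$ used throughout \cite{thevenin2019geometric}, I would state it as a short lemma (or cite it) rather than re-derive the excursion estimates, and then simply combine the three events. Everything else — that $R^g$ maps chords to chords, is injective on $\SetSieve(\D)$, and preserves uniform measure on $\bS^1$ — is immediate from the definition of the piecewise rotations.
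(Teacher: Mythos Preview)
Your central claim is false: for i.i.d.\ uniform points $X=e^{-2\pi i U}$ and $Y=e^{-2\pi i V}$ on the circle, the segment $[X,Y]$ almost surely does \emph{not} belong to $\bL^{(2)}_\infty$. A chord of the Brownian triangulation is a segment $[e^{-2\pi i u},e^{-2\pi i v}]$ with $\be(u)=\be(v)=\min_{[u\wedge v,u\vee v]}\be$; since for Lebesgue-a.e.\ $s$ the $\sim_\be$-class of $s$ is a singleton (the mass measure of $\cT_\infty$ is carried by the leaves), a uniform point $X$ is a.s.\ not the endpoint of any nondegenerate chord of $\bL^{(2)}_\infty$, and in particular $[X,Y]\notin\bL^{(2)}_\infty$. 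Your own description makes the confusion visible: you construct a chord $[e^{-2\pi i a},e^{-2\pi i b}]$ of $\bL^{(2)}_\infty$ that \emph{separates} $X$ from $Y$, but with $a\neq U$ and $b\neq V$, so it is not the segment $[X,Y]$. Consequently the whole reduction collapses: since $R^g$ is a bijection of $\bS^1$, the only way $[A'_i,B'_i]$ could lie in $R^g(\bL^{(2)}_\infty)$ (before closure) is if $[A''_i,B''_i]\in\bL^{(2)}_\infty$, which a.s.\ fails.

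The mechanism you are missing is that these triangle chords enter $\Sieve^g_\infty$ only through the \emph{closure} in its definition, via the \emph{discontinuity} of $R_i$ at its three rotation points (cf.\ \cref{rem:continuity}). The paper's argument runs as follows: a.s.\ $\bL^{(2)}_\infty$ contains chords of arbitrarily small length around the uniform point $A''_i$ (because $A''_i$ corresponds to a leaf of $\cT_\infty$ with ancestors of arbitrarily small subtree mass); $R^{i-1}_1$ acts as a local rotation near $A''_i$, sending such a small chord to a small chord around $A_i$; then $R_i$ blows this up to a chord close to $[A_i,D_i]$ with $D_i=R_i(A_i)$; finally $R^g_{i+1}$ sends $[A_i,D_i]$ to $[B'_i,A'_i]$ (using $R_i(B_i)=A_i$, $R_i(A_i)=D_i$). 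Letting the initial chord shrink gives a sequence of chords in $R^g(\bL^{(2)}_\infty)$ converging to $[A'_i,B'_i]$, hence $[A'_i,B'_i]\in\overline{R^g(\bL^{(2)}_\infty)}=\Sieve^g_\infty$. The chords around $B''_i$ and $C''_i$ yield $[B'_i,C'_i]$ and $[C'_i,A'_i]$ analogously.
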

\begin{proof}
We prove in fact a stronger version: fix $i \le g$,
for any $\delta>0$ and $\eps$ sufficiently small
(with a threshold depending on $\delta$), 
there exists a (random) time $M$ such that
$R^g(\bL_M^{(2)})$ contains a chord at Hausdorff distance $<\eps$
of $[A_i', B'_i]$ with probability $>1-\delta$.
A small adaptation of the proof provides
 the same result for $[B'_i,C'_i]$ and $[C'_i,A'_i]$; 
 the lemma follows, letting $\eps$ go to $0$
 and then $\delta$ to $0$.

Fix $\delta>0$. 
We can choose $\eta>0$ such that, with probability $>1-\delta/2$, 
 the chords $[A_i,B_i], [B_i, C_i], [C_i, A_i]$ have length $>\eta$.
Also, for $\eps$ small enough,
we have $\Delta(E)>\eps$ with probability $1-\delta/2$,
where $\Delta(E)$ is the minimal distance between two points of the set
$E=\{A''_j, B''_j, C''_j, \ j\le g\}$.
 We assume that both conditions hold and that $\epsilon<\eta/4$
 (which is possible up to taking a smaller $\epsilon$).
 
We now use the fact that the process $(\bL_c^{(2)})_{c \geq 0}$
is coded by a Brownian excursion $\be$ 
and thus by a continuum random tree $\cT_\infty$.
The point $A''_i=(R^g)^{-1}(A'_i)$ corresponds a.s. to a leaf $y_i$ of $\cT_\infty$ by this coding, since the mass measure of $\cT_\infty$ is supported by its leaves.
For any $\epsilon>0$, denote by $A_\epsilon(y_i)$ the set of ancestors of $y_i$ whose subtree has $h$-mass less than $\epsilon$.
Almost surely $\ell(A_\epsilon(y_i))>0$ and thus, for $M$ large enough, 
there is a point of $\cP_M(\cT_\infty)$ in $A_\epsilon(y_i)$.
This point codes a chord $c$ in $\bL_M^{(2)}$ 
of length $<\epsilon$ around $A''_i$. 
 
Let us check where $R^g(c)$ lies.
The composition $R^{i-1}$ of the first $i-1$ rotations
acts like a piecewise rotation on an arc containing $A''_i$ and both extremities of $c$
(since $\Delta(E)>\eps$,
 the points $A''_j, B''_j, C''_j$ for $j<i$ are at distance more than $\eps$ of $A''_i$),
  thus $R^{i-1}(c)$
is a chord of length $<\eps$ around $R^{i-1}(A''_i)=A_i$.
Applying $R_{i}$, we get that $R^{i}(c)=R_i(R^{i-1}(c))$ 
is a chord at distance $<\eps$ from $[A_i,D_i]$, where $D_i=R_i(A_i)$
(see \cref{rem:continuity}).
Note that $[A_i,D_i]$ has the same length as $[B_i,C_i]$,
which is at least $\eta$. In particular $R^{i}(c)$ has length at least $\eta-2\eps>\eta/2$.

The condition $\Delta(E)>\eps$ ensures that all points $R^i(A''_j)$, $R^i(B''_j)$
and $R^i(C''_j)$, for $j>i$ are at distance at least $\eps$ of $A_i$ and $D_i$.

In particular, the chords $R^i(c)$ and $[A_i, D_i]$ satisfy the hypothesis of
Lemma~\ref{lem:chorddistances} (iii) with respect to $R_{i+1}^g$
  (the extremities are pairwise in the same connected components
of the circle without $\bigcup_{j>i} \{  R^i(A''_j),R^i(B''_j),R^i(C''_j)\}$).
Using that $A'_i=R^g_{i+1}(R_i(A_i))=R^g_{i+1}(D_i)$
and $B'_i=R^g_{i+1}(R_i(B_i))=R^g_{i+1}(A_i)$, we have 
\[ d_H\big(R^g(c),[A'_i,B'_i]\big)
= d_H\Big(R_{i+1}^g\big(R^i(c)\big),R_{i+1}^g([D_i, A_i])\Big) 
\leq s_{\eta/2} d_H\big(R^i(c),[D_i, A_i] \big) < s_{\eta/2} \eps.\]
Recall that this holds with probability $>1-\delta$.
Since $R^g(c)$ belongs to $R^g(\bL_M^{(2)})$,
this proves our statement (note that $\eta$ depends only on $\delta$
and not on $\eps$).
\end{proof}

We can now prove \cref{thm:cvginfty}.

\begin{proof}[Proof of \cref{thm:cvginfty}]
Let $(\{A_i, B_i, C_i\})_{1 \leq i \leq g}$ be  i.i.d. triples of uniform points on the circle 
and let us define $(\{A'_i,B'_i,C'_i\})_{1 \leq i \leq g}$ as above.
Clearly, $(\{A'_i,B'_i,C'_i\})_{1 \leq i \leq g}$ are also i.i.d. uniform. 
Moreover, from \cref{lem:Triangles_In_XgInfty},
almost surely, 
the $3g$ chords $[A'_i,B'_i]$, $[B'_i,C'_i]$ and $[C'_i,A'_i]$  (for $1\le i \le g$)
belong to $\bm \Sieve^g_\infty$.

Let us fix $s \geq 1$. We discretize the circle, 
denoting  $I_k$ the arc $(e^{-2\pi i k/s}, e^{-2\pi i (k+1)/s})$ (for $1 \leq k \leq s$),
and consider the following event $E_{s,g}$:
for any $k_1, k_2,k_3 \in [ 1,s ]$, there exists $1 \leq i \leq g$ such that $(A'_i, B'_i, C'_i)$
lies in  $I_{k_1} \times I_{k_2} \times I_{k_3}$, up to reordering the triple.
Since the $3g$ chords $[A'_i,B'_i]$, $[B'_i,C'_i]$ and $[C'_i,A'_i]$  (for $1\le i \le g$)
belong to $\bm \Sieve^g_\infty$, the event $E_{s,g}$ implies 
$$d_H(\bm \Sieve^g_\infty, \D ) \le 1/s.$$

It is clear that the complementary event $E_{s,g}^c$ has probability $p_{s,g} \leq s^3 \left( 1-\frac{1}{s^3} \right)^{g}$.
In particular, for fixed $s$, one has $\sum_{g=1}^\infty p_{s,g} < \infty$.
By the Borel-Cantelli lemma, almost surely only a finite number of $E_{s,g}$ do not occur;
thus, any subsequential limit $\bm \Sieve^\infty_\infty$ 
of $\bm \Sieve^g_\infty$ (as $g$ tends to infinity)
satisfies $d_H(\bm \Sieve^\infty_\infty, \D ) \le 1/s$.
Letting $s$ tend to infinity, the only possible limit of $\bm \Sieve^g_\infty$
along a subsequence is $\D$ and we conclude by compactness
of the space of compact subspaces of the disk with respect to Hausdorff distance.
\end{proof}

\subsection{Convergence after rotation of the Brownian lamination}

We prove here the convergence of the image by rotations of the laminations constructed from the Brownian excursion:

\begin{proposition}
\label{prop:rotationofcontinuouslaminations}
For almost every $g$-tuple of triples of points, almost surely,
\begin{align*}
R^g\left(\bL_M^{(2)}\right) \underset{M \rightarrow \infty}{\rightarrow} R^g\left(\bL_\infty^{(2)}\right)
\end{align*}
for the Hausdorff distance.
\end{proposition}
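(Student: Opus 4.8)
The plan is to show that $R^g(\bL_M^{(2)})$ is a Cauchy net in $M$ (for the Hausdorff distance) converging to $R^g(\bL_\infty^{(2)})$, by decomposing the chords of $\bL_\infty^{(2)}$ into large chords and small chords and controlling their images separately. The key point is that, although $R^g$ is not continuous, it \emph{is} well-behaved on chords of length bounded away from $0$ whose endpoints avoid the finite set $E=\bigcup_{i=1}^g\{A''_i,B''_i,C''_i\}$: this is exactly what Lemma~\ref{lem:chorddistances}(iii) provides. Since the $g$-tuple of triples is fixed generically, the points of $E$ are distinct and, almost surely (using that the mass measure of $\cT_\infty$ is supported on leaves and the endpoints of chords of $\bL_\infty^{(2)}$ are distinct from any fixed finite set), no chord of $\bL_\infty^{(2)}$ has an endpoint in $E$.

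First I would fix $\eps>0$ and choose $\eta>0$ small enough that, with the generic configuration, every point of $E$ is at distance $>2\eta$ from every other, and small enough to make the error terms below work. I would then split $\bL_\infty^{(2)} = C^{\le\eta}\cup C^{>\eta}$ where $C^{>\eta}$ is the (almost surely finite) union of chords of length $>\eta$ and $C^{\le\eta}$ is the closure of the union of the remaining chords together with $\bS^1$. For the small chords: a chord of length $\le\eta$ lies within Hausdorff distance $\eta$ of an arc of the circle, hence $R^g$ maps it to a chord (or degenerate segment) again close to the circle up to an error controlled by $\eta$ and by the local behaviour of $R^g$ near the points of $E$; more precisely, as in Remark~\ref{rem:continuity} and the proof of Lemma~\ref{lem:Triangles_In_XgInfty}, a short chord around a point $x\in\bS^1$ is sent by $R^g$ to a short chord around $R^g(x)$ \emph{unless} $x$ is within $\eta$ of a point of $E$, in which case it may be sent to a chord close to one of the finitely many ``triangle edges'' $[A'_i,B'_i],[B'_i,C'_i],[C'_i,A'_i]$. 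But by Lemma~\ref{lem:Triangles_In_XgInfty} those triangle edges already belong to $R^g(\bL_\infty^{(2)})$, and by the same lemma's proof they are limits of images of short chords; so the contribution of small chords to $R^g(\bL_M^{(2)})$ stabilizes, up to Hausdorff error $O(\eta)$, as soon as $M$ is large enough that $\bL_M^{(2)}$ contains appropriate short chords around the (finitely many) relevant points near $E$ — this is exactly the statement proved in Lemma~\ref{lem:Triangles_In_XgInfty}, applied to all of $A'_i,B'_i,C'_i$.

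Next, for the large chords: there are almost surely finitely many chords of $\bL_\infty^{(2)}$ of length $>\eta$, say $c_1,\dots,c_K$, and by Proposition~\ref{prop:chordtochord} (chord-to-chord correspondence) each appears in $\bL_M^{(2)}$ for $M$ large, say each $c_j$ equals $\lim_{M}$ of a chord $c_j(M)\in\bL_M^{(2)}$ with $d_H(c_j(M),c_j)\to 0$; moreover $\bL_M^{(2)}$ contains no chord of length $>\eta$ other than small perturbations of $c_1,\dots,c_K$ once $M$ is large. For $M$ large enough that $d_H(c_j(M),c_j)$ is smaller than the threshold of Lemma~\ref{lem:chorddistances}(ii) and small enough that the endpoints of $c_j(M)$ and $c_j$ lie in the same components of $\bS^1\setminus E$ (possible since endpoints of $c_j$ avoid $E$ and $E$ is finite), Lemma~\ref{lem:chorddistances}(iii) gives $d_H(R^g(c_j(M)),R^g(c_j))\le s_\eta\, d_H(c_j(M),c_j)\to 0$. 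Combining the two parts: for $M$ large, every chord of $R^g(\bL_M^{(2)})$ is within $O(\eta)+o(1)$ of a chord of $R^g(\bL_\infty^{(2)})$ and vice versa, whence $d_H(R^g(\bL_M^{(2)}),R^g(\bL_\infty^{(2)}))\le O(\eta)+o_M(1)$; letting $M\to\infty$ and then $\eps$ (hence $\eta$) $\to 0$ finishes the proof. Monotonicity of $M\mapsto \bL_M^{(2)}$ (it is nondecreasing) ensures this is a genuine convergence and not merely along a subsequence.

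The main obstacle is the discontinuity of $R^g$ on short chords near the points of $E$: one must argue that the ``macroscopic jumps'' these chords produce in $R^g(\bL_M^{(2)})$ do not create instability, and this is precisely because their limiting images are the triangle chords $[A'_i,B'_i],[B'_i,C'_i],[C'_i,A'_i]$, which are \emph{already present} in $R^g(\bL_\infty^{(2)})$ by Lemma~\ref{lem:Triangles_In_XgInfty}. Making this precise — i.e. checking that for $M$ large every short chord of $\bL_M^{(2)}$ is either sent near $\bS^1$ or near one of these finitely many triangle chords, with uniform control — is the delicate bookkeeping step; everything else is a routine application of Lemma~\ref{lem:chorddistances} and the chord-to-chord correspondence.
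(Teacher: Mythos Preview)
Your approach coincides with the paper's: both use monotonicity for the easy inclusion $R^g(\bL_M^{(2)})\subseteq R^g(\bL_\infty^{(2)})$, a large/small chord dichotomy for the reverse, Lemma~\ref{lem:chorddistances}(iii) for large chords whose endpoints avoid $E$, and the proof of Lemma~\ref{lem:Triangles_In_XgInfty} to show the triangle chords $[A'_i,B'_i],[B'_i,C'_i],[C'_i,A'_i]$ are well approximated in $R^g(\bL_M^{(2)})$ for $M$ large.

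One correction, however: Proposition~\ref{prop:chordtochord} compares the \emph{discrete} laminations $\bL_{c\sqrt{n}}(T_n)$ with $\bL_c^{(2)}$, not $\bL_M^{(2)}$ with $\bL_\infty^{(2)}$, so it does not give you the approximation of large chords you need. The correct (and simpler) argument, which the paper uses, goes directly through the Poisson construction: for a chord $c=c(s,t)$, choose $t'<t$ close to $t$ so that every point $(s',t'')\in V_{c,\delta}:=[g(s,t),d(s,t)]\times[t',t]$ codes a chord within $\delta$ of $c$; since $V_{c,\delta}$ has positive Lebesgue measure, the Poisson process $\cN$ a.s.\ has a point in $V_{c,\delta}\times[0,M]$ for $M$ large, giving a chord of $\bL_M^{(2)}$ within $\delta$ of $c$. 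Chords of $\bL_\infty^{(2)}$ that are only in the closure (not themselves of the form $c(s,t)$) are first replaced by a nearby $c(s,t)$. With this fix your argument is correct and essentially identical to the paper's.
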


\begin{proof}
Let us first remark that, by \cite[Proposition $2.2$ (ii)]{thevenin2019geometric}, we have $\lim_{M \rightarrow \infty} \bL_M^{(2)} = \bL_\infty^{(2)}$, which proves it for $g=0$. Furthermore, the sequence $(\bL_M^{(2)})_{M \geq 0}$ is a.s. nondecreasing for the inclusion. 
Hence, $(R^g(\bL_M^{(2)}))_{M \geq 0}$ is also a.s. nondecreasing for the inclusion,
 and therefore converges a.s. to some limit $\bL$.
 Since, for any $M$, one has $\bL_M^{(2)} \subseteq \bL_\infty^{(2)}$, this implies
 $R^g(\bL_M^{(2)}) \subseteq R^g(\bL_\infty^{(2)})$ and taking the limit $M \to \infty$,
 we have $\bL \subseteq R^g(\bL_\infty^{(2)})$. We will prove the reverse implication.\medskip

Throughout the proof, for a point $P$ in the unit circle, we denote by $x_P$ the unique element of $[0,1)$ 
such that $P = e^{-2\pi i x_P}$.
Let us fix $\epsilon>0$, and recall that $E$ denotes the set $\bigcup_{i=1}^g \{ A''_i, B''_i, C''_i \}$.
Recall that $\bL_\infty^{(2)}$ is constructed from a Poissonian rain 
under a Brownian excursion $\be$. 
It is well-known that any given $x \in(0,1)$ is a.s. not a one-sided local minimum of $\be$.
Hence, using the notation of \cref{ssec:background_lamination},
there exists $r>0$ such that, for all $P \in E$, for all $t' \in (\be_{x_P} - r, \be_{x_P}), d(x_P,t')-g(x_P,t')<\epsilon$.  
Without loss of generality, taking $\epsilon$ small enough, one may also assume that all intervals $[g(x_P,\be_{x_P} - r), d(x_P,\be_{x_P} - r)]$ are disjoint, and that the Euclidean distance between two of these intervals is at least $2 \epsilon$. We also let $\eta := \min_{P \in E} \min \{dist(g(x_P,\be_{x_P} - r),x_P), dist(x_P,d(x_P,\be_{x_P} - r))\}$ where $dist$ denotes the Euclidean distance. 
Thus, it is clear that any chord in $\bL_\infty^{(2)}$ having an endpoint in one of the arcs $(e^{-2\pi i(x_P-\eta)}, e^{-2\pi i (x_P+\eta)})$ has both of its endpoints at distance less than $2\pi\epsilon$ from $P$.\medskip

We now construct a finite set $L_\eps$ of chords in $R^g(\bL_\infty^{(2)})$
 such that $\bS^1 \cup L_\eps$ is at Hausdorff distance $<2\pi \epsilon$ from $R^g(\bL_\infty^{(2)})$. 
 To this end, take $s \geq 1$ such that $2\pi/s < \eta/2$ 
 and denote by $I$ the set of all arcs of the form
  $(e^{-2\pi i a/s}, e^{-2\pi i (a+1)/s})_{0 \le a \le s-1}$ 
  not containing a point in $E$ and not neighboring an arc containing a point in $E$.
 Now, for any $x_1, x_2 \in I$, if there exists a chord of length $>1/s$ in $R^g(\bL_\infty^{(2)})$ connecting $x_1$ to $x_2$, 
we select arbitrarily one such chord and add it to $L_\eps$.
 Finally, we add to the set of remaining chords the $3g$ chords of the form $[A'_i, B'_i], [B'_i, C'_i], [C'_i, A'_i]$ (these chords are also included in $R^g(\bL_\infty^{(2)})$). 

The set of chords $L_\eps$ that we obtain is clearly finite, and 
we will check that $\bS^1 \cup L_\eps$ is at distance $<2\pi\epsilon$ from $R^g(\bL_\infty^{(2)})$. 
Indeed, taking a chord of $R^g(\bL_\infty^{(2)})$, three things may happen. 
\begin{itemize}
\item First, if $c$ is of the form $[A'_i, B'_i], [B'_i, C'_i]$ or $[C'_i, A'_i]$ for some $i \leq g$, then $c \subset L_\eps$. 
\item Secondly, we consider the case where none of the endpoints of $c$ is in an arc containing a point of $E$ or neighbor of such an arc.
Then there is a chord $c'$ in $L_\eps$ with both its extremities in the same arc as $c$.
 We have $d_H(c,L_\eps) \le d_H(c,c')<\epsilon$. 
\item The third case is when one of the endpoints of $c$, say $U$, is in an arc containing a point of $E$ or a neighbor of such an arc.
In this case, $x_U$ is at distance less than $\eta$ from a point of $E$ by definition of $s$. Furthermore, since we assumed the arcs $[g(x_P,\be_{x_P} - r), d(x_P,\be_{x_P} - r)]$
to be disjoint (when $P$ runs over $E$), this point of $E$ is unique. 
Say without loss of generality that it is $A''_1$. Then $x_{(R^g)^{-1}(U)} \in (x_{A''_1}-\eta, x_{A''_1}+\eta)$ and, by construction of $\eta$,
one has $x_{(R^g)^{-1}(V)} \in (x_{A''_1}-\eta, x_{A''_1}+\eta)$ where $V$ is the other endpoint of $c$. This implies that there exist two elements $X_U, X_V \in \{A'_1, B'_1, C'_1 \}$ such that $d(U,X_U)<2\pi\eta$ and $d(V,X_V)<2\pi\eta$. Hence, $c$ is at Hausdorff distance $<2\pi\epsilon$ from either one of the chords $[A'_1, B'_1], [B'_1, C'_1]$ or $[C'_1, A'_1]$, or the circle $\bS^1$.
\end{itemize}

Now, remark that by definition a chord of $L_\eps$ is either of the form $[A'_i, B'_i], [B'_i, C'_i], [C'_i, A'_i]$, or is obtained as a rotation of a chord of $\bL_\infty^{(2)}$, which is an accumulation point of actual chords $c(s,t)$ for some $(s,t) \in \cEG(\mathbbm{e})$. 
Hence, we can modify $L_\eps$ into a finite subset of chords $L'_\eps$ made only of the chords $[A'_i, B'_i], [B'_i, C'_i], [C'_i, A'_i]$ and of rotations of chords of the form $c(s,t)$, such that $\bS^1 \cup L'_\eps$ is at distance $<2\pi\epsilon$ from $R^g(\bL_\infty^{(2)})$. The only thing left to do is prove that any such chord is well approximated in $R^g(\bL_M^{(2)})$ with high probability as $M \rightarrow \infty$.

First we consider a chord $c_0=R^g(c)$ in $L'_\eps$ for some chord $c:=c(s,t)$, which is not of the form
 $[A'_i, B'_i], [B'_i, C'_i], [C'_i, A'_i]$.
 Recall that, by construction, its endpoints are not in the same arcs as the points $A'_i, B'_i, C'_i$ nor in neighbors of these arcs. 
 For any $\delta>0$, set $t':= \inf\{ u<t, d(s,u)-g(s,u) < d(s,t)-g(s,t)+\delta\}$. Almost surely $t'<t$ by continuity of $\mathbbm{e}$, and $V_{c,\delta} := [g(s,t), d(s,t)] \times [t',t]$ has positive $2$-dimensional Lebesgue measure. Thus, as $M$ tends to $\infty$, the probability that $\cN_M(\mathbbm{e}) \cap V_{c,\delta}$ is nonempty goes to $1$. But any point in $V_{c,\delta}$ codes a chord $c'$ at distance $<\delta$ from $c$. Finally, for $\delta$ small enough, this chord $c'$ and $c$ satisfy the assumption of Lemma \ref{lem:chorddistances} (iii), so that $d_H(R^g(c'),R^g(c))<\delta$.
 Since $R^g(c')$ belongs to $R^g(\bL_M^{(2)})$,
 we have found a chord in $R^g(\bL_M^{(2)})$ at distance $<\delta$ of $c_0=R^g(c)$.

We finally need the fact that the chords $[A_i', B'_i], [B'_i, C'_i], [C'_i, A'_i]$
are also well approximated as $M \rightarrow \infty$, which has been done earlier in the proof of \cref{lem:Triangles_In_XgInfty}.
\end{proof}

\section{Proofs of the scaling limit theorems for the final sieve and the sieve process}
\label{sec:proofs_Scaling}

This section is devoted to the proofs of the main convergence results of the paper, Theorems \ref{thm:main} and \ref{thm:mainprocess}, which respectively state the convergence of the random sieve $\Sieve(\bm F_n^g)$ and sieve-valued process $(\bm \Sieve_k(\bm F_n^g))_{0 \leq k \leq n}$, for the Hausdorff distance on the compact subsets of the unit disk. The main idea is to prove that the sieve-valued process $(\bm \Sieve_k(\bm F_n^g))_{0 \leq k \leq n}$ can be approximated by a sieve process obtained from a size-conditioned Galton-Watson tree whose vertices are labelled uniformly at random, and prove the convergence of this very process.

\subsection{Strategy of the proof}

Let us briefly explain how we prove Theorems \ref{thm:main} and \ref{thm:mainprocess}. 
The first step, which is the purpose of Section \ref{ssec:lamGWtree}, is to show a convergence result for rotations of lamination-valued processes constructed from a size-conditioned Galton-Watson tree, generalizing Theorem \ref{thm:cv_lamination_G0}. Recall that, for all $g\geq 0$, the operation $R^g$ performs $g$ rotations on a given sieve, around $g$ i.i.d. triples of uniform points. Recalling the construction of the process $(\bL_u(T_n))_{u \geq 0}$ from a uniform labelling of the edges of the tree $T_n$, we claim that the process $R^g((\bL_u(T_n)))_{u \geq 0}$ converges in distribution after renormalization to the image by $R^g$ of the limiting process $(\bL_c^{(2)})_{c \geq 0}$, where $T_n$ denotes a $Po(1)$-GW tree with $n-1$ edges labelled uniformly from $1$ to $n-1$. We also prove the convergence of the sieve $R^g(\bL(T_n))$, as $n \rightarrow \infty$. 

The connection between factorizations of the $n$-cycle and size-conditioned Galton-Watson trees is made clear by a bijection of Goulden and Yong \cite{GY02}, presented in Section \ref{ssec:gouldenyong}. This bijection associates to any element $F_n^0$ of $\setFZ$ a dual tree $T(F_n^0)$ with $n$ labelled vertices.

If $\rFZ$ is uniform on $\setFZ$, we show in \cref{ssec:chordtotree} that the lamination processes $R^g(\Sieve_k(\rFZ))_{0\le k \le n}$ and $R^g(\bL_k(T(\rFZ))_{0\le k \le n})$ are close to each other with high probability, and so are their images by $R^g$. It appears that $T(\rFZ)$ is distributed as a conditioned $Po(1)$-GW tree, but its labelling is not uniform. To overcome this, we use an operation on $T(\rFZ)$ introduced in \cite{thevenin2019geometric}, which shuffles labels on its vertices without changing much the associated sieve-valued process. 
In \cref{ssec:shuffling}, we prove that $R^g(\bL_k(T(\rFZ))_{0\le k \le n})$
is close to the analog process after shuffling, which is distributed as $(R^g(\bL_k(T_n)))_{0\le k \le n})$, where $T_n$ is a size-conditioned $Po(1)$-GW tree with uniform labelled edges.

We note that the proximity of the relevant processes before rotation had been established in \cite{thevenin2019geometric}; our contribution here is to show that they are still valid
after rotations. Due to the noncontinuity of $R_g$, this is delicate and based on precise estimates concerning the structure of large Galton-Watson trees.

The only thing left is to connect the sieve-valued process $(\Sieve_k(\rF))_{0\le k \le n}$ obtained from a uniform element $\rF$ of $\setF$ to the process $(R^g(\Sieve_k(\rFZ))_{0\le k \le n}$, obtained by rotating $g$ times the process associated to a uniform minimal factorization $\rFZ$ of $\setFZ$. This is where we make use of the algorithm presented in Section \ref{ssec:generation} and of several technical results proved in Section \ref{sec:asympbij}. We can assume that the element $\rF$ of $\setF$ that we consider was constructed by this algorithm, starting from a minimal factorization $\rFZ$ of the $n$-cycle. In Section \ref{ssec:genusgtogenus0}, we prove that $(\Sieve_k(\rF))_{0\le k \le n}$ and $(R^g(\Sieve_k(\rFZ))_{0\le k \le n}$ are close to each other. This is not trivial for several reasons. First, there are $2g$ more chords in the sieve associated to $\rF$ than in the lamination constructed from $\rFZ$. Second, the algorithm does not exactly correspond to the operation $R^g$, and the same chord in $\bL(\rFZ)$ may be sent to different places by $R^g$ and by the algorithm. However, it is possible to prove that such chords are necessarily either small or already well-approximated before they appear in the sieve-valued processes. 

Finally, all these results together prove the convergences of Theorems \ref{thm:main} and \ref{thm:mainprocess}.

\subsection{Notation}

We recall the main lines of the random generation algorithm from \cref{ssec:generation}.
We first take a uniform minimal factorization $\rFZ$ of the $n$-cycle. Then, for given $\aaa, \bbb, \ccc \leq n$, we add transpositions $(\aaa \, \bbb)$ and $(\aaa \, \ccc)$ at given positions $\vvv$ and $\www$ (or $\www$ and $\vvv$) so that the final product of transpositions is an $n$-cycle; we then conjugate all transpositions
so that the product is $(1\, \cdots\, n)$.
Iterating this $g$ times with numbers $\vvv,\www,\aaa,\bbb,\ccc$ taken uniformly at random
gives an asymptotically uniform factorization $\rF$ of genus $g$ of the cycle $(1\, \cdots\, n)$.

We will keep these notations throughout the section: namely,
$\vvv,\www$ denote positions of newly introduced transpositions,
$\aaa,\bbb,\ccc$ the moved elements.
Moreover, a capital $F$ denotes a factorization of genus $g$, 
while a small $f$ denotes the factorization of genus $0$ giving birth to $F$.
Further notation for this section is given in \cref{tab:notation}.
\begin{table}[!ht]
\renewcommand{\arraystretch}{1.6}
\[\begin{tabular}{|c|c|}
\hline
$T_n$ & $Po(1)$-GW tree with $n$ vertices labelled uniformly form $1$ to $n$.\\
\hline
$\bL(T)$ & lamination constructed from the contour function of $T$.\\
\hline
$\H(T)$ & height of $T$.\\
\hline
$\theta_x(T)$ & subtree of $T$ rooted in $x$\\
\hline
$\rF$ & uniform factorization of the $n$-cycle of genus $g$\\
\hline
$\rFZ$ & uniform {\em minimal factorization} of the $n$-cycle\\
\hline
$(\aaa \,\bbb),(\aaa \,\ccc)$ & transpositions that are added to $\rFZ$ at each step to get $\rF$ by the algorithm of \cref{ssec:generation}. \\
\hline
$\vvv,\www$ & locations of these transpositions. \\
\hline
$\Sieve_k(F)$ & union of $\bS^1$ and of the first $k$ chords in the sieve associated to $F$ \\
\hline
$T(f)$ & tree associated to $f$ by the Goulden-Yong bijection\\
\hline
$R_{A,B,C}(L)$ & sieve obtained by rotating a lamination $L$ according to the points $A,B,C$. \\
\hline
$R^g$ & $g$ iterations of operations of the form $R_{A,B,C}$. \\
\hline
\end{tabular}
\]
\caption{Summary of the main notation of \cref{sec:proofs_Scaling}}
\label{tab:notation}
\end{table}

\bigskip

For $T$ a tree with $n$ vertices and $A := e^{-2\pi i a} \in \bS^1$ (for $a \in [0,1)$), we denote by $V(A)$ the vertex of $T$ such that, at time $2na$, the contour exploration is in the edge between $V(A)$ and its parent in $T$ (or at the vertex $V(A)$). If $a \in [1-1/n,1)$, by convenience, we set $V(A)=\emptyset$, the root of $T$. This way, it is not hard to see that if $A$ is uniform on $\bS^1$ then $V(A)$ is uniform on the vertices of $T$.

\subsection{Rotation of lamination processes of Galton-Watson trees}
\label{ssec:lamGWtree}

In this first part, we study the convergence of the sieve process obtained by applying $g$ successive rotations to the lamination process constructed from the contour function of a (uniformly labelled) size-conditioned $Po(1)$-GW tree (this result holds in fact for any critical offspring distribution with finite variance). 
\begin{theorem}
\label{thm:convergence_rotated_GW}
Let $T_n$ be a $Po(1)$-GW tree conditioned on having $n$ vertices, labelled uniformly at random from $1$ to $n$. Then, for any fixed $g \geq 0$, for almost every $g$-tuple of i.i.d. uniform triples of points, on $\mathcal D(\R_+, \SetSieve(\D))$:
\begin{equation}
\label{eq:RotatedGW}
\left(R^g\left( \bL_{(c/\sqrt{2})\sqrt{n}}(T_n)\right) \right)_{c \geq 0} \underset{n \rightarrow \infty}{\overset{(d)}{\rightarrow}} \left( R^g(\bL_c^{(2)}) \right)_{c \geq 0}
\end{equation}
Moreover, jointly with the convergence above,
we have convergence of the endpoint of this process:
\begin{equation}
\label{eq:convergence_RgLTn}
R^g \big(\bL(T_n) \big)  \underset{n \rightarrow \infty}{\overset{(d)}{\rightarrow}} R^g(\bL_\infty^{(2)}).
\end{equation}
\end{theorem}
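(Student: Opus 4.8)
The strategy is to transfer the genus-$0$ convergence (\cref{thm:cv_lamination_G0}) through the rotation operator $R^g$, using \cref{lem:chorddistances} to control $R^g$ on chords whose endpoints stay away from the fixed finite set $E=\bigcup_{i=1}^{g}\{A''_i,B''_i,C''_i\}$, and the local analysis of \cref{rem:continuity} and \cref{lem:Triangles_In_XgInfty} near $E$. Throughout I would condition on a generic realisation of the $g$ triples, so that $E$, the minimal gap $\Delta(E)>0$ and a lower bound $2\eta_0$ on the lengths of $[A_i,B_i],[B_i,C_i],[C_i,A_i]$ are deterministic. Since the $J1$ topology on $\mathcal D(\R_+,\SetSieve(\D))$ is characterised by convergence in $\mathcal D([0,M],\SetSieve(\D))$ at every continuity level $M$ of the limiting process, it is enough to establish \eqref{eq:RotatedGW} restricted to $[0,M]$ for each such $M$, and then to deduce \eqref{eq:convergence_RgLTn} by letting $M\to\infty$.

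For the restriction to $[0,M]$, fix $\eps,\delta>0$. Reasoning as in the proof of \cref{prop:rotationofcontinuouslaminations} (a point of $E$ is a.s.\ not a one-sided local minimum of the coding excursion, so on one hand it is surrounded by arbitrarily short chords and on the other hand any chord of $\bL_M^{(2)}$ with an endpoint sufficiently close to a point $P\in E$ has \emph{both} endpoints close to $P$), one chooses a length threshold $\eta\in(0,\eta_0)$ and a fine discretisation scale $2\pi/s$ of the circle so that, with probability $\geq 1-\delta$, every chord of $\bL_M^{(2)}$ of length $>\eta$ has both endpoints at distance $>\eta$ from $E$, whereas every chord of $\bL_M^{(2)}$ with an endpoint within $\eta$ of $E$ has length $<\eta$ and is mapped by $R^g$ within $\eps$ of one of the $3g$ chords $[A'_i,B'_i],[B'_i,C'_i],[C'_i,A'_i]$ (this last point via \cref{rem:continuity} for the $i$-th rotation and \cref{lem:chorddistances}(iii) for the remaining ones, exactly as in the proof of \cref{lem:Triangles_In_XgInfty}). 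Choosing $D$ large, I would then invoke the explicit coupling built after \cref{prop:chordtochord} between $(\bL_{(c/\sqrt2)\sqrt n}(T_n))_{c\ge0}$ and $(\bL_c^{(2)})_{c\ge0}$: for $n$ large one couples them so that, with probability $\geq 1-2\delta$, the chords of length $>\eta$ of $\bL_M^{(2)}$ and of $\bL_{(M/\sqrt2)\sqrt n}(T_n)$ are in bijection, paired chords lying within Hausdorff distance $2\pi/s$ and appearing within $M/D$ of one another. On this event each paired chord $c$ has endpoints $\eta$-far from $E$, hence so does its partner $\widehat c$ (as $2\pi/s<\eta/2$), and corresponding endpoints lie in the same components of $\bS^1\setminus E$; by \cref{lem:chorddistances}(iii), $d_H(R^g(c),R^g(\widehat c))\le s_\eta\,2\pi/s$, with appearance times unchanged by $R^g$. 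The only remaining discrepancy concerns the $3g$ triangle chords: in $R^g(\bL_c^{(2)})$ the chord $[A'_i,B'_i]$ is $\eps$-approximated from the first time $\tau_i$ at which a short-enough chord surrounds $A''_i$, i.e.\ the first Poissonian mark on the (positive-length) sub-branch of ancestors of the leaf coding $A''_i$ whose fringe mass is small; and by the joint convergence in \cref{thm:aldleg} of the ancestral line of the uniform vertex $V(A''_i)$ in $T_n$, together with the uniform edge-labelling (as in \cite{thevenin2019geometric}), the analogous first time in $\bL_{(c/\sqrt2)\sqrt n}(T_n)$ converges in distribution to $\tau_i$ as $n\to\infty$. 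Feeding all this into \cref{prop:constant}(i) bounds the Skorokhod distance on $[0,M]$ by a quantity of order $s_\eta/s+\eps$, which can be made as small as desired; this proves \eqref{eq:RotatedGW}.

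For \eqref{eq:convergence_RgLTn} I would compare $R^g(\bL(T_n))$ and $R^g\big(\bL_{(M/\sqrt2)\sqrt n}(T_n)\big)$ for large $M$. On the limiting side, $R^g(\bL_M^{(2)})\to R^g(\bL_\infty^{(2)})$ as $M\to\infty$ by \cref{prop:rotationofcontinuouslaminations}, and evaluating \eqref{eq:RotatedGW} at a continuity level $M$ gives $R^g(\bL_{(M/\sqrt2)\sqrt n}(T_n))\to R^g(\bL_M^{(2)})$ as $n\to\infty$. It then remains to bound, uniformly in large $n$ and tending to $0$ with $M$, the Hausdorff distance between $R^g(\bL(T_n))$ and $R^g(\bL_{(M/\sqrt2)\sqrt n}(T_n))$: the genus-$0$ statements (process convergence and the endpoint convergence $\bL(T_n)\to\bL_\infty^{(2)}$ from \cite{thevenin2019geometric}) already give that every chord of $\bL(T_n)$ not present at rescaled time $M$ is, for $M$ large, of small length; applying $R^g$, such a chord is again small unless it surrounds a point of $E$, in which case its image lies near one of the triangle chords $[A'_i,B'_i]$, which has itself appeared in $R^g(\bL_{(M/\sqrt2)\sqrt n}(T_n))$ up to $\eps$ once $\tau_i\le M$ (an event of probability $\to1$ as $M\to\infty$, uniformly in large $n$, by the convergence $\tau_i^{(n)}\to\tau_i$ above). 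Letting $M\to\infty$ and then $\eps\to0$ yields \eqref{eq:convergence_RgLTn}. The crux of the whole argument is the discontinuity of $R^g$ at $E$: one must localise it, show that neither the discrete nor the continuous lamination has a long chord with an endpoint near $E$, and --- the genuinely delicate point --- control the rescaled time at which the macroscopic triangle chords emerge on the discrete side, which rests on precise estimates for the ancestral/spinal structure of large conditioned Galton-Watson trees.
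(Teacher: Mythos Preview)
Your overall strategy—transfer the genus-$0$ convergence through $R^g$, handle chords with endpoints away from $E$ via \cref{lem:chorddistances}(iii), and treat the discontinuity of $R^g$ at $E$ separately—is the right one and matches the paper. The gap lies in how you handle that discontinuity. You propose to control, for each point of $E$, the first time $\tau_i$ at which a short-enough chord surrounds $A''_i$ (so that the ``triangle chord'' $[A'_i,B'_i]$ emerges after rotation) and to show that the discrete analogue $\tau_i^{(n)}$ converges to $\tau_i$. But this convergence is not a consequence of the results you cite: it requires knowing that the set of ancestors of $V(A''_i)$ with subtree size $<\eta n$ has cardinality of order $\sqrt n$ and that the minimal label on this set rescales properly. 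This is precisely the content of the paper's \cref{lem:uniformancestrallines} and \cref{lem:uniformancestrallinescontinuous}, which you never invoke. The paper uses these lemmas in a different and more direct way: rather than tracking $\tau_i$, it shows that for every fixed $M$ one may choose $\eta_1>0$ so that, with high probability, \emph{no} chord of length $<\eta_1$ around any point of $E$ has appeared by time $(M/\sqrt2)\sqrt n$ (and likewise in the continuous limit). Consequently, at any finite time level the triangle chords do not enter the analysis at all: every chord of length $>\eta_1$ in the rotated lamination arises from a chord that was already long before rotation (matched via the chord-to-chord coupling of \cref{prop:chordtochord}, with endpoints in arcs disjoint from $E$), while every chord of length $\le\eta_1$ stays of length $\le\eta_1$ after rotation. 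No control of appearance times of triangle chords is needed.

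For \eqref{eq:convergence_RgLTn} your argument again rests on the unjustified $\tau_i^{(n)}\to\tau_i$ (to ensure the triangle chord is present in $R^g(\bL_{(M/\sqrt2)\sqrt n}(T_n))$ once $M$ is large). The paper avoids this entirely: it takes any subsequential limit $X$ of $R^g(\bL(T_n))$, gets $R^g(\bL^{(2)}_\infty)\subset X$ from \eqref{eq:RotatedGW}, and then observes that $(R^g)^{-1}(X)$ is itself a lamination containing $\bL^{(2)}_\infty$; by the a.s.\ \emph{maximality} of the Brownian triangulation among laminations this forces $(R^g)^{-1}(X)=\bL^{(2)}_\infty$, hence $X=R^g(\bL^{(2)}_\infty)$.
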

Remark that, in genus $0$, this is Theorem \ref{thm:cv_lamination_G0}.	
In order to prove Theorem \ref{thm:convergence_rotated_GW},
we start by a lemma giving precise estimates on the distribution of vertices 
with small labels in a uniformly labelled tree.

\begin{lemma}
\label{lem:uniformancestrallines}
Fix $\delta>0$ and $K>0$. Let $T_n$ be a $Po(1)$-GW tree conditioned on having $n$ vertices, labelled uniformly at random from $1$ to $n$, and let $V_1, \ldots, V_k$ be $k$ i.i.d. random vertices of $T_n$. Then, there exists $\eta>0$ such that, for $n$ large enough,
 with probability $>1-\delta$, no vertex in the ancestral line of any of the $V_i$'s 
has both a subtree of size $<\eta n$ and a label $< K \sqrt{n}$.
\end{lemma}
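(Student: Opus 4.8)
The plan is to reduce the statement, by a union bound over $i\in\{1,\dots,k\}$, to the following claim about a single uniform random vertex $V$ of $T_n$: for all $\epsilon,\delta'>0$ there is $\eta>0$ such that, for $n$ large, $\P\big(d_{T_n}(\emptyset,V)-d_{T_n}(\emptyset,w)>\epsilon\sqrt n\big)<\delta'$, where $w$ is the shallowest ancestor $u$ of $V$ with $|\theta_u(T_n)|<\eta n$. Two elementary observations then close the argument. First, along the ancestral line $\llbracket\emptyset,V\rrbracket$ the map $u\mapsto|\theta_u(T_n)|$ is non-increasing (if $u$ is an ancestor of $u'$ then $\theta_{u'}(T_n)\subseteq\theta_u(T_n)$), so the ancestors of $V$ with subtree of size $<\eta n$ form a final segment $\llbracket w,V\rrbracket$ of $\llbracket\emptyset,V\rrbracket$, of cardinality $d_{T_n}(w,V)+1$; hence the claim controls the number of these ancestors. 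Second, conditioning on $T_n$ and on the $V_i$'s on the event that each has at most $\epsilon\sqrt n+1$ such ancestors, the labelling being an independent uniform bijection, the set of vertices with label $<K\sqrt n$ is a uniform random subset of size $<K\sqrt n$, so the probability it meets the union over $i$ of these ancestors is at most $k(\epsilon\sqrt n+1)\cdot K\sqrt n/n\le 2kK\epsilon$ for $n$ large. Taking $\delta'=\delta/(4k)$ and $\epsilon=\delta/(8kK)$ makes the total failure probability $<\delta$, and the resulting $\eta$ depends only on $\delta,k,K$, as required.

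To prove the claim I would use Aldous' and Le Gall's theorem (Theorem~\ref{thm:aldleg}). Realizing $V$ as $V(e^{-2\pi i U})$ with $U$ uniform on $[0,1)$, so that $V$ is uniform on the vertices, we have jointly $(C_{2nt}(T_n)/\sqrt{2n})_{t\in[0,1]}\to\mathbbm{e}$ uniformly and $d_{T_n}(\emptyset,V)/\sqrt{2n}\to\mathbbm{e}(U)$. For $0\le r\le d_{T_n}(\emptyset,V)$ let $u_r$ be the ancestor of $V$ at depth $r$; by the standard relation between the contour function and subtree sizes, the descendants of $u_r$ are exactly the vertices explored during the maximal interval of contour times around a visit time of $V$ on which $C(T_n)\ge r$, so that $|\theta_{u_r}(T_n)|/n\to L_\lambda(U)$ whenever $r/\sqrt{2n}\to\lambda<\mathbbm{e}(U)$, where $L_\lambda(U)$ denotes the length of the connected component of $\{\mathbbm{e}\ge\lambda\}$ containing $U$. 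Fix $\epsilon>0$. On $\{\mathbbm{e}(U)\le\epsilon/2\}$ one has $d_{T_n}(w,V)\le d_{T_n}(\emptyset,V)<\epsilon\sqrt n$ for $n$ large, with probability tending to $1$ (since $\epsilon/2<\epsilon/\sqrt2$). On $\{\mathbbm{e}(U)>\epsilon/2\}$, put $r_0:=\lfloor\sqrt{2n}(\mathbbm{e}(U)-\epsilon/2)\rfloor$, a legitimate depth along $\llbracket\emptyset,V\rrbracket$ for $n$ large; if $|\theta_{u_{r_0}}(T_n)|\ge\eta n$ then $u_{r_0}$ is not one of the ancestors counted above, so $d_{T_n}(w,V)<d_{T_n}(\emptyset,V)-r_0$, which after dividing by $\sqrt{2n}$ tends to $\epsilon/2<\epsilon/\sqrt2$ and is therefore $<\epsilon\sqrt n$ for $n$ large. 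Hence $\P\big(d_{T_n}(w,V)>\epsilon\sqrt n\big)\le\P\big(|\theta_{u_{r_0}}(T_n)|<\eta n,\ \mathbbm{e}(U)>\epsilon/2\big)+o(1)$, which converges to $\P\big(L_{\mathbbm{e}(U)-\epsilon/2}(U)\le\eta,\ \mathbbm{e}(U)>\epsilon/2\big)$. Since $\mathbbm{e}$ is continuous and positive on $(0,1)$, the component of $\{\mathbbm{e}\ge\lambda\}$ around $U$ has positive length for every $\lambda<\mathbbm{e}(U)$, so $L_{\mathbbm{e}(U)-\epsilon/2}(U)>0$ almost surely on $\{\mathbbm{e}(U)>\epsilon/2\}$; therefore $\P\big(L_{\mathbbm{e}(U)-\epsilon/2}(U)\le\eta\big)\to0$ as $\eta\to0$, and choosing $\eta$ small makes this last quantity $<\delta'$, proving the claim.

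The delicate point is the convergence $|\theta_{u_r}(T_n)|/n\to L_\lambda(U)$ along a \emph{random} ancestral line, i.e. transferring the convergence of the contour function to that of the subtree-size profile; I would handle it through a Skorokhod-representation coupling together with the fact that, for the Brownian excursion, the endpoints $\underline g_\lambda(U)$ and $\overline d_\lambda(U)$ of the component of $\{\mathbbm{e}\ge\lambda\}$ containing $U$ depend continuously on $\lambda$ for $\lambda<\mathbbm{e}(U)$, for almost every $U$. The remaining care is only to discard null or boundary events (the event $\{\mathbbm{e}(U)=\epsilon/2\}$, and levels $\eta$ at which $\lambda\mapsto L_\lambda(U)$ is locally constant, which are avoided by perturbing $\eta$). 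Everything else — the monotonicity of subtree sizes along an ancestral line, and the concluding union bound over the fewer than $K\sqrt n$ small-label vertices — is elementary.
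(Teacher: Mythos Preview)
Your proof is correct, and shares with the paper the same two-step skeleton: first show that each $V_i$ has at most $O(\sqrt n)$ ancestors whose subtree has size $<\eta n$, then use the independence of the uniform labelling to bound the probability that one of these few ancestors receives a label $<K\sqrt n$. The difference lies entirely in how the first step is carried out.

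The paper's argument is shorter and uses only tightness. It observes that the event ``some vertex $b$ has $|\theta_b(T_n)|<\eta n$ and $H(\theta_b(T_n))>H_0\sqrt n$'' forces the modulus of continuity of the rescaled contour function to satisfy $\omega_{\tilde C_{T_n}}(\eta)\gtrsim H_0$; tightness of $(\tilde C_{T_n})$ then makes this improbable for $\eta$ small. On the complement of this event, \emph{every} subtree of size $<\eta n$ has height $<H_0\sqrt n$, so the number of ancestors of any $V_i$ with small subtree is at most $H_0\sqrt n$. This is a uniform, one-line deterministic implication from the modulus-of-continuity bound.

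Your route instead fixes the random vertex $V$, uses the full convergence $\tilde C_{T_n}\to\mathbbm e$ (not just tightness), and argues that the ancestor of $V$ at relative depth $\sqrt{2n}(\mathbbm e(U)-\varepsilon/2)$ already has subtree size $\ge\eta n$ w.h.p., by transferring the subtree-size profile along the ancestral line to the level-set structure of $\mathbbm e$. This works, but it buys you the needed bound only along the ancestral line of $V$ (which is all you need), at the cost of the ``delicate point'' you flag: justifying $|\theta_{u_{r_0}}(T_n)|/n\to L_{\mathbbm e(U)-\varepsilon/2}(U)$ in the Skorokhod coupling. That step is valid because $\lambda=\mathbbm e(U)-\varepsilon/2$ is a.s.\ not a local-minimum value of $\mathbbm e$, whence $\mathbbm e>\lambda$ strictly on the open component around $U$ and $\mathbbm e<\lambda$ arbitrarily close outside it, which is exactly what one needs to pass to the limit in the endpoints. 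Still, the paper's modulus-of-continuity shortcut avoids this analysis entirely.
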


We will also need the following continuous counterpart of this lemma.

\begin{lemma}
\label{lem:uniformancestrallinescontinuous}
Fix $\delta>0$ and $K>0$. Let $\cN_K$ be a Poisson point process of intensity $K d\ell$ on the skeleton of Aldous' CRT $\cT_\infty$. Let $L_1, \ldots, L_k$ be $k$ i.i.d. random leaves of $\cT_\infty$. Then, there exists $\eta>0$ such that, with probability $>1-\delta$, there is no point of $\cN_K$ with a subtree of size $<\eta$ in the ancestral line of any of the $L_i$'s.
\end{lemma}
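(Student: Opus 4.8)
The plan is to leverage the explicit "spinal decomposition" of Aldous' CRT along the paths to $k$ i.i.d.\ uniform leaves, which is classical (see \cite{aldous1993,LG05}): conditionally on the reduced tree $\mathcal R(L_1,\dots,L_k)$ spanned by the root and the $k$ leaves, the CRT is obtained by grafting i.i.d.\ rescaled CRTs along the branches, and the masses of these grafted subtrees are distributed according to a suitable continuous law with no atom at $0$. Concretely, fix a leaf $L_i$ and look at the ancestral line $\llbracket \emptyset, L_i\rrbracket$, which has length $H$ (a positive random variable with a density). For a point $x$ on this line at height $t$, the mass $|\theta_x(\cT_\infty)|$ of the subtree hanging at $x$ is a nonincreasing function of $t$ that decreases from $1$ (at $t=0$) to $0$ (at $t=H$), and it is continuous in $t$ away from the branch points of the reduced tree; moreover the \emph{first} time the mass drops below a level $\eta$ is a.s.\ strictly less than $H$.

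First I would reduce to a single leaf: it suffices to prove the statement for $L_1$ and then take a union bound over $i=1,\dots,k$ (replacing $\delta$ by $\delta/k$). For the single-leaf statement, I would introduce, for $\eta>0$, the threshold height $\tau_\eta := \inf\{ t \ge 0 : |\theta_{L_1(t)}(\cT_\infty)| < \eta \}$, where $L_1(t)$ is the ancestor of $L_1$ at height $t$. Since $|\theta_{L_1(t)}(\cT_\infty)| \to 0$ as $t \to H$, we have $\tau_\eta < H$ a.s., and since $|\theta_{L_1(0)}| = 1$, we have $\tau_\eta > 0$ a.s.; moreover $\tau_\eta \downarrow 0$ as $\eta \downarrow 0$ by monotone continuity. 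The event to be excluded is that $\cN_K$ has a point strictly above height $\tau_\eta$ on $\llbracket \emptyset, L_1\rrbracket$ (a point with subtree mass $<\eta$ is exactly a point at height $>\tau_\eta$). Conditionally on the tree $\cT_\infty$, the number of points of $\cN_K$ on the portion $\llbracket L_1(\tau_\eta), L_1\rrbracket$ of the spine is Poisson with parameter $K\,\ell(\llbracket L_1(\tau_\eta), L_1\rrbracket) = K(H-\tau_\eta)$, so the conditional probability that there is such a point equals $1 - e^{-K(H-\tau_\eta)}$.

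Hence I would estimate
\[
\P\big(\exists\, \text{point of }\cN_K\text{ above }\tau_\eta\big) = \E\big[\,1 - e^{-K(H-\tau_\eta)}\,\big].
\]
As $\eta \downarrow 0$ we have $H - \tau_\eta \downarrow 0$ a.s., so $1 - e^{-K(H-\tau_\eta)} \downarrow 0$ a.s.; it is bounded by $1$, so by dominated convergence the right-hand side tends to $0$. Thus one may choose $\eta>0$ small enough that this probability is $<\delta/k$, and the union bound over the $k$ leaves concludes. The one technical point I expect to require care — the main obstacle — is justifying rigorously the almost-sure limit $H - \tau_\eta \to 0$, i.e.\ the continuity and correct limiting behaviour of the spinal subtree-mass function $t \mapsto |\theta_{L_1(t)}(\cT_\infty)|$ near the leaf; this follows from the fact that the mass measure of $\cT_\infty$ is nonatomic and supported on the leaves (so that the subtree hanging at a point converging up to a leaf has mass tending to $0$), together with the description of the masses of fringe subtrees along the spine from the CRT spinal decomposition. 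All the ingredients are standard in the theory of \cite{aldous1993,LG05}, and the remaining computation is the elementary dominated-convergence argument above. Lemma \ref{lem:uniformancestrallines} is then the discrete analogue, obtainable either directly by the same spinal-decomposition argument for conditioned Galton--Watson trees or by transferring the continuous statement via Theorem \ref{thm:aldleg}.
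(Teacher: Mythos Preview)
Your argument is essentially correct, but there is one slip and one small gap. The slip: you write ``$\tau_\eta \downarrow 0$ as $\eta \downarrow 0$'', whereas in fact $\tau_\eta \uparrow H$ (equivalently $H-\tau_\eta \downarrow 0$, which is what you actually use). The gap: your justification of $H-\tau_\eta \to 0$ appeals to the mass being nonatomic and supported on the leaves, but what you really need is that $|\theta_{L_1(t)}(\cT_\infty)| > 0$ for every $t<H$; this is immediate from continuity of $\be$ (the excursion interval above level $t$ containing the time coding $L_1$ has positive length), but is not the property you cite.

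The paper takes a different route. Instead of a per-leaf spinal argument, it controls things \emph{uniformly} over the whole tree: it sets
\[
Y_\eta := \sup\big\{ H(\theta_x(\cT_\infty)) : x\in\cT_\infty,\ |\theta_x(\cT_\infty)|<\eta \big\}
\]
and observes that $Y_\eta \le \omega_{\be}(\eta) \to 0$ a.s., where $\omega_{\be}$ is the modulus of continuity of the excursion. On the event $\{Y_\eta \le H_0\}$, the portion of the ancestral line of any $L_i$ consisting of points with subtree mass $<\eta$ has $\ell$-length at most $H_0$; choosing $H_0 = \delta/(2K)$ (after the union bound over $i$) and then $\eta$ small enough so that $\P(Y_\eta>H_0)$ is small finishes the proof, exactly mirroring the discrete Lemma~\ref{lem:uniformancestrallines}. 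Your approach avoids the global modulus-of-continuity bound and uses dominated convergence on $\E[1-e^{-K(H-\tau_\eta)}]$ instead; both are valid, with the paper's having the advantage of transparent parallelism with the discrete proof, and yours being slightly more self-contained for a single leaf.
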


Assuming these results, whose proof is postponed, we can directly prove Theorem \ref{thm:convergence_rotated_GW}.

\begin{proof}[Proof of Theorem \ref{thm:convergence_rotated_GW}]
We fix $M>0$ and we first prove that $\lim_{n \to \infty} R^g(\bL_{(M/\sqrt{2}) \sqrt{n}}(T_n)) = R^g(\bL_M^{(2)})$.
Recall that we denote $E=\{A''_i,B''_i,C''_i ;\, i\le g\}$ and set $\eta_0:= \inf\{ d(U,V), U,V \in E \}$ where $d$ denotes the Euclidean distance in $\R^2$. Fix $\delta_0>0$. By \cref{lem:uniformancestrallines,lem:uniformancestrallinescontinuous}, 
there exists $\eta_1 \in (0,\eta_0)$ such that, with probability $>1-\delta_0$, no ancestral line of any of the elements of the set $\{V(P), P \in E \}$ contains a vertex of label $< (M/\sqrt{2}) \sqrt{n}$ whose subtree has size $< \eta_1 n$ and the analogue statement holds in the continuous. We assume these events hold and work conditionally on them.

We make use here of the chord-to-chord correspondence of Proposition \ref{prop:chordtochord}. 
Fix $\epsilon,\delta>0$. There exists $K_0>0$
such that \cref{eq:DefK0} holds, taking $\eta=\eta_1$ in the definition of the events $E^M_{Y,Z,R}$.
 We can take $s$ large enough so that, with probability $>1-\delta$, for any $K\le K_0$,
any given $Y,Z \in I_s^{(K)}$, no element of $E$ lies in any arcs of the lists $Y$ or $Z$ or in a neighbor arc.
Recall that we can couple $\bL_{(M/\sqrt{2}) \sqrt{n}}(T_n)$ and $\bL_M^{(2)}$ such that, for $n \ge n_0$
with probability at least $1 -\delta$,
the event $E^M_{Y,Z,R}(\kL_n)$  holds if and only if $E^M_{Y,Z,R}(\kL)$ holds
(for any $K\le K_0$, and any $Y,Z,R$) and one of this event holds. 

We condition on the 
events $E^M_{Y,Z,R}(\kL_n)$ and $E^M_{Y,Z,R}(\kL)$, for some $K\le K_0$, and some $Y,Z,R$.
We will show that, for $n$ large enough, we have 
\[d_H(R^g(\bL_{(M/\sqrt{2}) \sqrt{n}}(T_n)), R^g(\bL_M^{(2)})) \le \max(\eta_1,\eps).\]

We first show that, for $n$ large enough, the sieve $R^g(\bL_{(M/\sqrt{2}) \sqrt{n}}(T_n))$ 
is included in the $\max(\eta_1,\eps)$-neighborhood of $R^g(\bL_M^{(2)})$.
To this end we can forget chords of length $\le \eta_1$ in $R^g(\bL_{(M/\sqrt{2}) \sqrt{n}}(T_n))$.
So let us consider a chord of length $>\eta_1$ in $R^g(\bL_{M\sqrt{n}}(T_n))$
(since $E^M_{Y,Z,R}(\kL_n)$ holds, there are at most $K_0$ such chords). 
Either it arises as rotation of a chord that already had length $>\eta_1$ in $\bL_{(M/\sqrt{2})\sqrt{n}}(T_n)$, 
or from a chord of length $<\eta_1$ that has become larger after a rotation. We study both cases separately.

We first consider chords that were already large before rotations. Let us consider the chords of lengths $>\eta_1$ in $\bL_{(M/\sqrt{2}) \sqrt{n}}(T_n)$, for $n$ such tat $E^M_{Y,Z,R}(\kL_n)$ holds. Take $c$ such a chord. 
In particular, by definition of the event $E^M_{Y,Z,R}(\kL_n)$,
the chord $c$ connects some arc in $Y$ to an arc in $Z$.
We set $\eps'=\min(\eps/s_{\eta_1},1/s)$, where $s_{\eta_1}$ is the constant from \cref{lem:chorddistances}.
By the chord-to-chord convergence, for $n$ large enough there exists a chord $c' \in \bL_M^{(2)}$ such that $d_H(c,c')<\epsilon'$. 
Since no points of $E$ lies in some arcs of $Y$ or $Z$ or in some neighboring arc, 
 $c$ and $c'$ satisfy the conditions of Lemma \ref{lem:chorddistances} (iii). 
Thus, $d_H(R^g(c), R^g(c'))<s_{\eta_1} \epsilon' \le\eps$
and the chord $R^g(c)$ of $R^g(\bL_{(M/\sqrt{2})\sqrt{n}}(T_n))$
is included in the $\eps$-neighborhood of $R^g(\bL_M^{(2)})$, as wanted.
\medskip

Now, assume that a small chord $c$ of length $<\eta_1$ in $\bL_{(M/\sqrt{2}) \sqrt{n}}(T_n)$ is such that $R^g(c)$ has length $> \eta_1$. Then necessarily, since $\eta_1<\eta$, the chords $c$ isolates one of the elements of $E$, say $P$, from all others. Thus $c$ corresponds to an ancestor of $V(P)$, whose subtree has size $<\eta_1 n$ and whose label is $< M \sqrt{n}$. This contradicts our assumption. 
Therefore no chord of length $>\eta_1$ of $R^g(\bL_{(M/\sqrt{2})\sqrt{n}}(T_n))$
is obtained as rotation of a chord of length $<\eta_1$.
\medskip

We conclude that $R^g(\bL_{(M/\sqrt{2}) \sqrt{n}}(T_n))$ is included in a 
$\max(\eta_1,\eps)$-neighborhood of $R^g(\bL_M^{(2)})$ for $n$ large enough.
A reverse argument shows that $R^g(\bL_M^{(2)})$
is included in a 
$\max(\eta_1,\eps)$-neighborhood of $R^g(\bL_{(M/\sqrt{2}) \sqrt{n}}(T_n))$
for $n$ large enough, which implies
\[d_H(R^g(\bL_{(M/\sqrt{2}) \sqrt{n}}(T_n)), R^g(\bL_M^{(2)})) \le \max(\eta_1,\eps).\]
This happens conditionally on both $E^M_{Y,Z,R}(\kL_n)$ and $E^M_{Y,Z,R}(\kL)$ occurring
for some $K\le K_0$, and some $Y,Z,R$, so with probability at least $1-\delta$,
for $n$ large enough.

Hence, in the Hausdorff topology, we have the convergence in distribution
\begin{equation}
\label{eq:lim_RgLM}
 \lim_{n \to +\infty} R^g(\bL_{(M/\sqrt{2}) \sqrt{n}}(T_n)) =R^g(\bL_M^{(2)}).
 \end{equation}
Moreover, in the above argument one can see that the chord $R^g(c')$
approximating $R^g(c)$ appears in the same interval of time of the form $(jM/D, (j+1)M/D)$
(see the definition of the event $E^M_{Y,Z,R}(\kL_n)$). Letting $D$ go to $+\infty$,
this proves the convergence of the processes 
\[ \lim_{n \to +\infty} \big(R^g(\bL_{(c/\sqrt{2}) \sqrt{n}}(T_n)) \big)_{c \le M} = \big(R^g(\bL_c^{(2)})\big)_{c \le M},\]
in the Skorokhod topology.
Since this holds for any $M>0$ we have convergence of the processes on $[0,+\infty)$.
\medskip

It remains to prove \cref{eq:convergence_RgLTn},
 which corresponds informally to the case $c=+\infty$.
Using the Skorokhod representation theorem, we assume
that the random trees $T_n$ and the limit $(\bL_c^{(2)})_{c \ge 0}$ are defined
on the same probability space such that the convergence
\[ \lim_{n \to +\infty} \big(R^g(\bL_{(c/\sqrt{2}) \sqrt{n}}(T_n)) \big)_{c \ge 0} = \big(R^g(\bL_c^{(2)})\big)_{c \ge 0}\]
holds almost surely in the Skorokhod space $\mathcal D(\R_+,\SetSieve(\D))$.

Let $X$ be a subsequential limit of $R^g(\bL(T_n))$ 
(which exists by compactness of the set of laminations of the disk).
For any $M >0$, we have $R^g_{(M/\sqrt{2}) \sqrt{n}}(\bL(T_n)) \subseteq R^g(\bL(T_n))$.
Going to the limit, and using \eqref{eq:lim_RgLM}, we get that, a.s., $R^g(\bL_{M}^{(2)}) \subseteq X$.
Since this holds for all $M$, we have $R^g(\bL_\infty^{(2)}) \subseteq X$.

Conversely, since $X$ is a limit of $R^g$ applied to a sequence of laminations (i.e. sieves without crossings),
we get that $(R^g)^{-1}(X)$ is a.s. a lamination.
 This lamination contains $\bL_\infty^{(2)}$. 
 Since the Brownian triangulation is a.s. maximal for the inclusion \cite[Proposition 2.1]{LGP08},
 therefore we have $(R^g)^{-1}(X) = \bL_\infty^{(2)}$, i.e. $X = R^g(\bL^{(2)}_\infty)$. 
 The same holds for any subsequential limit of $R^g(\bL(T_n))$,
 showing that $R^g(\bL(T_n))$ converges to $R^g(\bL^{(2)}_\infty)$, as wanted.
\end{proof}

Let us now prove the technical Lemmas \ref{lem:uniformancestrallines} and \ref{lem:uniformancestrallinescontinuous}.

\begin{proof}[Proof of Lemma \ref{lem:uniformancestrallines}]
First note that, by a union bound, it is enough to prove the lemma for a single vertex $V_1$.
For a (random) tree $T$ with $n$ vertices and numbers $H_0,\eta>0$, 
we consider the  event $E(T,\eta, H_0)$: 
there exists a vertex $b$ in $T$ such that $|\theta_b(T)| < \eta n$ and $H(\theta_b(T))>H_0\sqrt{n}$, 
where we recall that $\theta_b(T)$ denotes the fringe subtree of $T$ rooted in $b$ and $H((\theta_b(T)))$ denotes its height.

Now, for $n \geq 1$, let $T_n$ be a $Po(1)$-GW tree with $n$ vertices. We claim that, for fixed $H_0$,
\begin{equation}
\label{eq:Tech}
\lim_{\eta \to 0} \Big(\limsup_{n \rightarrow \infty} \P(E(T_n,\eta,H_0)) \Big) = 0.
\end{equation}
To this end, observe that the event $E(T_n,\eta,H_0)$ implies that $\omega_{\tilde C_{T_n}}(\eta) \ge H_0$,
where $\tilde C_{T_n}$ is the normalized contour function of $T_n$ considered in Theorem~\ref{thm:aldleg},
and $\omega_f$ the modulus of continuity of $f$.
Then \eqref{eq:Tech} follows from the tightness of the sequence $\tilde C_{T_n}$,
see \cite[eq. (7.8)]{Bil_Cv}.

We come back to the proof of the lemma.
 Fix $H_0=\delta/2K >0$, take a tree $T$ with $n$ vertices labelled uniformly at random
 and assume that there is no vertex $b$ of $T$ such that $|\theta_b(T)| < \eta n$ and $H(\theta_b(T))>H_0\sqrt{n}$.
 Then, for a vertex $V_1$ of the tree, the number of ancestors of $V_1$ 
 whose subtree has size $<\eta n$ is less than $H_0 \sqrt{n}$. 
 In particular, taking $V_1$ uniformly at random among the vertices of $T$, 
 the probability that an ancestor of $V_1$ has both a subtree of size $<\eta n$ and a label $< K \sqrt{n}$
  is bounded by $K H_0=\delta/2$, independently of $T$.  
  For any $\delta>0$, by the above claim, 
  our assumption is satisfied by $T_n$ with probability $>1-\delta/2$,
   for some $\eta>0$ and $n$ large enough. This concludes the proof.
\end{proof}

\begin{proof}[Proof of Lemma \ref{lem:uniformancestrallinescontinuous}]
One can prove Lemma \ref{lem:uniformancestrallinescontinuous} in an analogous way: for any $\eta>0$, define the random variable $Y_\eta$ as
\begin{align*}
Y_\eta := \underset{\substack{x \in \cT_\infty \\ |\theta_x(\cT_\infty)|<\eta}}{\sup} H(\theta_x(\cT_\infty)).
\end{align*}
   Then $Y_\eta \le \omega_{ \mathbbm{e}}(\eta) \rightarrow 0$ almost surely as $\eta \rightarrow 0$.
In particular, for any $H_0>0$, the probability $\P(Y_\eta>H_0)$ tends to $0$,
giving a continuous analogue of \eqref{eq:Tech}. The rest of the proof is easily adapted,
replacing sizes of the fringe subtrees divided by $n$ by the mass measure $h$ of fringe subtrees in
$\mathcal T_\infty$,
the number of ancestors divided by $\sqrt{n}$ by the length measure $\ell$,
and vertices of labels $<K\sqrt{n}$ by points of the Poisson point process $\cN_K$.
\end{proof}

\subsection{The Goulden-Yong bijection}
\label{ssec:gouldenyong}

Before proving Theorem \ref{thm:mainprocess}, we expose the connection between minimal factorizations and the $Po(1)$-GW tree studied in Section \ref{ssec:lamGWtree}. This connection goes back to Goulden and Yong \cite{GY02}, who exhibit a bijection between the set $\setFZ$ of minimal factorizations of the $n$-cycle and the set of Cayley trees of size $n$ - that is, nonplane trees with $n$ vertices labelled from $1$ to $n$. It is known (see e.g. \cite[Example $10.2$]{Jan12}) that a $Po(1)$-GW tree conditioned to have $n$ vertices is in some sense distributed as a uniform Cayley tree of size $n$, which establishes our connection. In this whole subsection, all operations that we present are deterministic.

\bigskip

Let us present this bijection. We take $F_n^0:=(\transpo_1, \ldots, \transpo_{n-1}) \in \setFZ$ and, as in the construction of the sieve $\Sieve(F_n^0)$, for any $i\leq n-1$, we associate to the transposition $\transpo_i := (x_i \, y_i)$ the chord 
\begin{align*}
c_i := \left[ e^{-2\pi i  x_i/n}, e^{-2\pi i  y_i/n} \right].
\end{align*}

Furthermore, we give to $c_i$ the label $i+1$. This allows us to associate to any $f \in \setFZ$ the lamination $\Sieve(f)$ whose $n-1$ chords are labelled from $2$ to $n$ (see Fig. \ref{fig:gy}, up-left). Goulden and Yong observe that, in this lamination, around each vertex of the form $e^{-2\pi i  k/n}$, the labels of the chords admitting this vertex as an endpoint are sorted in increasing clockwise order. 

An interesting remark is that the chords of $\Sieve(F_n^0)$ form a tree with $n$ vertices and $n-1$ labelled chords. This tree is an embedding in the disk of the Hurwitz map $Map(f)$ defined in Section \ref{sec:facto_maps}, which appears to be a tree since $F_n^0 \in \setFZ$.

Now we define the dual tree of this labelled lamination, $T(F_n^0)$. This tree is obtained by putting a vertex in each face of $\Sieve(F_n^0)$, and connecting two of these vertices if the corresponding faces share a chord as a boundary. We now label this tree the following way: give the label $1$ to the vertex $V_1$ corresponding to the face containing the arc between $e^{-2\pi i /n}$ and $1$. Now, remark that each edge corresponds to a chord of $\Sieve(F_n^0)$, and give each vertex $V$ (except $V_1$) the label of the chord corresponding to the first edge in the unique path in the tree from $V$ to $V_1$. One obtains this way a tree $T(F_n^0)$ with $n$ vertices labelled from $1$ to $n$ (see Fig. \ref{fig:gy}, bottom-right).

\begin{figure}[!ht]
\center
\begin{tabular}{c c c}
\begin{tikzpicture}[scale=0.7, every node/.style={scale=0.7}, rotate=-40]
\draw (0,0) circle (3);
\foreach \i in {1,...,9}
{
\draw[auto=right] ({3.3*cos(-(\i-1)*360/9)},{3.3*sin(-(\i-1)*360/9)}) node{\i};
}
\draw ({3*cos(-2*360/9)},{3*sin(-2*360/9)}) -- ({3*cos(-3*360/9)},{3*sin(-3*360/9)}) node[circle,midway,fill=blue!20, inner sep=2pt]{4};
\draw ({3*cos(2*360/9)},{3*sin(2*360/9)}) -- ({3*cos(1*360/9)},{3*sin(1*360/9)}) node[circle,midway,fill=blue!20, inner sep=2pt]{3};
\draw ({3*cos(-3*360/9)},{3*sin(-3*360/9)}) -- ({3*cos(-4*360/9)},{3*sin(-4*360/9)}) node[circle,midway,fill=blue!20, inner sep=2pt]{2};
\draw ({3*cos(-2*360/9)},{3*sin(-2*360/9)}) -- ({3*cos(0*360/9)},{3*sin(0*360/9)}) node[circle,midway,fill=blue!20, inner sep=2pt]{5};
\draw ({3*cos(-5*360/9)},{3*sin(-5*360/9)}) -- ({3*cos(0*360/9)},{3*sin(0*360/9)}) node[circle,midway,fill=blue!20, inner sep=2pt]{6};
\draw ({3*cos(-7*360/9)},{3*sin(-7*360/9)}) -- ({3*cos(0*360/9)},{3*sin(0*360/9)}) node[circle,midway,fill=blue!20, inner sep=2pt]{7};
\draw ({3*cos(-2*360/9)},{3*sin(-2*360/9)}) -- ({3*cos(-1*360/9)},{3*sin(-1*360/9)}) node[circle,midway,fill=blue!20, inner sep=2pt]{8};
\draw ({3*cos(-6*360/9)},{3*sin(-6*360/9)}) -- ({3*cos(-7*360/9)},{3*sin(-7*360/9)}) node[circle,midway,fill=blue!20, inner sep=2pt]{9};
\end{tikzpicture}
&
\begin{tikzpicture}[scale=0.7, every node/.style={scale=0.7}, rotate=-40]
\draw (0,0) circle (3);
\foreach \i in {1,...,9}
{
\draw[auto=right] ({3.3*cos(-(\i-1)*360/9)},{3.3*sin(-(\i-1)*360/9)}) node{\i};
}
\draw[red,fill=red] (-.5,-.5) circle (.05);
\draw[red,fill=red] (0,1.5) circle (.05);
\draw[red,fill=red] (-.5,2.85) circle (.05);
\draw[red,fill=red] (1.6,-2.5) circle (.05);
\draw[red,fill=red] (1.6,2.5) circle (.05);
\draw[red,fill=red] (2.6,-.8) circle (.05);
\draw[red,fill=red] (-2.2,-2) circle (.05);
\draw[red,fill=red] (-.3,-2.9) circle (.05);
\draw[dashed,red] (-.5,-.5) -- (0,1.5) -- (-.5,2.85);
\draw[dashed,red] (1.6,-2.5) -- (2.6,-.8) --(-.5,-.5) -- (-2.2,-2) (-.5,-.5) -- (-.3,-2.9);
\draw[dashed,red] (0,1.5) -- (2.2,1.5) -- (1.6,2.5);

\draw[fill=green] (2.2,1.5) circle (.15);
\draw ({3*cos(-2*360/9)},{3*sin(-2*360/9)}) -- ({3*cos(-3*360/9)},{3*sin(-3*360/9)}) node[circle,midway,fill=blue!20, inner sep=2pt]{4};
\draw ({3*cos(2*360/9)},{3*sin(2*360/9)}) -- ({3*cos(1*360/9)},{3*sin(1*360/9)}) node[circle,midway,fill=blue!20, inner sep=2pt]{3};
\draw ({3*cos(-3*360/9)},{3*sin(-3*360/9)}) -- ({3*cos(-4*360/9)},{3*sin(-4*360/9)}) node[circle,midway,fill=blue!20, inner sep=2pt]{2};
\draw ({3*cos(-2*360/9)},{3*sin(-2*360/9)}) -- ({3*cos(0*360/9)},{3*sin(0*360/9)}) node[circle,midway,fill=blue!20, inner sep=2pt]{5};
\draw ({3*cos(-5*360/9)},{3*sin(-5*360/9)}) -- ({3*cos(0*360/9)},{3*sin(0*360/9)}) node[circle,midway,fill=blue!20, inner sep=2pt]{6};
\draw ({3*cos(-7*360/9)},{3*sin(-7*360/9)}) -- ({3*cos(0*360/9)},{3*sin(0*360/9)}) node[circle,midway,fill=blue!20, inner sep=2pt]{7};
\draw ({3*cos(-2*360/9)},{3*sin(-2*360/9)}) -- ({3*cos(-1*360/9)},{3*sin(-1*360/9)}) node[circle,midway,fill=blue!20, inner sep=2pt]{8};
\draw ({3*cos(-6*360/9)},{3*sin(-6*360/9)}) -- ({3*cos(-7*360/9)},{3*sin(-7*360/9)}) node[circle,midway,fill=blue!20, inner sep=2pt]{9};
\end{tikzpicture}
&
\begin{tikzpicture}[scale=0.7, every node/.style={scale=0.7}, rotate=-40]
\draw (0,0) circle (3);
\foreach \i in {1,...,9}
{
\draw[auto=right] ({3.3*cos(-(\i-1)*360/9)},{3.3*sin(-(\i-1)*360/9)}) node{\i};
}
\draw[red,fill=red] (-.5,-.5) circle (.05);
\draw[red,fill=red] (0,1.5) circle (.05);
\draw[red,fill=red] (-.5,2.85) circle (.05);
\draw[red,fill=red] (1.6,-2.5) circle (.05);
\draw[red,fill=red] (1.6,2.5) circle (.05);
\draw[red,fill=red] (2.6,-.8) circle (.05);
\draw[red,fill=red] (-2.2,-2) circle (.05);
\draw[red,fill=red] (-.3,-2.9) circle (.05);
\draw[dashed,red] (-.5,-.5) -- (0,1.5) node[circle, black,midway,fill=blue!20, inner sep=2pt]{6} -- (-.5,2.85) node[circle,black,midway,fill=blue!20, inner sep=2pt]{9};
\draw[dashed,red] (1.6,-2.5) -- (2.6,-.8) node[circle, black,midway,fill=blue!20, inner sep=2pt]{8} -- (-.5,-.5) node[circle, black,midway,fill=blue!20, inner sep=2pt]{5} -- (-2.2,-2) node[circle, black,midway,fill=blue!20, inner sep=2pt]{2} (-.5,-.5) -- (-.3,-2.9) node[circle, black,midway,fill=blue!20, inner sep=2pt]{4};
\draw[dashed,red] (0,1.5) -- (2.2,1.5) node[circle, black,midway,fill=blue!20, inner sep=2pt]{7} -- (1.6,2.5) node[circle, black,midway,fill=blue!20, inner sep=2pt]{3};
\draw[fill=green] (2.2,1.5) circle (.15);
\end{tikzpicture}
\\
\begin{tikzpicture}[scale=0.7, every node/.style={scale=0.7}]
\draw[red] (-1,-1) -- (0,0) node[black,circle,midway,fill=blue!20, inner sep=2pt]{2} -- (1,-1) node[black,circle,midway,fill=blue!20, inner sep=2pt]{5} -- (1,-2) node[black,circle,midway,fill=blue!20, inner sep=2pt]{8} (0,0) -- (0,-1) node[black,circle,midway,fill=blue!20, inner sep=2pt]{4};
\draw[red] (0,0) -- (0,1) node[black,circle,midway,fill=blue!20, inner sep=2pt]{6} -- (-1,2) node[black,circle,midway,fill=blue!20, inner sep=2pt]{9};
\draw[red] (0,1) -- (1,2) node[black,circle,midway,fill=blue!20, inner sep=2pt]{7} -- (1,3) node[black,circle,midway,fill=blue!20, inner sep=2pt]{3};
\draw[fill=green] (1,2) circle (.15);
\draw[red,fill=red] (-1,-1) circle (.05);
\draw[red,fill=red] (0,-1) circle (.05);
\draw[red,fill=red] (0,0) circle (.05);
\draw[red,fill=red] (1,-1) circle (.05);
\draw[red,fill=red] (1,-2) circle (.05);
\draw[red,fill=red] (0,1) circle (.05);
\draw[red,fill=red] (-1,2) circle (.05);
\draw[red,fill=red] (1,3) circle (.05);
\end{tikzpicture}
&
\begin{tikzpicture}[scale=0.7, every node/.style={scale=0.7}]
\draw[red] (0,0) -- (0,-1) (-1,-1) -- (0,0) -- (0,1) -- (-1,2);
\draw[red] (1,-2) -- (1,-1) -- (0,0) -- (0,1) -- (1,2) -- (1,3);
\draw (0,0) node[circle,fill=white, draw=black, inner sep=2pt]{6};
\draw (0,1) node[circle,fill=white, draw=black, inner sep=2pt]{7};
\draw (1,2) node[circle,fill=green, draw=black, inner sep=2pt]{1};
\draw (1,3) node[circle,fill=white, draw=black, inner sep=2pt]{3};
\draw (-1,2) node[circle,fill=white, draw=black, inner sep=2pt]{9};
\draw (-1,-1) node[circle,fill=white, draw=black, inner sep=2pt]{2};
\draw (0,-1) node[circle,fill=white, draw=black, inner sep=2pt]{4};
\draw (1,-1) node[circle,fill=white, draw=black, inner sep=2pt]{5};
\draw (1,-2) node[circle,fill=white, draw=black, inner sep=2pt]{8};
\end{tikzpicture}
&
\begin{tikzpicture}[scale=0.7, every node/.style={scale=0.7}]
\draw[red] (1,1) -- (0,0) -- (-1,1) -- (0,2);
\draw[red] (-1,1) -- (-1,2) -- (0,3) (-1,2) -- (-1,3);
\draw[red] (-1,2) -- (-2,3) -- (-2,4);
\draw (-1,2) node[circle,fill=white, draw=black, inner sep=2pt]{6};
\draw (-1,1) node[circle,fill=white, draw=black, inner sep=2pt]{7};
\draw (0,0) node[circle,fill=green, draw=black, inner sep=2pt]{1};
\draw (1,1) node[circle,fill=white, draw=black, inner sep=2pt]{3};
\draw (0,2) node[circle,fill=white, draw=black, inner sep=2pt]{9};
\draw (-1,3) node[circle,fill=white, draw=black, inner sep=2pt]{4};
\draw (0,3) node[circle,fill=white, draw=black, inner sep=2pt]{2};
\draw (-2,3) node[circle,fill=white, draw=black, inner sep=2pt]{5};
\draw (-2,4) node[circle,fill=white, draw=black, inner sep=2pt]{8};
\end{tikzpicture}
\end{tabular}
\caption{The Goulden-Yong mapping, applied to $F_n^0 := (45)(89)(34)(13)(16)(18)(23)(78) \in \mathcal{F}^0_9$. The lamination $\Sieve(F_n^0)$ with labelled chords is represented up-left. Condition $C_\Delta$ is satisfied by the tree $T(F_n^0)$ (bottom-right).}
\label{fig:gy}
\end{figure}
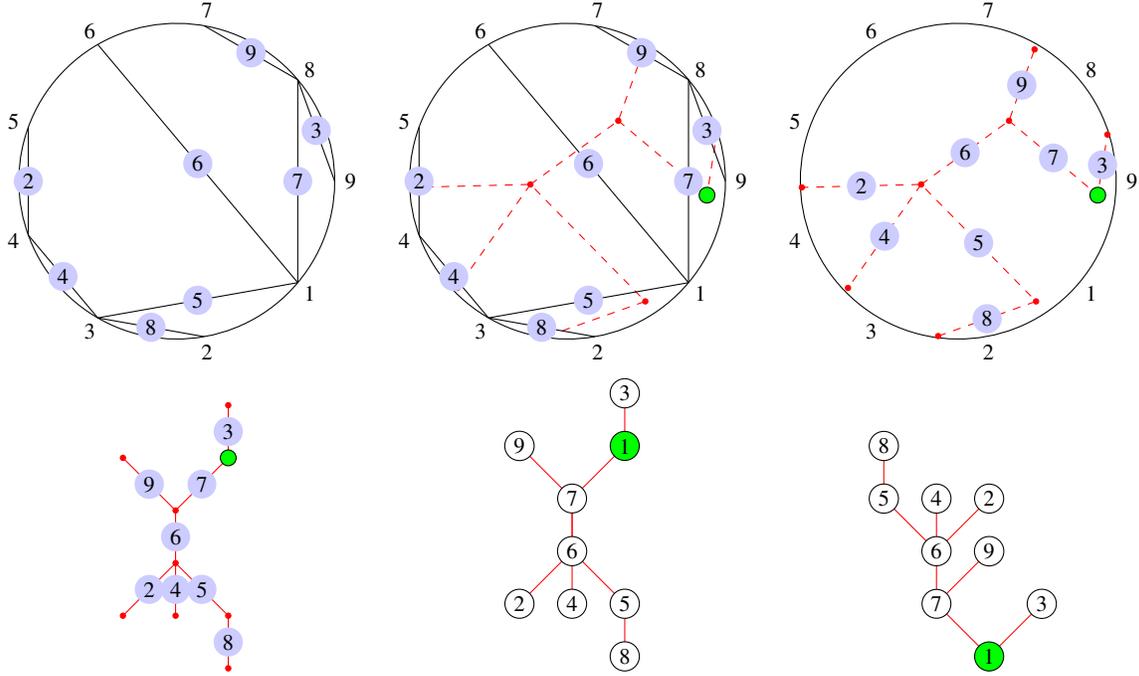

\begin{theorem}[Goulden \& Yong, \cite{GY02}]
For any $n \geq 1$, $F_n^0 \rightarrow T(F_n^0)$ is a bijection from $\setFZ$ to the set of Cayley trees of size $n$.
\end{theorem}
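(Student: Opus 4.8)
The plan is to check that the map is well defined, i.e. that $T(F_n^0)$ really is a non-plane labelled tree on $n$ vertices, and then to produce an explicit inverse.

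\emph{Well-definedness.} First I would invoke \cref{sec:facto_maps} with $g=0$: the map $\Map(F_n^0)$ is a unicellular map with $n$ vertices and $n-1$ edges, hence a tree carrying a consistent edge-labeling, and by \cref{lem:Unicellular+Relabeling} the tour of its unique face meets the vertices in the cyclic order $1,2,\dots,n$. Placing the vertices at the $n$-th roots of unity and the edges as straight chords, this last property says exactly that the chords are pairwise non-crossing, so $\Sieve(F_n^0)$ is a lamination, namely a plane tree embedded in $\D$. Now $n-1$ non-crossing chords cut the disk into exactly $n$ faces (add them one at a time, each one splitting a single face into two). Every face carries at least one of the $n$ circular arcs, since a face bounded only by chords would form a cycle in the tree $\Map(F_n^0)$; as each arc lies on the boundary of exactly one face, the assignment sending an arc to its face is a surjection between two $n$-element sets, hence a bijection. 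Consequently the dual graph $T(F_n^0)$ has $n$ vertices, $n-1$ edges (one per chord) and is connected, so it is a tree; rooting it at the face $V_1$ incident to the arc between $e^{-2\pi i/n}$ and $1$, each non-root vertex corresponds to its parent edge, i.e. to a chord, and the chords carry the $n-1$ distinct labels $2,\dots,n$, so the labeling rule of the statement identifies the vertex set of $T(F_n^0)$ bijectively with $\{1,\dots,n\}$. Thus $T(F_n^0)$ is a Cayley tree of size $n$.

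\emph{The inverse.} Conversely, given a Cayley tree $T$ on $\{1,\dots,n\}$, I would root it at the vertex labeled $1$ and relabel each edge by the label of its endpoint farther from the root, so that the edge-labels are exactly $2,\dots,n$. The crucial claim is that there is a \emph{unique} plane-tree structure on $T$ (equivalently, a unique cyclic order of the half-edges around each vertex) for which, realizing $T$ as the dual in $\D$ of a configuration of $n-1$ non-crossing chords joining $n$ boundary points (so that the primal tree of chords is recovered as the contour tour of the plane tree $T$), the resulting labelled chords satisfy the consistency condition of \cref{sec:facto_maps}: around each boundary point the incident chord-labels increase clockwise. Read one boundary point at a time, this condition forces greedily and consistently the cyclic order of the edges of $T$ around each vertex, hence pins down the plane structure; the contour tour of a plane tree automatically yields a non-crossing chord diagram, and reading the chords in increasing order of their labels produces a word $(\transpo_2,\dots,\transpo_n)$ of transpositions of $\{1,\dots,n\}$ whose support spans all $n$ boundary points. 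Connectedness gives transitivity of the product, and since the word has length $n-1$ the product must be an $n$-cycle, which the consistency condition forces to be $(1\,2\,\cdots\,n)$ after the index shift of the statement; this defines a map $T\mapsto F(T)\in\setFZ$. Finally, unwinding the two constructions: the chord diagram reconstructed from $T(F_n^0)$ is $\Sieve(F_n^0)$ with the same labels, by the arc/face bijection above, so $F(T(F_n^0))=F_n^0$, and symmetrically $T(F(T))=T$, whence the two maps are mutually inverse.

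The step I expect to be the real obstacle is the uniqueness-and-consistency statement for the plane structure reconstructed from the clockwise-increasing condition: one has to verify that choosing around each vertex of $T$ the cyclic order compatible with that condition never runs into a contradiction and always assembles into a well-defined planar embedding dual to a genuine chord diagram. This is exactly where the consistency (Hurwitz) property of \cref{sec:facto_maps} does the work, and it should be dispatched by the same bookkeeping that shows a Hurwitz tree is determined by its edge-labeling. Everything else (counting faces, the arc/face bijection, transitivity of the reconstructed word) is elementary. Should that uniqueness argument prove awkward to phrase directly, a fallback is to establish only the injectivity of $F_n^0\mapsto T(F_n^0)$ and conclude from the equality $|\setFZ| = n^{n-2}$ (Dénes) together with Cayley's formula for the number of Cayley trees of size $n$.
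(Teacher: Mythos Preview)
The paper does not prove this theorem; it is cited from Goulden and Yong, and the refinement via condition $C_\Delta$ is cited from \cite{thevenin2019geometric}. So let me assess your argument on its own merits.

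Your well-definedness paragraph is fine: unicellularity in genus $0$ gives a tree, non-crossingness of $\Sieve(F_n^0)$ follows from \cref{lem:Unicellular+Relabeling}, and the arc/face count shows the dual is a tree on $n$ labelled vertices.

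The inverse, however, has a genuine gap. You want to pin down the plane structure on the dual tree $T$ by invoking the Hurwitz (clockwise-increasing) condition on the \emph{primal} chord tree. But the primal tree is only available once a plane structure on $T$ has been chosen: the boundary points, and which chords meet at each of them, are outputs of the duality, not inputs. Your sentence ``read one boundary point at a time, this condition forces the cyclic order of the edges of $T$'' is therefore circular. It is also \emph{not} ``the same bookkeeping'' as for Hurwitz trees: in a Hurwitz tree one orders the edges around each \emph{primal} vertex by their labels, whereas here you must order the edges around each \emph{dual} vertex (a face of the primal), whose boundary is a sequence of chords meeting at several different primal vertices; the Hurwitz condition at any single primal vertex does not directly give you that order. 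Your fallback does not escape the issue either: injectivity of $F\mapsto T(F)$ is precisely the statement that the plane structure on $T(F)$ is determined by its vertex labels, which is the same uniqueness claim.

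The clean fix is exactly what the paper records as condition $C_\Delta$ (stated just after the theorem, from \cite{thevenin2019geometric}): around every vertex $V$ of $T(F_n^0)$, the labels of $V$ and of its children appear in \emph{decreasing} clockwise order. This is a condition phrased directly on the dual tree, so it determines the plane structure from the vertex labels alone, giving both injectivity and an explicit inverse. Proving $C_\Delta$ from the Hurwitz property on the primal does require a short argument (walk around the boundary of the face $V$, using the clockwise-increasing property at each primal vertex you meet), and that argument is the missing step in your proposal.
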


By a small abuse of notation, we will also denote by $T(F_n^0)$ the plane tree obtained by rooting it in $V_1$ and keeping the planar order induced by $\Sieve(F_n^0)$. It appears that, then, the labels satisfy the following condition:

\begin{definition}
We say that a plane tree $T$ with $n$ vertices labelled from $1$ to $n$ satisfies the condition $C_\Delta$ if the following hold:
\begin{itemize}
\item its root is labelled $1$;
\item for any vertex $V \in T$, the labels of $V$ and its potential children are sorted in decreasing clockwise order, starting from the largest of them.
\end{itemize}
\end{definition}

\begin{theorem}[\cite{thevenin2019geometric}]
\label{thm:GouldenYong}
For any $n \geq 1$, the map $F_n^0 \rightarrow T(F_n^0)$ is a bijection between $\setFZ$ and the set of trees with $n$ labelled vertices satisfying condition $C_\Delta$.
\end{theorem}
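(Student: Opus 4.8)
The plan is to show that the Goulden--Yong map $F_n^0 \mapsto T(F_n^0)$, which Goulden and Yong proved to be a bijection onto \emph{nonplane} (Cayley) trees, becomes a bijection onto \emph{plane} trees once we equip $T(F_n^0)$ with the planar structure inherited from $\Sieve(F_n^0)$, and that the image is exactly the set of plane trees satisfying $C_\Delta$. Since we already know (Goulden--Yong) that forgetting the planar structure gives a bijection onto Cayley trees, it suffices to prove two things: first, that a Cayley tree admits \emph{exactly one} planar embedding satisfying $C_\Delta$; and second, that the planar embedding coming from the construction of $T(F_n^0)$ is the one satisfying $C_\Delta$. Together these identify the planarly-rooted $T(F_n^0)$ with the unique $C_\Delta$-representative of the underlying Cayley tree, and the claimed bijection follows.

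For the first point: given a Cayley tree, root it at the vertex labelled $1$. For each vertex $V$, its children (in the rooted tree) carry distinct labels, none equal to $V$'s label, so there is exactly one cyclic order of $\{V\}\cup\{\text{children of }V\}$ that is decreasing clockwise starting from the largest; equivalently, exactly one way to arrange the children from left to right so that, together with the edge going up to the parent, the condition holds around $V$. Making this choice at every vertex independently produces a unique plane tree, and conversely any $C_\Delta$ plane tree arises this way. So the $C_\Delta$ condition is a canonical planar section of the forgetful map from plane trees to Cayley trees.

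For the second point, I would invoke the observation of Goulden and Yong recalled in the excerpt: in the labelled lamination $\Sieve(F_n^0)$, around each vertex $e^{-2\pi i k/n}$ of the circle the labels of the incident chords increase in clockwise order. The dual tree $T(F_n^0)$ is built by placing a vertex in each face and joining faces across a common chord; one checks that the planar (clockwise) order of the edges around a dual vertex $V$ is the planar order in which the corresponding chords bound the face of $V$, read off the circle. Tracing through the labelling rule --- a non-root dual vertex receives the label of the chord on its edge toward $V_1$, i.e.\ toward the face adjacent to the arc $(e^{-2\pi i/n},1)$ --- one verifies that around $V$ the edge toward the parent carries the largest label among $V$ and its children, and the children's labels then appear in decreasing clockwise order. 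This is precisely $C_\Delta$. The bulk of the work is this planarity bookkeeping: relating the clockwise order of chords around a circle vertex to the clockwise order of edges around a face, and confirming the "largest label points to the parent" claim; this is the main obstacle, though it is essentially a careful reading of the Goulden--Yong construction rather than a deep argument. Once $C_\Delta$ is established for $T(F_n^0)$, combining with the uniqueness statement and the original Goulden--Yong bijection yields the theorem.

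Finally, I would note that injectivity and surjectivity are then immediate: injectivity is inherited from the Goulden--Yong bijection (distinct $F_n^0$ give distinct Cayley trees, hence distinct $C_\Delta$ plane trees), and surjectivity follows because every $C_\Delta$ plane tree, viewed as a Cayley tree, is $T(F_n^0)$ for some $F_n^0$ with the planar structure reconstructed uniquely as above. Thus $F_n^0 \mapsto T(F_n^0)$ is a bijection between $\setFZ$ and the set of $n$-vertex plane trees satisfying $C_\Delta$.
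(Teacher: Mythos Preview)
The paper does not prove this theorem; it is quoted from \cite{thevenin2019geometric} and used as a black box. So there is no ``paper's own proof'' to compare against, and your outline is in effect an attempt to supply one.

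Your overall strategy is sound and is indeed how one would prove this: the Goulden--Yong bijection onto Cayley trees is known, and the only extra content is that (a) each Cayley tree carries a unique planar embedding satisfying $C_\Delta$, and (b) the embedding inherited from $\Sieve(F_n^0)$ is that one. Your argument for (a) is correct.

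However, your sketch of (b) contains a concrete error. You write that ``around $V$ the edge toward the parent carries the largest label among $V$ and its children.'' This is false: in the paper's own example (Fig.~\ref{fig:gy}), the vertex labelled $7$ has children labelled $9$ and $6$, so the largest label in $\{7,9,6\}$ is the child $9$, not the parent edge. The condition $C_\Delta$ does not assert that $V$'s own label is maximal; it asserts only that the cyclic sequence of labels (of $V$ and its children) around $V$ has a single descent, i.e.\ is decreasing when read clockwise starting from wherever the maximum sits. So what you must actually verify is this cyclic monotonicity, not the stronger (and incorrect) claim you stated. This changes the bookkeeping you need to do: rather than tracking which chord bounds the face ``toward $V_1$'', you must show that as one walks clockwise around the boundary of a face of $\Sieve(F_n^0)$, the labels of the bounding chords are cyclically decreasing. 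That is where the Goulden--Yong observation about chord labels around circle vertices enters. You have identified the right ingredient, but the deduction you sketched from it is not the correct one, and the actual verification (which you yourself flag as ``the main obstacle'') remains to be carried out.
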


This implies the following result:

\begin{theorem}[\cite{Jan12,thevenin2019geometric}]
\label{thm:poissontree}
Fix $n \in \Z_+$, and let $\rFZ$ be a uniform element of $\setFZ$. Then $T(\rFZ)$, forgetting about its labels, is distributed as a $Po(1)$-GW tree with $n$ vertices. 
\end{theorem}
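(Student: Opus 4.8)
The plan is to deduce the statement from Theorem~\ref{thm:GouldenYong} together with a direct count of the labellings satisfying condition $C_\Delta$ on a fixed plane shape. Since $\rFZ$ is uniform on $\setFZ$, Theorem~\ref{thm:GouldenYong} shows that $T(\rFZ)$ is uniform on the set $\cT^\Delta_n$ of plane trees with $n$ vertices labelled from $1$ to $n$ that satisfy $C_\Delta$. Consequently, for a fixed unlabelled plane tree $\tau$ with $n$ vertices, if $N(\tau)$ denotes the number of $C_\Delta$-labellings of $\tau$, then
\[\P\big(T(\rFZ)=\tau\text{ as a plane tree}\big)=\frac{N(\tau)}{|\cT^\Delta_n|}.\]
Since a $Po(1)$-GW tree conditioned to have $n$ vertices equals $\tau$ with probability proportional to $\prod_{V\in\tau}\mu_{k_V(\tau)}=e^{-n}\prod_{V\in\tau}(k_V(\tau)!)^{-1}$, and since two probability measures on the finite set of plane trees with $n$ vertices that are proportional to the same function must coincide, it suffices to show that $N(\tau)$ is proportional to $\prod_{V\in\tau}(k_V(\tau)!)^{-1}$, with a proportionality constant not depending on $\tau$.

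To compute $N(\tau)$, I would argue that a $C_\Delta$-labelling of $\tau$ is equivalent to an ordered set partition of $\{2,\dots,n\}$ into blocks $(B_V)_{V\in\tau}$ with $|B_V|=k_V(\tau)$, where $B_V$ records the set of labels carried by the children of $V$. In one direction, from a $C_\Delta$-labelling $\ell$ one sets $B_V=\{\ell(c):c\text{ a child of }V\}$; as $\ell$ is a bijection onto $[1,n]$ sending the root to $1$, the family $(B_V)_V$ partitions $\{2,\dots,n\}$ with the prescribed block sizes. Conversely, given such a partition, the labels can be reconstructed top-down: assign $1$ to the root, and whenever a vertex $V$ has already received its label and has been assigned the block $B_V$, there is exactly one way to distribute the elements of $B_V$ over the children $c_1,\dots,c_{k_V(\tau)}$ of $V$ (listed in planar order) making $C_\Delta$ hold at $V$, namely the distribution for which the cyclic word $(\ell(V),\ell(c_1),\dots,\ell(c_{k_V(\tau)}))$ is strictly decreasing when read clockwise from its largest entry. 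Carrying this induction down the tree recovers a labelling, and one checks the two constructions are inverse to each other. The number of ordered partitions of an $(n-1)$-element set into blocks of sizes $(k_V(\tau))_{V\in\tau}$ is the multinomial coefficient, so
\[N(\tau)=\frac{(n-1)!}{\prod_{V\in\tau}k_V(\tau)!},\]
which has the required form with constant $(n-1)!/|\cT^\Delta_n|$; plugging this into the displayed identity and comparing with the conditioned Galton--Watson law finishes the argument. (As a sanity check, summing over $\tau$ gives back $|\cT^\Delta_n|=|\setFZ|$.)

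The main thing to check carefully — indeed the only delicate point — is the local claim used in the reconstruction: given a vertex $V$, the already-known value $\ell(V)$ and the assigned block $B_V$ determine a \emph{unique} assignment of $B_V$ to the children of $V$ compatible with $C_\Delta$, and this assignment is always legitimate. This follows by unwinding the definition of $C_\Delta$: for any collection of distinct labels occupying the cyclic positions $(V,c_1,\dots,c_{k_V(\tau)})$, the prescription ``decreasing clockwise from the largest'' determines the cyclic arrangement entirely, and specifying which position carries the value $\ell(V)$ then determines the whole assignment; no conflict can occur since $\ell(V)$ lies in the block of the parent of $V$ (or equals $1$), hence is automatically distinct from every element of $B_V$. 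Once this local bijection is in place, the global equivalence, and therefore the theorem, follows from the top-down induction described above.
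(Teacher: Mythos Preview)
Your proof is correct. The paper does not actually prove this theorem but merely states it as a consequence of Theorem~\ref{thm:GouldenYong} together with the cited references \cite{Jan12,thevenin2019geometric}; you have supplied precisely the explicit counting argument that lies behind those citations, namely that the number of $C_\Delta$-labellings of a fixed plane shape $\tau$ equals $(n-1)!/\prod_V k_V(\tau)!$, which matches the conditioned $Po(1)$-Galton--Watson weights.
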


We finish this section by a short combinatorial observation that will be useful later.
We note that each vertex $V$ in $T(F_n^0)$ corresponds to a face $F(V)$ of the complement of the lamination $\Sieve(F_n^0)$,
and that the border of each face includes a single arc $(e^{- 2\pi i j/ n}, e^{- 2\pi i (j+1)/n})$.
Call $\Arc(V)$ the arc corresponding to the face $F(V)$. Besides, recall that chords of $\Sieve(F_n^0)$ correspond to edges of $T(f)$.
The following lemma is immediate.
\begin{lemma}
\label{lem:SeparatingChords_AncestralLine} 
Let $V$ and $V'$ be vertices of $T(f)$.
A chord $c$ of $\Sieve(F_n^0)$ separates the arcs $\Arc(V)$ and $\Arc(V')$ if and only if the corresponding edge separates $V$ and $V'$ in $T(f)$.
In particular, $c$ separates $\Arc(V)$ and the {\em root} arc $(e^{-2\pi i/n},1)$
if and only if the corresponding edge is on the ancestral line of $V$.
\end{lemma}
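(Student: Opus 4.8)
The plan is to read the statement off directly from the planar duality between the lamination $\Sieve(F_n^0)$ and its dual tree $T(f)$. Since $F_n^0\in\setFZ$, the chords of $\Sieve(F_n^0)$ are pairwise noncrossing, so $\Sieve(F_n^0)$ is a genuine lamination and each chord $c$ cuts the closed disk $\D$ into two closed regions meeting exactly along $c$; call them $R_c^+$ and $R_c^-$. By construction the vertices of $T(f)$ are the faces of the complement of $\Sieve(F_n^0)$, two faces being adjacent exactly when they share a chord, and deleting from $T(f)$ the edge $e$ dual to $c$ splits $T(f)$ into two subtrees, one consisting of the faces contained in $R_c^+$ and the other of those contained in $R_c^-$.

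First I would note that for any vertex $W$ of $T(f)$ the arc $\Arc(W)$ lies on the boundary of the face $F(W)$ and, being a circular arc between two consecutive $n$-th roots of unity, is met by no chord of $\Sieve(F_n^0)$; hence $\Arc(W)$ lies in $R_c^+$ or in $R_c^-$ according to whether $F(W)$ does. Therefore $c$ separates the arcs $\Arc(V)$ and $\Arc(V')$ if and only if $F(V)$ and $F(V')$ lie on opposite sides of $c$, i.e.\ (by the previous paragraph) if and only if $V$ and $V'$ lie in the two different subtrees of $T(f)\setminus e$, which is exactly the assertion that $e$ lies on the path $\llbracket V,V'\rrbracket$, i.e.\ that $e$ separates $V$ and $V'$ in $T(f)$. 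For the final claim one applies this with $V'=V_1$, the root of $T(f)$: by definition $V_1$ corresponds to the face containing the arc $(e^{-2\pi i/n},1)$, so $\Arc(V_1)$ is the root arc, and $e$ separates $V$ from the root if and only if $e$ lies on the ancestral line of $V$.

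I do not expect any genuine obstacle here; the two facts that need (elementary) verification are that $\Sieve(F_n^0)$ is noncrossing --- so that ``the two sides of a chord'' is well defined --- and that each face of its complement meets the unit circle in a single arc, so that $\Arc(W)$ is unambiguously on one side of every chord not bounding $F(W)$. Both were recorded in the construction of the Goulden--Yong bijection preceding the lemma, which is why it is stated as immediate.
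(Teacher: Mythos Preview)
Your proposal is correct and is exactly the planar-duality argument the paper has in mind; indeed the paper declares the lemma ``immediate'' and gives no proof at all. You have spelled out precisely the two ingredients needed (noncrossingness of the chords and the fact that each face meets the circle in a single arc), both of which were stated in the paragraph preceding the lemma.
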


\subsection{From a minimal factorization to a Galton-Watson tree: a shuffling operation}
\label{ssec:shuffling}
We can now use the Goulden-Yong bijection to prove the convergence of the sieve-valued process associated to a uniform element of $\setFZ$. For any $n \geq 1$, we let $\rFZ$ be a uniform element of $\setFZ$. Because of the condition $C_\Delta$ satisfied by its dual tree $T(\rFZ)$, the labelling of this tree is not uniform. We present here a shuffling operation which still allows us to use the result of Theorem \ref{thm:convergence_rotated_GW} and apply it to this tree. This operation was defined in \cite{thevenin2019geometric} and used in the same context of minimal factorizations.

\begin{definition}
\label{def:shuffling}
Let $T$ be a plane tree with $n$ vertices labelled from $1$ to $n$, rooted at the vertex of label $1$, and let $K \leq n$. We define the shuffled tree $T^{(K)}$ as follows: starting from the root of $T$, we perform one of the following two operations on the vertices of $T$. For consistency, we impose that the operation shall be performed on a vertex before being performed on its children.
\begin{itemize}
\item Operation $1$: for a vertex such that the labels of its children are all $> K$, we uniformly shuffle these labels (without shuffling the corresponding subtrees).
\item Operation $2$: for a vertex such that at least one of its children has a label $\leq K$, we uniformly shuffle these labelled vertices and keep the subtrees on top of each of these children.
\end{itemize}
\end{definition}
See Figure~\ref{fig:shuffling} for an example. Note that this operation induces a transformation of the lamination $\bL(T)$ associated to $T$.

\begin{figure}[!h]
\center
\begin{tabular}{c c}
\begin{tikzpicture}[scale=.7, every node/.style={scale=.7}]
\draw (-2,3) -- (0,1.5) -- (-1,3);
\draw (0,3) -- (0,1.5) -- (1,3);
\draw (0,1.5) -- (2,3);
\draw (-2,3.3) circle (.3) node{7};
\draw (-1,3.3) circle (.3) node{4};
\draw (0,3.3) circle (.3) node{28};
\draw (2,3.3) circle (.3) node{12};
\draw (1,3.3) circle (.3) node{16};
\draw (0,1.2) circle (.3) node{9};
\draw (-2,3.6) -- (-2.3,5) -- (-1.7,5) -- cycle;
\draw (-1,3.6) -- (-1.3,4.5) -- (-.7,4.5) -- cycle;
\draw (0,3.6) -- (-.3,5.5) -- (.3,5.5) -- cycle;
\draw (1,3.6) -- (.7,4) -- (1.3,4) -- cycle;
\draw (2,3.6) -- (1.7,6) -- (2.3,6) -- cycle;
\draw (-2,5.5) node{$T_1$};
\draw (-1,5) node{$T_2$};
\draw (0,6) node{$T_3$};
\draw (1,4.5) node{$T_4$};
\draw (2,6.5) node{$T_5$};
\end{tikzpicture}
&
\begin{tikzpicture}[scale=.7, every node/.style={scale=.7}]
\draw (-2,3) -- (0,1.5) -- (-1,3);
\draw (0,3) -- (0,1.5) -- (1,3);
\draw (0,1.5) -- (2,3);
\draw (-2,3.3) circle (.3) node{28};
\draw (-1,3.3) circle (.3) node{4};
\draw (0,3.3) circle (.3) node{16};
\draw (2,3.3) circle (.3) node{7};
\draw (1,3.3) circle (.3) node{12};
\draw (0,1.2) circle (.3) node{9};
\draw (-2,3.6) -- (-2.3,5) -- (-1.7,5) -- cycle;
\draw (-1,3.6) -- (-1.3,4.5) -- (-.7,4.5) -- cycle;
\draw (0,3.6) -- (-.3,5.5) -- (.3,5.5) -- cycle;
\draw (1,3.6) -- (.7,4) -- (1.3,4) -- cycle;
\draw (2,3.6) -- (1.7,6) -- (2.3,6) -- cycle;
\draw (-2,5.5) node{$T_1$};
\draw (-1,5) node{$T_2$};
\draw (0,6) node{$T_3$};
\draw (1,4.5) node{$T_4$};
\draw (2,6.5) node{$T_5$};
\end{tikzpicture}
\\
\multicolumn{2}{c}{(a) Shuffling of a labelled plane tree when $K=3$: Operation $1$ is performed}\\

\begin{tikzpicture}[scale=.7, every node/.style={scale=.7}]
\draw (-2,3) -- (0,1.5) -- (-1,3);
\draw (0,3) -- (0,1.5) -- (1,3);
\draw (0,1.5) -- (2,3);
\draw (-2,3.3) circle (.3) node{7};
\draw (-1,3.3) circle (.3) node{4};
\draw (0,3.3) circle (.3) node{28};
\draw (2,3.3) circle (.3) node{12};
\draw (1,3.3) circle (.3) node{16};
\draw (0,1.2) circle (.3) node{9};
\draw (-2,3.6) -- (-2.3,5) -- (-1.7,5) -- cycle;
\draw (-1,3.6) -- (-1.3,4.5) -- (-.7,4.5) -- cycle;
\draw (0,3.6) -- (-.3,5.5) -- (.3,5.5) -- cycle;
\draw (1,3.6) -- (.7,4) -- (1.3,4) -- cycle;
\draw (2,3.6) -- (1.7,6) -- (2.3,6) -- cycle;
\draw (-2,5.5) node{$T_1$};
\draw (-1,5) node{$T_2$};
\draw (0,6) node{$T_3$};
\draw (1,4.5) node{$T_4$};
\draw (2,6.5) node{$T_5$};
\end{tikzpicture}
&
\begin{tikzpicture}[scale=.7, every node/.style={scale=.7}]
\draw (-2,3) -- (0,1.5) -- (-1,3);
\draw (0,3) -- (0,1.5) -- (1,3);
\draw (0,1.5) -- (2,3);
\draw (-2,3.3) circle (.3) node{12};
\draw (-1,3.3) circle (.3) node{28};
\draw (0,3.3) circle (.3) node{4};
\draw (2,3.3) circle (.3) node{16};
\draw (1,3.3) circle (.3) node{7};
\draw (0,1.2) circle (.3) node{9};
\draw (1,3.6) -- (.7,5) -- (1.3,5) -- cycle;
\draw (0,3.6) -- (-.3,4.5) -- (.3,4.5) -- cycle;
\draw (-1,3.6) -- (-1.3,5.5) -- (-.7,5.5) -- cycle;
\draw (2,3.6) -- (2.3,4) -- (1.7,4) -- cycle;
\draw (-2,3.6) -- (-1.7,6) -- (-2.3,6) -- cycle;
\draw (1,5.3) node{$T_1$};
\draw (0,4.8) node{$T_2$};
\draw (-1,5.8) node{$T_3$};
\draw (2,4.3) node{$T_4$};
\draw (-2,6.3) node{$T_5$};
\end{tikzpicture}
\\
\multicolumn{2}{c}{(b) Shuffling of the same tree when $K=5$: Operation $2$ is performed}
\end{tabular}
\caption{Examples of the shuffling operation. The operation is different in both cases, since in the second case the vertex labelled $9$ has a child with label $4 \leq K$.}
\label{fig:shuffling}
\end{figure}

The main interest of this shuffling is that, if $T(\rFZ)$ is the dual tree obtained from a uniform minimal factorization of the $n$-cycle by the Goulden-Yong bijection, then by Theorem \ref{thm:poissontree} for any $K$, $T^{(K)}(\rFZ)$ has the law of a $Po(1)$-GW tree conditioned to have $n$ vertices, where the root has label $1$ and the other vertices are uniformly labelled from $2$ to $n$.
In what follows, we disregard the fact that the root is forced to have label $1$ and consider $T^{(K)}(\rFZ)$
as a conditioned $Po(1)$-GW tree with uniform labels on vertices;
we let the reader  check that this has no influence on the validity of the results.

Set from now on, for all $n$, $K_n=n^{3/4}$. We use the notation of \cref{ssec:framework}.
In particular, $(A_i,B_i,C_i)_{1 \leq i \leq g}$ are i.i.d. uniform triples of points
on the unit circle and $R^g$ is the composition of the $g$ rotations associated with these triples.

\begin{proposition}
\label{prop:shuffling}
Let $\rFZ$ be a uniform minimal factorization of the $n$-cycle. Then, for any $g \geq 0$, 
\begin{equation}
\label{eq:shuffling}
\sup_{1 \leq k \leq n} d_H\left(R^g\left(\bL_k\left(T(\rFZ)\right)\right), R^g\left(\bL_k\left( T^{(K_n)}(\rFZ)\right)\right)\right) \underset{n \rightarrow \infty}{\overset{\P}{\rightarrow}} 0.
\end{equation}
\end{proposition}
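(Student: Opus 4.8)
Write $T_1:=T(\rFZ)$ and $T_2:=T^{(K_n)}(\rFZ)$, and recall that $E=\bigcup_{i=1}^g\{A''_i,B''_i,C''_i\}$ is a family of $3g$ i.i.d.\ uniform points of $\bS^1$, independent of $\rFZ$. The case $g=0$ of the statement --- $\sup_{1\le k\le n}d_H(\bL_k(T_1),\bL_k(T_2))\to0$ in probability --- is established in \cite{thevenin2019geometric}. Since $R^g$ is not continuous (Remark~\ref{rem:continuity}), this does not immediately give \eqref{eq:shuffling}, and the plan is to show that the one mechanism through which the non-continuity could matter --- a short chord lying around a point of $E$ being blown up to a macroscopic chord --- is already controlled by the very special form of the shuffling.

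Two observations organise the argument. First, the shuffling $T\mapsto T^{(K_n)}$ preserves the underlying \emph{non-plane rooted labelled} tree: $T_1$ and $T_2$ have the same vertices, the same labels carried by these vertices, the same ancestry relation and the same subtree sizes, and differ only through the left-right order of siblings at certain vertices (Operation~1 of Definition~\ref{def:shuffling} does not even change that --- it only permutes labels, which are then all $>K_n$). Hence the shuffling alters only the \emph{angular positions} of the chords $c(W)$, never their lengths, the vertex they come from, nor --- by Lemma~\ref{lem:SeparatingChords_AncestralLine} --- the ancestral lines they separate from the root. Second, for a sieve containing $\bS^1$, moving a chord of length $\le\eta$ changes it by at most $\eta$ for $d_H$: so the position of a short chord is $d_H$-irrelevant \emph{unless} that chord lies around a point $P\in E$, in which case $R^g$ sends it close to one of the three chords of the rotation triangle attached to $P$ (as in Remark~\ref{rem:continuity} and the proof of Lemma~\ref{lem:Triangles_In_XgInfty}, a short chord around $A''_i$, $B''_i$, $C''_i$ is mapped by $R^g$ to a chord close to $[A'_i,B'_i]$, $[B'_i,C'_i]$, $[C'_i,A'_i]$).

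Fix $\delta>0$, choose $\eta_0>0$ with $\P\big(\min_{P\ne Q\in E}d(P,Q)<2\eta_0\big)<\delta/4$, and work on the complement; then fix $\eta_1\in(0,\eta_0)$ (threshold specified below) and call a chord of $\bL_k(T_j)$ \emph{large} if its length exceeds $\eta_1$, \emph{small} otherwise (the small chords forming a lamination within $\eta_1$ of $\bS^1$). For large chords, use the $g=0$ case: for any $\eps'>0$ we may assume $\sup_k d_H(\bL_k(T_1),\bL_k(T_2))<\eps'$, and with $\eps'\ll\eta_1$, since each $\bL_k(T_j)$ has only finitely many chords longer than $\eta_1$, this produces (uniformly in $k$) a partial matching between the large chords of $\bL_k(T_1)$ and those of $\bL_k(T_2)$ with matched chords at $d_H$-distance $<\eps'$, as in the chord-to-chord construction behind Propositions~\ref{prop:constant}--\ref{prop:chordtochord}. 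If a matched pair $(c,c')$ has all its paired endpoints at distance $>\eps'$ from $E$, then these endpoints fall pairwise into the same component of $\bS^1\setminus E$, and Lemma~\ref{lem:chorddistances}(iii) gives $d_H(R^g(c),R^g(c'))\le s_{\eta_1}\eps'$.

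It remains to handle the chords \emph{near} $E$: short chords lying around a point of $E$, and large chords with an endpoint within $\eps'$ of $E$ (for the latter, Lemma~\ref{lem:chorddistances}(iii) may fail because paired endpoints straddle a point of $E$, which $R^g$ then pulls macroscopically apart). By Lemma~\ref{lem:SeparatingChords_AncestralLine} such a chord of $\bL_k(T_j)$ corresponds to an edge on the ancestral line of the vertex $V(P)$ associated to some $P\in E$ (notation of Section~\ref{sec:proofs_Scaling}). One must show, with probability tending to $1$ and uniformly in $k$, that: (a) for every $P\in E$ the time at which $\bL_k(T_j)$ first acquires a short chord strictly around $P$, and the resulting blown-up chord close to the rotation triangle at $P$, agree up to $o(\sqrt n)$ in time and $O(\eta_1)$ in space between $T_1$ and $T_2$; and (b) no large chord on the ancestral line of any $V(P)$ has an endpoint within $\eps'$ of $P$. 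Both reduce to the fact that $\{V(P):P\in E\}$ are i.i.d.\ uniform vertices of $T_j$, combined with precise estimates on size-conditioned $\Poisson(1)$-Galton-Watson trees and on the Goulden-Yong labelling: (a) controls how the shuffling --- in particular Operation~2, applied at vertices having a child of label $\le K_n=n^{3/4}$ --- can perturb the deep part of the ancestral line of $V(P)$, using the analysis of minimal factorizations in \cite{MinFacto,thevenin2019geometric} to locate the small labels and arguments in the spirit of Lemma~\ref{lem:uniformancestrallines}; (b) bounds, via the CRT limit and tightness of the rescaled contour functions (cf.\ the proof of \eqref{eq:Tech}), the number of ancestors of a uniform vertex whose subtree is both macroscopic and ``ends'' immediately after that vertex, and then uses a union bound over $P\in E$ (the relevant large chords near each $P$ being $\O_P(1)$ in number). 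Intersecting all these high-probability events yields, for $n$ large and every $1\le k\le n$, $d_H\big(R^g(\bL_k(T_1)),R^g(\bL_k(T_2))\big)\le C_g\,(\eta_1+s_{\eta_1}\eps')$ with $C_g$ depending only on $g$; letting $\eps'\to0$, then $\eta_1\to0$, then $\delta\to0$ gives \eqref{eq:shuffling}. The main obstacle is exactly this bookkeeping of the non-continuity of $R^g$: pointwise closeness at each fixed rescaled time would follow more directly from Theorem~\ref{thm:convergence_rotated_GW} and the $g=0$ case, but the uniformity over all $n$ values of $k$ forces one to exclude that, at some intermediate time, one process already carries a blown-up chord near $E$ that the other does not, and this is what the two Galton-Watson estimates above are designed to do.
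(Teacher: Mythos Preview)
Your plan correctly isolates the obstacle --- the non-continuity of $R^g$ at chords straddling a point of $E$ --- and your treatment of large chords whose matched endpoints lie in the same component of $\bS^1\setminus E$, via the $g=0$ case and Lemma~\ref{lem:chorddistances}(iii), is essentially the paper's argument on the range $k\le M\sqrt n$ (where Lemma~\ref{lem:uniformancestrallines}, applied to both trees, rules out short chords around $E$ altogether). The divergence is in how the full range $k\le n$ is handled. You note that Theorem~\ref{thm:convergence_rotated_GW} gives closeness at each fixed rescaled time but dismiss this as insufficient for uniformity; the paper shows it \emph{is} sufficient. Theorem~\ref{thm:convergence_rotated_GW} (applied to the uniformly labelled tree $T^{(K_n)}(\rFZ)$) together with Proposition~\ref{prop:rotationofcontinuouslaminations} yields that, for $M$ large, both $R^g(\bL_{M\sqrt n}(T_j))$ and $R^g(\bL(T_j))$ lie within $\eps$ of the common limit $R^g(\bL_\infty^{(2)})$; monotonicity of $k\mapsto\bL_k(T_j)$ then sandwiches $R^g(\bL_k(T_j))$ between these two for every $M\sqrt n\le k\le n$. (For $T(\rFZ)$, convergence of $R^g(\bL(T(\rFZ)))$ needs only that the unlabelled tree is a conditioned GW tree, and convergence of $R^g(\bL_{M\sqrt n}(T(\rFZ)))$ follows from the already-established $k\le M\sqrt n$ estimate.) This convergence-plus-monotonicity device is the missing idea.

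Your substitute, item~(a), asks that the first times $\tau_1,\tau_2$ at which a short chord around $P\in E$ appears in $T_1,T_2$ agree up to $o(\sqrt n)$, and this is a genuine gap. Even granting $|\tau_1-\tau_2|=o(\sqrt n)$, for every integer $k$ strictly between $\tau_1$ and $\tau_2$ one rotated process carries a macroscopic chord close to some triangle side $[A'_i,B'_i]$ while the other does not, and nothing in your argument guarantees that chord is already approximated elsewhere in the second process; the Hausdorff distance at such $k$ is then bounded below by a constant, and your final bound $d_H\le C_g(\eta_1+s_{\eta_1}\eps')$ cannot hold uniformly. More fundamentally, the vertices $V_1(P)$ and $V_2(P)$ coded by $P$ in the two plane embeddings are in general \emph{different}: Operation~2 at any ancestor of $V_1(P)$ --- and near the root such ancestors typically exist, since some child of the root usually has label $\le K_n$ --- permutes whole subtrees and hence changes which vertex sits at contour position $P$. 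The ancestral lines of $V_1(P)$ and $V_2(P)$ are then distinct sets of vertices, with no a priori relation between their minimal small-subtree labels; the phrase ``perturb the deep part of the ancestral line of $V(P)$'' conceals this rather than resolving it. (Relatedly, the claim that $T_1$ and $T_2$ carry ``the same labels on the same vertices'' is not correct: Operation~1 permutes the labels $>K_n$.) The paper's two-regime split, with the sandwich argument in the second regime, is exactly what removes the need to compare $\tau_1$ and $\tau_2$ at all.
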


\begin{proof}
For $g=0$, this is essentially a particular case of \cite[Lemma 4.6]{thevenin2019geometric}:
the latter gives a bound on the Skorokhod distance
between the processes $(\bL_{u \sqrt{n}}\left(T(\rFZ)\right))_{u \geq 0}$ and $(\bL_{u\sqrt{n}}\left( T^{(K_n)}(\rFZ)\right)_{u \geq 0}$, and not a bound for the supremum over all times $k$ as stated here, but such a bound can be found in the proof after \cite[Equation (8)]{thevenin2019geometric}.

 Let us now fix $g \geq 1$. We first prove that for any fixed $M>0$:
\begin{equation}
\label{eq:rotationsandshuffling}
\sup_{1 \leq k \leq M \sqrt{n}} d_H\left(R^g\left(\bL_k\left(T(\rFZ)\right)\right), R^g\left(\bL_k\left( T^{(K_n)}(\rFZ)\right)\right)\right) \underset{n \rightarrow \infty}{\overset{\P}{\rightarrow}} 0.
\end{equation} 

Fix $M,\delta,\eps>0$. 
We note that since Lemma \ref{lem:uniformancestrallines}
does not depend on the planar structure of the tree, 
we can apply it to both $T(\rFZ)$ and $T^{(K_n)}(\rFZ)$.
Therefore there exists $\eta>0$ such that with probability $> 1-\delta$ 
there is
\begin{itemize}
\item no chord of length $<\eta$ in $\bL_{M \sqrt{n}}\left(T(\rFZ)\right)$
around an element of $E := \bigcup_{i=1}^g \{A''_i, B''_i, C''_i\}$;
\item and no chord of length $<\eta$ in $\bL_{M \sqrt{n}}\left(T^{(K_n)}(\rFZ)\right)$
around an element of $E$.
\end{itemize}
Let $\eps_0=\min(\eps,\eta)$.
In the following we call {\em long chord} a chord of length $>\eps_0$.
Let $(c_{j,n})_{j \le J_n}$, $(c^S_{j,n})_{j \le J^S_n}$  and $(c_j)_{j \le J}$ be the long chords of
$\bL_{M \sqrt{n}}\left(T(\rFZ)\right)$, $\bL_{M \sqrt{n}}\left( T^{(K_n)}(\rFZ) \right)$ and $\bL_M^{(2)}$, respectively
(a priori, $J_n$, $J^S_n$ and/or $J$ can be infinite).
Chords in $\bL_{M \sqrt{n}}\left(T(\rFZ)\right)$ and $\bL_{M \sqrt{n}}\left( T^{(K_n)}(\rFZ) \right)$
have the same lengths (shuffling does not change the sizes of subtrees of vertices with labels $\le K_n$,
which correspond to the lengths of these chords) so in particular $J_n=J^S_n$ for all $n$.

Since $T^{(K_n)}(\rFZ)$ is a size-conditioned $Po(1)$-GW tree with uniform labelings, we can use the chord-by-chord correspondence of \cref{ssec:chords_chords}.
In particular, with probability $\geq 1-\delta$, $J$ is finite and $J_n=J$ for $n$ large enough. Since we know that \eqref{eq:rotationsandshuffling} holds in genus $0$, we can reindex $(c^S_{j,n})_{j \le J}$ so that, for each $j \le J$, $c_{j,n}$ and $c^S_{j,n}$ both converge to $c_j$ as $n \rightarrow \infty$.

On the other hand, since the points $(A''_i, B''_i, C''_i)_{i \le g}$ are i.i.d. uniform, there exists $\eps_1>0$ such that, with probability $\ge 1-\delta$, no point of $E$ is at distance $<\eps_1$ from an extremity of one of the chords $(c_j)_{j \le J}$.
W.l.o.g., we can take $\eps_1 < 2\eps/s_{\eps_0}$,
where $s_{\eps_0}$ is the constant given in \cref{lem:chorddistances}.

We claim that, for $n$ large enough, each of the following events happens
with probability $\ge 1-\delta$ (and thus both of them happen simultaneously with probability $\ge 1-2\delta$):
\begin{align*}
\sup_{j \le J} d_H(c^S_{j,n},c_j) & <\eps_1/2,\\
\sup_{j \le J} d_H(c^S_{j,n},c_{j,n})  & <\eps_1/2.
\end{align*}
Indeed, the first inequality comes from the chord-by-chord correspondence of Section~\ref{ssec:chords_chords}, and the second one from \eqref{eq:shuffling} in the case $g=0$. Let us assume from now on that these events hold.
This implies that, for any $j\le J$, the extremities of $c_{j,n}$ and $c^S_{j,n}$
are in the same connected components of $\mathbb S^1 \setminus E$.
Using \cref{lem:chorddistances} (iii), 
we have
\[d_H\big(R^g(c_{j,n}),R^g(c^S_{j,n})\big) \le s_{\eps_0} \eps_1/2 < \eps.\]
and \eqref{eq:rotationsandshuffling} follows.

We now want to extend the supremum over $k \le M \sqrt{n}$ in \eqref{eq:rotationsandshuffling} to a supremum over $k \le n$. The key idea is that not much happens after scale $\sqrt{n}$, as the sieve is already close to $\Sieve^g_\infty$.
We recall that $T^{(K_n)}(\rFZ)$ has the distribution of a conditioned $Po(1)$-GW tree with uniform labels on the vertices.
Therefore, by Theorem \ref{thm:convergence_rotated_GW}, one has
jointly, for any $M$,
\begin{align}
\label{eq:rotationsandshuffling2}
R^g \left(\bL_{M \sqrt{n}}\left( T^{(K_n)}(\rFZ)\right)\right) & \underset{n \rightarrow \infty}{\overset{(d)}{\rightarrow}} R^g \left(\bL_{M\sqrt{2}}^{(2)} \right) \text{ and } \\
R^g \left(\bL \left( T^{(K_n)}(\rFZ)\right)\right) & \underset{n \rightarrow \infty}{\overset{(d)}{\rightarrow}} R^g \left(\bL_\infty^{(2)} \right).
\label{eq:rotationsandshuffling3}
\end{align}
By Skorokhod representation theorem, let us assume that these convergences hold almost surely and jointly.
Fix $\eps,\delta>0$.
We can choose $M$ large enough, so that $d_H\left(R^g \big(\bL_{M\sqrt{2}}^{(2)} \big),R^g \big(\bL_\infty^{(2)} \big)\right) \le \eps$
with probability $1-\delta$ (thanks to \cref{prop:rotationofcontinuouslaminations}).
For $n$ large enough, in each of the two convergences above,
the left-hand side is at distance at most $\eps$ from the right-hand side with probability $1-\delta$.
We deduce, that for $n$ large enough, it holds that
\[d_H\left( R^g \left(\bL_{M \sqrt{n}}\left( T^{(K_n)}(\rFZ)\right)\right),
R^g\left(\bL \left( T^{(K_n)}(\rFZ)\right)\right) \right) \le 3\eps, \]
with probability at least $1-3\delta$.
Using the monotonicity of the lamination process $(\bL_c\left( T^{(K_n)}(\rFZ)\right))_{c \geq 0}$, this implies
that
\begin{equation}
\label{eq:rotationsandshuffling4}
\sup_{M \sqrt{n} \le k \le n}  d_H\left(R^g\left(\bL_{k}\left( T^{(K_n)}(\rFZ)\right)\right),
R^g\left(\bL_{M \sqrt{n}}\left( T^{(K_n)}(\rFZ)\right)\right) \right)\le 3\eps
\end{equation}
with probability at least  $1-3\delta$.

We can proceed similarly for $R^g\left(\bL_k\left(T(\rFZ)\right)\right)$. From \cref{eq:rotationsandshuffling,eq:rotationsandshuffling2}, we get that 
\[R^g \left(\bL_{M \sqrt{n}}\left( T(\rFZ)\right)\right) \underset{n \rightarrow \infty}{\overset{(d)}{\rightarrow}} R^g \left(\bL_{M\sqrt{2}}^{(2)} \right). \]
Eq.~\eqref{eq:rotationsandshuffling3} also holds without shuffling since we know that, forgetting the labeling, $T(\rFZ)$ is distributed as a size-conditioned $Po(1)$-Galton--Watson tree 
(\cref{thm:poissontree}). Thus we have that
\begin{equation}
\label{eq:rotationsandshuffling5}
\sup_{M \sqrt{n} \le k \le n}  d_H\left(R^g\left(\bL_{k}\left( T(\rFZ)\right)\right),
R^g\left(\bL_{M \sqrt{n}}\left( T(\rFZ)\right)\right) \right)\le 3\eps,
\end{equation}
with probability at least  $1-3\delta$.

The proposition follows from \cref{eq:rotationsandshuffling,eq:rotationsandshuffling4,eq:rotationsandshuffling5}
\end{proof}

\subsection{From a chord configuration to a tree}
\label{ssec:chordtotree}
We now use the Goulden-Yong bijection exposed in Section \ref{ssec:gouldenyong} to associate to a minimal factorization $f$ of the $n$-cycle a tree $T(f)$ with $n$ labelled vertices.
By Theorem \ref{thm:poissontree}, when the factorization is uniform, then the tree is distributed as a size-conditioned $Po(1)$-GW tree, which helps study its behaviour. In genus $0$, this was fully investigated in \cite{thevenin2019geometric}. Here, we explain how to extend it to greater genera.

\begin{proposition}
\label{prop:chordstotree}
Let $\rFZ$ be a uniform minimal factorization of the $n$-cycle. Then:
\begin{align*}
\sup_{1 \leq k \leq n} d_H\left( R^g(\Sieve_k(\rFZ)), R^g(\bL_k(T(\rFZ))) \right) \underset{n \rightarrow \infty}{\overset{\P}{\rightarrow}} 0.
\end{align*}
\end{proposition}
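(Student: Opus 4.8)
The plan is to compare the two sieve processes chord by chord, relying on the fact that the underlying combinatorial objects -- the lamination $\Sieve(f)$ built directly from the transpositions of $f$ and the lamination $\bL(T(f))$ built from the contour function of the Goulden-Yong tree $T(f)$ -- differ only in the \emph{position} of each chord on the circle, not in which subtree it separates. More precisely, recall from \cref{ssec:gouldenyong} that an edge $e$ of $T(f)$ corresponds both to a chord $c_e$ of $\Sieve(f)$ (the chord of the transposition labelled by $e$) and to a chord $c'_e$ of $\bL(T(f))$ (the chord joining the first and last visit times of the endpoint of $e$ furthest from the root). By \cref{lem:SeparatingChords_AncestralLine}, $c_e$ separates $\Arc(V)$ from the root arc if and only if $e$ lies on the ancestral line of $V$; the same is true for $c'_e$ by construction of the lamination associated to a tree. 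Hence the two chords $c_e$ and $c'_e$ are ``close'' in Hausdorff distance whenever the fringe subtree $\theta_e(T(f))$ hanging below $e$ is small (has $o(n)$ vertices), since then both chords cut off an arc of length $o(1)$ around the same location. This pointwise comparison, together with a uniform control over time $k$, is exactly what was established in genus $0$ in \cite{thevenin2019geometric} (see the proof of Theorem~4.1 there): $\sup_{1\le k\le n} d_H(\Sieve_k(\rFZ),\bL_k(T(\rFZ)))\to 0$ in probability. The point here is to promote this to the same statement after applying $R^g$.

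The key step is therefore the following. Fix $\eps,\delta>0$. Using \cref{lem:uniformancestrallines} applied to the tree $T(\rFZ)$ (which, forgetting labels, is a conditioned $Po(1)$-GW tree by \cref{thm:poissontree}, and note the lemma does not use the planar/labelling structure), choose $\eta\in(0,\eta_0)$ -- where $\eta_0=\inf\{d(U,V):U,V\in E\}$ is the minimal spacing of the $3g$ points of $E$ -- so that, with probability $>1-\delta$, no chord of $\Sieve_n(\rFZ)$ of length $<\eta$ sits around a point of $E$, and likewise for $\bL_n(T(\rFZ))$. Work on this event. Set $\eps_0=\min(\eps,\eta)$ and call a chord \emph{long} if it has length $>\eps_0$. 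By the genus-$0$ case of the proposition (and the chord-to-chord correspondence of \cref{ssec:chords_chords}, which applies since $T(\rFZ)$ is a conditioned $Po(1)$-GW tree), with probability $\ge 1-\delta$ there are finitely many long chords in $\Sieve_{M\sqrt n}(\rFZ)$, the same number in $\bL_{M\sqrt n}(T(\rFZ))$, and we may index them so that the $j$-th long chord of one is within $\eps_1/2$ of the $j$-th long chord of the other, where $\eps_1<2\eps/s_{\eps_0}$ is chosen (using that the points of $E$ are i.i.d.\ uniform) so that with probability $\ge 1-\delta$ no point of $E$ lies within $\eps_1$ of an endpoint of any long chord of the limit $\bL_M^{(2)}$. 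On this event each pair of corresponding long chords has its endpoints in the same connected components of $\bS^1\setminus E$, so \cref{lem:chorddistances}(iii) gives $d_H(R^g(c_{j,M\sqrt n}),R^g(c'_{j,M\sqrt n}))\le s_{\eps_0}\eps_1/2<\eps$. Short chords (length $\le\eps_0$), on the other hand, can only be turned into long chords by $R^g$ if they isolate a point of $E$, which is excluded on our event; thus short chords stay short (length $\le\eps_0$) after $R^g$, hence contribute at most $\eps_0\le\eps$ to the Hausdorff distance. This yields $\sup_{1\le k\le M\sqrt n} d_H(R^g(\Sieve_k(\rFZ)),R^g(\bL_k(T(\rFZ))))\le\eps$ with probability $\ge 1-C\delta$, for $n$ large.

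To pass from $\sup_{k\le M\sqrt n}$ to $\sup_{k\le n}$, I use exactly the argument already deployed at the end of the proof of \cref{prop:shuffling}: both processes are monotone in $k$, and by \cref{thm:convergence_rotated_GW} (applicable to $\bL_k(T(\rFZ))$ since $T(\rFZ)$ is a conditioned $Po(1)$-GW tree) together with \cref{prop:rotationofcontinuouslaminations}, one has $d_H(R^g(\bL_{M\sqrt n}(T(\rFZ))),R^g(\bL(T(\rFZ))))\le 3\eps$ with probability $\ge 1-3\delta$ for $M$ and then $n$ large; by monotonicity the same bound controls $\sup_{M\sqrt n\le k\le n} d_H(R^g(\bL_k(T(\rFZ))),R^g(\bL_{M\sqrt n}(T(\rFZ))))$. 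For the process $R^g(\Sieve_k(\rFZ))$ I invoke the genus-$0$ version of the present proposition at time $M\sqrt n$ to transfer the scaling-limit statement, concluding $\sup_{M\sqrt n\le k\le n} d_H(R^g(\Sieve_k(\rFZ)),R^g(\Sieve_{M\sqrt n}(\rFZ)))\le 3\eps$ with high probability as well. Combining the three bounds (over $[1,M\sqrt n]$, the endpoint comparison, and the two tails) gives the proposition.

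The main obstacle is the discontinuity of $R^g$: a small chord around a point of $E$ is blown up to a macroscopic chord, so the naive transfer of the genus-$0$ estimate fails. The whole argument is organized around ruling this out -- that is why \cref{lem:uniformancestrallines} is essential, as it guarantees that no \emph{short} chord (equivalently, no ancestral edge with a small fringe subtree and a label appearing before time $M\sqrt n$) ever isolates a point of $E$; and why the chord-to-chord correspondence is needed, to ensure that \emph{long} chords match up with endpoints in the same components of $\bS^1\setminus E$, so that \cref{lem:chorddistances}(iii) applies and $R^g$ acts there as a genuine local isometry. The rest is the bookkeeping of the $\eps$--$\delta$ constants, which is entirely parallel to \cref{prop:shuffling} and presents no new difficulty.
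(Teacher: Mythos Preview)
Your proposal is correct and follows essentially the same route as the paper's proof: first control the supremum over $k\le M\sqrt n$ by ruling out short chords around points of $E$ via \cref{lem:uniformancestrallines} (together with \cref{lem:SeparatingChords_AncestralLine}), then match the remaining long chords using \cref{lem:chorddistances}(iii), and finally extend to $k\le n$ by monotonicity and the convergence results of \cref{thm:convergence_rotated_GW} and \cref{prop:rotationofcontinuouslaminations}. The only cosmetic difference is that the paper invokes the direct one-to-one correspondence $c_e\leftrightarrow c'_e$ between chords of $\Sieve(\rFZ)$ and $\bL(T(\rFZ))$ (via \cite[Lemma~4.4]{thevenin2019geometric}) rather than routing through the chord-to-chord correspondence with the limit $\bL_M^{(2)}$, which makes the pairing slightly more immediate; also, your ``no chord of $\Sieve_n(\rFZ)$'' should read $\Sieve_{M\sqrt n}(\rFZ)$ (the full sieve does contain short chords around almost every point).
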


This allows us to study the lamination obtained from a tree, instead of the chord configuration obtained from the factorization.

\begin{proof}

As in the proof of Proposition \ref{prop:shuffling}, we first prove that, for any $M>0$, 
\begin{equation}
\label{eq:RotatedSievesClosed}
\sup_{1 \leq k \leq M \sqrt{n}} d_H\left( R^g(\Sieve_k(\rFZ)), R^g(\bL_k(T(\rFZ))) \right) \underset{n \rightarrow \infty}{\overset{\P}{\rightarrow}} 0.
\end{equation}

The proof is similar (though slightly simpler since the rotation points are the same on both sides)
 to that of \cref{prop:shuffling}. We only give the main lines.
 
Fix $M,\delta,\eps, \eta>0$. We use again Lemma \ref{lem:uniformancestrallines} for $T(\rFZ)$, together with \cref{lem:SeparatingChords_AncestralLine}, to ensure 
that, with probability $1-\delta$,
there is no small chord (i.e. of length smaller than $\eta$) in $R^g(\Sieve_{M \sqrt{n}}(\rFZ))$ or $R^g(\bL_{M \sqrt{n}}(T(\rFZ)))$ around any element of $E$.
 
Set $\eps_0=\min(\eps,\eta)$ and consider the large chords $(c_j)_{j \le J}$ in $R^g(\bL_{M \sqrt{n}}(T(\rFZ)))$, $(c'_j)_{j \le J'}$ in $R^g(\Sieve_{M \sqrt{n}}(\rFZ))$ (large meaning of length $> \eps_0$).
For $n$ sufficiently large, the extremities of large chords $(c_j)_{j \in J}$ in $R^g(\bL_{M \sqrt{n}}(T(\rFZ)))$ are far from elements of $E$ (this was proved in the proof of Proposition \ref{prop:shuffling}), and thus we can apply \cref{lem:chorddistances} (iii) to $c_j$ and $c'_j$, for any $j \leq J$. Since $d_H(c_j,c'_j)=o(1)$ uniformly in $j$ (e.g. by \cite[Lemma $4.4$]{thevenin2019geometric}), we have $d_H\big(R^g(c_j),R^g(c'_j)\big)=o(1)$, concluding the proof of~\eqref{eq:RotatedSievesClosed}.
 
 We then need to extend the supremum over $k \leq M \sqrt{n}$ in~\eqref{eq:RotatedSievesClosed}
 to a supremum over $k \le n$. To this end we first 
 note  that \cref{eq:RotatedGW,eq:shuffling,eq:RotatedSievesClosed}
 imply that $R^g(\Sieve_{M\sqrt{n}}(\rFZ))$ converges towards $R^g(\bL_{M\sqrt{2}}^{(2)})$.
Then the same 
argument as in the end of the proof of \cref{thm:convergence_rotated_GW}
implies the convergence of
 $R^g(\Sieve(\rFZ))$ to $R^g(\bL_\infty^{(2)})$.
 The rest of the proof is  similar to that of \cref{prop:shuffling}.
 \end{proof}

\subsection{From a uniform factorization of genus $g$ to a uniform minimal factorization}
\label{ssec:genusgtogenus0}

Let $\rF$ be a uniform random factorization in $\setF$
and $\rFZ$ a uniform random minimal factorization.
By \cref{corol:RandomGeneration}, we can construct
$\rF$ and $\rFZ$ on the same probability space so that 
$\rF=\Lambda^g(\rFZ)$ with high probability. We use this construction throughout this section.

We recall the action of $\Lambda^g$. For $1 \leq i \leq g$, we first choose $\aaa_i<\bbb_i<\ccc_i$ a uniform random increasing triple of $[ 1,n ]$, and add to the factorization the transpositions $(\aaa_i \,\bbb_i)$ and $(\aaa_i \,\ccc_i)$ at uniformly chosen times $\vvv_i,\www_i$. 
Then we replace each transposition $(j \,h)$ by the transposition $(\sigma_i(j) \,\sigma_i(h))$,
where $\sigma_i$ is of the form given in Proposition \ref{prop:labelingalgorithm}.

Finally, we set
 $(A_i, B_i, C_i) := (e^{-2\pi i (\aaa_i+\epsilon_{i,1})/n}, e^{-2\pi i (\bbb_i+\epsilon_{i,2})/n}, e^{-2\pi i (\ccc_i+\epsilon_{i,3})/n})$, where the variables $\epsilon_{i,1}, \epsilon_{i,2}, \epsilon_{i,3}$ are i.i.d., independent of $\aaa_i, \bbb_i, \ccc_i$ and uniform on $(-1, 0)$. With this construction, $(A_i, B_i, C_i)_{1 \le i \le g}$ are independent uniform triples of points on the circle, with the condition that $(1,A_i, B_i, C_i)$ appear in clockwise order around the circle.
We use the notation of \cref{ssec:framework} with respect to these triples $(A_i, B_i, C_i)$;
in particular, $R^g$ is the composition of the $g$ consecutive rotations $R_{A_i, B_i, C_i}$.

The following proposition, which is the main result of this section, states that the sieve-valued process $(\Sieve_k(\rF))_{0 \leq k \leq n}$ can be approximated by the lamination-valued process associated to $\rFZ$, rotated $g$ times.

\begin{proposition}
\label{prop:fromuniftounif}
We have \begin{align*}
\sup_{1 \leq k \leq n} d_H\left(\Sieve_k(\rF), R^g\left(\Sieve_k(\rFZ)\right)\right) \underset{n \rightarrow \infty}{\overset{\P}{\rightarrow}} 0.
\end{align*}
\end{proposition}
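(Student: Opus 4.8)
The plan is to reduce everything to statements that have already been established, by carefully decomposing the operation $\Lambda^g$ into its effect on sieves and comparing it to the clean rotation operation $R^g$. The crucial input is \cref{prop:labelingalgorithm}: at each step $i$, the conjugation by $\sigma_i$ is a piecewise rotation of the integer segment $[1,n]$ which differs from the ``ideal'' piecewise rotation defining $R_{A_i,B_i,C_i}$ only near the $O_P(1)$ cut points $\aaa_i',\aaa_i'',\bbb_i',\ccc_i'$ (each within $O_P(1)$ of $\aaa_i,\bbb_i,\ccc_i$). I would first treat the case $g=1$ in detail and then iterate; the iteration introduces only the usual complication that $R^{i-1}$ must already be controlled before applying step $i$, but since the rotation points at step $i$ are uniform and independent, with high probability they fall in ``generic'' position relative to the images of the earlier cut points, so the error estimates compose.

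The first main step is to deal with the two extra transpositions $(\aaa_i\,\bbb_i)$ and $(\aaa_i\,\ccc_i)$ added at each step. By construction the insertion times $\vvv_i,\www_i$ are uniform in $[1,n-1+2(i-1)]$, hence with high probability $\vvv_i,\www_i > M\sqrt n$ for any fixed $M$. Therefore, for $k \le M\sqrt n$, the prefix $\Sieve_k(\rF)$ contains none of the $2g$ new chords, and for $k > M\sqrt n$ one can use the fact (established in the proof of \cref{thm:convergence_rotated_GW} and \cref{prop:rotationofcontinuouslaminations}) that the sieve is already within $\eps$ of $\Sieve^g_\infty$, so adding $O(g)=O(1)$ chords changes the Hausdorff distance by at most $O(1/\sqrt n)$ worth of arc-length plus the fixed $\eps$ slack. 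Concretely I would split $\sup_{1\le k\le n}$ into $\sup_{k\le M\sqrt n}$ and $\sup_{M\sqrt n < k \le n}$, handle the tail exactly as at the end of the proof of \cref{prop:shuffling} (monotonicity of the processes plus the already-proven convergence of the endpoints $R^g(\Sieve(\rFZ))$ and $\Sieve(\rF)$), and focus the real work on the window $k \le M\sqrt n$.

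On the window $k \le M\sqrt n$, the task is to compare, chord by chord, the image of $\Sieve_k(\rFZ)$ under $R^g$ with its image under the actual conjugations $\sigma_1,\dots,\sigma_g$ performed by the algorithm. Fix $\eta>0$ and, using \cref{lem:uniformancestrallines} together with \cref{lem:SeparatingChords_AncestralLine}, arrange that with high probability no chord of $\Sieve_{M\sqrt n}(\rFZ)$ of length $<\eta$ sits around any element of $E=\bigcup\{A_i'',B_i'',C_i''\}$. Set $\eps_0=\min(\eps,\eta)$ and list the large chords $(c_j)_{j\le J}$ of $\Sieve_{M\sqrt n}(\rFZ)$ (finitely many w.h.p.\ by the chord-to-chord correspondence). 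For each such large chord, its endpoints are at macroscopic distance from every cut point $\aaa_i,\bbb_i,\ccc_i$ up to $O_P(1)/n = o(1)$, hence also from $\aaa_i',\aaa_i'',\bbb_i',\ccc_i'$; consequently the algorithm's piecewise rotation $\sigma_i/n$ and the ideal rotation $R_{A_i,B_i,C_i}$ agree on both endpoints up to an additive error $O_P(1)/n$. By \cref{lem:chorddistances}(i)--(iii), applied with the endpoints of $R^{i-1}(c_j)$ and their two candidate images lying in the same components of $\bS^1\setminus(\text{images of }E)$, the Hausdorff distance between $R^g(c_j)$ and its algorithmic image is $\le s_{\eps_0}\cdot O_P(1)/n \to 0$ uniformly in $j$. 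Small chords ($<\eps_0$) on both sides can only be moved to chords that are again small \emph{unless} they isolate an element of $E$, which is excluded by our choice of $\eta$; this is exactly the argument used in the proof of \cref{thm:convergence_rotated_GW} for why small chords do not become large after rotation. Combining, $\sup_{k\le M\sqrt n} d_H(\Sieve_k(\rF),R^g(\Sieve_k(\rFZ))) \to 0$ in probability, and together with the tail estimate this proves the proposition.

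The main obstacle I anticipate is the noncontinuity of the rotation operators, which is precisely why one cannot simply say ``$\sigma_i/n$ converges pointwise to $R_{A_i,B_i,C_i}$, hence the images converge.'' The delicate point is to show that the $O_P(1)$ discrepancy between $\sigma_i$ and the ideal piecewise rotation near the cut points never causes a short chord of $\Sieve_k(\rFZ)$ sitting \emph{at} a cut point to be sent to a long chord in one process but not the other; this is handled by the uniform-ancestral-line estimate (\cref{lem:uniformancestrallines}), which guarantees that prefixes of length $O(\sqrt n)$ contain no short chord around the relevant points. A secondary subtlety is that in iterating over $i=1,\dots,g$ the ``generic position'' of the $i$-th triple with respect to the partially-rotated images of the earlier cut points must be invoked, but since those triples are uniform and independent and there are only finitely many points to avoid, this happens with high probability; one should also note that \cref{prop:labelingalgorithm} already packages the cumulative effect in the right form, so the induction is mostly bookkeeping.
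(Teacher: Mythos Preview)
Your approach for the window $k \le M\sqrt{n}$ is essentially sound and parallels what is done in the proofs of \cref{thm:convergence_rotated_GW} and \cref{prop:chordstotree}. The paper actually takes a slightly more direct route here by working on the larger window $k \le K_n = n^{3/4}$: with that threshold, the uniform rotation points $(\aaa_i,\bbb_i,\ccc_i)$ are w.h.p.\ at distance $\Theta(n/K_n) \to \infty$ from the support of \emph{every} transposition in $(\rFZ)_{K_n}$, so all transpositions are ``good'' (both endpoints in the set $G$ where $\sigma^g$ and $\tilde\sigma^g$ agree up to $O_P(1)$). This sidesteps the large/small chord dichotomy and the appeal to \cref{lem:uniformancestrallines} altogether.

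There is, however, a genuine gap in your tail argument. You propose to handle $k > M\sqrt{n}$ ``exactly as at the end of the proof of \cref{prop:shuffling}'', using monotonicity plus ``the already-proven convergence of the endpoints $R^g(\Sieve(\rFZ))$ and $\Sieve(\rF)$''. But the convergence of $\Sieve(\rF)$ is \cref{thm:main}, which is proved \emph{after} \cref{prop:fromuniftounif} and in fact uses it; invoking it here is circular. Nor can you salvage the endpoint convergence via the maximality trick from the end of the proof of \cref{thm:convergence_rotated_GW}: there one uses that for any subsequential limit $X$, the preimage $(R^g)^{-1}(X)$ is a lamination containing $\bL_\infty^{(2)}$, hence equals it by maximality. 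Here $\Sieve(\rF)$ is \emph{not} of the form $R^g(\text{lamination})$---the $2g$ added chords destroy this structure---so that argument does not apply. The paper flags this explicitly. Relatedly, your remark that ``adding $O(g)=O(1)$ chords changes the Hausdorff distance by at most $O(1/\sqrt n)$'' is false: the added chords are macroscopic, and adding a single long chord to a sieve can move it by $\Theta(1)$ in Hausdorff distance.

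What the paper does for the tail $K_n < k \le n$ is the following. Split the chords of $\Sieve_k(\rF)$ into three kinds: (a) images under $\sigma^g$ of ``good'' initial transpositions of $\rFZ$; (b) images of the $O_P(1)$ ``bad'' initial transpositions (one endpoint in the exceptional set $\overline G$); (c) the $2g$ added transpositions. For (a), one shows $\sup_k d_H(\Sieve^{good}_k(\rF),R^g(\Sieve^{good}_k(\rFZ))) \to 0$ as in the window argument, and then sandwiches $R^g(\Sieve^{good}_k(\rFZ))$ between $R^g(\Sieve_{K_n}(\rFZ))$ and $R^g(\Sieve(\rFZ))$, both of which are known (from \cref{thm:convergence_rotated_GW}, \cref{prop:shuffling}, \cref{prop:chordstotree}) to be close to $R^g(\bL_\infty^{(2)})$. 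For (b) and (c), a careful case analysis using the explicit form of $\sigma_i$ in \cref{prop:labelingalgorithm} shows that each such chord lands within $O_P(1)/n$ of one of the chords $[A'_i,B'_i]$, $[B'_i,C'_i]$, $[A'_i,C'_i]$, and these belong to $R^g(\bL_\infty^{(2)})$ a.s.\ by \cref{lem:Triangles_In_XgInfty}. This is the missing ingredient: you must argue that the finitely many exceptional chords of $\rF$ are close to chords that are already present in the limit, not merely that there are $O(1)$ of them.
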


\begin{proof}
Recall that we have fixed $K_n=n^{3/4}$ as threshold for the shuffling procedure
in \cref{ssec:shuffling}. We first prove our estimates until the threshold, i.e. that
\begin{equation}
\label{eq:ClosedUntilKn}
\sup_{1 \leq k \leq K_n} d_H\left(\Sieve_k(\rF), R^g\left(\Sieve_k(\rFZ)\right)\right) \underset{n \rightarrow \infty}{\overset{\P}{\rightarrow}} 0.
\end{equation}
We first remark that, with high probability, 
at each of the $g$ steps of the algorithm, 
the two transpositions $(\aaa_i \,\bbb_i)$ and $(\aaa_i \,\ccc_i)$ are added to the factorization $\rF$ at times  $\vvv_i$ and $\www_i$
which are larger than $K_n$. 
Therefore they do not appear in $\Sieve_k(\rF)$ for $k \le K_n$,
and $\Sieve_k(\rF)$ is obtained from $\Sieve_k(\rFZ)$ by replacing 
the chord associated to a transposition $(j \,h)$ by that of $(\sigma^g(j) \,\sigma^g(h))$,
where $\sigma^g$ is the composition $\sigma_g \circ \dots \circ \sigma_1$.

For $i \le g$, let us denote
$\tilde \sigma_i$ the permutation defined as follows
\begin{equation}
\label{eq:tilde_sigma}
\tilde \sigma_i(j)=\begin{cases}
j &\text{ if $j\leq \aaa_i$ or $j> \ccc_i$;}\\
j+\ccc_i-\bbb_i&\text{  if $\aaa_i< j\le \bbb_i$;}\\
j-\bbb_i+\aaa_i&\text{  if $\bbb_i< j\leq \ccc_i$,}
\end{cases}.\end{equation}
Note that $\tilde \sigma_i$ is a discrete version of the rotation $R_i$
associated with the triple $(A_i,B_i,C_i)$

Comparing with Proposition \ref{prop:labelingalgorithm} (and taking the same notations), 
we see that, uniformly in $j$,
one has $\tilde\sigma_i(j)=\sigma_i(j)+\O_P(1)$ except for $j$ in a set of size $\O_P(1)$
(namely, except for $j$ between $\aaa_i$ and $\aaa'_i$ or $\aaa''_i$, 
between $\bbb_i$ and $\bbb'_i$ or between $\ccc_i$ and $\ccc'_i$;
note that all such $j$ are at distance $\O_P(1)$ of one of the rotation point $\aaa_i$, $\bbb_i$ or $\ccc_i$, which will be useful later in the proof).
Denoting by  $\tilde \sigma^g$  the composition $\tilde \sigma_g \circ \dots \circ \tilde \sigma_1$,
we have that, uniformly on $j$, $\tilde\sigma^g(j)=\sigma^g(j)+\O_P(1)$ except for $j$ in a set of size $\O_P(1)$.
We denote by $G \subseteq [1,n]$ the set of {\em good} $j$ for which this holds, 
and $\overline G$ its complement in $[1, n]$. We call {\em good transposition} a transposition $(j \,h)$ such that $j,h \in G$, and {\em bad transposition} a transposition that is not good.

If $j$, $h$ are both in the set $G$, then the chord associated with $(\sigma^g(j) \,\sigma^g(h))$
is at distance $\O(1/n)$ from the one associated to $(\tilde \sigma^g(j) \,\tilde \sigma^g(h))$.
The latter corresponds to $R^g(c)$, where $c$ is the chord of $\Sieve(\rFZ)$
associated to the transposition $(j \,h)$.
Hence, \eqref{eq:ClosedUntilKn} follows if we can show that, with high probability,
all transpositions $(j \,h)$ appearing before time $K_n$ in $\rFZ$ are good.

By construction, all points of $\overline G$ are at distance $\O_P(1)$
of one of the  points 
\[\tilde\sigma_1^{-1} \circ \dots \circ \tilde\sigma_{i-1}^{-1} (\aaa_i),\ \ \tilde\sigma_1^{-1} \circ \dots \circ \tilde\sigma_{i-1}^{-1} (\bbb_i),
\text{ or }\ \tilde\sigma_1^{-1} \circ \dots \circ \tilde\sigma_{i-1}^{-1} (\ccc_i),\]
for some $i \le g$.
Since these points are independent and uniform, and there are only $3g$ of them, they are at distance $\O_P(n/K_n)$ of the support of the $K_n$ first transpositions of $\rFZ$ (for the discrete distance on $[n]$ modulo $n$). This proves that, w.h.p., there is no transposition $(j \,h)$ 
 before time $K_n$ in $\rFZ$,
with either $j$ or $h$ in $\overline G$.
This ends the proof of \eqref{eq:ClosedUntilKn}.
\bigskip

We still need to extend the supremum over $k \le K_n$ in \eqref{eq:ClosedUntilKn}
to a supremum over $k \le n$. 
The argument here is different from the proofs of similar statements. Indeed, $\Sieve(\rF)$ is not necessarily the image of a lamination by $g$ rotations, so one cannot use a maximality argument.

Using \cref{thm:convergence_rotated_GW,prop:shuffling,prop:chordstotree}
we have $R^g\big(\Sieve_{M\sqrt n}(\rFZ)\big) \to  R^g(\bL_{M\sqrt 2}^{(2)})$ for any $M>0$
and $R^g\big(\Sieve(\rFZ)\big) \to  R^g(\bL_\infty^{(2)})$.
By monotonicity of $(\Sieve_k(\rFZ))_{k \geq 0}$ and using the convergence $\bL_{M\sqrt 2}^{(2)} \to \bL_\infty^{(2)}$,
this implies
\begin{equation}
\label{eq:Rgk_L2}
\sup_{K_n \le k\le n} d_H \Big(R^g\big(\Sieve_k(\rFZ)\big), R^g(\bL_\infty^{(2)}) \Big) 
\underset{n \rightarrow \infty}{\overset{\P}{\rightarrow}} 0.
\end{equation}

We now consider $\Sieve(\bm F_n^g)$. Note that $\bm F_n^g$ contains two types of transpositions.
\begin{itemize}
\item First, we have the transpositions coming from the initial factorization $\rFZ$,
which we call {\em initial transpositions}.
These are  the form $(\sigma^g(j) \,\sigma^g(h))$ where $(j \,h)$ is
a transposition in the initial factorization $\rFZ$.
\item Secondly, we have the transpositions added during an application of $\Lambda$
in the generation algorithm, which we call {\em added transpositions}.
 Namely, at time $i$, we add the transpositions
$(\aaa_i \,\bbb_i)$ and $(\aaa_i \,\ccc_i)$ and conjugate these transpositions with $\sigma_i$.
 After the $g-i$ remaining steps,
these are transformed into $(\sigma_{i}^g(\aaa_i) \,\sigma_{i}^g(\bbb_i))$
and $(\sigma_{i}^g(\aaa_i) \,\sigma_{i}^g(\ccc_i))$,
where $\sigma_{i}^g$ is the composition $\sigma_g \circ \dots \sigma_{i}$.
\end{itemize}
Among the initial transpositions, we further distinguish the ones for which both $j$ and $h$
are good in the sense defined above.
For $k \le n$, we denote $\Sieve_k^{good}(\bm F_n^g)$ the subset of chords in $\Sieve_k(\bm F_n^g)$
  associated to the good initial transpositions.
 We also denote by $\Sieve_k^{good}(\rFZ)$ the corresponding set of chords in $\Sieve_k(\rFZ)$.
 Using the same argument as to prove \eqref{eq:ClosedUntilKn},
 we have
\begin{equation}
\label{eq:goodclose}
\sup_{0 \leq k \leq n} d_H\left( \Sieve^{good}_k(\bm F_n^g), R^g(\Sieve^{good}_k(\rFZ)) \right)
 \underset{n \rightarrow \infty}{\overset{\P}{\rightarrow}} 0.
\end{equation}
We also know that with high probability, all chords appearing before time $K_n$ are good.
Hence, for $k\ge K_n$, we have with high probability
 \[ R^g\big(\Sieve_{K_n}(\rFZ)\big) \subseteq R^g\big(\Sieve^{good}_k(\rFZ)\big)\subseteq R^g\big(\Sieve(\rFZ)\big).\]
 Using \eqref{eq:Rgk_L2}, this implies
 \[
\sup_{K_n \le k\le n} d_H \Big(R^g\big(\Sieve^{good}_k(\rFZ)\big), R^g(\bL_\infty^{(2)}) \Big) 
\underset{n \rightarrow \infty}{\overset{\P}{\rightarrow}} 0.
\]
and therefore, using \eqref{eq:goodclose},
\[\sup_{K_n \le k\le n} d_H \Big(\Sieve^{good}_k(\bm F_n^g), 
 R^g(\bL_\infty^{(2)})\Big) 
\underset{n \rightarrow \infty}{\overset{\P}{\rightarrow}} 0.
\]
We claim and will prove below that we can replace $\Sieve^{good}_k(\bm F_n^g)$
by $\Sieve(\bm F_n^g)$ in the equation above, i.e. that
\begin{equation}
\label{eq:Xk_LargeK}
\sup_{K_n \le k\le n} d_H \Big(\Sieve_k(\bm F_n^g), 
 R^g(\bL_\infty^{(2)})\Big) 
\underset{n \rightarrow \infty}{\overset{\P}{\rightarrow}} 0.
\end{equation}
The proposition now follows from \cref{eq:ClosedUntilKn,eq:Rgk_L2,eq:Xk_LargeK}
\end{proof}

Here is the proof of the technical point left aside above.

\begin{proof}[Proof of \eqref{eq:Xk_LargeK}]
Since we know that $\Sieve^{good}_k(\bm F_n^g)$ is close to $ R^g(\bL_\infty^{(2)})$,
we only need to prove that chords associated with the initial bad transpositions
and with the added transpositions are well approximated by chords in $R^g(\bL_\infty^{(2)})$.

We start with the added transpositions. With high probability, the distance between 
$(\aaa_i,\bbb_i,\ccc_i)$ and further rotation points becomes large as $n$ tend to infinity,
and thus
we have
\[\sigma_i^g(\aaa_i)=\tilde \sigma_g \circ \dots \circ \tilde \sigma_{i+1} (\sigma_i(\aaa_i))+\O(1).\]
We also note that depending on how $\aaa_i$ compares with the points $a'$ and $a''$
of \cref{prop:labelingalgorithm}, we have 
\[\sigma_i(\aaa_i) = \begin{cases}\phantom{or } \aaa_i +\O(1) =\tilde \sigma_i(\aaa_i) +\O(1);\\
\text{or } \ccc_i +\O(1) =\tilde \sigma_i(\bbb_i) +\O(1);\\
\text{or } \aaa_i+\ccc_i-\bbb_i +\O(1) =\tilde \sigma_i(\ccc_i) +\O(1).\\
\end{cases}\]
Calling $\aaa'_i=\tilde \sigma_g \circ \dots \circ \tilde \sigma_i(\aaa_i)$,
and defining $\bbb'_i$ and $\ccc'_i$ in the same way, we
have that $\sigma_i^g(\aaa_i)$ is at distance $\O(1)$ from either 
$\aaa'_i$, $\bbb'_i$ or $\ccc'_i$.
The same holds for $\sigma_i^g(\bbb_i)$ and $\sigma_i^g(\ccc_i)$.

Letting $A'_i$ be the point on the circle corresponding to $\aaa'_i$,
we have $A'_i=R_{i}^g(A_i)$, matching the notation of \cref{ssec:framework}.
Similar remarks and notation hold for $B$ and $C$.
We conclude that the chord associated with the added transposition
$(\sigma_{i+1}^g(\aaa_i) \,\sigma_{i+1}^g(\bbb_i))$ is close with high probability to one of the chords
$[A'_i, C'_i], [B'_i, C'_i]$ and $[A'_i, B'_i]$.
Since these chords belong a.s. to $R^g( \bL_\infty^{(2)})$ (by \cref{lem:Triangles_In_XgInfty}),
this settles the case of added transpositions.

We now consider initial bad transpositions.
For such a transposition $(j \,h)$ in $\rFZ$, there exists $i$ such that 
$\sigma_{i-1} \circ \dots \circ \sigma_1(j)$ (or $\sigma_{i-1} \circ \dots \circ \sigma_1(h)$,
but say $\sigma_{i-1} \circ \dots \circ \sigma_1(j)$ w.l.o.g.)
is sent by $\sigma_i$ and $\tilde \sigma_i$ to distant values.
But by construction of $\sigma_i$ (see \cref{prop:labelingalgorithm}), this can happen
only if $\sigma_{i-1} \circ \dots \circ \sigma_1(j)$ is the label of a vertex in the subtree
rooted in either $A_i$, $B_i$ or $C_i$ in the map associated to $(R_i^g)^{-1}(\rF)$.
This implies that $\sigma_{i-1} \circ \dots \circ \sigma_1(h)$ is the label of a descendant or of the parent of either
$A_i$, $B_i$ or $C_i$ and thus is at distance $\mathcal O(1)$ of $a_i$, $b_i$ and $c_i$ 
(see \cref{lem_small_desc_trees}).
We can then apply the same argument as above and conclude
that the chord associated with $(\sigma^g(j) \,\sigma^g(h))$ in $\Sieve(\bm F_n^g)$
is also close to either $[A'_i, C'_i], [B'_i, C'_i]$ and $[A'_i, B'_i]$.
\end{proof}

\subsection{Concluding the proof of the scaling limit theorems}

We end this section by providing proofs of Theorems \ref{thm:main} and \ref{thm:mainprocess} using the results of the previous subsections.

\begin{proof}[Proof of Theorems \ref{thm:main} and \ref{thm:mainprocess}]
From Propositions \ref{prop:shuffling}, \ref{prop:chordstotree} and \ref{prop:fromuniftounif},
we have
\[\sup_{1 \leq k \leq n} d_H\Bigg(\Sieve_k(\rF),R^g\left(\bL_k\left( T^{(K_n)}(\rFZ)\right)\right)\Bigg)
\underset{n \rightarrow \infty}{\overset{\P}{\rightarrow}}0.  \]
Since $T^{(K_n)}(\rFZ)$ is a size-conditioned $Po(1)$-GW tree with a uniform labeling on the vertices
and since the rotation points $(A_i,B_i,C_i)$ are i.i.d. uniform triples of vertices independent of $T^{(K_n)}(\rFZ)$,
we can apply \cref{thm:convergence_rotated_GW} to $R^g\left(\bL_k\left( T^{(K_n)}(\rFZ)\right)\right)$.
This proves \cref{thm:main,thm:mainprocess}
\end{proof}

\section{The genus process}
\label{sec:genus_process}

The main goal in this section is to understand the birth of the successive genera in a uniform factorization of genus $g$ of the $n$-cycle, thus proving Theorem \ref{thm:asymptoticgenus}. Our method relies on a fine study of the sieve $\Sieve(\bm F_n^g)$, and in particular of the chords that cross inside the disk. The main tool in this study is a slightly adapted version of Aldous' \textit{reduced tree} \cite{aldous1993} constructed from a finite tree with marked vertices.

\subsection{Definitions and first properties}
\label{ssec:def_genus}

We recall from the introduction that given a list $f := (t_1, \ldots, t_k)$ of transpositions,
$G(f)$ is the minimal genus of a factorization of the long cycle $(1\, \cdots\, n)$
extending $f$.
The genus $G(f)$ actually has a more compact equivalent definition.
For $\pi \in \kS_n$, let $\deg[\pi]$ be the minimal number of transpositions
needed to factorize $\pi$; this is well-known to be $n-\ka(\pi)$,
where $\ka(\pi)$ is the number of cycles of $\pi$.
We leave to the reader the proof of the following fact.
\begin{lemma}
\label{lem:computation}
Let $n,k \geq 1$ and $f=(t_1,\dots,t_k) \in \kT_n^k$. Then 
\begin{align}
\label{eq:formule_Gf}
n-1 + 2 G(f) = k+\deg[(t_1\cdots t_k)^{-1} (1 \, 2 \, \cdots \, n)].
\end{align}
\end{lemma}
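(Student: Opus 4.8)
The plan is to prove \cref{lem:computation} by interpreting both sides in terms of factorizations and the minimal-transposition length $\deg[\cdot]$. Recall that a factorization $F$ of $(1\,2\,\cdots\,n)$ of genus $g$ has exactly $n-1+2g$ transpositions, so saying that $F$ extends $f=(t_1,\dots,t_k)$ and has genus $g$ amounts to writing $F=(t_1,\dots,t_k,s_1,\dots,s_\ell)$ with $t_1\cdots t_k\, s_1\cdots s_\ell=(1\,2\,\cdots\,n)$ and $k+\ell=n-1+2g$, i.e. $\ell=n-1+2g-k$. Since we compose from left to right, the condition on the $s_j$'s is exactly that $(s_1\cdots s_\ell)=(t_1\cdots t_k)^{-1}(1\,2\,\cdots\,n)$, i.e. $(s_1,\dots,s_\ell)$ is a factorization into transpositions of the fixed permutation $\pi:=(t_1\cdots t_k)^{-1}(1\,2\,\cdots\,n)$.

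First I would record the elementary fact that a permutation $\pi\in\kS_n$ admits a factorization into exactly $\ell$ transpositions if and only if $\ell\ge \deg[\pi]$ and $\ell\equiv\deg[\pi]\pmod 2$; this is standard (the parity of the number of transpositions in any factorization of $\pi$ is the sign, and one can always pad an existing factorization by inserting a pair $(a\,b)(a\,b)$). From this, $f$ extends to a genus-$g$ factorization of $(1\,2\,\cdots\,n)$ if and only if $\ell=n-1+2g-k$ satisfies $\ell\ge\deg[\pi]$ and $\ell\equiv\deg[\pi]\pmod 2$. The minimal such $g$ is therefore obtained by taking $\ell$ as small as possible subject to these constraints, namely $\ell=\deg[\pi]$ provided that $\deg[\pi]\equiv n-1-k\pmod 2$, which always holds: indeed $\deg[\pi]\equiv n-\ka(\pi)$ and $\pi=(t_1\cdots t_k)^{-1}(1\,2\,\cdots\,n)$ has sign $(-1)^k\cdot(-1)^{n-1}$, so $\deg[\pi]\equiv k+n-1\pmod 2$ exactly as needed. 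Hence the minimal $\ell$ is $\deg[\pi]$, giving $n-1+2G(f)=k+\deg[\pi]=k+\deg[(t_1\cdots t_k)^{-1}(1\,2\,\cdots\,n)]$, which is \eqref{eq:formule_Gf}.

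The remaining subtlety I would address carefully is that the minimum in \cref{defi:genus} is taken over factorizations of $(1\,2\,\cdots\,n)$, and one must check such a factorization extending $f$ exists at all (so that $G(f)$ is well defined and finite): this is clear since for $\ell$ large enough of the right parity one can factor $\pi$ into $\ell$ transpositions, so the set of valid $g$ is nonempty. I would also note that the formula automatically yields $G(f)\in\Z_+$ because the right-hand side minus $n-1$ is a nonnegative even integer, by the parity computation above together with $\deg[\pi]\ge 0$; in particular when $k=n-1$ and the product is a long cycle one recovers $G(f)=0$, consistent with the genus-$0$ case. The main (mild) obstacle is simply being precise about the parity bookkeeping and the "padding" argument for transposition factorizations of a fixed permutation; everything else is a direct unwinding of the definitions.
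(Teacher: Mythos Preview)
Your proof is correct and is exactly the natural argument one would expect; the paper in fact omits the proof entirely (``We leave to the reader the proof of the following fact''), so there is nothing to compare against beyond noting that your unwinding of \cref{defi:genus} via the parity/padding characterization of transposition factorizations is the intended route. One small remark: your claim that $G(f)\in\Z_+$ follows ``together with $\deg[\pi]\ge 0$'' is slightly imprecise, since nonnegativity of $k+\deg[\pi]-(n-1)$ really uses the subadditivity $\deg[(1\,\cdots\,n)]\le \deg[t_1\cdots t_k]+\deg[\pi]\le k+\deg[\pi]$; but this is a cosmetic point, as $G(f)\ge 0$ is already immediate from the definition as a minimum over genera.
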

This can in turn be interpreted geometrically.
For a partial factorization $f$, we let $M(f)$ be the following combinatorial map
(see Fig. \ref{fig:moff} for an example).
\begin{itemize}
\item Vertices are indexed by integers from $1$ to $n$;
it is convenient to think of them as points on a circle.
\item Edges are of two kinds.
For each $i \le n-1$, there is an edge between $i$ and $i+1  \mod n$
and one between $n$ and $1$ (we think of these edges as arcs of circle).
 Also, each transposition in $f$ is seen as an edge
(which we think of as drawn inside the circle with potential crossings).
\item The cyclic order around any vertex $i$ is given as follows:
first the edge from $i-1 \mod n$ to $i$ then the $t_j$ containing $i$ in {\em decreasing} order of their indices $j$,
and finally the edge from $i$ to $i+1 \mod n$.
\end{itemize}

\begin{figure}
\includegraphics[scale=.5]{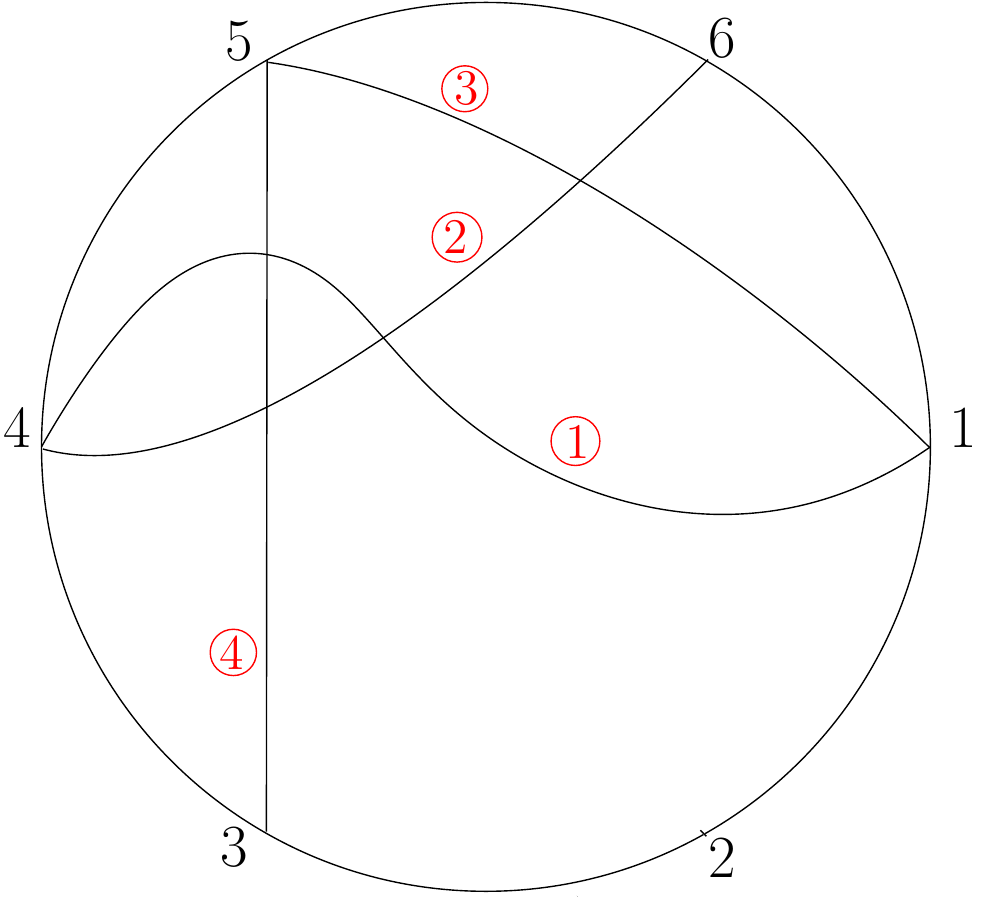}
\caption{The map $M(f)$ associated with $f:=(1 \, 4) (4 \, 6) (1 \, 5) (3 \, 5)$.
 It has genus $G(f)=G(M(f))=2$, which can be seen directly or by \cref{eq:formule_Gf} }
\label{fig:moff}
\end{figure}

\begin{lemma}
With the notation above, $G(f)$ is the genus of $M(f)$.
\end{lemma}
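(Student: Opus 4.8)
The strategy is to relate the genus of the map $M(f)$ to the quantity on the right-hand side of \eqref{eq:formule_Gf} via Euler's formula, and thereby reduce the statement to \cref{lem:computation}. First I would count the vertices, edges and faces of $M(f)$. The vertex count is immediate: $M(f)$ has $n$ vertices. The edge count is also immediate: there are $n$ "circle" edges (one between $i$ and $i+1 \bmod n$ for each $i$) together with one edge for each of the $k$ transpositions in $f$, so $M(f)$ has $n+k$ edges. The substance is in identifying the faces.

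The key observation is that the faces of $M(f)$ are in bijection with the cycles of the permutation $(t_1\cdots t_k)^{-1}(1\,2\,\cdots\,n)$. Concretely, I would trace the face-walk of $M(f)$ and check that following the boundary of a face, starting along the circle-arc from $i$ to $i+1 \bmod n$, one is routed by the prescribed cyclic order (first the incoming arc-edge, then the transpositions containing the vertex in \emph{decreasing} index order, then the outgoing arc-edge) exactly so that the sequence of arc-edges encountered realizes one cycle of $(t_1\cdots t_k)^{-1}(1\,2\,\cdots\,n)$ — this mirrors precisely the computation done in the proof of \cref{lem:Unicellular+Relabeling}, where a map was associated to a sequence of transpositions and its faces read off the cycles of the product. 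Here the circle-arcs play the role of the "long cycle" factor $(1\,2\,\cdots\,n)$, the $t_j$'s contribute the factor $t_1\cdots t_k$ read in the reverse (decreasing) order dictated by the cyclic order convention, and the inverse appears because the arc from $i$ to $i+1$ is traversed; the upshot is that the number of faces of $M(f)$ equals $\ka\big((t_1\cdots t_k)^{-1}(1\,2\,\cdots\,n)\big)$. (One should also note $M(f)$ is connected, since the $n$ circle-arcs already form a connected subgraph on all $n$ vertices.)

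Granting the face count, Euler's formula $v - e + f = 2 - 2\,G(M(f))$ gives
\begin{align*}
G(M(f)) &= \frac{1}{2}\Big(2 - v + e - f\Big)
= \frac{1}{2}\Big(2 - n + (n+k) - \ka\big((t_1\cdots t_k)^{-1}(1\,2\,\cdots\,n)\big)\Big)\\
&= \frac{1}{2}\Big(2 + k - \ka\big((t_1\cdots t_k)^{-1}(1\,2\,\cdots\,n)\big)\Big).
\end{align*}
Using $\deg[\pi] = n - \ka(\pi)$ with $\pi = (t_1\cdots t_k)^{-1}(1\,2\,\cdots\,n)$, this rearranges to $2\,G(M(f)) = k - n + \deg[\pi] + 1$, i.e. $n - 1 + 2\,G(M(f)) = k + \deg[(t_1\cdots t_k)^{-1}(1\,2\,\cdots\,n)]$, which by \cref{lem:computation} is exactly $n - 1 + 2\,G(f)$. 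Hence $G(M(f)) = G(f)$.

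**Main obstacle.** The only genuinely delicate step is the face-counting, i.e. verifying carefully that the face-boundary walk in $M(f)$ with the stated cyclic-order convention produces the cycles of $(t_1\cdots t_k)^{-1}(1\,2\,\cdots\,n)$ rather than some conjugate or inverse thereof. The bookkeeping on which direction the arcs are traversed, and why the transpositions must be read in decreasing index order (so that composing them in the face-walk yields $t_1\cdots t_k$ and not $t_k\cdots t_1$), has to be done with care; but since $\deg[\pi] = \deg[\pi^{-1}]$ and conjugation preserves cycle type, any such sign/orientation ambiguity does not affect the final count $\ka(\cdot)$, so the argument is robust. The bijective content is essentially a repackaging of the computation already carried out in the proof of \cref{lem:Unicellular+Relabeling}, so I would phrase it by reference to that argument rather than redoing it from scratch.
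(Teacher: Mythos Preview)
Your approach is exactly the paper's: count vertices, edges, faces of $M(f)$, apply Euler's formula, and compare with \cref{lem:computation}. However, there is an off-by-one slip in your face count, and a compensating arithmetic slip that masks it.

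The circle arcs in $M(f)$ bound an \emph{outer} face in addition to the inner faces. Your face-walk correctly identifies the inner faces with the cycles of $\pi=(t_1\cdots t_k)^{-1}(1\,2\,\cdots\,n)=t_k\cdots t_1\,(1\,\cdots\,n)$, but the total number of faces is $1+\ka(\pi)$, not $\ka(\pi)$. With your stated face count, your displayed formula gives $2\,G(M(f))=2+k-\ka(\pi)=k-n+\deg[\pi]+2$, not $+1$ as you wrote; so the line ``this rearranges to $2\,G(M(f))=k-n+\deg[\pi]+1$'' is an arithmetic error that happens to exactly cancel the missing outer face. Once you insert the outer face into the count, Euler's formula yields
\[
2-2\,G(M(f))=n-(n+k)+\big(1+\ka(\pi)\big)=1-k+\ka(\pi),
\]
i.e.\ $n-1+2\,G(M(f))=k+\deg[\pi]$, and then \cref{lem:computation} finishes the argument as you intended.
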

We write $G(M)$ for the genus of a map $M$,
so that the above statement reads $$G(f)=G(M(f)).$$
\begin{proof}
Write $f=(t_1,\dots,t_k)$. The map $M(f)$ has $n$ vertices and $n+k$ edges. Let us describe its faces.
We start at the edge from $i-1$ to $i$ inside the circle 
and follow the contour of the corresponding face
until we find another arc of circle, say from $h-1$ to $h$ (with the convention $1-1=n$).
We claim that $h-1=(t_k \dots t_1) (i)$. The proof of this claim is left to the reader;
it is similar to the proof of Lemma~\ref{lem:Unicellular+Relabeling}. As an example, on Fig. \ref{fig:moff}, for $i=6$ one follows the edges $(4 \, 6)$ and $(1 \, 4)$ before finding the arc $(1 \, 2)$. One can check that, indeed, $(t_k \dots t_1)(6)=1$ (recall that we apply permutations from left to right).
The above equality rewrites as $h=\big(t_k \dots t_1 (1 \dots n)\big) (i)$.
The next arc of circle we will encounter by keeping turning around the same face goes from $h'-1$ to $h'$,
where $h'=\big(t_k \dots t_1 (1 \dots n)\big)^2 (i)$, and so on.
We will come back to the arc from  $i-1$ to $i$ when we have completed the cycle 
of $t_k \dots t_1 (1 \dots n)$ containing $i$.

We conclude that faces of $M(f)$ inside the circle correspond to cycles of $t_k \dots t_1 (1 \dots n)$.
Besides, $M(f)$ has one outer face, hence in total $1 +\kappa \big(t_k \dots t_1 (1 \dots n)\big)$ faces.
Letting $G(M(f))$ be the genus of $M(f)$, Euler formula gives us
\[n-(n+k)+1+\kappa \big(t_k \dots t_1 (1 \dots n)\big) =2 -2G(M(f)).\]
Comparing with \eqref{eq:formule_Gf} gives us the result.
\end{proof}

\begin{remark}
The construction of $M(f)$ is reminiscent -- though different in some aspects --
of a construction of \'Sniady on set partitions \cite[Section 5]{Sniady2006fluctuations}.
\end{remark}

\subsection{Reduction of the factorization}
\label{ssec:Red_facto}

Starting with the map $M(f)$ of a factorization, we perform successively the following operations
 (see \cref{Fig:Reduction}):
\begin{enumerate}
\item removing vertices of degree $2$ (and merging their two incoming edges);
\item splitting vertices of degree $d\ge 4$ into $d-2$ vertices of degree $3$ connected by arcs
of the outer circle;
\item erasing an interior edge, which is not crossing any other edge inside the circle,
and removing its two extremities as in i);
\item removing one of two parallel interior edges (interior edges are parallel if they form a face of degree $4$, together with two arcs of circle) and removing its two extremities as in i).
\end{enumerate}
By performing these operations, we get a  diagram of chords with disjoint extremities,
which we call {\em reduction} of $M(f)$ and denote $\Red(f)$.
All the operations above preserve the genus: indeed,
\begin{itemize}
\item
i) decreases by 1 both the numbers of edges and vertices, while preserving the number of faces ;
\item
ii) increases by $d-2$ both the numbers of edges and vertices, while preserving the number of faces ;
\item 
finally, the edge removals in iii) and iv) decrease by 1 both the numbers of edges and faces, while preserving the number of vertices.
\end{itemize}
Consequently, one has $G(f)=G(\Red(f))$. We call \textit{reduced chord diagrams} all chords diagrams that are equal to their reductions.
\begin{figure}[t]
\[\begin{array}{c|c} 
\begin{tabular}{c}
\includegraphics[height=2cm]{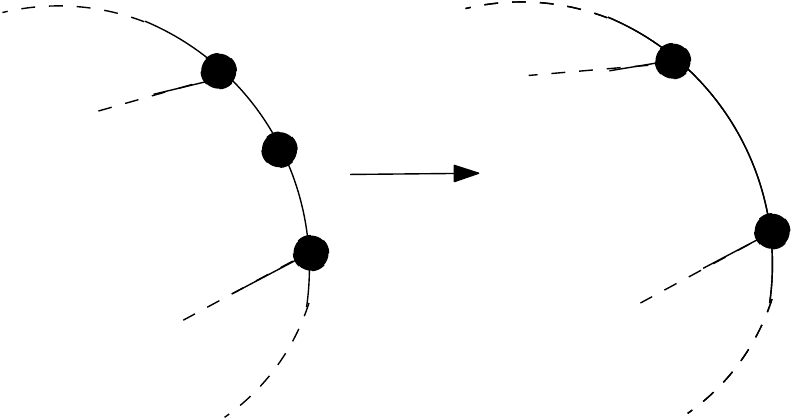}\\ (i) removing degree $2$ vertices
\end{tabular} &
\begin{tabular}{c}
\includegraphics[height=2cm]{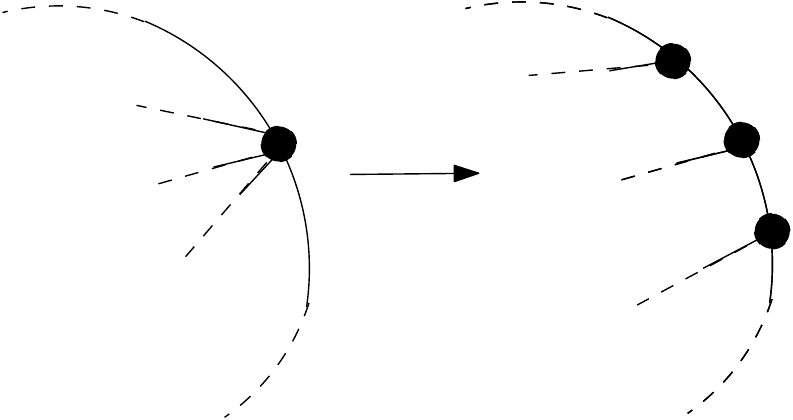}\\ (ii) splitting vertices
\end{tabular} \\
\begin{tabular}{c}
\includegraphics[height=2cm]{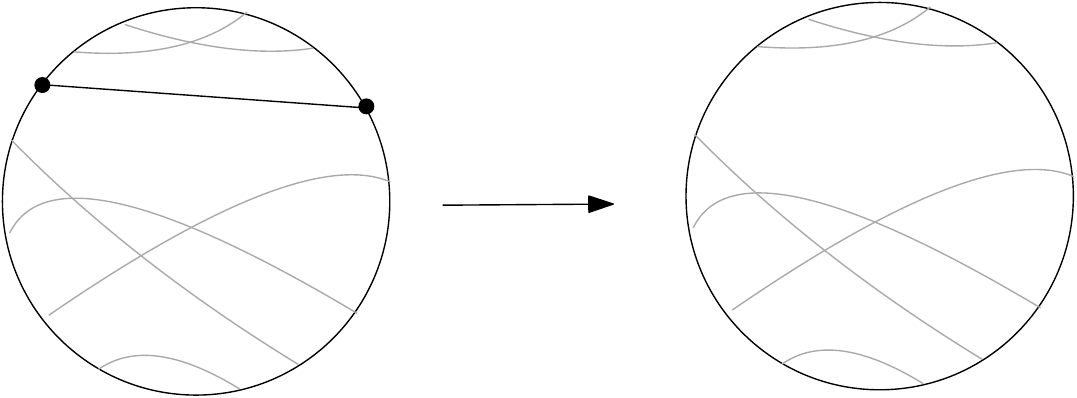}\\ (iii) erasing noncrossing edges
\end{tabular}  &
\begin{tabular}{c}
\includegraphics[height=2cm]{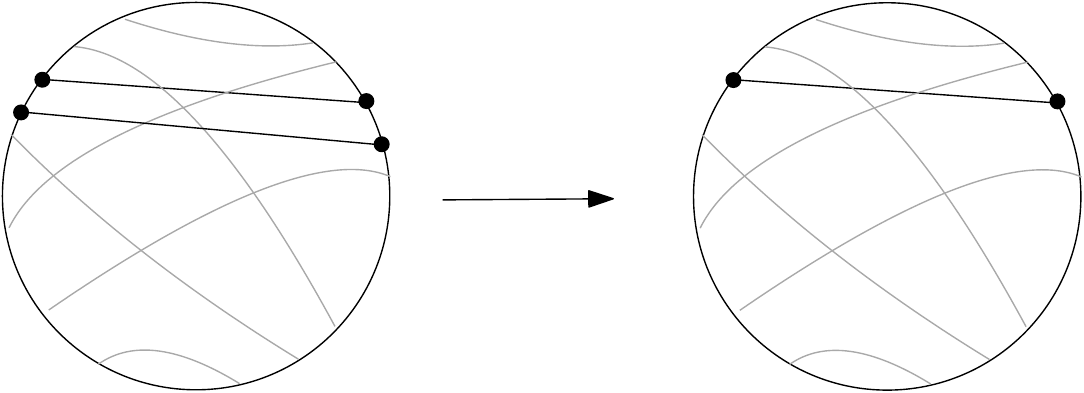}\\ (iv) parallel edges
\end{tabular} 
 \end{array}\]
\caption{The reduction operations}
\label{Fig:Reduction}
\end{figure}

\begin{lemma}
\label{lem:size_reduced_diagram}
A reduced chord diagram of genus $G$ has at most $12 \, G-6$ vertices.
\end{lemma}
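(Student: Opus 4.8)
The plan is to prove the bound by a double-counting argument based on Euler's formula for the chord diagram seen as a map of genus $G$.

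I would first record the combinatorial structure of a reduced chord diagram $D$. Since no reduction operation applies, every vertex of $D$ has degree exactly $3$: two circle-arcs and one chord-endpoint. As the extremities of the chords are pairwise disjoint, the chords form a perfect matching of the vertex set; writing $V$ for the number of vertices, $D$ therefore has $V/2$ chords, $V$ arcs $e_1,\dots,e_V$ along the circle, and $E=V+V/2=3V/2$ edges. Euler's formula on the genus-$G$ surface gives $F=2-2G+V/2$ faces. Among them, exactly one — which I call the outer face — is incident only to arcs: indeed, at each vertex the single corner between its two arcs is visited by the closed walk $e_1e_2\cdots e_V$ running along the circle, so all arc--arc corners belong to one and the same face. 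Consequently every other (inner) face uses only corners incident to one arc and one chord, which forces the boundary of each inner face to alternate chord-side, arc-side, chord-side, arc-side, $\dots$; in particular its degree is even, say $2k$, with exactly $k$ chord-sides and $k$ arc-sides.

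Next I would use the two non-local reduction conditions to bound $k$ from below. An inner face with $k=1$ is a bigon bounded by one chord and one arc; the chord then crosses no other chord, so reduction step (iii) would apply — impossible. An inner face with $k=2$ is a quadrilateral bounded by two chords and two arcs, i.e. a pair of parallel chords, so step (iv) would apply — impossible. Hence every inner face has at least $3$ chord-sides. Summing over the $F-1=1-2G+V/2$ inner faces, and using that each of the $V/2$ chords has exactly two sides, each lying on a single inner face, I obtain
\[
V \;=\; \sum_{\text{inner faces }\varphi} \#\{\text{chord-sides of }\varphi\} \;\ge\; 3\,(1-2G+V/2),
\]
which rearranges at once to $V \le 12G-6$.

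The routine parts — Euler's formula and the final arithmetic — are immediate, so I expect the main obstacle to be the two structural claims: that all arc--arc corners form a single face, and that inner-face boundaries alternate between chords and arcs. Establishing these cleanly requires being careful with the rotation system of $M(f)$ (hence of $D$) and with small degenerate configurations (for instance $V=2$). I would also note that the statement is meaningful only for $G\ge 1$: a reduced diagram containing at least one chord has $F\ge 2$, hence $V\ge 4G$, which is compatible with $V\le 12G-6$ precisely when $G\ge 1$ (a reduced diagram with no chord being the empty, excluded degenerate case).
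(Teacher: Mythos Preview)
Your proof is correct and follows essentially the same approach as the paper: both use Euler's formula together with the observation that every internal (inner) face is incident to at least three chord-sides, the cases $k=1$ and $k=2$ being ruled out by the reduction operations (iii) and (iv) respectively. Your write-up is more explicit about the outer face and the alternating structure of inner-face boundaries, but the counting argument and the final inequality $3(F-1)\le 2\cdot(V/2)$ (the paper's $3f\le 2e$) are identical.
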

\begin{proof}
Consider a reduced chord diagram of genus $G$.
We denote by $v$ its number of vertices,
and by $e$ and $f$ its numbers of {\em internal} edges and faces, respectively. 
There are $e+v$ edges in total and $f+1$ faces. Euler formula gives us 
\[v - (e+v) + (f+1)= 2-2G.\]
Moreover, vertices in a reduced chord diagram are all incident to a single interior edge, yielding
$v=2e$. Also, each internal face is incident to at least $3$ internal edges (if there was only $1$, this edge would be noncrossing; if they were two,
they would be parallel). Thus $3f \le 2e$.
Combining all these equations gives the lemma.
\end{proof}

\subsection{Reduced subtrees}
In his study of limits of random trees, 
Aldous introduced the notion of {\em reduced subtree}
associated to a finite tree $T$ and a subset of its vertices $(A_1,\dots A_k)$.
This notion will be useful for us.
Note that we use here a plane unrooted version of Aldous' reduced subtree.
The following definition is therefore an adaptation of the one given
 in \cite[Section 2.1]{aldous1993}.

\begin{definition}
\label{def:reducedtree}
Let $T$ be a plane tree with $n$ vertices, 
and fix $k \geq 3$. 
Let $A_1, \ldots, A_k$ be $k$ distinct distinguished vertices in $T$.

A vertex $\rho$ in $T$ is a {\em branching point} (w.r.t. $A_1$, \dots, $A_k$)
if there exists $1 \leq u<v<w \leq k$
such that $\rho$ is the unique vertex belonging
to the three paths $\llbracket A_u, A_v \rrbracket$,  
$\llbracket A_v, A_w \rrbracket$
and  $\llbracket A_w, A_u \rrbracket$.

Then the reduced tree $T_{red}(A_1, \ldots, A_k;T)$
is the plane unrooted tree whose vertices are the distinguished vertices
$A_1, \ldots, A_k$ and the branching points.
Two of these vertices, say $x$ and $y$ (which might be distinguished vertices or branching points), are connected by an edge in $T_{red}(A_1, \ldots, A_k;T)$
if there is no branching point on the path between $x$ and $y$ in $T$.
The plane embedding is inherited from that of $T$.
Moreover, we assign a length to the edge $\{x,y\}$ in $T_{red}(A_1, \ldots, A_k;T)$
which is simply the distance between $x$ and $y$ in $T$.
\end{definition}
Note that the output $T_{red}(A_1, \ldots, A_k;T)$ is a geometric tree, i.e. a tree with edge-lengths. An example is given on \cref{fig:reducedtree}
(at the moment, one can disregard edge labels in $T$).
Each edge of $T_{red}(A_1, \ldots, A_k;T)$ corresponds to a path, i.e. a set of edges in $T$.
To avoid confusion, we call  {\em branches} the edges of the reduced tree.
\begin{figure}[!ht]
\[\begin{array}{l c r}
\begin{tikzpicture}[scale=1, every node/.style={scale=0.6}]
\draw (0,0) -- node[left]{$4$} (0,1) -- node[left]{$8$} (-1, 2) 
(0,2) -- node[left]{$6$} (0,1) -- node[left]{$5$} (1,2) -- node[left]{$2$} (1,3) 
(2,3) -- node[left]{$9$} (1,2) -- node[below]{$1$} (3,3) 
(-1, -1) -- node[left]{$10$} (0,0) -- node[left]{$7$} (1,-1) (-1, 2) -- node[left]{$3$} (-1, 3);
\draw[red,fill=white] (-1,-1) circle (.2) node{$A_1$};
\draw[red,fill=white] (-1,2) circle (.2) node{$A_3$} ;
\draw[fill=white] (0,2) circle (.2);
\draw[blue,fill=white] (0,1) circle (.2) node{$B$} ;
\draw[blue,fill=white] (1,2) circle (.2) node{$B'$} ;
\draw[red,fill=white] (1,3) circle (.2) node{$A_4$};
\draw[fill=white] (2,3) circle (.2);
\draw[red,fill=white] (3,3) circle (.2) node{$A_2$};
\draw[fill=white] (0,0) circle (.2);
\draw[fill=white] (1,-1) circle (.2) ;
\draw[fill=white] (-1,3) circle (.2) ;
\end{tikzpicture}

&
\begin{tikzpicture}[scale=1.2, every node/.style={scale=0.6}]
\draw (-1,-1) -- (0,0) node[midway,above] {1};
\draw (0,0) -- (1,-1) node[midway,above] {2};
\draw (1,2) -- (0,1) node[midway,above] {1};
\draw (0,1) -- (-1,2) node[midway,above] {1};
\draw (0,1) -- (0,0) node[midway,left] {1};
\draw[red,fill=white] (-1,-1) circle (.2) node{$A_3$};
\draw[red,fill=white] (1,-1) circle (.2) node{$A_1$};
\draw[red,fill=white] (1,2) circle (.2) node{$A_2$};
\draw[red,fill=white] (-1,2) circle (.2) node{$A_4$};
\draw[blue,fill=white] (0,0) circle (.2) node{$B$};
\draw[blue,fill=white] (0,1) circle (.2) node{$B'$};
\end{tikzpicture}&
\begin{tikzpicture}[scale=1.2, every node/.style={scale=0.6}]
\draw (-1,-1) -- (0,0) node[midway,above] {8};
\draw (0,0) -- (1,-1) node[midway,above] {4};
\draw (1,2) -- (0,1) node[midway,above] {1};
\draw (0,1) -- (-1,2) node[midway,above] {2};
\draw (0,1) -- (0,0) node[midway,left] {5};
\draw[red,fill=white] (-1,-1) circle (.2) node{$A_3$};
\draw[red,fill=white] (1,-1) circle (.2) node{$A_1$};
\draw[red,fill=white] (1,2) circle (.2) node{$A_2$};
\draw[red,fill=white] (-1,2) circle (.2) node{$A_4$};
\draw[blue,fill=white] (0,0) circle (.2) node{$B$};
\draw[blue,fill=white] (0,1) circle (.2) node{$B'$};
\end{tikzpicture}
\\
T & T_{red}(A_1, A_2, A_3, A_4;T)
& T_{min}(A_1, A_2, A_3, A_4;T)
\end{array}\]
\caption{Left: an example of a labelled tree $T$ with $4$ distinguished vertices $A_1, A_2, A_3, A_4$. 
To help the reader we have indicated the associated branching points $B$ and $B'$.
Middle: the reduced tree with branch labels given by the corresponding path lengths.
Right: the reduced tree with branch labels corresponding
to the minimum labels in the corresponding paths in $T$.
\label{fig:reducedtree}
}
\end{figure}

We say that an unrooted tree is proper if all its internal nodes have degree exactly 3.
We will be only interested in the case where $T_{red}(A_1, \ldots, A_k;T)$ 
is a proper tree with $k$ leaves (corresponding to the distinguished vertices);
this happens with high probability for uniform random vertices $(A_1,\dots, A_k)$
in size-conditioned $Po(1)$ Galton-Watson trees, see \cite{aldous1993}.
Then $T_{red}(A_1, \ldots, A_k;T)$ has $2k-3$ branches
and one can encode it as an element of $\mathcal T_k \times \mathbb R^{2k-3}$
where $\mathcal T_k$ is the set of combinatorial unrooted proper trees
 with $k$ distinguished leaves ({\em combinatorial trees} are trees without branch-lengths);
 here, we choose an arbitrary order on the branches of each tree in $\mathcal T_k$
 in order to identify branch lengths to a vector in $\mathbb R^{2k-3}$.
 This gives a natural notion of convergence, taking the discrete topology on the finite
set $\mathcal T_k$ and the usual one on $\mathbb R^{2k-3}$.
Finally, for a geometric tree $T$, we denote by $\lambda T$ the tree where all distances 
are multiplied by $\lambda$. 

The following result is a variant of a result of Aldous
 \cite[Theorem 23 and discussion below]{aldous1993}.
As said above, Aldous considers a rooted nonplane version of $T_{red}(A_1, \ldots, A_k;T)$
but the two versions are easily seen to be equivalent considering that (i) $A_k$ is the root
and the $k-1$ other distinguished vertices as marked vertices in the rooted version;
(ii) the plane embedding is simply taken uniformly at random independently of other
data.

\begin{proposition}
\label{prop:reducedtree_branchlengths}
 For each $n\ge 1$, let $T_n$ be a $Po(1)$ Galton--Watson tree conditioned to have $n$ vertices,
 with a uniform random embedding in the plane.
 Let $(A_1, \ldots, A_k)$ be a uniform $k$-tuple of distinct vertices in $T_n$.
 Then the normalized reduced tree 
 $\frac1{\sqrt{n}} T_{red}(A_1, \ldots, A_k;T_n)$ converges in distribution to 
 $(\hat t,(x_1,\dots,x_{2k-3}))$, where $\hat t$ is a uniform random proper tree
 with $k$ distinguished leaves and $(x_1,\dots,x_{2k-3}) \in \R_+^{2k-3}$ has density
 \begin{equation}
 \label{eq:densite_lengths}
 h(x_1\dots,x_{2k-3})=\Bigg(\prod_{i=1}^{k-2} (2i-1) \Bigg)\, \Bigg(\sum_{i \le 2k-3} x_i\Bigg)\exp\Bigg[-\frac12\Bigg(\sum_{i \le 2k-3}  x_i\Bigg)^2\Bigg]
\end{equation}
and is independent from $\hat t$.
\end{proposition}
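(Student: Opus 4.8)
The plan is to deduce \cref{prop:reducedtree_branchlengths} from Aldous' description of the reduced subtrees of the Brownian CRT, namely \cite[Theorem 23 and the discussion following it]{aldous1993} (refined by Le Gall \cite{LG05}), which establishes the analogous statement for the \emph{rooted, non-plane} reduced subtree of a conditioned critical Galton--Watson tree with finite offspring variance. Three points then have to be addressed: that the unrooted plane object considered here reduces to Aldous', that the normalisation works out to exactly $1/\sqrt n$, and that the limit has the explicit law claimed.

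First I would settle the reduction of \emph{the object itself}. Aldous singles out one of the $k$ distinguished vertices as a root and works with non-plane trees; forgetting the root changes neither the combinatorial shape nor the branch lengths, so the rooted and unrooted pictures carry the same information. As for the plane structure, the embedding of $T_n$ is by hypothesis uniform and independent of the non-plane tree and of $A_1,\dots,A_k$, so it induces on $T_{red}(A_1,\dots,A_k;T_n)$ a uniform random plane embedding of its combinatorial shape, independent of that shape and of the branch lengths. It therefore suffices to prove the statement for the underlying non-plane unrooted reduced tree and then tensor with an independent uniform embedding, which produces precisely the limit in the proposition. I would also invoke here the standard fact, contained in \cite{aldous1993,LG05}, that for a uniform $k$-tuple of vertices the reduced tree is, with probability tending to $1$, a proper tree with exactly $k$ leaves --- no two branch points coincide and no branch point meets a marked vertex, all relevant distances being of order $\sqrt n$ --- so that the limit is supported on $\mathcal T_k\times\R_+^{2k-3}$, with convergence for the product of the discrete topology on $\mathcal T_k$ and the Euclidean topology on the lengths, as in Aldous' formulation.

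Next I would pin down the \emph{normalisation}. Since $\Poisson(1)$ is critical with variance $\sigma^2=1$, Aldous' theorem gives the convergence in distribution of $\sigma n^{-1/2}=n^{-1/2}$ times the reduced subtree $T_{red}(A_1,\dots,A_k;T_n)$ --- exactly the quantity appearing in \cref{prop:reducedtree_branchlengths} --- to the reduced subtree $\mathcal R_k$ of the Brownian CRT. This has to be reconciled with the scaling $\sigma/\sqrt{2n}$ used in \cref{thm:aldleg} for $\cT_\infty=\Tree(\be)$, the two being consistent once the factor $\sqrt 2$ between the two standard conventions for the Brownian CRT is accounted for. The law of $\mathcal R_k$ is then recalled explicitly: its combinatorial shape is uniform among the $\prod_{i=1}^{k-2}(2i-1)$ proper trees with $k$ labelled leaves, independent of the vector of $2k-3$ branch lengths, and the joint law of (shape, lengths) has density $\ell\,e^{-\ell^2/2}$, where $\ell=\sum_i x_i$ is the total length, with respect to (counting measure on shapes)$\,\otimes\,$(Lebesgue measure on $\R_+^{2k-3}$) --- Aldous' line-breaking description of the CRT. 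Pulling the $\prod_{i=1}^{k-2}(2i-1)$ shapes out to normalise the shape distribution to the uniform one leaves exactly the density \eqref{eq:densite_lengths} for the lengths, and the identity $\int_{\R_+^{m}}\phi\big(\sum_i x_i\big)\,dx=\int_0^\infty\phi(s)\,s^{m-1}/(m-1)!\,ds$ (applied with $m=2k-3$) confirms that \eqref{eq:densite_lengths} integrates to $1$ and that each shape receives probability $1/\prod_{i=1}^{k-2}(2i-1)$, consistently.

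The main obstacle I expect is not conceptual but this last piece of normalisation bookkeeping: the factors $\sigma=1$, the $\sqrt 2$ relating the $\Tree(\be)$ convention of \cref{thm:aldleg} to the convention under which \eqref{eq:densite_lengths} holds, and the passage from the total length to the full $(2k-3)$-tuple of branch lengths, must all be tracked so that the prefactor in front of $T_{red}$ is exactly $1/\sqrt n$ and the exponent $e^{-\ell^2/2}$ in \eqref{eq:densite_lengths} comes out with the stated constant. Once this is done, and given the trivial reduction of the plane unrooted version to the rooted non-plane one, the proposition is contained in \cite{aldous1993,LG05}.
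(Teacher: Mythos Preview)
Your proposal is correct and follows essentially the same approach as the paper: the paper does not give a proof of this proposition but simply presents it as a variant of \cite[Theorem~23 and the discussion below]{aldous1993}, noting exactly your two reduction points --- that one may take $A_k$ as the root to pass between the rooted and unrooted pictures, and that the plane embedding is uniform and independent of the rest. Your write-up is in fact more detailed than the paper's (you spell out the normalisation bookkeeping and the verification that \eqref{eq:densite_lengths} integrates to~$1$), but the route is the same.
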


We need in fact a slightly different version of the reduced trees.
In this version, the edges of the initial tree $T$ are labelled from $1$ to $n-1$
and we label each branch of the reduced tree
 with the minimal label in the corresponding path in $T$.
We denote the resulting tree by $T_{min}(A_1, \ldots, A_k;T)$.
An example is given on Fig.~\ref{fig:reducedtree}.

\begin{proposition}
\label{prop:reducedtree_minlabels}
Fix $k \geq 0$ and let $T_n$ be a conditioned $Po(1)$-GW tree with $n$ vertices.
We label its edges uniformly at random from $1$ to $n-1$ and consider the unique plane embedding such that the resulting tree satisfies the Hurwitz condition.
We also let $(A_1, \ldots, A_k)$ be a uniform $k$-tuple of distinct vertices in $T_n$.

 Then $\frac{1}{\sqrt{n}}\left(T_{min}(A_1, \ldots, A_k;T_n)\right)$
 converges in distribution to $(\hat t,(y_1,\dots,y_{2k-3}))$, where $\hat t$ is a uniform random proper tree
 with $k$ distinguished leaves and 
 $(y_1,\dots,y_{2k-3})$ is independent from $\hat t$ and satisfies
 \begin{equation}
 \label{eq:Dist_Y}
 \P\big(y_1 \ge \alpha_1,\dots, y_{2k-3} \ge \alpha_{2k-3})
 =\int_{\mathbb R_+^{2k-3}} h(x_1\dots,x_{2k-3}) \exp(-\alpha_i x_i) \prod dx_i, 
 \end{equation}
 where $h$ is given in \cref{eq:densite_lengths} above.
\end{proposition}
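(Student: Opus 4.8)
The plan is to bootstrap from Proposition~\ref{prop:reducedtree_branchlengths}, which already controls the combinatorial shape and the branch \emph{lengths} of the reduced tree, and to superimpose on it an analysis of how a uniform random edge-labelling turns branch lengths into branch min-labels.

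First I would check that the conclusion of Proposition~\ref{prop:reducedtree_branchlengths} still holds when $T_n$ carries the Hurwitz plane embedding induced by a uniform edge-labelling, rather than an independent uniformly random plane embedding. The branch-length and combinatorial-metric part is unaffected: branch lengths do not depend on the embedding, and $T_n$ is, as a non-plane tree, a conditioned $Po(1)$-GW tree. For the plane structure, the only new point is that the cyclic order of the (at least three) branches emanating from a branching point $\rho$ of $T_n$ is dictated by the order in which the labels of their first edges are sorted. Since these first-edge labels are asymptotically i.i.d.\ uniform in $[1,n-1]$ (the branches all have length $\Theta(\sqrt n)$ and there are only finitely many of them), this cyclic order is asymptotically uniform, independently over the finitely many branching points; hence $\tfrac1{\sqrt n}T_{red}(A_1,\dots,A_k;T_n)$ still converges to $(\hat t,(x_1,\dots,x_{2k-3}))$ with $\hat t$ uniform proper, $(x_i)$ of density $h$, and the two independent. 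From now on we work on a probability space supporting this convergence.

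Next I would compute the conditional law of the min-labels. Condition on $T_n$ and on $(A_1,\dots,A_k)$: this fixes the reduced tree and partitions the $m:=\sum_i\ell_i$ edges lying on it into groups $G_1,\dots,G_{2k-3}$ of sizes $\ell_1,\dots,\ell_{2k-3}$, the labels being then a uniform random injection into $[1,n-1]$. By symmetry, the conditional law of $(\mu_1,\dots,\mu_{2k-3})$, with $\mu_i$ the smallest label of an edge of $G_i$, depends only on $(\ell_1,\dots,\ell_{2k-3})$. Assigning labels $1,2,3,\dots$ one at a time to uniformly chosen not-yet-labelled edges yields the exact formula
\[
\P\big(\mu_i>a_i\ \text{for all}\ i\big)=\prod_{\ell=1}^{\lfloor\max_i a_i\rfloor}\Big(1-\tfrac{\sum_{i:\,a_i\ge \ell}\ell_i}{n-\ell}\Big).
\]
Taking $a_i=\alpha_i\sqrt n$ and using $\ell_i/\sqrt n\to x_i>0$ together with $n-\ell\sim n$, the logarithm of this product is $-\tfrac1n\sum_i\ell_i\lfloor\alpha_i\sqrt n\rfloor+o(1)\to-\sum_i\alpha_i x_i$. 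Thus, conditionally on $\ell_i/\sqrt n\to x_i>0$, the vector $(\mu_i/\sqrt n)_i$ converges in distribution to $(y_i)_i$ with the $y_i$ independent and $y_i\sim\mathrm{Exp}(x_i)$, and one checks this convergence of conditional laws is locally uniform in $(x_1,\dots,x_{2k-3})\in(0,\infty)^{2k-3}$.

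Finally I would assemble the two steps by a mixing argument. For bounded continuous $F$ on $\mathcal T_k\times\R_+^{2k-3}$, write $\E\big[F(\mathrm{shape}_n,(\mu_i/\sqrt n))\big]=\E\big[\Psi_n(\mathrm{shape}_n,(\ell_i/\sqrt n))\big]$, where $\Psi_n(t,(v_i)):=\E\big[F(t,(\mu_i/\sqrt n))\mid\mathrm{shape}=t,\ \ell_i=\lfloor v_i\sqrt n\rfloor\big]$ depends only on $t$ and $(v_i)$. By the previous step $\Psi_n\to\Psi_\infty$ locally uniformly on $\mathcal T_k\times(0,\infty)^{2k-3}$, with $\Psi_\infty(t,(v_i))=\E[F(t,(Y_i))]$, $Y_i\sim\mathrm{Exp}(v_i)$ independent, and $\Psi_\infty$ bounded continuous. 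Since $(\mathrm{shape}_n,(\ell_i/\sqrt n))\Rightarrow(\hat t,(x_i))$ with $(x_i)\in(0,\infty)^{2k-3}$ almost surely, combining local uniform convergence with this weak convergence gives $\E[F(\mathrm{shape}_n,(\mu_i/\sqrt n))]\to\E[\Psi_\infty(\hat t,(x_i))]=\E[F(\hat t,(y_i))]$, where conditionally on $(x_i)$ the $y_i$ are independent with $y_i\sim\mathrm{Exp}(x_i)$; in particular $(y_i)$ is independent of $\hat t$ since $(x_i)$ is. This is the asserted convergence, and $\P(y_i\ge\alpha_i\ \forall i)=\E\big[\prod_i e^{-\alpha_i x_i}\big]=\int_{\R_+^{2k-3}}h(x_1,\dots,x_{2k-3})\,\exp\big(-\sum_i\alpha_i x_i\big)\prod_i dx_i$, which is exactly \eqref{eq:Dist_Y}. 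The technical heart — and the step I expect to require the most care — is the second one: the exact product formula and, more importantly, upgrading its pointwise limit to the locally uniform control of the conditional laws needed to run the mixing (checking no mass escapes as the $\ell_i\to\infty$, and that the null set $\{x_i=0\}$ plays no role). The embedding point in the first step is routine but must be made explicit, since Proposition~\ref{prop:reducedtree_branchlengths} is stated for the uniform plane embedding.
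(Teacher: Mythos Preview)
Your approach is essentially the paper's: invoke Proposition~\ref{prop:reducedtree_branchlengths} for the shape and branch lengths, then superimpose the uniform edge-labelling to get exponential limits for the rescaled minima, and mix. However, there is a genuine gap in the way you pass from step~2 to step~3.

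In step~2 you condition on $T_n$ (as a non-plane tree) and on $(A_1,\dots,A_k)$; under this conditioning the labels are indeed a uniform injection and your product formula for $\P(\mu_i>a_i\ \forall i)$ is correct. But in step~3 you need $\Psi_n(t,(v_i))=\E[F(t,(\mu_i/\sqrt n))\mid \text{shape}=t,\ \ell_i=\lfloor v_i\sqrt n\rfloor]$, i.e.\ the conditional law given the \emph{plane} shape as well. These two conditionings are not the same: the plane shape of the reduced tree is determined by the relative order of the labels on the edges incident to the branching points, so conditioning on it conditions on part of the labelling, and the labels are no longer a uniform injection. Your step~2 therefore does not compute the $\Psi_n$ that your step~3 uses, and the claim ``$\Psi_n$ depends only on $t$ and $(v_i)$'' is where the argument breaks.

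The paper closes exactly this gap with a simple device: let $B_i$ be the set of labels on branch $i$ \emph{excluding the first and last edges of that path}. Then (i) the sets $B_i$ are genuinely independent of the plane embedding of the reduced tree, because that embedding is determined precisely by the excluded first/last-edge labels; (ii) those finitely many excluded labels are each uniform in $[1,n-1]$, hence $>n^{2/3}$ w.h.p., so $\min B_i=\mu_i$ w.h.p. With this modification, your product formula (now with $\ell_i-2$ in place of $\ell_i$) applies conditionally on the plane shape as well, and the mixing in step~3 goes through.

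A minor side remark: in step~1 you argue the Hurwitz embedding is ``asymptotically'' uniform. In fact it is \emph{exactly} uniform and independent of the non-plane tree (the paper states this; it follows from exchangeability of the labelling), so Proposition~\ref{prop:reducedtree_branchlengths} applies directly without an approximation argument.
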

We note that the plane embedding is uniform and independent from the tree structure
(but is determined from the edge-labeling).
Hence, we can apply \cref{prop:reducedtree_branchlengths},
and without branch labeling, the reduced tree converges to a uniform random
proper tree $\hat t$. Only the formula for the limiting distribution of the labels
and their independence from $\hat t$ need to be proven. 
We start with a lemma of elementary probability theory,
whose proof is left to the reader.
\begin{lemma}
\label{lem:min_labels}
Fix $\ell \ge 1$ and fix $\alpha_1, \ldots, \alpha_\ell \in \R_+$.
For $x_1, \ldots, x_\ell \in \R^*_+$, for each $n$ large enough and sizes $S_1,\dots, S_\ell$ with $S_i \sim x_i\sqrt{n}$, we take disjoint subsets $B_i \subset [n-1]$ of size $S_i-2$ uniformly at random.
Then, for any compact $\mathcal{K} \subset (\R_+^*)^\ell$, uniformly for $(x_1, \ldots, x_\ell) \in \mathcal{K}$, we have
\[\lim_{n \to \infty} \P\big(\min(B_1) \ge \alpha_1 \sqrt{n},\dots, \min(B_\ell) \ge \alpha_\ell \sqrt{n}\big)=
\prod_{i=1}^\ell \exp(- \alpha_i x_i). \]
\end{lemma}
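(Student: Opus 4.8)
The plan is to reduce the statement to an exact count of tuples of pairwise disjoint subsets of $[1,n-1]$ and then to run an elementary asymptotic expansion, uniformly over the compact set $\mathcal K$.

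\smallskip

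\emph{Step 1: normalisation.} First I would note that the probability on the left-hand side depends on the data only through the multiset of pairs $\{(S_i,\alpha_i)\}_{1\le i\le\ell}$: relabelling the subsets $B_i$ simultaneously permutes the sizes and the thresholds but leaves the event, hence its probability, unchanged. Indices with $\alpha_i=0$ impose no constraint and contribute a factor $e^{-\alpha_ix_i}=1$, so I may discard them; then, after relabelling, I may assume $\alpha_1\ge\alpha_2\ge\cdots\ge\alpha_\ell>0$. Write $m:=n-1$, $s_i:=S_i-2=x_i\sqrt n\,(1+o(1))$, $a_i:=\lceil\alpha_i\sqrt n\rceil$ and $\sigma_{i-1}:=s_1+\cdots+s_{i-1}$ (with $\sigma_0=0$), the estimates being uniform over $(x_1,\dots,x_\ell)\in\mathcal K$ since $\mathcal K$ is compact (so the $x_i$ stay in a fixed interval $[\underline x,\overline x]\subset(0,\infty)$). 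Because the $B_i$ are sets of integers, the event $\{\min(B_i)\ge\alpha_i\sqrt n\ \forall i\}$ is precisely $\{B_i\subseteq[a_i,m]\ \forall i\}$.

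\smallskip

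\emph{Step 2: exact count.} The number of $\ell$-tuples of pairwise disjoint subsets of $[1,m]$ with $|B_i|=s_i$ is $\prod_{i=1}^{\ell}\binom{m-\sigma_{i-1}}{s_i}$. To count those satisfying $B_i\subseteq[a_i,m]$, I would place $B_1,\dots,B_\ell$ in this order: the ordering $a_1\ge\cdots\ge a_\ell$ forces $B_1\cup\cdots\cup B_{i-1}\subseteq[a_{i-1},m]\subseteq[a_i,m]$, so at step $i$ the set $B_i$ is chosen among the $(m-a_i+1)-\sigma_{i-1}$ elements of $[a_i,m]$ not yet used, yielding $\binom{m-a_i+1-\sigma_{i-1}}{s_i}$ choices. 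Hence
\[
\P\big(\min(B_i)\ge\alpha_i\sqrt n\ \forall i\big)
=\prod_{i=1}^{\ell}\frac{\binom{m-a_i+1-\sigma_{i-1}}{s_i}}{\binom{m-\sigma_{i-1}}{s_i}}
=\prod_{i=1}^{\ell}\prod_{t=0}^{s_i-1}\Big(1-\frac{a_i-1}{m-\sigma_{i-1}-t}\Big).
\]
This is where the only real subtlety lies, and what I expect to be the main (mild) obstacle: the reordering is exactly what guarantees that, on the relevant event, the already-placed sets lie entirely above the threshold $a_i$, so the number of forbidden positions among the still-available ones is $a_i-1\sim\alpha_i\sqrt n$ and is not shifted by a spurious term of order $\sqrt n$; without this observation the order-$\sqrt n$ corrections would destroy the limit.

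\smallskip

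\emph{Step 3: asymptotics.} Since $\ell$ is fixed and $\sigma_{i-1},s_i,t$ are all $O(\sqrt n)$, one has $m-\sigma_{i-1}-t=n\,(1+o(1))$ uniformly in $t<s_i$, while $a_i-1=\alpha_i\sqrt n+O(1)$. Taking logarithms,
\[
\sum_{t=0}^{s_i-1}\log\!\Big(1-\frac{a_i-1}{m-\sigma_{i-1}-t}\Big)
=-\sum_{t=0}^{s_i-1}\frac{a_i-1}{m-\sigma_{i-1}-t}+O\!\Big(\frac{s_i\,a_i^2}{n^2}\Big)
=-\frac{s_i(a_i-1)}{n}\,(1+o(1))+O(n^{-1/2})
=-\alpha_ix_i+o(1),
\]
uniformly over $\mathcal K$. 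Exponentiating and multiplying the $\ell$ factors gives $\prod_{i=1}^{\ell}e^{-\alpha_ix_i}(1+o(1))=e^{-\sum_i\alpha_ix_i}(1+o(1))$, which is the desired conclusion; uniformity over $\mathcal K$ is automatic from the uniform bounds used throughout, so no further argument is needed.
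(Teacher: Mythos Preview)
Your proof is correct. The paper in fact leaves this lemma to the reader (``a lemma of elementary probability theory, whose proof is left to the reader''), so there is no proof to compare against; your direct computation via the exact count of admissible tuples, after reordering so that the thresholds are decreasing, is precisely the kind of elementary argument the authors had in mind.
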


\begin{proof}[Proof of \cref{prop:reducedtree_minlabels}]
We write $T_{red}(A_1, \ldots, A_k;T_n)=(t;S_1,\dots, S_{2k-3})$.
As said above, we can apply \cref{prop:reducedtree_branchlengths},
proving the convergence of $n^{-1/2}(t;S_1,\dots, S_{2k-3})$ 
to some vector $(\hat t,(x_1,\dots,x_{2k-3}))$.
We assume by Skorokhod theorem that this holds almost surely. 
 In particular, for $i\le 2k-3$, we have $S_i \sim x_i \sqrt{n}$.

Each branch $e_i$ ($i \le 2k-3$) of the reduced tree correspond to a path in $T_n$.
We let $B_i$ be the set of labels on this path, {\em except those of the first and last edge of the path}.
The label sets are uniform distinct random subsets of $[1,n-1]$ of size $S_i-2$,
and are independent from the plane embedding on the reduced tree
(for this independence property, 
it is important that we have not included the labels of the first and last edge of the path
when defining $B_i$).
We can therefore apply \cref{lem:min_labels} and we get that 
\begin{equation}
\label{eq:minB}
\lim_{n \to \infty} \P\big(\min(B_1) \ge \alpha_1 \sqrt{n},\dots, \min(B_{2k-3}) \ge \alpha_{2k-3} \sqrt{n}\big)=
\prod_{i=1}^k \exp(- \alpha_i x_i). \end{equation}
With high probability, the labels of the first and last edges of all those paths
are larger than $n^{2/3}$, and hence larger than $\min(B_i)$ (for all $i \le 2k-3$).
The labels of the reduced tree $T_{min}(A_1, \ldots, A_k;T_n)$ 
are therefore $(\min(B_i))_{i \le 2k-3}$.
Using formula \eqref{eq:minB} and recalling the density of the $x_i$ from Eq. \eqref{eq:densite_lengths} 
gives  \eqref{eq:Dist_Y}.
Moreover, the independence of the limiting vector $(y_1,\dots,y_{2k-3})$ and the combinatorial tree $\hat t$
follows from that of $(x_1,\dots,x_{2k-3})$ and  $\hat t$.
\end{proof}

\subsection{Connecting reduced chords diagrams of factorizations to reduced subtrees}
In what follows, we fix $g \geq 0$ and let $\bm F_n^g$ be a uniform factorization of genus $g$.
As in \cref{sec:proofs_Scaling}, we use \cref{corol:RandomGeneration}
and assume that, w.h.p., $\bm F_n^g=\Lambda^g(\rFZ)$ for a uniform random factorization $\rFZ$.
When applying $\Lambda$ for the $i$-th time,
we let $a_i< b_i< c_i$ be the random integers chosen at step i)
 and $\sigma_i$ the permutation by which we conjugate at step ii).

For fixed $M>0$, we are interested in the genus of the partial factorization $(\bm F_n^g)_{M\sqrt n}$.
With high probability, the times $v_i$ and $w_i$ at which we add transpositions
when applying $\Lambda^g$ are both larger than $M\sqrt n$ and thus we do not see
the added transpositions in this partial factorization.
For a transposition $t=(j \,h)$ and a permutation $\sigma$, we write $t^\sigma=(\sigma(j) \,\sigma(h))$.
This extends to a (partial) factorization $f_k=(t_1,\dots,t_k)$ by setting $f_k^\sigma=(t_1^\sigma,\dots,t_k^\sigma)$.
With this notation, we have
\[(\bm F_n^g)_{M\sqrt n} = (\rFZ)_{M\sqrt n}^{\, \sigma^g},\]
where we recall from \cref{ssec:genusgtogenus0} that 
$\sigma^g$ is the composition $\sigma_g \circ \dots \circ \sigma_1$.

Taking the genus, and using the reduction operations from \cref{ssec:Red_facto},
we have
\begin{equation}
\label{eq:genus_Red}
 G\big( (\bm F_n^g)_{M\sqrt n} \big)=G\big( \Red((\rFZ)_{M\sqrt n}^{\, \sigma^g}) \big). 
 \end{equation}
As in \cref{ssec:genusgtogenus0}, it will be useful to replace the permutations
$\sigma_i$ by their "approximation" $\tilde \sigma_i$, defined in \eqref{eq:tilde_sigma}.
This is done in the following lemma.
\begin{lemma}
\label{lem:Reduced_f_ftilde}
With high probability,
\[\Red((\rFZ)_{M\sqrt n}^{\, \sigma^g}\big) = \Red((\rFZ)_{M\sqrt n}^{\, \tilde \sigma^g}\big) .\]
\end{lemma}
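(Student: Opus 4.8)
The plan is to compare the combinatorial maps $M\bigl((\rFZ)_{M\sqrt n}^{\,\sigma^g}\bigr)$ and $M\bigl((\rFZ)_{M\sqrt n}^{\,\tilde\sigma^g}\bigr)$ and to prove that, with high probability, they become \emph{identical} after the first reduction operation (deleting all degree-$2$ vertices and merging their two incident arcs). Since the reduction of a map is a deterministic function of the combinatorial map produced by that first operation, and since the cyclic order of the chords around a vertex of $M(f)$ is prescribed by the \emph{indices} of the transpositions of $f$ (which are unchanged by a relabeling of the ground set), it is enough to show that, w.h.p., the chord diagrams $\{[\sigma^g(j_\ell),\sigma^g(h_\ell)]\}_\ell$ and $\{[\tilde\sigma^g(j_\ell),\tilde\sigma^g(h_\ell)]\}_\ell$ — where $(j_\ell\,h_\ell)_{\ell\le M\sqrt n}$ are the transpositions of $(\rFZ)_{M\sqrt n}$ — have the same crossing structure: equivalently, that the points $\sigma^g(p)$, for $p$ in the support $S$ of $(\rFZ)_{M\sqrt n}$, occur around the circle in the same cyclic order as the points $\tilde\sigma^g(p)$ (identifying $\sigma^g(p)$ with $\tilde\sigma^g(p)$), with the same coincidences.

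First I would record, from \cref{prop:labelingalgorithm} and \eqref{eq:tilde_sigma}, the structure of the permutations involved: each $\sigma_i$ and each $\tilde\sigma_i$ is a bijection of $[1,n]$ which is an integer translation on each of a bounded number of intervals whose endpoints all lie within $\O_P(1)$ of $\{a_i,b_i,c_i\}$; moreover, on a common refinement of these intervals the translation parts of $\sigma_i$ and of $\tilde\sigma_i$ differ by $\O_P(1)$. Composing, $\sigma^g=\sigma_g\circ\cdots\circ\sigma_1$ and $\tilde\sigma^g=\tilde\sigma_g\circ\cdots\circ\tilde\sigma_1$ are again piecewise integer translations with $\O_P(1)$ pieces, the translation parts of which differ by $\O_P(1)$ piece by piece, and all of whose breakpoints lie within $\O_P(1)$ of the finite set $\mathcal E$ made of the points $\tilde\sigma_1^{-1}\circ\cdots\circ\tilde\sigma_{i-1}^{-1}(a_i)$ ($1\le i\le g$) and their analogues for $b_i$ and $c_i$. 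These $\O_P(1)$ points are deterministic functions of the $(a_i,b_i,c_i)$, hence asymptotically uniform on $[1,n]$ and independent of $\rFZ$.

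By the independence-and-counting argument already used for \eqref{eq:ClosedUntilKn} (applied with $M\sqrt n$ in place of $K_n=n^{3/4}$, which is only weaker), $\mathcal E$ lies w.h.p. at distance $\omega(1)$ from $S$ on $\Z/n\Z$; hence $S$ avoids the $\O_P(1)$-neighborhood of every breakpoint of $\sigma^g$ and $\tilde\sigma^g$. On this event, every $p\in S$ lies well inside a single piece of both maps, so $\sigma^g(p)-\tilde\sigma^g(p)$ is the $\O_P(1)$ shift of that piece; and since each such piece has length $\omega(1)$, its image under $\sigma^g$ is an interval of length $\omega(1)$ whose endpoints (the image breakpoints) are at distance $\omega(1)$ from $\sigma^g(p)$, so $\sigma^g(S)$ in turn avoids the $\O_P(1)$-neighborhood of all image breakpoints. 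Now take $p\ne p'$ in $S$. If $p$ and $p'$ lie in the same piece then $\sigma^g(p)-\sigma^g(p')=p-p'=\tilde\sigma^g(p)-\tilde\sigma^g(p')$, so their relative cyclic position is unchanged. If they lie in different pieces, a change of relative cyclic position would require $\sigma^g(p)$ and $\sigma^g(p')$ to be within $\O_P(1)$ of each other, which (their images lying in disjoint intervals) forces $\sigma^g(p)$ to be within $\O_P(1)$ of an image breakpoint — excluded above. Applying this to the four endpoints of any two chords shows that no crossing is created or destroyed; and coincidences among the $\sigma^g(p)$ occur precisely when the corresponding $p\in S$ coincide, which is a property of $S$ alone. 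Therefore the two chord diagrams coincide, the two maps agree after the first reduction step, and $\Red\bigl((\rFZ)_{M\sqrt n}^{\,\sigma^g}\bigr)=\Red\bigl((\rFZ)_{M\sqrt n}^{\,\tilde\sigma^g}\bigr)$, as wanted.

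I expect the main obstacle to be exactly the ``seam'' control of the third paragraph — ensuring that $\sigma^g$ does not bring two far-apart support points of the prefix $\O_P(1)$-close, i.e. that $\sigma^g(S)$ stays away from the image breakpoints. Conceptually this is of the same nature as the estimates behind \eqref{eq:ClosedUntilKn} and \eqref{eq:Xk_LargeK} in \cref{ssec:genusgtogenus0}, but it does require carefully propagating the $\O_P(1)$ discrepancies between $\sigma_i$ and $\tilde\sigma_i$ through the $g$-fold composition, which is routine but unenlightening bookkeeping.
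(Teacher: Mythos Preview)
Your proposal is correct and follows essentially the same approach as the paper. The paper's proof is more compressed: it declares that the circle splits into ``small pieces'' of size $\O_P(1)$ near the points $a_i,a'_i,a''_i,b_i,b'_i,c_i,c'_i$ and ``large pieces'' elsewhere, observes that the support of $(\rFZ)_{M\sqrt n}$ avoids the small pieces w.h.p., and then asserts that ``large pieces are permuted in the same way by $\sigma^g$ and $\tilde\sigma^g$,'' so that after removing isolated vertices the two chord diagrams coincide. Your argument unpacks precisely what that last assertion means and why it holds --- namely that the piecewise translations of $\sigma^g$ and $\tilde\sigma^g$ differ only by $\O_P(1)$ on each piece, and that your ``seam control'' (the support $S$ and its image $\sigma^g(S)$ staying $\omega(1)$ away from the breakpoints) prevents any change in the cyclic order of the chord endpoints. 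This is exactly the content behind the paper's one-line claim, made explicit.
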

\begin{proof}
Let us draw the points $(a_i,a'_i,a''_i,b_i,b'_i,c_i,c'_i)_{1 \le i \le g}$ on a circle 
(where $a'_i,a''_i,b'_i,c'_i$ are implicitly defined from $\sigma_i$ 
through formula \eqref{eq:sigma}).
These points cut the circle into {\em small pieces} (between $a_i$ and $a'_i$ or $a''_i$,
 or between $b_i$ and $b'_i$ or between $c_i$ and $c'_i$)
 and {\em large pieces} (other parts).
From \cref{prop:labelingalgorithm}, we know that small pieces are of size $\O_P(1)$.
In particular, with high probability, 
none of the first $M\sqrt n$ transpositions in $\rFZ$
contain an integer in one of the small pieces (we recall that $\{a_i,b_i,c_i\}$ is taken uniformly
at random independently from $\rFZ$).

On the other hand, large pieces are permuted in the same way by $\sigma^g$ and $\tilde \sigma^g$.
This implies that the partial factorizations obtained by removing isolated points
from $(\rFZ)_{M\sqrt n}^{\, \sigma^g}$ and $(\rFZ)_{M\sqrt n}^{\, \tilde \sigma^g}$ are the same.
The lemma follows immediately.
\end{proof}

To study $\Red((\rFZ)_{M\sqrt n}^{\, \tilde \sigma^g}\big)$, 
we consider the tree $T(\rFZ)$, defined as the dual of the lamination $\Sieve(\rFZ)$
(this is the tree associated by the Goulden-Yong bijection, \cref{ssec:gouldenyong},
with vertex labels).
Vertices of $T(\rFZ)$ correspond to faces of $\Sieve(\rFZ)$; each such face is adjacent to exactly
one arc $(e^{-2\pi i  (x-1)/n}, e^{-2\pi i  x/n})$, giving a correspondence between 
vertices of $T(\rFZ)$ and integers from $1$ to $n$;
see \cref{fig:DualTree_CrossingEdges}.

We recall that, for each $i \le g$, the triple $(a_i,b_i,c_i)$ is taken uniformly at random
with the condition $a_i<b_i<c_i$.
We let $A_i$ be a uniform random point in the arc
 $(e^{-2\pi i  (a_i-1)/n}, e^{-2\pi i  a_i/n})$,
 and $V(a_i)$ the corresponding vertex of $T(\rFZ)$.
 Similar notations can be defined for $b_i$ and $c_i$.
 Then, for each $i\le g$, the triple $(A_i,B_i,C_i)$
  is closed in total variation to a uniform triple of points on the circle,
 named such that $(1,A_i,B_i,C_i)$ appear clockwise in this order.
 We define $E_0=\{A_i,B_i,C_i:\, 1\le i\le g\}$ 
 and $E=\{V(A_i),V(B_i),V(C_i):\, 1\le i\le g\}$.

\begin{figure}[t]
\[\begin{tikzpicture}[scale=0.7, every node/.style={scale=0.7}, rotate=-40]
\draw (0,0) circle (3);
\foreach \i in {1,...,9}
{
\draw[auto=right] ({3.3*cos(-(\i-1)*360/9)},{3.3*sin(-(\i-1)*360/9)}) node{\i};
}
\draw[red,fill=red] (-.5,-.5) circle (.05);
\draw[red,fill=red] (0,1.5) circle (.05);
\draw[red,fill=red] (-.5,2.85) circle (.05);
\draw[red,fill=red] (1.6,-2.5) circle (.05);
\draw[red,fill=red] (1.6,2.5) circle (.05);
\draw[red,fill=red] (2.6,-.8) circle (.05);
\draw[red,fill=red] (-2.2,-2) circle (.05);
\draw[red,fill=red] (-.3,-2.9) circle (.05);
\draw[dashed,red] (-.5,-.5) -- (0,1.5) -- (-.5,2.85);
\draw[dashed,red] (1.6,-2.5) -- (2.6,-.8) --(-.5,-.5) -- (-2.2,-2) (-.5,-.5) -- (-.3,-2.9);
\draw[dashed,red] (0,1.5) -- (2.2,1.5) -- (1.6,2.5);

\draw[fill=black!40!green] (2.2,1.5) circle (.15);
\draw[very thick,black!40!green] (3,0) arc (0:360/9:3cm);
\draw[fill=black!40!blue] (-.5,-.5) circle (.15);
\draw[very thick,black!40!blue] ({3*cos(4*360/9)},{3*sin(4*360/9)}) arc (360*4/9:360*5/9:3cm);

\draw ({3*cos(-2*360/9)},{3*sin(-2*360/9)}) -- ({3*cos(-3*360/9)},{3*sin(-3*360/9)}) node[circle,midway,fill=blue!20, inner sep=2pt]{4};
\draw ({3*cos(2*360/9)},{3*sin(2*360/9)}) -- ({3*cos(1*360/9)},{3*sin(1*360/9)}) node[circle,midway,fill=blue!20, inner sep=2pt]{3};
\draw ({3*cos(-3*360/9)},{3*sin(-3*360/9)}) -- ({3*cos(-4*360/9)},{3*sin(-4*360/9)}) node[circle,midway,fill=blue!20, inner sep=2pt]{2};
\draw ({3*cos(-2*360/9)},{3*sin(-2*360/9)}) -- ({3*cos(0*360/9)},{3*sin(0*360/9)}) node[circle,midway,fill=blue!20, inner sep=2pt]{5};
\draw ({3*cos(-5*360/9)},{3*sin(-5*360/9)}) -- ({3*cos(0*360/9)},{3*sin(0*360/9)}) node[circle,midway,fill=blue!20, inner sep=2pt]{6};
\draw ({3*cos(-7*360/9)},{3*sin(-7*360/9)}) -- ({3*cos(0*360/9)},{3*sin(0*360/9)}) node[circle,midway,fill=blue!20, inner sep=2pt]{7};
\draw ({3*cos(-2*360/9)},{3*sin(-2*360/9)}) -- ({3*cos(-1*360/9)},{3*sin(-1*360/9)}) node[circle,midway,fill=blue!20, inner sep=2pt]{8};
\draw ({3*cos(-6*360/9)},{3*sin(-6*360/9)}) -- ({3*cos(-7*360/9)},{3*sin(-7*360/9)}) node[circle,midway,fill=blue!20, inner sep=2pt]{9};
\end{tikzpicture}
\]
\caption{The lamination associated to the minimal factorization
(45)(89)(34)(13)(16)(18)(23)(78) (in black) and its dual tree (in red).
We highlight in blue and green two vertices of this dual tree,
and the corresponding arcs of circles.
We further discuss this example to illustrate the proof of \cref{lem:crossing_transpo}.
The blue and green arcs of circle delimit two regions of the circle
 (one from 1 to 5 and the other from 6 to 9).
 There are two  black chords going from one region to the other,
(carrying labels 6 and 7).
 These two chords are dual to the two red edges 
on the path between the blue and green vertices.}
\label{fig:DualTree_CrossingEdges}
\end{figure}
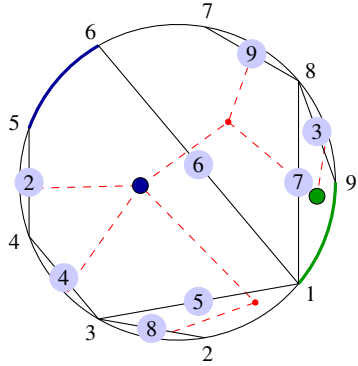

On the other hand, edges in $T(\rFZ)$ correspond to edges of $\Sieve(\rFZ)$
i.e. to transpositions in $\rFZ$.
From now on, we say that two transpositions $\transpo$ and $\transpo'$ in a partial factorization $f$
cross if the corresponding edges cross in $M(f)$.
For example if $a<b<c<d$, then the transpositions $(a\, c)$ and $(b\, d)$ cross
(in any partial factorization $f$ where they both appear).
Also, $(a\, b)$ and $(a\, c)$ cross if and only if $(a\, c)$ appear before $(a\, b)$
in the underlying factorization 
(and similar statements hold, permuting cyclically the indices $a$, $b$ and $c$).

We have the following key lemma.
\begin{lemma}
\label{lem:crossing_transpo}
A transposition may cross others
in the partial factorization $(\rFZ)_{M\sqrt n}^{\, \tilde \sigma^g}$ 
only if its corresponding edge $T(\rFZ)$ belongs to a path between two points of $E$.
\end{lemma}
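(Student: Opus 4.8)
The plan is to reduce the claim to a purely combinatorial statement about how the permutation $\tilde\sigma^g$ distorts the non-crossing chord diagram $\Sieve(\rFZ)$, and then to analyse, one rotation at a time, which pairs of chords can become crossing. Recall that the transpositions of $(\rFZ)^{\tilde\sigma^g}_{M\sqrt n}$ are precisely the relabellings $\transpo^{\tilde\sigma^g}$ of the transpositions $\transpo$ of $(\rFZ)_{M\sqrt n}$, that the chord associated with $\transpo^{\tilde\sigma^g}$ is the image of the chord associated with $\transpo$ under the circle map induced by $\tilde\sigma^g$, and that by the Goulden--Yong correspondence each $\transpo$ is dual to an edge $e(\transpo)$ of $T(\rFZ)$. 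Writing $E_0 := \{A_i,B_i,C_i:1\le i\le g\}$ for the $3g$ rotation points on the circle, \cref{lem:SeparatingChords_AncestralLine} shows that $e(\transpo)$ lies on a path between two points of $E$ if and only if the chord associated with $\transpo$ has at least one point of $E_0$ on each of the two arcs it cuts out of the circle. Moreover, since two transpositions sharing an element cross or not according to their relative order, which $\tilde\sigma^g$ does not change, any crossing in $(\rFZ)^{\tilde\sigma^g}_{M\sqrt n}$ not already present in $\Sieve(\rFZ)$ --- and all of them are of this kind, $\Sieve(\rFZ)$ being a lamination --- involves two chords with four distinct endpoints that become interleaved. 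So it suffices to prove: \emph{if two non-crossing chords $c,c'$ of $\Sieve(\rFZ)$ are such that $\tilde\sigma^g(c)$ and $\tilde\sigma^g(c')$ cross, then $c$ has a point of $E_0$ on each of its two sides.}

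First I would isolate a single-rotation statement. Each $\tilde\sigma_i$ swaps two adjacent cyclic blocks, so it has exactly three \emph{cut points} $a_i<b_i<c_i$ (positions $j$ with $\tilde\sigma_i(j+1)\neq\tilde\sigma_i(j)+1$); it fixes the arc $(c_i,a_i]$ pointwise, translates $(a_i,b_i]$ and $(b_i,c_i]$ rigidly, and is a translation away from the three cut points. \textbf{Claim:} if $\tilde\sigma_i$ turns a non-crossing pair of chords $\gamma,\gamma'$ into a crossing pair, then $\gamma$ has a cut point strictly on each side (and likewise $\gamma'$). Indeed, suppose the three cut points all lie on one side of $\gamma$; since $\gamma$ and $\gamma'$ do not cross, that side also contains both endpoints of $\gamma'$. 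Then the other side of $\gamma$ is an arc containing no cut point, hence is contained in one of $(c_i,a_i],(a_i,b_i],(b_i,c_i]$; assuming, as happens with probability tending to $1$ by a union bound over the $\le 2M\sqrt n$ relevant endpoints against $3g$ uniform points, that no endpoint of $\gamma$ is itself a cut point, $\tilde\sigma_i$ acts on $\gamma$ either as the identity or as a rigid translation. A short case analysis with cyclic orders then shows that $\tilde\sigma_i$ maps the two endpoints of $\gamma'$ to the same side of the image of $\gamma$, contradicting the assumed crossing; the same argument applies with $\gamma$ and $\gamma'$ interchanged. This proves the claim, and, three points being distributed over two sides with "all three on one side" excluded, there is at least one cut point on each side of $\gamma$ (resp.\ $\gamma'$).

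With this in hand, take $c,c'$ as above and let $i^\star\le g$ be the first index such that $\tilde\sigma_{i^\star}\circ\cdots\circ\tilde\sigma_1$ already makes the images of $c$ and $c'$ cross (such $i^\star$ exists since $\Sieve(\rFZ)$ has no crossings). Apply the single-rotation claim to $\tilde\sigma_{i^\star}$ acting on the chords $\tilde\sigma^{(i^\star-1)}(c)$ and $\tilde\sigma^{(i^\star-1)}(c')$, where $\tilde\sigma^{(i^\star-1)} := \tilde\sigma_{i^\star-1}\circ\cdots\circ\tilde\sigma_1$: each of them has a cut point of $\tilde\sigma_{i^\star}$ strictly on each side. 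Pulling back through the circle bijection induced by $\tilde\sigma^{(i^\star-1)}$, which maps the two sides of $\tilde\sigma^{(i^\star-1)}(c)$ onto the two sides of $c$, the chord $c$ has, on each of its two sides, one of the points $(\tilde\sigma^{(i^\star-1)})^{-1}(a_{i^\star}),(\tilde\sigma^{(i^\star-1)})^{-1}(b_{i^\star}),(\tilde\sigma^{(i^\star-1)})^{-1}(c_{i^\star})$, i.e.\ one of the three rotation points of step $i^\star$ expressed in the coordinates of $\rFZ$, which is a point of $E_0$. By the reformulation of the first paragraph, $e(\transpo)$ then lies on a path between two points of $E$; applying the same to $c'$ settles $\transpo'$, and the lemma follows.

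The main obstacle is the case analysis inside the single-rotation claim: one must check, separately for the three possible locations of the cut-point-free side of $\gamma$ (inside the fixed region, or inside one of the two translated blocks), that the images of the endpoints of $\gamma'$ cannot straddle the image of $\gamma$, and the bookkeeping with cyclic orders on the circle is where the care is needed; it is essential that $\gamma,\gamma'$ do not cross \emph{before} the rotation, not that the whole diagram be non-crossing. A secondary nuisance, already met for \cref{lem:Reduced_f_ftilde}, is that the three cut points of $\tilde\sigma_i$ live in the coordinate system reached after the first $i-1$ rotations, so the pull-backs must be tracked consistently through the composition, and the measure-zero coincidences (an endpoint of one of the first $M\sqrt n$ transpositions equal to a rotation point) must be discarded by the union-bound estimate above, so that the statement holds with probability tending to $1$.
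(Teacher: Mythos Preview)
Your reduction via Goulden--Yong duality (a chord that becomes crossing must have points of $E_0$ on both sides) is correct, and your single-rotation claim is the right local ingredient. But the pullback step fails. You assert that the bijection $\tilde\sigma^{(i^\star-1)}$ ``maps the two sides of $\tilde\sigma^{(i^\star-1)}(c)$ onto the two sides of $c$''; this is only true when $c$ does not already straddle a cut of $\tilde\sigma^{(i^\star-1)}$, and nothing in your choice of $i^\star$ prevents that. Schematically on the continuous circle, take $\tilde\sigma_1$ with cuts $\tfrac{1}{10},\tfrac{4}{10},\tfrac{7}{10}$ and $\tilde\sigma_2$ with cuts $\tfrac{35}{100},\tfrac{45}{100},\tfrac{9}{10}$, and set $c=[\tfrac{2}{10},\tfrac{6}{10}]$, $c'=[\tfrac{62}{100},\tfrac{68}{100}]$. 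Then $c,c'$ do not cross, $\tilde\sigma_1(c)=[\tfrac{3}{10},\tfrac{5}{10}]$ and $\tilde\sigma_1(c')=[\tfrac{32}{100},\tfrac{38}{100}]$ do not cross, but $\tilde\sigma_2\tilde\sigma_1(c)=[\tfrac{3}{10},\tfrac{4}{10}]$ and $\tilde\sigma_2\tilde\sigma_1(c')=[\tfrac{32}{100},\tfrac{83}{100}]$ do cross, so $i^\star=2$; yet $(\tilde\sigma_1)^{-1}\{\tfrac{35}{100},\tfrac{45}{100},\tfrac{9}{10}\}=\{\tfrac{65}{100},\tfrac{15}{100},\tfrac{9}{10}\}$ all lie on the \emph{same} side of $c$. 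The lemma's conclusion still holds here --- $c$ straddles the cut $\tfrac{4}{10}$ of $\tilde\sigma_1$ --- but your argument does not detect it. (A smaller issue: your dismissal of shared-endpoint crossings is not valid either, since whether $(a,b)$ and $(a,c)$ cross in $M(f)$ depends on the cyclic order of $a,b,c$ as well as on their index order, and $\tilde\sigma^g$ can change the former.)

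The paper's proof avoids the pullback altogether by treating $\tilde\sigma^g$ as a single piecewise translation of the circle with cut set $E_0$ (understood as the pullbacks, as you rightly note). If both endpoints of $c$ lie in one component of $\mathbb S^1\setminus E_0$, then one side of $c$ --- the one contained in that component --- is carried rigidly onto one side of $\tilde\sigma^g(c)$; since $\tilde\sigma^g$ is a bijection of the circle, the complementary side goes to the complementary side. Hence \emph{every} crossing relation involving $c$ (shared endpoint or not) is preserved, and since $\Sieve(\rFZ)$ is non-crossing, $\tilde\sigma^g(c)$ is uncrossed. Contrapositive: if $\tilde\sigma^g(c)$ crosses something, $c$ has a point of $E_0$ on each side. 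Your rotation-by-rotation route can be repaired by an induction on $i$ (first dispose of the case where $c$ already straddles an earlier cut; otherwise the cut-free side of $c$ \emph{is} carried rigidly to a side of $\tilde\sigma^{(i-1)}(c)$, and then the single-rotation claim places a cut of $\tilde\sigma_i$ there), but the one-shot argument is visibly shorter.
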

In the sequel, we will call such transpositions {\em potentially crossing transpositions}.
\begin{proof}
As usual, we split the circle into pieces, which are the connected components
of $\mathbb S^1 \backslash E_0$. 
Suppose that $t$ is a transposition of $\rFZ$,
whose image in $(\rFZ)_{M\sqrt n}^{\, \tilde \sigma^g}$ crosses another transposition,.
Since transpositions in $\rFZ$ are not crossing each other
and since connected components of  $\mathbb S^1 \backslash E_0$
are simply translated by $\mathbb S^1 \backslash E_0$,
the transposition $t$ must
 connect two different connected components of $\mathbb S^1 \backslash E_0$.
 In particular there are two points $P$ and $Q$ in $E_0$ such that $t$ connects the
 two arcs of circles delimited by $P$ and $Q$.
By duality, this implies that
the edge corresponding to $t$ in $T(\rFZ)$ lies on the path between the vertices $V(P)$ and $V(Q)$;
see \cref{fig:DualTree_CrossingEdges}.
\end{proof}

We recall from the previous section that for a tree $T$ and subset $E$
of the vertices, we denote by $T_{min}(E;T)$ the reduced tree $T_{red}(E;T)$
where each branch of  $T_{min}(E;T)$ carries a label indicating the minimum label
on the corresponding path in $T$.

Consider a potentially crossing transposition $t$ in $\rFZ$.
From \cref{lem:crossing_transpo}, the corresponding edge in $T(\rFZ)$
-- call it $e$ -- lies on a path between two points of $E$.
By construction of the reduced tree,
 $e$ is mapped to a branch in the reduced tree $T_{min}(E;T(\rFZ))$;
we denote by $\pi$ this projection from some edges of $T$ 
to branches of $T_{min}(E;T(\rFZ))$.
An important observation is the following:
if we know $\pi(e)$, then we know which pairs of points $P$,$Q$
in $E_0$ are separated by the transposition $t$,
i.e. we know which connected components of $\mathbb S^1 \backslash E_0$ are linked by $t$.

With this in mind, we can prove the following proposition.
\begin{proposition}
\label{prop:ReducedFacto_ReducedTree}
The partial factorization 
$\Red((\rFZ)_{M\sqrt n}^{\, \tilde \sigma^g}\big)$
only depends on the set of branches of labels at most $M\sqrt{n}$ in $T_{min}(E;T(\rFZ))$.
\end{proposition}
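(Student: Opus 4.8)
The plan is to throw away all the transpositions that cannot cross, to recognise the surviving ones as duals of the edges of $T(\rFZ)$ lying along the branches of the reduced tree, and then to check that performing the reduction only remembers a bounded amount of combinatorial data indexed by the branches of label $\le M\sqrt n$.

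First I would invoke \cref{lem:crossing_transpo}: every transposition of $(\rFZ)_{M\sqrt n}^{\,\tilde\sigma^g}$ whose dual edge in $T(\rFZ)$ does not lie on a path between two points of $E$ is a non-crossing chord, hence — together with its degree-$2$ endpoints — is erased by the reduction operations (iii) and (i). Since the outcome of the reduction does not depend on the order in which the operations are applied, this gives that $\Red\big((\rFZ)_{M\sqrt n}^{\,\tilde\sigma^g}\big)$ equals the reduction of the sub-diagram $D$ formed by the potentially crossing transpositions appearing before time $M\sqrt n$. Each of those transpositions projects through $\pi$ to a branch of $T_{red}(E;T(\rFZ))$, and the branches that occur are exactly the ones carrying an edge of label $\le M\sqrt n$, that is, the branches of label $\le M\sqrt n$ of $T_{min}(E;T(\rFZ))$ — I will call these the \emph{low} branches (there are at most $6g-3$ of them).

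Next, I would fix a low branch $b$ and look at the edges $e_1,\dots,e_m$ of $T(\rFZ)$ projecting onto it; they form a path, and the subtrees of $T(\rFZ)$ hanging off this path between consecutive $e_j$'s carry no point of $E$ (otherwise a branching point would split $b$), so that, after the first step, the only chords of $D$ attached along the arcs dual to this path are the duals $t_1,\dots,t_m$ of $e_1,\dots,e_m$ themselves. These chords are pairwise nested, they all link the same pair of connected components of $\bS^1\setminus E_0$ — which pair only depends on $b$, as recalled just before the statement — and, because $\tilde\sigma^g$ is an orientation-preserving piecewise rotation which rigidly rearranges the relevant portions of the circle, both this nesting and this pair of components persist after conjugation by $\tilde\sigma^g$. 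As the arcs separating two consecutive chords of the nested stack are now free of endpoints, the reduction operations (iv) and (i) collapse $t_1,\dots,t_m$ to a single chord; performing this on every low branch turns $D$ into a chord diagram $D^\ast$ carrying exactly one chord per low branch. Finally, $D^\ast$ sits on the $3g$ cyclically ordered points of $E_0$, each of its chords linking a pair of components of $\bS^1\setminus E_0$ determined by the corresponding low branch, while the cyclic order of $E_0$ and the piecewise rotation $\tilde\sigma^g$ are themselves read off the planar structure of $T_{red}(E;T(\rFZ))$ together with the labelling of its leaves by the $A_i,B_i,C_i$. Therefore $D^\ast$, and hence $\Red\big((\rFZ)_{M\sqrt n}^{\,\tilde\sigma^g}\big)=\Red(D)=\Red(D^\ast)$, is a deterministic function of the planar tree $T_{min}(E;T(\rFZ))$ and of its set of branches of label $\le M\sqrt n$.

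The step I expect to require most care is the middle one: rigorously checking that, once the non-potentially-crossing transpositions are removed and conjugation by $\tilde\sigma^g$ is performed, the chords along any single branch genuinely form a clean nested stack that the reduction collapses to one chord, and that the crossings and nestings between chords of distinct branches are completely encoded by the planar structure of the reduced tree. As in \cref{lem:Reduced_f_ftilde}, everything here is meant up to the high-probability event that none of the first $M\sqrt n$ transpositions of $\rFZ$ is attached within $\O_P(1)$ of a point of $E_0$, so that the breakpoints introduced by $\tilde\sigma^g$ do not interfere with the transpositions under consideration.
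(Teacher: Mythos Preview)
Your proposal is correct and follows essentially the same approach as the paper: first discard the non-potentially-crossing transpositions via \cref{lem:crossing_transpo}, then observe that all transpositions projecting to the same branch of the reduced tree connect the same pair of components of $\bS^1\setminus E_0$ and hence collapse under the parallel-edge reduction (iv) to a single chord, leaving one chord per branch of label $\le M\sqrt n$. The paper's own argument is terser on the collapsing step (it simply asserts the transpositions are ``parallel''), whereas you spell out why the arcs between consecutive chords of a given branch are free of other endpoints once the non-crossing chords are gone; this extra care is welcome and is exactly the point you flag as requiring attention. Two small remarks: the phrase ``$D^\ast$ sits on the $3g$ cyclically ordered points of $E_0$'' is slightly off (the surviving chord endpoints lie in the arcs between these points, not at them), and your final high-probability caveat is unnecessary here since $\tilde\sigma^g$ is a well-defined permutation of $[1,n]$ and the statement is deterministic given the data.
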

\begin{proof}
We consider a branch $e_0$ in the reduced tree $T_{min}(E;T(\rFZ))$ with label $\le M\sqrt n$.
This means that there exists an edge $e$ with $\pi(e)=e_0$ and label $\le M\sqrt n$.
Taking the dual transposition of $e$,
we see that there is at least one transposition in $(\rFZ)_{M\sqrt n}^{\, \tilde \sigma^g}$
crossing $e_0$.

Moreover, if two edges $e$ and $e'$ are mapped to the same branch $\pi(e)=\pi(e')$,
they connect the same pair of connected components of $\mathbb S^1 \backslash E_0$.
In particular, since the sieve of $(\rFZ)_{M\sqrt n}$ is noncrossing,
after applying the composition $\tilde \sigma^g$ of rotations,
the corresponding transpositions are parallel transpositions
in $(\rFZ)_{M\sqrt n}^{\, \tilde \sigma^g}$ (parallel in the sense of \cref{ssec:Red_facto}).
Hence, one removes one of them in the reduction operation.
We conclude that the partial factorization $\Red((\rFZ)_{M\sqrt n}^{\, \tilde \sigma^g}\big)$
contains exactly one transposition crossing $e_0$.

On the other hand, if $e_1$ is a branch of $T_{min}(E;T(\rFZ))$ with label larger than $M\sqrt n$,
then all edges $e$ of $T$ with $\pi(e)=e_1$ have label larger than $M\sqrt n$.
The corresponding transpositions do not belong to the partial factorization $(\rFZ)_{M\sqrt n}$.

Therefore, the reduced tree $T_{min}(E;T(\rFZ))$ -- and more precisely,
its set of branches with label smaller than $M\sqrt n$ -- determines which
transpositions of $\rFZ$ appear in the partial factorization
$\Red((\rFZ)_{M\sqrt n}^{\, \tilde \sigma^g}\big)$, proving the lemma.
\end{proof}

\subsection{Limit of the genus process: end of the proof}

We can now finish the proof of Theorem \ref{thm:asymptoticgenus}.

\begin{proof}[Proof of Theorem \ref{thm:asymptoticgenus}]
The tree $T(\rFZ)$ is a $Po(1)$ GW tree conditioned on having $n$ vertices,
with a uniform labeling of its edges, and embedded in the plane so that it satisfies
Hurwitz condition -- this is a reformulation of \cref{thm:GouldenYong}.
Besides, $E$ is the union of $g$ uniform triples of vertices in the tree;
up to a set of small probability, this corresponds to a uniform set of $3g$ distinct vertices in the tree.
Therefore, we can apply \cref{prop:reducedtree_minlabels} and we know that
$\frac{1}{\sqrt n} T_{min}(E;T(\rFZ))$ converges in distribution.
Combining with \cref{eq:genus_Red,lem:Reduced_f_ftilde,prop:ReducedFacto_ReducedTree},
this proves the convergence of the process $\big( G \big((\bm F_n^g)_{c\sqrt n} \big)\big)_{c\ge 0}$.

In the limit $c \to 0$, 
the limiting tree of $\frac{1}{\sqrt n} T_{min}(E;T(\rFZ))$
has no branch with label at most $c$ with high probability.
Hence the limit as $n \to \infty$ of the chord diagram 
$\Red((\rFZ)_{c \sqrt n}^{\, \tilde \sigma^g}\big)$ is an empty diagram
w.h.p. as $c \to 0$.
Its genus is $0$, proving $\lim_{c \to 0} G^g_c=0$.

For the limit $c \to +\infty$, the limiting tree of $\frac{1}{\sqrt n} T_{min}(E;T(\rFZ))$
has all its branches with label at most $c$.
Then in $(\rFZ)_{c\sqrt n}$, there is at least one transposition dual
to each branch of the reduced tree $T_{min}(E;T(\rFZ))$
(this tree has  $2k-3=6g-3$ branches).
Taking one such transposition for each branch, 
one can check that after rotation by $\, \tilde \sigma^g$,
there are never parallel nor noncrossing transpositions.
This implies that the  reduced chord diagram 
$\Red((\rFZ)_{c \sqrt n}^{\, \tilde \sigma^g}\big)$
has $12g-6$ vertices with probability tending to $1$ as $c$ tends to $+\infty$.
From \cref{lem:size_reduced_diagram}, we deduce that its genus is at least $g$
 (w.h.p. as $c \to +\infty$).
But by construction, it cannot be larger than $g$;
indeed $G^g_c$ is the limit of genera of prefixes of a genus $g$ factorization.
This proves $\lim_{c \to +\infty} G^g_c=g$.

Finally, with probability $1$, the labels on all branches of the limiting tree of $\frac{1}{\sqrt{n}} T_{min}(E;T(\rFZ))$ are all different, which implies that the jumps of $(G^g_c)_{c \ge 0}$ have size $1$.
\end{proof}

To conclude, we give an explicit formula in the case $g=1$.
\begin{proposition}
\label{prop:Genus1}
We have 
\[ \P (G^1_c=0) = 
\int_{\mathbb R_+^{3}} h(x_1,x_2,x_3) \big( 3\exp(-c (x_1+x_2)) - 2\exp(-c (x_1+x_2+x_3) \big) \prod dx_i,\]
where $h$ is given in \cref{eq:densite_lengths} above.
\end{proposition}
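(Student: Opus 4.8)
The plan is to specialize the analysis of the previous subsection to the case $g=1$, where $k=3g=3$, so the reduced tree $T_{min}(E;T(\rFZ))$ has $2k-3=3$ branches and its underlying combinatorial tree $\hat t$ is the unique proper tree with three leaves (a single internal vertex of degree $3$). Thus the only randomness left in the limit is the vector of branch labels $(y_1,y_2,y_3)$ whose joint law is given by \cref{prop:reducedtree_minlabels} and \cref{eq:Dist_Y}. By \cref{prop:ReducedFacto_ReducedTree}, the chord diagram $\Red((\rFZ)_{c\sqrt n}^{\,\tilde\sigma})$ — and hence, via \cref{eq:genus_Red,lem:Reduced_f_ftilde}, the genus $G((\bm F_n^1)_{c\sqrt n})$ — is determined by which branches carry a label $\le c\sqrt n$, i.e. in the limit by which of the events $\{y_i \le c\}$ occur. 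The first step is therefore purely combinatorial: describe, for each of the $2^3$ subsets of branches, what reduced chord diagram arises and what its genus is.

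The key combinatorial observation is that with the single branching point $B$, each branch $e_i$ of the reduced tree links two of the three connected components of $\bS^1\setminus E_0$ cut out by the (asymptotic) triple $(A_1,B_1,C_1)$; after applying the discrete rotation $\tilde\sigma_1$ (which, as noted after \cref{eq:tilde_sigma}, is a genuine piecewise rotation built from $(A_1,B_1,C_1)$), the transposition dual to a branch becomes a chord joining the two corresponding arcs of the rotated circle. One then checks directly: if no branch label is $\le c$, the diagram is empty, genus $0$; if exactly one branch is present, its single chord is noncrossing, so it reduces away and the genus is still $0$; if exactly two branches are present, the corresponding two chords turn out to be parallel (they bound a quadrilateral face together with two arcs), so one of them is removed by reduction rule (iv) and again the genus is $0$; only when all three branches are present do we get three pairwise-crossing chords forming a reduced diagram with $6$ vertices, whose genus by \cref{lem:size_reduced_diagram} (and the complementary lower bound, exactly as in the end of the proof of \cref{thm:asymptoticgenus}) equals $1$. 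Hence $G^1_c=0$ if and only if at least one of $y_1,y_2,y_3$ exceeds $c$, i.e.\ $\{G^1_c=0\}=\{y_1>c\}\cup\{y_2>c\}\cup\{y_3>c\}$.

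From here it is inclusion–exclusion. Writing $p(S)=\P(\forall i\in S,\ y_i>c)$ for $S\subseteq\{1,2,3\}$, symmetry in the $y_i$ (visible from \cref{eq:Dist_Y}, since $h$ is symmetric) gives
\[
\P(G^1_c=0)=3\,p(\{1\})-3\,p(\{1,2\})+p(\{1,2,3\}).
\]
Plugging the right tail formula \cref{eq:Dist_Y} with the appropriate $\alpha_i$ (namely $\alpha_i=c$ for $i\in S$ and $\alpha_i=0$ otherwise) yields $p(\{1\})=\int h(x)\,e^{-cx_1}\,dx$, $p(\{1,2\})=\int h(x)\,e^{-c(x_1+x_2)}\,dx$, $p(\{1,2,3\})=\int h(x)\,e^{-c(x_1+x_2+x_3)}\,dx$; substituting and using once more the symmetry of $h$ to relabel $x_1\mapsto x_3$ in the first integral so that all three terms feature $e^{-c(x_1+x_2)}$ or $e^{-c(x_1+x_2+x_3)}$, we obtain
\[
\P(G^1_c=0)=\int_{\R_+^3} h(x_1,x_2,x_3)\big(3e^{-c(x_1+x_2)}-2e^{-c(x_1+x_2+x_3)}\big)\prod dx_i,
\]
as claimed.

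The only genuinely delicate point — the one I would treat most carefully — is the claim that two present branches produce \emph{parallel} chords (rather than crossing or noncrossing ones) after the rotation $\tilde\sigma_1$, so that the two-branch case also has genus $0$; this is what makes the threshold "at least one $y_i>c$" rather than something coarser. This requires checking the cyclic positions of the four arc-endpoints involved, using that the three branches around $B$ inherit a consistent cyclic order from the plane embedding of $T(\rFZ)$ and that $\tilde\sigma_1$ permutes the three arcs of $\bS^1\setminus E_0$ cyclically; a short case analysis (there is essentially one configuration up to the cyclic symmetry of the three branches) settles it. Everything else is a direct specialization of \cref{prop:reducedtree_minlabels,prop:ReducedFacto_ReducedTree} together with \cref{lem:size_reduced_diagram} and an elementary inclusion–exclusion, so no further machinery is needed.
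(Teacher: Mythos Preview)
Your combinatorial analysis of the two-branch case is wrong, and this propagates to the final formula. When exactly two branches carry a label $\le c\sqrt n$, the two associated chords after the rotation $\tilde\sigma_1$ are \emph{crossing}, not parallel. Indeed, a chord dual to the branch toward $V(A_1)$ separates $A_1$ from $\{B_1,C_1\}$, while a chord dual to the branch toward $V(B_1)$ separates $B_1$ from $\{A_1,C_1\}$; these two chords connect \emph{different} pairs of components of $\bS^1\setminus\{A_1,B_1,C_1\}$, and a direct check (or the argument in the paper) shows that after the piecewise rotation swapping $\arc{A_1B_1}$ and $\arc{B_1C_1}$ they intersect. Hence the genus is already $1$ as soon as two branches are present, and the correct characterization is
\[
\{G^1_c=0\}=\big\{\,|\{i:y_i> c\}|\ge 2\,\big\},
\]
not $\{\max_i y_i>c\}$. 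Inclusion--exclusion on the three events $\{y_i> c,\ y_j> c\}$ then gives $3\,p(\{1,2\})-2\,p(\{1,2,3\})$, which is exactly the stated integral.

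Your final ``relabeling $x_1\mapsto x_3$'' step is not a valid manipulation: $\int h\,e^{-cx_1}\,dx$ equals $\int h\,e^{-cx_3}\,dx$ by symmetry, not $\int h\,e^{-c(x_1+x_2)}\,dx$. One can check directly (e.g.\ comparing second derivatives at $c=0$) that
\[
3\!\int\! h\,e^{-cx_1}-3\!\int\! h\,e^{-c(x_1+x_2)}+\!\int\! h\,e^{-c(x_1+x_2+x_3)}
\neq
3\!\int\! h\,e^{-c(x_1+x_2)}-2\!\int\! h\,e^{-c(x_1+x_2+x_3)},
\]
so the mismatch is real and traces back to the erroneous ``parallel'' claim.
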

\begin{proof}
In genus $g=1$, the reduced tree of the three points $A$, $B$ and $C$
in $T(\rFZ)$ is always a star with three extremities.
The partial factorization $(\rFZ)_{c\sqrt n}^{\, \tilde \sigma}$ has genus $1$
as soon as there is a label smaller than $c\sqrt{n}$ on two of the three branches of this star. 
Indeed, in this case, the transpositions $t_j$ and $t_{j'}$ corresponding to these edges
with label smaller than $c\sqrt{n}$ connect different connected components of 
$\mathbb S \backslash \{A,B,C\}$ and cross after rotation.

On the other hand, if only one branch has one (or several) label smaller than $c\sqrt{n}$,
then all potentially noncrossing transpositions in $(\rFZ)_{c\sqrt n}^{\, \tilde \sigma}$
correspond to the same branch of the reduced tree $T_{min}(A,B,C;T(\rFZ))$
and are therefore parallel. The genus is $0$ in this case.

Denoting by $\ell_1$, $\ell_2$ and $\ell_3$ 
the labels in $T_{min}(A,B,C;T(\rFZ))$, the argument above shows that
\[\P \Big( G\big( (\rFZ)_{c\sqrt n}^{\, \tilde \sigma} \big) =0 \Big)
= \P \big[ |\{ i\le 3: \ell_i \ge c\sqrt n \}| \ge 2 \big].\]
Taking the limit $n\to \infty$ and using \cref{prop:reducedtree_minlabels},
we get that
\[ \P (G^g_c=0) = \P \big[ |\{ i \le 3 : y_i \ge c \}| \ge 2 \big], \]
where the distribution of the $y_i$ is given by \eqref{eq:Dist_Y}.
The proposition follows from an elementary inclusion-exclusion argument.
\end{proof}

\bibliographystyle{bibli_perso}
\bibliography{bibli}

\end{document}